\providecommand{\algorithmname}{Algorithm}
\numberwithin{equation}{section}
\numberwithin{figure}{section}
\theoremstyle{plain}
\newtheorem{thm}{\protect\theoremname}[section]
\DeclareMathOperator*{\Ex}{\mathbb{E}}
\DeclareMathOperator*{\Pro}{\mathbb{P}}
\newtheorem{conj}[thm]{Conjecture}
\newtheorem{cor}[thm]{Corollary}
\newtheorem*{thm*}{\protect\theoremname}
\theoremstyle{definition}
\newtheorem*{problem*}{Problem}
\theoremstyle{remark}
\newtheorem*{rem*}{\protect\remarkname}
\theoremstyle{remark}
\newtheorem{rem}[thm]{\protect\remarkname}
\theoremstyle{definition}
\newtheorem{defn}[thm]{\protect\definitionname}
\theoremstyle{plain}
\newtheorem{prop}[thm]{\protect\propositionname}
\theoremstyle{plain}
\newtheorem{fact}[thm]{\protect\factname}
\theoremstyle{plain}
\newtheorem{lem}[thm]{\protect\lemmaname}
\theoremstyle{definition}
\newtheorem{example}[thm]{\protect\examplename}
\theoremstyle{plain}
\newtheorem{claim}[thm]{Claim}
\theoremstyle{plain}
\let\originalleft\left
\let\originalright\right
\renewcommand{\left}{\mathopen{}\mathclose\bgroup\originalleft}
\renewcommand{\right}{\aftergroup\egroup\originalright}
\newcommand{\class}[1]{\ensuremath{\mathsf{#1}}\xspace}
\newcommand{\MIP}{\class{MIP}} %
\newcommand{\NEXP}{\class{NEXP}}
\newcommand{\TMIP}{\class{TailoredMIP}} %
\newcommand{\RE}{\class{RE}} %
   \providecommand{\fg}{\ifdim\lastskip>\z@\unskip\fi~\frqq}%
\providecommand{\definitionname}{Definition}
\providecommand{\factname}{Fact}
\providecommand{\lemmaname}{Lemma}
\providecommand{\problemname}{Problem}
\providecommand{\propositionname}{Proposition}
\providecommand{\remarkname}{Remark}
\providecommand{\theoremname}{Theorem}
\providecommand{\examplename}{Example}
\begin{document}
\global\long\def\eps{\varepsilon}

\newcommand{\Sym}{{\rm Sym}}
\newcommand{\Bad}{{\rm Bad}}
\newcommand{\erg}{{\rm erg}}
\newcommand{\Rob}{{\rm Rob}}
\newcommand{\Sch}{{\rm Sch}}
\newcommand{\Cay}{{\rm Cay}}
\newcommand{\IP}{{\rm IP}}
\newcommand{\IRS}{{\rm IRS}}
\newcommand{\Stab}{{\rm Stab}}
\newcommand{\Id}{{\rm Id}}
\newcommand{\Prob}{{\rm Prob}}
\newcommand{\Aug}{{\frak Aug}}

\newcommand{\val}{{\rm val}}
\newcommand{\fd}{{\rm fd}}
\newcommand{\sof}{{\rm sof}}
\newcommand{\cF}{\mathcal{F}}
\newcommand{\FF}{\mathbb{F}}
\newcommand{\RR}{\mathbb{R}}
\newcommand{\QQ}{\mathbb{Q}}
\newcommand{\NN}{\mathbb{N}}
\newcommand{\ZZ}{\mathbb{Z}}
\newcommand{\cP}{\mathcal{P}}
\newcommand{\cT}{\mathcal{T}}
\newcommand{\cQ}{\mathcal{Q}}
\newcommand{\cV}{\mathcal{V}}
\newcommand{\cC}{\mathcal{C}}
\newcommand{\cS}{\mathcal{S}}
\newcommand{\cK}{\mathcal{K}}
\newcommand{\cD}{\mathcal{D}}
\newcommand{\cG}{\mathcal{G}}
\newcommand{\Sub}{\frak{sub}}
\newcommand{\WH}{{\rm WH}}
\newcommand{\Img}{{\rm Im}}

\newcommand{\frL}{\frak{L}}
\newcommand{\frR}{\frak{R}}
\newcommand{\frS}{\frak{s}}

\newcommand{\sJ}{\mathsf{J}}
\newcommand{\sX}{\mathsf{X}}
\newcommand{\sY}{\mathsf{Y}}
\newcommand{\sZ}{\mathsf{Z}}
\newcommand{\sU}{\mathsf{U}}
\newcommand{\sP}{\mathsf{P}}
\newcommand{\sQ}{\mathsf{Q}}
\newcommand{\sA}{\mathsf{A}}
\newcommand{\sB}{\mathsf{B}}
\newcommand{\sC}{\mathsf{C}}
\newcommand{\sD}{\mathsf{D}}
\newcommand{\sE}{\mathsf{E}}
\newcommand{\sR}{\mathsf{R}}

\newcommand{\Sam}{\mathsf{Sam}}
\newcommand{\Loc}{\mathsf{Loc}}
\newcommand{\Dec}{\mathsf{Dec}}

\setcounter{tocdepth}{1}

\newcommand{\tnote}[1]{\textcolor{blue}{\footnotesize{\bf (Thomas:} {#1}{\bf ) }}}
\newcommand{\mnote}[1]{\textcolor{red}{\footnotesize{\bf (Michael:} {#1}{\bf ) }}}

\title{The Aldous--Lyons Conjecture I: Subgroup Tests}

\author[L.\ Bowen]{Lewis Bowen}
\address{Lewis Bowen\hfill\break
Department of Mathematics \hfill\break 1 University Station C1200 \hfill\break University of Texas at Austin\hfill\break Austin, TX, 78712 USA.}
\email{lpbowen@math.utexas.edu}

\author[M.\ Chapman]{Michael Chapman}
\address{Michael Chapman\hfill\break
	Courant Institute of Mathematical Sciences\hfill\break
	New York University,\hfill\break 251 Mercer St, New York, NY 10012, USA.}
\email{mc9578@nyu.edu}

\author[A.\ Lubotzky]{Alexander Lubotzky}
\address{Alexander Lubotzky\hfill\break
	Weizmann institute of Science\hfill\break
	Rehovot, Israel.}
\email{alex.lubotzky@mail.huji.ac.il}

\author[T.\ Vidick]{Thomas Vidick}
\address{Thomas Vidick\hfill\break
	Weizmann institute of Science\hfill\break
	Rehovot, Israel.}
\email{thomas.vidick@weizmann.ac.il}

\begin{abstract}
    This paper, and its companion \cite{Tailored_MIPRE}, are devoted to  a negative resolution of the Aldous--Lyons Conjecture \cites{Aldous_Lyons_Conj,Aldous--Lyons_conj_blogpost}. This conjecture, originated in probability theory, is well known (cf. \cite{Gelander_ICM2018}) to be equivalent to the statement that every invariant random subgroup of the free group is co-sofic. We disprove this last statement.
    \\
    
   In this part we introduce \emph{subgroup tests}. These tests are finite distributions over continuous functions from the space of subgroups of the free group to $\{0,1\}$. Subgroup tests provide a general framework in which one can study invariant random subgroups of the free group. Classical notions such as group soficity and  group stability arise naturally in this framework. By the correspondence between subgroups of the free group and Schreier graphs, one can view subgroup tests as a property testing model for certain edge-labeled graphs. This correspondence  also provides the connection to random networks.
    
    Subgroup tests have \emph{values}, which are their asymptotic optimal expectations when integrated against co-sofic invariant random subgroups.  Our first main result is that, if every invariant random subgroup of the free group is co-sofic, then one can  approximate the value of a subgroup test up to any positive additive constant. 

    Our second main result is an essentially value preserving correspondence between certain non-local games and subgroup tests. By composing this correspondence with a stronger variant of the reduction in $\MIP^*=\RE$ \cite{MIPRE},  proved in the companion paper \cite{Tailored_MIPRE}, we deduce that approximating the sofic value of a subgroup test  is as hard as the Halting Problem, and in particular, undecidable. The combination of our two main results proves the existence of non co-sofic invariant random subgroups of the free group.

\end{abstract}
\maketitle
\tableofcontents

\section{Introduction}
In their seminal paper \cite{Aldous_Lyons_Conj}, Aldous and Lyons ask whether every unimodular network is sofic. Conditional on a certain result which will be addressed in a companion paper \cite{Tailored_MIPRE}, we prove the answer is `no'.
This conjecture, which originated in probability theory, can also be expressed in the language of invariant random subgroups, and this is the one we will use in this paper.
We explain and motivate this conjecture before discussing our approach. The formal definitions and results will start in  Section \ref{subsec_subgroup_tests}.

\pagebreak
\subsection*{The Aldous--Lyons Conjecture}

\subsubsection*{What is soficity?}
Roughly speaking, a countable group $\Gamma$ is sofic if it admits a sequence of partial actions on finite sets which approximates the action of $\Gamma$ on itself by left-translations. If $\Gamma=\langle S | R\rangle$ is finitely presented then this is equivalent to the existence of a sequence of finite graphs which converge to the Cayley graph of $\Gamma$ in the sense of Benjamini--Schramm, which is a kind of local-on-average convergence.

To be more precise, a \emph{rooted graph} is a pair $(G,o)$ where $G=(V,E)$ is a graph and $o \in V$ is a distinguished vertex called the \emph{root}. A random rooted graph $(G,o)$ is \emph{sofic} if there exists a sequence $(G_i)_{i \in \NN}$ of finite graphs such that, if $o_i$ is a uniformly random vertex of $G_i$, then $(G_i,o_i)$ converges to $(G,o)$ in distribution (this means: for every $r>0$, the radius $r$ neighborhood of the root in $G_i$ converges in distribution to the radius $r$ neighborhood of the root in $G$). This notion of convergence is due to Benjamini and Schramm \cite{MR1873300}: They proved that if each finite graph $G_i$ is planar and there is a uniform degree bound, then the limit $(G,o)$ is almost surely recurrent. 

The notions of Benjamini--Schramm convergence and soficity naturally generalize from graphs to labeled graphs or networks \cite{Aldous_Lyons_Conj}, simplicial complexes \cite{MR2797963}, manifolds \cite{MR4520306}, measured equivalence relations \cite{MR2566316}, measured groupoids 
\cite{dykema-2014} and most generally, measured metric spaces \cite{khezeli2023unimodular}. 

Again, a finitely presented group is sofic if one of its Cayley graphs is sofic.\footnote{If one of the Cayley graphs is sofic, then all those that use finitely many generators are.}  For example, amenable groups are sofic. In fact, the Cayley graph of an amenable group admits a sequence of finite sub-graphs in which the isoperimetric ratio (number of boundary vertices to number of vertices) tends to zero, which implies this sequence Benjamini--Schramm converges to the Cayley graph.  Also, residually finite groups are sofic because they admit sequences of finite quotient groups whose Cayley graphs approximate the Cayley graph of the given group. 

Gromov implicitly introduced sofic groups in  \cite{MR1694588}. He was motivated by Gottshalk's Conjecture: If $\Gamma$ is a countable group and $k \in \NN$, then any $\Gamma$-equivariant continuous map $\phi:[k]^\Gamma \to [k]^\Gamma$ which is injective is necessarily surjective. This is obviously true when $\Gamma$ is finite. Gromov proved that it holds when $\Gamma$ is sofic. Benjy Weiss gave another proof and coined the term `sofic' from the Hebrew word for finite \cite{weiss-2000}.


      The soficity property has been useful in obtaining positive results about $L^2$ and spectral theory invariants \cite{MR3664810}, group rings \cites{MR2089244, MR2417890}, invariant couplings of random fields \cite{MR3503036}, and in constructing dynamical invariants of groups actions \cites{MR2552252, MR2854085, MR3077882, MR3132735, MR3993930}. See \cites{Cap_Lup_Sofic_Hyperlinear_book, MR3821628} for more background on sofic groups. The basic idea is roughly the same in each case: One transfers properties of the finite approximating objects to properties of the limit.  

      It is a major open problem to determine whether all countable groups are sofic. We do not directly address this problem because the class of objects we study, described next, is more general than the class of groups.


\subsubsection*{What is unimodularity?}

The main tool for analyzing Benjamini--Schramm limits is called the \emph{Mass Transport Principle} which makes precise the intuitive notion of statistical homogeneity. To explain, a \emph{doubly-rooted graph} is an ordered triple $(G,o_1,o_2)$, where $o_1,o_2$ are vertices of $G$. A \emph{transport function} is a function $f$, satisfying some measurability condition, which takes as input a doubly rooted graph and outputs a non-negative real number. The interpretation is that $f(G,o_1,o_2)$ is the amount of mass the first root $o_1$ sends to the second root $o_2$. So if $(G,o)$ is a random rooted graph then $\Ex_{(G,o)} \left[\sum_{x \in V(G)} f(G,o,x)\right]$ is the average amount of mass sent out of the root. Symmetrically, $\Ex_{(G,o)} \left[\sum_{x \in V(G)} f(G,x,o)\right]$ is the average amount of mass sent into the root. If these two quantities are equal for every transport function $f$, then we say $(G,o)$ satisfies the \emph{Mass Transport Principle}. Equivalently, we say it is \emph{unimodular} (this is the term used by Aldous and Lyons in \cite{Aldous_Lyons_Conj}). Alternatively, unimodularity can be formulated in terms of graphings or as invariance with respect to the root-changing equivalence relation on the space of rooted graphs. The former arises from the ergodic theory of measured equivalence relations (e.g. \cite{MR1164598}). The latter point-of-view was introduced in \cite{MR1631732} and further developed in \cite{MR3504507}.

The term unimodular comes from the following special case. Suppose $G$ is a connected transitive graph; this means that its automorphism group $\textrm{Aut}(G)$ acts transitively on its vertex set. Then $(G,o)$ is unimodular if and only if the automorphism group $\textrm{Aut}(G)$ is unimodular in the sense that its left and right Haar measures agree.

The Mass Transport Principle arose in H\"{a}ggstr\"{o}m's study of percolation clusters on trees \cite{MR1457624}. In fact, it is an exercise to show that if $(G,o)$ is unimodular and $p \in [0,1]$ is given, then the Bernoulli $p$-percolation cluster containing $o$ is also unimodular. Unimodularity was further developed in \cite{blps-group-perc} (see also \cite{MR2883390} where it was used to construct a version of the Euler characteristic for unimodular planar maps).

It is straightforward to verify that (1) unimodularity is closed under weak limits and (2) if $G$ is a finite graph and $o$ is a uniformly random vertex of $G$ then $(G,o)$ is unimodular. It follows from these observations that soficity implies unimodularity. Question 10.1 of \cite{Aldous_Lyons_Conj} asks whether unimodularity implies soficity.

As above, unimodularity has been generalized to random rooted labeled graphs, simplicial complexes, manifolds and so on. Benjamini and Schramm note that many results which are known to hold in the deterministic setting of unimodular transitive graphs can be generalized to  unimodular random graphs \cite{MR1873300}. Aldous and Lyons demonstrate this with results about random walks, amenability, ends of graphs, spanning forests, percolation, and so on \cite{Aldous_Lyons_Conj}.

\subsubsection*{What is an invariant random subgroup?}\label{sec:intro_what_is_IRS}

As a first step towards our goal, let us describe the algebraic formulation of the Aldous--Lyons Conjecture in terms of invariant random subgroups, which were introduced in \cite{MR2749291}, \cite{abert2014kesten}, \cite{MR3193754} (and implicitly in \cite{stuck1994stabilizers}). The concept of invariant random subgroups has been profitably studied in the context of locally compact groups (e.g. \cites{stuck1994stabilizers, MR3664810,Gelander_ICM2018}). 
However, we will restrict our focus to the case of most relevance to this paper: finitely generated groups. 
Already in this discrete setup, invariant random subgroups play an important role in graph convergence \cite{Hatami_Lovasz_Szegedi_Graph_limits}, stability properties of groups \cite{BLT} and the analysis of dynamical systems \cites{stuck1994stabilizers, MR3193754} for example.

So, fix a finitely generated group $\Gamma$. An \emph{invariant random subgroup} (or IRS) of $\Gamma$ is a random variable taking values in the space of all subgroups of $\Gamma$, whose law is invariant under the conjugation action of $\Gamma$. For example, every normal subgroup of $\Gamma$ is an IRS. More generally, if $H \le \Gamma$ has only finitely many conjugates, then a uniformly random sample of its conjugates is an IRS. As finite index subgroups have only finitely many conjugates, they induce  IRSs which are called \emph{elementary}. To generate more examples, one can take convex combinations and weak* limits of elementary IRSs, and the IRSs that arise this way are called \emph{co-sofic}. It turns out that for some groups every IRS is co-sofic, and for others there are non co-sofic IRSs (cf. \cite{BLT}). The Aldous--Lyons Conjecture can be formulated as follows: 
\begin{center}
    Are all IRSs of a (non-commutative) free group co-sofic?
\end{center} 
For the rest of the paper, this is the formulation that we tackle, and it is known --- and we explain it in the following paragraphs --- to be equivalent to the probability theoretic formulation on Benjamini--Schramm limits of finite graphs.

The law (or distribution) of an IRS is a Borel probability measure $\mu$ on $\mathfrak{sub}(\Gamma)$, the space of subgroups of $\Gamma$. We let $\IRS(\Gamma)$ denote the space of all such measures. By abuse of language, we say that $\mu$ is an IRS if $\mu \in \IRS(\Gamma)$.
To see the relation between the two formulations of the Aldous--Lyons Conjecture, note that if $H$ is a subgroup of $\Gamma$, then the Schreier coset graph of $_H\backslash^\Gamma$ with respect to some generating set $S$ is a rooted (directed, edge-labeled) graph, with the root being the coset of the identity.  This gives a map from the space  of subgroups $\mathfrak{sub}(\Gamma)$ to the space of isomorphism classes of rooted (edge-labeled, directed) graphs. 

If $\mu \in \IRS(\Gamma)$ then $\mu$ pushes forward under this map to a unimodular measure \cite[Proposition 14]{abert2014kesten}. 
 Moreover, if $\mu$ is elementary, then its pushforward is induced by uniformly choosing a root of some finite connected graph. Therefore, if $\mu$ is co-sofic, then its pushforward is sofic.

Now, consider the special case in which $\Gamma=\cF(S)$ is the free group generated by a finite set $S$. Suppose $\mu$ is an invariant random subgroup of $\Gamma$, and that the (random) Schreier coset graph of $\mu$ is sofic (in the directed, edge-labeled rooted graph category). This means there exists a sequence of finite directed, edge-labeled graphs $G_i$ which Benjamini--Schramm converge to the (random) Schreier coset graph of $\mu$. After perturbing the given sequence if necessary, we may assume that each $G_i$ is itself a Schreier coset graph with respect to some finite-index subgroup $H_i$ of $\cF(S)$ --- the existence of such perturbations is exactly where the \emph{freeness} of the group $\cF(S)$ plays a role. Moreover, if $\widetilde{H}_i$ is a uniformly random conjugate of $H_i$, then the distribution of $\widetilde{H}_i$ converges to $\mu$. Therefore, a positive solution to the Aldous--Lyons Conjecture implies that every IRS of $\cF(S)$ is co-sofic.\footnote{There is a standard ``decoration of graphs'' technique that allows one to show that the directed, edge labeled (with finitely many labels) version of the Aldous--Lyons conjecture is \textbf{equivalent} to the non-directed, non-edge labeled version. As B\'alint Vir\'ag pointed to us, the same technique is used, e.g., to show that every group is the automorphism group of some graph. As this is standard, we do not elaborate on it anymore.}


\subsection*{Our approach}
\subsubsection*{What is a subgroup test?}
The set $\IRS(\cF)$ of all IRSs of the free group and the set $\IRS_{\sof}(\cF)$ of co-sofic IRSs of the free group are both convex. Thus, the Aldous--Lyons Conjecture is asking whether certain convex sets can be separated. A natural way to distinguish between two convex sets is to use (continuous) linear functionals. So, in order to separate the above, one should search for a \textbf{rich enough} collection of functionals. \emph{Subgroup tests} and their \emph{values} will play this exact role.  

A \emph{challenge} is a continuous map from $\Sub(\cF)$ to $\{0,1\}$. A subgroup test $\cT$ is a probability distribution on a finite set of challenges. One can integrate any subgroup test against any probability measure over $\Sub(\cF)$. If $\pi$ is in $\IRS(\cF)$, then this integral is called the \emph{value} of the \emph{strategy} $\pi$ against the test $\cT$.
 Many natural and well studied properties of groups, specifically soficity and pointwise permutation stability, can be formulated as properties of subgroup tests. 

The terminology of ``tests'' and ``values'' originates in the field of \emph{interactive proofs} in computer science. 
An interactive proof designates an interaction between two entities, a ``verifier'' and a ``prover''. The goal of the prover is to convince the verifier of the validity of a certain claim, and the goal of the verifier is to test the prover so that it  accepts the interaction only if the claim  is indeed correct. 
Examples of ``claims'' studied in computer science are the $3$-colorability of a given input graph, or that a Turing machine whose description is passed as input halts.
In our approach, the methods of the verifier and the prover are restricted. The prover is required to sample an element of $\Sub(\cF)$. 
The sampling method itself is restricted to some sub-class of IRSs  (e.g., elementary IRSs); the choice of distribution according which to sample is the only degree of freedom the prover has, and is called its \emph{strategy}. The verifier gets access to (the indicator function of) the prover's sampled subgroup $H$, and it makes a decision whether to \textbf{accept} or \textbf{reject} this subgroup. 
To decide, the verifier holds some finite set $K \subseteq \cF$ --- known beforehand to the prover --- and according to which combinations of elements of $K$ are contained in $H$, it accepts or rejects; the \emph{rules} that control which combinations of elements result in acceptance and which combinations result in rejection are also know  to the prover beforehand. The \textbf{value} of a strategy in a game is the probability, taken over the prover's sampling and the verifier's probabilistic choices, that the verifier makes the decision to accept when interacting with a prover using this strategy. 

In spite of these restrictions, it is possible to verify hard computational problems in this setup.\footnote{For the expert, we note that restrictions on the  prover and verifier can affect both the completeness and soundness properties of a proof system. Hence, a priori, they may either lower or raise the complexity of the associated class of interactive proofs compared to classical single-prover interactive proofs.} For example, it is not difficult to design a subgroup test such that there is a value-$1$ strategy for this test if and only if a given graph is $3$-colorable. Jumping ahead, the correct analogy is with the theory of \emph{multiprover} interactive proofs, and even more specifically interactive proofs with quantum provers sharing entanglement, also known as non-local games. We explain below this connection, which plays an essential role in the second part of this paper.


\subsubsection*{What is a non-local game?}
Morally, \emph{non-local games} originated in Bell's resolution \cite{bell1964einstein} of the Einstein--Podolsky--Rosen paradox \cite{einstein1935can}. Einstein was famously concerned by the ability of space-time separated particles to \emph{correlate} in ways that might  violate relativity, a possibility that seemed to be suggested by the mathematical modeling of quantum mechanics. Einstein argued for a \emph{local hidden variable} theory --- namely, that the particles \textbf{did} share some information beforehand,  which allowed them to correlate --- and that physicists should search for this hidden information.
Bell provided a thought experiment, that in modern jargon is an instance of a non-local game, which proves that the kinds of correlations that arise from isolated quantum mechanical systems are intrinsically different from those that could be generated in any hidden variable model. 
Bell's work lay the foundation for the subsequent design, and execution, of experiments which verify the existence of quantum entanglement (and thus refute Einstein's approach; see also \cite{clauser1969proposed}). The 2022 Nobel prize in physics was awarded to Aspect, Clauser and Zeilinger, partly for performing non-local games as experiments and verifying that the winning statistics in the games exceeds what local hidden variable models allow.

A \emph{correlation} is a function $p\colon A\times A\times X\times X\to [0,1]$, where $X$ and $A$ are finite sets, such that for every pair $x,y\in X$ the function $p(\cdot,\cdot|x,y)$ is a probability distribution over $A\times A$ --- the quantity $p(a,b|x,y)$ should be thought of as the answer to ``what is the probability $a$ and $b$ are provided as answers given that $x$ and $y$ were asked as questions?''; this cryptic phrase will soon be clarified.  A correlation  $p$ is said to be \emph{deterministic} if there are two functions $f_1,f_2\colon X\to A$ such that $p(a,b|x,y)=1$ only if $f_1(x)=a$ and $ f_2(y)=b$.
The convex hull   $C_c$ of  deterministic correlations is the set of \emph{local} or  \emph{classical} correlations.\footnote{An example of a \emph{non-local} correlation is $p(a,b|x,y)=1$ only when $a=y$ and $b=x$. Intuitively, this is because this correlation implies \emph{signaling} from one system to the other.} 
On the other hand, the  \emph{quantum correlations}  $C_q$ are those that can be generated by performing quantum measurements on a (finite-dimensional) bipartite physical system, where the measurement on one part depends only on $x$ and generates $a$, while  the measurement on the other part depends only on $y$ and generates $b$. For a formal description, see Remark \ref{rem:Born_rule}. The set $C_q$ is convex, but interestingly it is not closed~\cite{slofstra2019set}.

A \emph{non-local} game $\cG$ is specified by a probability distribution $\mu$ on the set ${X}\times {X}$, and a \emph{decision function} $D:X\times X\times A\times A\to \{0,1\}$.\footnote{As can be seen in Section \ref{sec:intro_non-local}, our definition of non-local games is slightly different. This difference is mainly cosmetic, and is driven by our motivation to relate non-local games to subgroup tests.} 
The interpretation of this combinatorial object  as a ``game'' comes from thinking of it (similarly to subgroup tests) as an interactive proof, but this time with two provers, a la \cite{ben1988multi}. 
The common dramatization goes as follows: Two provers are spatially separated. The verifier samples a pair of ``questions'' $x,y\sim \mu$, and ``asks''  one prover $x$ and   the other $y$. The provers then apply some local procedure to choose their ``answers'' $a$ and $b$ respectively. Finally, the verifier accepts if $D(x,y,a,b)=1$ and rejects otherwise.  

Since the process by which the provers generate their answers is hidden to us, we can only ``observe'' (samples from) the distribution of answers given questions, namely, the underlying correlation. So, the correlation can be thought of as the ``strategy'' used by the provers (similar to the way IRSs are seen as strategies for the prover in a subgroup test).  Bell's separation of $C_c$ from $C_q$ amounts to devising a game that is ``easy'' for players that use quantum correlations, yet ``hard'' for players that use only classical ones. 

Now, there is more than one candidate mathematical model for entanglement in quantum mechanics. A slightly generalized model, suggested by Tsirelson \cite{tsirelson1993some},  gives birth to a set of correlations known as \emph{quantum commuting} correlations $C_{qc}$. This set is closed and contains the quantum correlations.  Tsirelson famously asked whether the closure of $C_{q}$ is equal to $C_{qc}$ \cite{tsirelson2006bell}. At this point, we hope the type of problem already resonates with the reader, as this is again a \emph{separation of convex sets} type of problem.\footnote{Connes' embedding problem (CEP) is also a separation of convex sets type of problem, specifically, separating all  characters of the free group from those that are limits of finite dimensional characters. This problem was shown~\cite{fritz2012tsirelson,junge2011connes,ozawa2013connes} to be equivalent to Tsirelson's problem, and also has close connections with invariant random subgroups --- as every IRS induces a character. We omit discussions on the relations between this work and CEP from the text, as they are not particularly helpful for understanding our approach.}

\subsubsection*{Harnessing undecidability}
It was known that if  the quantum correlations are dense in the quantum commuting correlations, then the complexity class of multiprover interactive proofs with entangled provers ($\MIP^*$) contains only decidable languages.
By proving that the Halting Problem, which is undecidable, is in $\MIP^*$, the authors of~\cite{MIPRE} were able to resolve Tsirelson's problem in the negative.

In this paper a similar path is followed: Rather than directly find a functional that separates the co-sofic and general IRSs of the free group, we describe a computational problem --- approximating the sofic value of a subgroup test --- whose undecidability implies a negative resolution of the Aldous--Lyons conjecture.  Our approach can be described in two steps: First, show that a positive solution to the Aldous--Lyons Conjecture implies that approximating the sofic value of a subgroup test is decidable. Second, prove that this computational task is undecidable.

Elaborating on the first,  given a subgroup test $\cT$, we prove that its optimal value against co-sofic IRSs --- which we call the \emph{sofic value} of $\cT$ --- can be approximated from below, and its optimal value against \textbf{any} IRS --- which we call the \emph{ergodic value} of $\cT$ --- can be approximated from above.  Thus, if the Aldous--Lyons Conjecture has a positive answer, then the sofic and ergodic value of a test always agree, and using the aforementioned approximation procedures they can be calculated to any  predetermined accuracy. 

Elaborating on the second, we provide a mechanism for translating certain non-local games (see Section \ref{sec:intro_non-local}), which we call \emph{tailored non-local games}, to subgroup tests. This translation is essentially value preserving. Namely, if the non-local game had a (certain kind) of perfect quantum strategy --- which we call a $Z$-aligned permutation strategy that commutes along edges --- then the analogous subgroup test has a perfect (co-sofic) strategy. In the other direction, if the subgroup test has an almost perfect (co-sofic) strategy, then the non-local game has an almost perfect strategy. Hence, a certain strengthening of the reduction in \cite{MIPRE}, which states that small additive constant approximations to the value of \emph{tailored} non-local game are undecidable (Theorem \ref{thm:tailored_MIP*=RE}), implies that the sofic value of a game \textbf{cannot} be approximated to arbitrary precision. Thus, the Aldous--Lyons Conjecture has a negative solution.
\\

The rest of the introduction provides a deeper dive into the  definitions, results and ideas   of this paper.

\subsection{Subgroup Tests} \label{subsec_subgroup_tests}
Let $S$ be a finite set, $\cF=\cF(S)$ be the free group with basis $S$, $\Sub(\cF)$ the collection of subgroups of $\cF$ and ${\rm Prob}(\frak{sub}(\mathcal{F}))$ the set of all Borel probability measures on $\frak{sub}(\mathcal{F})$. 
A \emph{challenge}  is a pair $(K;D)$ such that $K$ is a finite subset of $\cF$, and $D$ is a function from subsets of $K$ to $\{0,1\}$. A subset $A\subseteq \cF$ \emph{passes} the challenge $(K;D)$ if $D(A\cap K)=1$, and \emph{fails} it otherwise.
A \emph{test} $\cT$ is a finite collection of challenges $\{(K_i;D_i)\}_{i\in Q}$, where $Q$ is a finite index set, together with a probability distribution $\mu$ over the set $Q$. 
A \emph{strategy} is a (Borel) probability distribution over subgroups of the free group, namely an element $\pi\in \Prob(\Sub (\cF))$. 
One can \emph{run} the test $\cT$ against $\pi$, as follows. 
First the verifier samples $i\in Q$ according to $\mu$. Next, the prover samples a subgroup  $H$ according to $\pi$. Finally, the verifier makes the decision to \emph{accept} if $H$ passes the $i^{\rm th}$ challenge $ (K_i;D_i)$, and \emph{reject} if $H$ fails the $i^{\rm th}$ challenge.

The \emph{value} of using the strategy $\pi$ against the test $\cT$ is its acceptance probability, namely
\[
\val(\cT,\pi)=\Ex_{H\sim \pi}\Ex_{i\sim \mu}[D_i(H\cap K_i)].
\]
For a fixed $\cT$, the value is a continuous linear functional from probability distributions over subgroups of $\cF$ to $\mathbb{R}$. 
Hence, it enables us to study convex subsets of $\Prob(\Sub(\cF))$ by their optimal value against a given test. 

\begin{rem}
    As mentioned earlier, a challenge is, essentially, a continuous map from the space of subgroups of $\cF$ to $\{0,1\}$. One can extract from any such continuous map  a challenge $(K;D)$ that describes it and vice versa. Therefore, a test is a convex combination of continuous maps from $\Sub(\cF)$ to $\{0,1\}$, and the value is integration of this map against a (probability) measure. 
\end{rem}

\subsection{Invariant random subgroups}\label{Sec_IRS}

As $\frak{sub}(\mathcal{F})$ inherits the product topology
from the power set $\{0,1\}^{\mathcal{F}}$,  it is compact.
Every action of a group $\Gamma$ on a set $X$ extends naturally to an action of $\Gamma$ on the power set $\{0,1\}^X$ by 
\[
\forall \gamma \in\Gamma,A\subseteq X\ \colon\ \ \gamma.A=\{\gamma.a \mid a\in A\}\;.
\]
Hence, the conjugation action of $\mathcal{F}$ on itself is inherited by $\{0,1\}^\cF$. Also, $\Sub(\cF)\subseteq \{0,1\}^\cF$ is preserved by this action. Hence, the conjugation action extends  to its power  set $\{0,1\}^{\Sub(\cF)}$ by 
\[
\forall w\in \mathcal{F}, A\subseteq \frak{sub}(\mathcal{F}) \ \colon \ \ w. A=\{wHw^{-1} \mid H\in A\}\;.
\]
 The conjugation action of $\mathcal{F}$ further extends to ${\rm Prob}(\frak{sub}(\mathcal{F}))$:
For every Borel set $B\subseteq \Sub(\cF)$ and $\mu\in{\rm Prob}(\frak{sub}(\mathcal{F}))$, we have
\[
(w.\mu)(B)=\mu(w^{-1}.B)\;.
\]

\begin{defn}\label{defn:IRS}
The set of \emph{invariant random subgroups} ${\rm IRS}(\mathcal{F})$
consists of all probability measures in ${\rm Prob}(\frak{sub}(\mathcal{F}))$
that are invariant under the action of $\mathcal{F}$. 
\end{defn}

The \emph{ergodic value} of a test $\cT$ is the best performance of a strategy $\pi\in \IRS(\cF)$ against it, namely
\begin{equation}\label{ergodic_val_equation}
    \val_{\erg}(\cT)=\sup\{\val(\cT,\pi) \mid \pi\in \IRS(\cF)\}\;.
\end{equation}
\begin{rem}
The extremal points in the convex set  $\IRS(\Gamma)$ are commonly referred to as \emph{ergodic IRSs}.
    As we take the supremum of a linear functional over a convex compact set, the optimum is obtained on an extremal point, which explains the phrase \emph{ergodic value}. 
\end{rem}

\subsection{Finitely described invariant random subgroups} \label{Intro_Sec_fin_desc_IRS}
Let $X$ be a finite set, and let $\Sym(X)$ be the symmetric group acting on $X$. Every map $\sigma\colon S\to \Sym(X)$ extends uniquely to an action of $\cF=\mathcal{F}(S)$ on $X$. 
For a vertex $x\in X$, we can define its \emph{stabilizer} to be \begin{equation} \label{stabilizer}
{\rm Stab}(\sigma,x)=\left\{ w\in\mathcal{F}\mid \sigma(w).x=x\right\}\;,
\end{equation}
which is a subgroup of $\mathcal{F}$.
Hence, we can associate with $\sigma\colon S\to \Sym(X)$ an IRS of $\cF$ via the following sampling procedure: 
$(1)$ Choose $x\in X$ uniformly at random. $(2)$
  Output ${\rm Stab}(\sigma,x)$.
We denote by $\Phi$ the map from finite actions of $\cF$ to $\IRS (\cF)$ defined by this procedure, namely
\begin{equation}\label{correspondence_actions_IRSs}
\Phi(\sigma)=\Ex_{x\in X}\big[{\bf 1}_{{\rm Stab}(\sigma,x)}\big]\;,
\end{equation}
where ${\bf 1}_H$ is the Dirac measure concentrated on the subgroup $H$ and $\Ex_{x\in X}$ is the expectation according to the uniform measure over $X$. The set of \emph{finitely described invariant random subgroups} $\IRS_{\fd}(\cF)$ is the image of the map $\Phi$ in $\IRS(\cF)$. 
\begin{defn}
The \emph{sofic value} of a test $\cT$ is the (asymptotic) best performance of a strategy $\pi\in \IRS_{\fd}(\cF)$ against it, namely
\begin{equation}\label{sofic_val_equation}
    \val_{\sof}(\cT)=\sup\,\big\{\val(\cT,\Phi(\sigma)) \mid |X|<\infty,\ \sigma\colon S\to \Sym(X)\big\}\;.
\end{equation}
\end{defn}

\begin{rem}
The name sofic value is appropriate, as demonstrated in Section \ref{subsec_separation_test}. Furthermore, we often abuse notation and use $\val(\cT,\sigma)$ instead of $\val(\cT,\Phi(\sigma))$, even though the action itself is not an IRS.
\end{rem}

Since $\IRS_{\fd}(\cF)\subseteq \IRS(\cF)$, we have $\val_{\sof}(\cT)\leq \val_{\erg}(\cT)$ for every test $\cT$. The Aldous--Lyons conjecture \cite{Aldous_Lyons_Conj} \textbf{can} be formulated as follows:
\begin{conj} [Aldous--Lyons, cf.\ Section 6 of \cite{Gelander_ICM2018}] \label{(Aldous-Lyons)} Is  ${\rm IRS}_{\fd}(\mathcal{F})$
 weak* dense in ${\rm IRS}(\mathcal{F})$?
\end{conj}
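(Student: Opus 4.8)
The plan is to \emph{disprove} Conjecture~\ref{(Aldous-Lyons)}, i.e.\ to exhibit an IRS of $\cF$ that is not a weak* limit of finitely described ones. The argument runs in two halves, mirroring the paper's two advertised main results.

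\textbf{Step 1: density would make the sofic value computable.} I would first observe that $\val_{\sof}(\cT)$ can be approximated \emph{from below} and $\val_{\erg}(\cT)$ \emph{from above}, effectively. The lower bound is immediate from~\eqref{sofic_val_equation}: enumerate all finite sets $X$ and maps $\sigma\colon S\to\Sym(X)$, compute the rational number $\val(\cT,\sigma)=\Ex_x\Ex_i[D_i(\Stab(\sigma,x)\cap K_i)]$ --- each $K_i$ being finite, membership in $\Stab(\sigma,x)$ is decidable --- and take running maxima, converging up to $\val_{\sof}(\cT)$. For the upper bound, a point of $\IRS(\cF)$ is pinned down by its marginals on the finitely many events $\{H\cap B_r=A\}$, $A\subseteq B_r$ (with $B_r$ the radius-$r$ ball of $\cF$), and maximizing the linear functional $\val(\cT,\cdot)$ over the polytope cut out by marginal-compatibility and conjugation-invariance constraints is a finite rational linear program $\mathrm{LP}_r$; by compactness of $\Sub(\cF)$ one gets $\mathrm{LP}_r\downarrow\val_{\erg}(\cT)$. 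Since always $\val_{\sof}\le\val_{\erg}$, if $\IRS_{\fd}(\cF)$ were dense in $\IRS(\cF)$ then $\val_{\sof}(\cT)=\val_{\erg}(\cT)$ for every $\cT$, and the two one-sided procedures would jointly decide, e.g., whether $\val_{\sof}(\cT)=1$ or $\val_{\sof}(\cT)\le\tfrac12$.

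\textbf{Step 2: the sofic value is not computable.} I would then construct the value-preserving reduction from \emph{tailored} non-local games to subgroup tests. Given a tailored game $\cG$, encode its question alphabet into a generating set $S$ and build a test $\cT_\cG$ whose challenges $(K_i;D_i)$ force a subgroup $H=\Stab(\sigma,x)$ to arise from a $Z$-aligned permutation strategy for $\cG$ that commutes along edges: synchronization/consistency checks, the tailored group relations, and the decision predicate $D$, each rendered as a condition on which elements of a fixed finite $K_i\subseteq\cF$ lie in $H$. Completeness: a perfect such quantum strategy for $\cG$ yields a finitely described (hence co-sofic) strategy of value $1$ for $\cT_\cG$. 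Soundness: a strategy of value $\ge 1-\eps$ for $\cT_\cG$ yields one of value $\ge 1-\mathrm{poly}(\eps)$ for $\cG$. Combining with the tailored strengthening of $\MIP^*=\RE$ (Theorem~\ref{thm:tailored_MIP*=RE}) --- under which deciding $\val(\cG)=1$ versus $\val(\cG)\le\tfrac12$ is $\RE$-hard --- the quantity $\val_{\sof}(\cT_\cG)$ inherits the same undecidability, contradicting Step~1. Therefore $\IRS_{\fd}(\cF)$ is not weak* dense, and any IRS lying outside its closure is the desired non-co-sofic witness.

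\textbf{Main obstacle.} The heart of the matter is the soundness half of Step~2: a near-optimal strategy for $\cT_\cG$ is merely some finite action $\sigma$ (or IRS), carrying no operator-algebraic structure a priori, so one must prove it is \emph{close} to a genuine permutation strategy obeying the tailored relations --- a quantitative stability/rounding statement --- and only then read off a good strategy for $\cG$, all while keeping the value loss polynomial rather than $o(1)$. Dually, the completeness direction requires care that the strategies produced are genuinely finitely described. By contrast, Step~1 is essentially Benjamini--Schramm compactness plus linear-programming bookkeeping.
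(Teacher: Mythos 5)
Your proposal follows exactly the paper's two-step strategy: Theorem~\ref{Main_Thm} together with Corollary~\ref{cor:Aldous_Lyons_implies_SOFVAL_decidable} yield that a positive answer would make the sofic value of a test approximable to any accuracy (lower bound by enumerating finite actions, upper bound by a shrinking sequence of linear programs over the pseudo-IRS polytopes $\cQ_B$), while the value-preserving translation of tailored games into synchronous subgroup tests (Theorem~\ref{thm:values_of_associated_tests}) composed with $\TMIP^*=\RE$ (Theorem~\ref{thm:tailored_MIP*=RE}) shows this approximation is $\RE$-hard, forcing the negative answer. The ``main obstacle'' you flag --- quantitatively rounding a near-optimal finite action to a genuine $Z$-aligned permutation strategy --- is precisely what Propositions~\ref{prop:perturbing_to_satisfy_1to3}, \ref{prop:associated_quantum_strat_has_value_at_least_as_original} and \ref{prop:significance_of_associated_test} carry out, so your blind outline matches the paper's proof.
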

\begin{rem}
In their paper \cite{Aldous_Lyons_Conj}, Aldous and Lyons ask this as a question. In the passing of time, the positive form of this problem, namely that $\IRS_{\fd}(\cF)$ \textbf{is} dense in $\IRS(\cF)$, received the name \emph{The Aldous--Lyons Conjecture} (cf. \cite{Aldous--Lyons_conj_blogpost}). Thus, we use  the more common ``Conjecture'' phrasing instead of Problem. 

The algebraic formulation we  provided earlier, which can also be found  in Section 6 of \cite{Gelander_ICM2018}, may seem different from the above. The question was  whether every invariant random subgroup of $\cF$ is \emph{co-sofic}, where co-sofic means an IRS which is the weak* limit of convex combinations of uniform distributions over finite index subgroups. Since the finitely described IRSs are dense in the co-sofic ones, the formulation in Conjecture \ref{(Aldous-Lyons)} is equivalent to this standard (algebraic) formulation.


Lastly, the term co-sofic is closely related to the better known term of a \emph{sofic group} (cf.\ Definition \ref{defn:sofic}).
A finitely presented group $\mathcal{F}/N$ is sofic if and only if the Dirac measure concentrated on $N$ is a co-sofic IRS (cf.\ Proposition 6.1 in \cite{Gelander_ICM2018}).
      Thus, a positive solution to the Aldous--Lyons Conjecture \ref{(Aldous-Lyons)} implies in particular that every group is sofic.\footnote{The way we formulate it, a positive solution to the Aldous--Lyons Conjecture only implies that all finitely presented groups are sofic. It is standard to deduce it for all groups from the finitely presented ones \cite{MR2089244}.  } 

\end{rem}

\begin{cor} \label{cor:values_are_the_same}
Since $\val(\cT,\cdot)$ is a continuous functional, a positive solution to the Aldous--Lyons Conjecture \ref{(Aldous-Lyons)} implies that  for every test $\cT$,  $\val_{\sof}(\cT)=\val_{\erg}(\cT)$.
\end{cor}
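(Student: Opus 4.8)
The statement is essentially a one-line consequence of the definitions combined with a standard compactness/continuity argument, so the plan is simply to make the continuity explicit. First I would recall that, by definition, $\val_{\sof}(\cT) = \sup\{\val(\cT,\pi)\mid \pi\in\IRS_{\fd}(\cF)\}$ and $\val_{\erg}(\cT) = \sup\{\val(\cT,\pi)\mid \pi\in\IRS(\cF)\}$, and that $\IRS_{\fd}(\cF)\subseteq \IRS(\cF)$ gives the trivial inequality $\val_{\sof}(\cT)\le\val_{\erg}(\cT)$ for free. So the only content is the reverse inequality under the assumption of Conjecture \ref{(Aldous-Lyons)}.

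For the reverse inequality, I would argue as follows. Fix $\cT = \{(K_i;D_i)\}_{i\in Q}$ with sampling distribution $\mu$, and observe that the map $\pi\mapsto \val(\cT,\pi) = \Ex_{H\sim\pi}\Ex_{i\sim\mu}[D_i(H\cap K_i)]$ is weak\textsuperscript{*}-continuous on $\Prob(\Sub(\cF))$: indeed, for each fixed $i$, the function $H\mapsto D_i(H\cap K_i)$ is a continuous (locally constant) function on the compact space $\Sub(\cF)$, since membership of each of the finitely many elements of $K_i$ in $H$ is determined by finitely many coordinates of $H\in\{0,1\}^{\cF}$; hence $H\mapsto \Ex_{i\sim\mu}[D_i(H\cap K_i)]$ is a finite convex combination of continuous functions, hence continuous, and integration against it is weak\textsuperscript{*}-continuous by definition of the weak\textsuperscript{*} topology. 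Now let $\pi\in\IRS(\cF)$ be arbitrary. A positive answer to Conjecture \ref{(Aldous-Lyons)} says $\IRS_{\fd}(\cF)$ is weak\textsuperscript{*}-dense in $\IRS(\cF)$, so there is a net (or, since $\Prob(\Sub(\cF))$ is metrizable by compactness and second countability of $\Sub(\cF)$, a sequence) $\pi_n\in\IRS_{\fd}(\cF)$ with $\pi_n\to\pi$. By continuity, $\val(\cT,\pi_n)\to\val(\cT,\pi)$, and since each $\val(\cT,\pi_n)\le\val_{\sof}(\cT)$ we get $\val(\cT,\pi)\le\val_{\sof}(\cT)$. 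Taking the supremum over $\pi\in\IRS(\cF)$ yields $\val_{\erg}(\cT)\le\val_{\sof}(\cT)$, and combined with the trivial direction this gives equality.

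There is really no obstacle here; the only thing one must be slightly careful about is to confirm that $\val_{\sof}(\cT)$ as defined via finite actions $\sigma\colon S\to\Sym(X)$ indeed equals $\sup\{\val(\cT,\pi)\mid\pi\in\IRS_{\fd}(\cF)\}$, which is immediate from the definition of $\IRS_{\fd}(\cF)$ as the image of the map $\Phi$ and the notational convention $\val(\cT,\sigma)=\val(\cT,\Phi(\sigma))$. One could alternatively phrase the whole argument without nets by noting that $\val_{\erg}(\cT)$, being the supremum of a continuous function over the weak\textsuperscript{*}-closure of $\IRS_{\fd}(\cF)$ (which under the conjecture is all of $\IRS(\cF)$), cannot exceed the supremum over the dense subset $\IRS_{\fd}(\cF)$ itself. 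I would present the argument in this slightly more compact form.
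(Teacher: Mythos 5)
Your proof is correct and follows exactly the route the paper intends: the paper does not spell out a separate argument but simply asserts the corollary as an immediate consequence of the weak\textsuperscript{*}-continuity of $\val(\cT,\cdot)$ together with the density hypothesis, which is precisely what you have made explicit. The verification that $\val(\cT,\cdot)$ is continuous (as a finite convex combination of integrals of locally constant functions) and the final step (supremum over a dense subset equals supremum over the closure) are the right fillings-in of the paper's one-line justification.
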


\begin{rem}
    Due to the relation between subgroups of the free group and Schreier graphs, one can view subgroup tests as a new property testing model for (edge-labeled) graphs:
Let $X$ be a finite set and $\sigma\colon S\to X$  a transitive action (the non-transitive case is a convex combination of transitive ones). For $x\in X$, the stabilizer ${\rm Stab}(\sigma,x)$ is exactly the set of labeled paths in the Schreier graph of $\cF(S)$ with respect to ${\rm Stab}(\sigma,x)$ and $S$ that begin and end at $x$, namely, closed paths originating from $x$. Thus, when the   finitely described IRS $\Phi(\sigma)$ is put against a test $\cT$, we actually test the aforementioned Schreier graph  in the following way:
First, choose a vertex $x\in X$ uniformly at random and sample $i\sim \mu$.
  List the paths from $K_i$ and for each one check whether it is closed or open.  According to this check,  decide using $D_i$ whether to accept or reject.
\end{rem}

\subsection{Decidability of approximating the sofic value} \label{section:Interactive_proofs_with_a_finitely_described_IRS}

If the  challenge distribution $\mu$ in a subgroup test $\cT$ is rational, then $\cT$ can be encoded as a finite bit string. There are many (non-equivalent) ways of doing so.  
Assume we fixed \textbf{some} encoding for tests with rational challenge distributions such that, given the encoding, all underlying combinatorial data of the test can be calculated in finite time. 
Namely, the  set of generators $S$, the indexing set of challenges $Q$, the collection of challenges $\{(K_i;D_i)\}_{i\in Q}$ and the distribution $\mu$ can all be \textbf{read} from the encoding in finite time.
 
\begin{thm}[Main Theorem I]\label{Main_Thm}
For every fixed encoding of tests such that all combinatorial data of the test can be read from it in finite time:
\begin{enumerate}
    \item There is a Turing machine $M_1$ that takes as an input (the encoding of) a test $\cT$, and outputs an infinite sequence of non-decreasing numbers $(\alpha_t)_{t=1}^\infty \subseteq [0,1]$  such that $\lim_{t\to \infty}\alpha_t=\val_{\sof}(\cT)$.
    \item There is a Turing machine $M_2$ that takes as an input (the encoding of) a test $\cT$, and outputs an infinite sequence of non-increasing numbers $(\beta_t)_{t=1}^\infty \subseteq [0,1]$  such that $\lim_{t\to \infty}\beta_t=\val_{\erg}(\cT)$.
\end{enumerate}
\end{thm}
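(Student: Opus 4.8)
The plan is to establish the two parts separately, since they rely on different ``directions'' of approximation. For part (1), the goal is to approximate $\val_{\sof}(\cT) = \sup_\sigma \val(\cT, \Phi(\sigma))$ from below, where the supremum is over finite actions $\sigma \colon S \to \Sym(X)$. The key observation is that for a fixed finite set $X$ of size $n$, the set of maps $\sigma \colon S \to \Sym(X)$ is finite — there are $(n!)^{|S|}$ of them — and for each such $\sigma$ the quantity $\val(\cT, \Phi(\sigma)) = \Ex_{x \in X}\Ex_{i \sim \mu}[D_i(\Stab(\sigma,x) \cap K_i)]$ is exactly computable: for each vertex $x$, each challenge index $i$, and each word $w \in K_i$, one checks whether $\sigma(w).x = x$ (a finite computation, as $w$ is a finite word in $S \cup S^{-1}$), assembles the set $\Stab(\sigma,x) \cap K_i$, evaluates $D_i$, and averages using the rational distribution $\mu$ (which is readable from the encoding). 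So the Turing machine $M_1$ enumerates $n = 1, 2, 3, \dots$, and for each $n$ computes $\alpha_n := \max\{\val(\cT,\Phi(\sigma)) : |X| = n,\ \sigma \colon S \to \Sym(X)\}$; to ensure monotonicity it actually outputs $\max(\alpha_1, \dots, \alpha_n)$ at stage $n$. Convergence to $\val_{\sof}(\cT)$ is immediate from the definition of the supremum in \eqref{sofic_val_equation}.

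For part (2), the goal is to approximate $\val_{\erg}(\cT) = \sup\{\val(\cT,\pi) : \pi \in \IRS(\cF)\}$ from above. This is the substantive direction. The natural approach is a hierarchy of semidefinite or linear-programming relaxations of the set $\IRS(\cF)$: one exploits that $\val(\cT, \pi)$ depends on $\pi$ only through the finitely many ``local'' probabilities $\Pro_{H \sim \pi}[A \subseteq H,\ B \cap H = \emptyset]$ for $A, B$ ranging over subsets of $\bigcup_i K_i$. Conjugation-invariance of $\pi$ imposes linear constraints relating these local statistics at a given word-length to those at larger word-lengths (since membership $w \in H$ and $uwu^{-1} \in uHu^{-1}$ must have equal probability, and $uHu^{-1}$ is again distributed as $\pi$), and the subgroup axioms (identity in $H$, closure under products and inverses) impose further consistency/positivity constraints on these marginals. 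Truncating these constraints to words of length at most $t$ gives, for each $t$, a finite linear program whose optimum $\beta_t$ is computable from the encoding and is an upper bound on $\val_{\erg}(\cT)$; increasing $t$ only adds constraints, so $\beta_t$ is non-increasing. The crux is to prove that $\beta_t \to \val_{\erg}(\cT)$, i.e.\ that the truncated relaxations are asymptotically tight. This should follow from a compactness argument: any sequence of feasible solutions to the level-$t$ programs has local statistics lying in compact sets, so along a subsequence they converge to a consistent family of marginals on all finite subsets of $\cF$, which by Kolmogorov extension defines a measure on $\{0,1\}^\cF$; one then checks this measure is supported on $\Sub(\cF)$ and is conjugation-invariant, hence lies in $\IRS(\cF)$, and achieves value $\lim_t \beta_t$.

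The main obstacle is making this compactness/extension step for part (2) fully rigorous — in particular, ensuring that the finite-level feasibility constraints are rich enough that their limit genuinely lands inside $\IRS(\cF)$ rather than some strictly larger convex set. A cleaner alternative, which I would pursue in parallel, is to avoid hand-crafting the LP and instead invoke upper semicontinuity and the structure theorem that $\val_{\erg}(\cT)$ is attained at an ergodic IRS (noted in the remark after \eqref{ergodic_val_equation}), combined with the fact that $\IRS(\cF)$ is a $\Pi_1$-type (closed, effectively describable) subset of the compact space $\Prob(\Sub(\cF))$: one can enumerate a dense sequence of ``test functions'' witnessing that a candidate measure violates invariance or the subgroup property, and dually enumerate rational upper bounds on the supremum. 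Either way, the essential content is that $\IRS(\cF)$ is co-recursively-enumerable as a subset of a computable compact space and $\val(\cT,\cdot)$ is a computable continuous functional, so its supremum is computable from above; part (1) is the easy ``$\Sigma_1$'' side where one just searches through finite actions.
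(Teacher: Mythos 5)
Your proposal takes essentially the same route as the paper: Part (1) enumerates finite actions and takes running maxima exactly as you describe, and Part (2) solves a hierarchy of finite LP relaxations over truncated constraint windows and closes the gap via weak* compactness. The paper's concrete realization of your truncated LP is the polytope $\cQ_B$ of pseudo-IRSs over a finite $B\subseteq\cF$ (Definition~\ref{def:P_B_Q_B}, Lemma~\ref{lem:computable_polytopes}), where decidability comes from Stallings foldings, and the tightness you flagged as the ``crux'' is Lemma~\ref{Lem:upper_approx}, which shows $\bigcap_B\tilde\cQ_B=\IRS(\cF)$.
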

\begin{rem}
    Clause $(2)$ of Theorem \ref{Main_Thm} should be compared to the Navascu\'es--Pironio--Ac\'in (NPA) hierarchy in the study of non-local games \cite{navascues2008convergent}. See also the introduction of \cite{MIPRE}.
\end{rem}

\begin{cor}\label{cor:Aldous_Lyons_implies_SOFVAL_decidable}
    If the Aldous--Lyons Conjecture \ref{(Aldous-Lyons)} has a positive solution, then for every $\theta>0$, there is a Turing machine which accepts a subgroup test $\cT$ as input and outputs its sofic value $\val_{\sof}(\cT)$ up to an additive error of at most $\theta$.
\end{cor}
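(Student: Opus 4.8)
The plan is to derive Corollary~\ref{cor:Aldous_Lyons_implies_SOFVAL_decidable} as a direct consequence of Theorem~\ref{Main_Thm} together with Corollary~\ref{cor:values_are_the_same}. Recall that under a positive solution to Conjecture~\ref{(Aldous-Lyons)}, $\IRS_{\fd}(\cF)$ is weak* dense in $\IRS(\cF)$, and since $\val(\cT,\cdot)$ is a continuous linear functional on the compact space $\Prob(\Sub(\cF))$, we get $\val_{\sof}(\cT) = \val_{\erg}(\cT)$ for every test $\cT$; denote this common value by $v(\cT)$.

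First I would run the two Turing machines $M_1$ and $M_2$ from Theorem~\ref{Main_Thm} in parallel (or dovetailed) on the input encoding of $\cT$. The machine $M_1$ produces a non-decreasing sequence $(\alpha_t)_{t\ge 1} \subseteq [0,1]$ with $\alpha_t \uparrow \val_{\sof}(\cT) = v(\cT)$, while $M_2$ produces a non-increasing sequence $(\beta_t)_{t\ge 1}\subseteq [0,1]$ with $\beta_t \downarrow \val_{\erg}(\cT) = v(\cT)$. In particular, for every $t$ we have $\alpha_t \le v(\cT) \le \beta_t$, so the interval $[\alpha_t,\beta_t]$ always contains $v(\cT)$, and its length $\beta_t - \alpha_t$ tends to $0$ as $t\to\infty$ (since both sequences converge to the same limit $v(\cT)$).

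The algorithm for the desired Turing machine is then: on input $\cT$ and with the fixed target accuracy $\theta>0$ hard-coded, compute $\alpha_t$ and $\beta_t$ for $t = 1,2,3,\dots$ in turn, and halt at the first $t$ for which $\beta_t - \alpha_t \le 2\theta$; output, say, $\tfrac{1}{2}(\alpha_t + \beta_t)$ (or either endpoint). This halts in finite time because $\beta_t - \alpha_t \to 0$, and the output $w$ satisfies $|w - v(\cT)| \le \tfrac{1}{2}(\beta_t - \alpha_t) \le \theta$ since $v(\cT) \in [\alpha_t,\beta_t]$. As $v(\cT) = \val_{\sof}(\cT)$ under the conjecture, this is the required $\theta$-approximation to the sofic value.

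The only subtlety, and the point most worth stating carefully, is that the halting condition depends on the \emph{conjectural} equality $\val_{\sof}(\cT) = \val_{\erg}(\cT)$: without it, the machine $M_2$ might converge to a strictly larger limit and the stopping criterion $\beta_t - \alpha_t \le 2\theta$ would never be met. Thus this is genuinely a conditional algorithm — it is guaranteed to halt only when the Aldous--Lyons Conjecture holds — which is exactly the logical content we want, since the contrapositive (combined with the undecidability established via the companion paper \cite{Tailored_MIPRE}) will yield a negative resolution of the conjecture. No further ingredients are needed beyond Theorem~\ref{Main_Thm}, the continuity of $\val(\cT,\cdot)$, and compactness of $\Prob(\Sub(\cF))$.
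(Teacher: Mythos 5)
Your argument is exactly the one the paper intends: combine Corollary \ref{cor:values_are_the_same} with the two sandwiching sequences from Theorem \ref{Main_Thm} and stop when the gap $\beta_t-\alpha_t$ is small enough, a condition guaranteed to occur precisely because the conjecture forces the two limits to coincide. Your remark that the halting guarantee is conditional is the right subtlety to flag, and nothing is missing.
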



\newcommand{\mX}{\mathcal{X}}
\newcommand{\mY}{\mathcal{Y}}
\newcommand{\mA}{\mathcal{A}}
\newcommand{\mB}{\mathcal{B}}

\subsection{Tailored non-local games and their associated subgroup tests}\label{sec:intro_non-local}


As described before, the standard definition of  a (non-local) game consists of two finite sets, a questions set $X$ and an answers set $A$, together with a probability distribution $\mu$ over $X\times X$ and a decision function $D\colon X\times X\times A\times A\to \{0,1\}$. In this paper, we use a slightly modified definition, that makes the connection with subgroup tests clearer. Before diving into our alternative definition, note first that the distribution $\mu$ defines a graph structure on $X$, by associating edges with its support. Furthermore, as the exact choice of $A$ is irrelevant, we can assume it is the set of all bit strings up to some length $\Lambda$.

A (synchronous, non-local) game consists of a graph  $G=(V,E)$, a distribution $\mu$ over $E$,
a length function $\ell\colon V\to \mathbb{N}$,  formal sets of generators $S_x=\{\sX^{x,i}\mid 1\leq i\leq \ell(x)\}$ --- the union of which is denoted by $S=\bigcup_{x\in V}S_x$ --- and decision functions $D_{xy}\colon \{0,1\}^{S_{xy}}\to \{0,1\}$ for every edge $xy\in E$, where $S_{xy}=S_x\cup S_y$.\footnote{So, the main difference in our definition, is that the decision function $D$ rejects automatically answers that are not of the appropriate length according to $\ell$.}
     A (synchronous, quantum) \emph{strategy} for $\cG$ is a map $\rho\colon \bigcup_{x\in V}S_x\to U(n)$, where the images are involutions, and $\rho({S_x})$ commute for every fixed $x\in V$.\footnote{This is again not the standard definition of a quantum strategy. Unpacking our definition, $\rho$ associates an observable (Hermitian operator on a finite-dimensional Hilbert space that squares to the identity) with each bit of the answer when asked for $x\in V$. Since the answer must be totally measured, all the observables associated with a specific vertex must commute. The fact there is a single  map $\rho$, and not one for each prover, is the \emph{synchronicity} of this strategy. }
Such a strategy defines for every $xy\in E$ a probability distribution over maps $\gamma\colon S_{xy}\to \{0,1\}$ in the following way: 
For every $\sX\in S$, let  $\sP^{\sX}_0$ be the projection on the $(+1)$-eigenspace of $\rho(\sX)$, and $\sP^\sX_{1}$  the projection on its $(-1)$-eigenspace. 
Furthermore, let $\tau$ be the dimension normalized trace on $n\times n$ matrices.  
Then, the probability that $\gamma \colon S_{xy}\to \{0,1\}$ is sampled according to $\rho$ is by definition 
\begin{equation}\label{eq:intro_sampling_according_rho}
    \tau\left(\prod_{\sX\in S_{x}}\sP^{\sX}_{\gamma(\sX)}\prod_{\sY\in S_y}\sP^{\sY}_{\gamma(\sY)}\right)\;.
\end{equation}
A map $\gamma$ sampled that way is said to be sampled \emph{according to} $\rho$, and we denote it by $\gamma\sim \rho$.\footnote{Note that this sampling depends on the chosen edge $xy\in E$. Namely, though implicit in the notation, $\gamma$ is defined from $S_{xy}$ for some edge $xy\in E$. It is important to note that $\gamma$ is \textbf{not} globally defined on all of $S$.}
    Relating to standard notations, we remark that by letting $a=\gamma|_{S_x}$ and $b=\gamma|_{S_y}$, the correlation $p(a,b|x,y)$ induced by the quantum strategy  $\rho$ (as in Remark \ref{rem:Born_rule}) agrees with   formula \eqref{eq:intro_sampling_according_rho}.
    In a similar manner to subgroup tests, one can run the game $\cG$ against the strategy $\rho$: 
    First, the verifier samples an edge $xy\in E$ according to $\mu$. Then, the prover samples $\gamma\colon S_{xy}\to \{0,1\}$ according to $\rho$. Finally, the verifier {accepts} if $D_{xy}(\gamma)=1$, and {rejects} otherwise.
 
  As for subgroup tests, the  \emph{value} of the strategy $\rho$ against the non-local game $\cG$ is its acceptance probability when ran against the game. Namely,
\[
\begin{split}
\val(\cG,\rho)=\Ex_{xy\sim \mu}\Ex_{\gamma\sim \rho}[D_{xy}(\gamma)]\;.
\end{split}
\]
 The (synchronous) \emph{quantum value} of $\cG$, $\val^*(\cG)$, is the supremum of $\val(\cG,\rho)$ over all possible quantum strategies against $\cG$.

The main result in \cite{MIPRE} (recalled in detail in Theorem \ref{thm:MIP*=RE} herein) is a reduction from the Halting Problem to approximating  the quantum value of a game.
Our main contribution in the second half of this paper (Sections \ref{sec:Synch},\ref{sec:test_associated_with_game} and \ref{sec:proof_of_value_preserving_transformation}) is a mapping from a specific sub-class of non-local games --- which we term \emph{tailored} non-local games
and describe further below --- to subgroup tests, that  (almost) preserves their values.

\begin{thm}[Main Theorem II, informal. For the formal version see Theorem \ref{thm:values_of_associated_tests}]\label{thm:main2}
    There is a transformation that takes as input a \textbf{tailored} non-local game (see Definition \ref{defn:tailored_games}) and outputs a subgroup test (which we call the associated synchronous subgroup test, see Definition  \ref{defn:associated_test}), such that:
    \begin{enumerate}
        \item If the game had a perfect (i.e., a value $1$) \textbf{$Z$-aligned, commuting along edges, permutation strategy} (see Definitions \ref{defn:permutation_strategy} and \ref{defn:Z-aligned_strategy}), then the associated subgroup test has a perfect strategy. 
        \item  Every almost perfect finitely described strategy for the associated subgroup test can be transformed into an almost perfect quantum strategy to the original tailored non-local game. 
    \end{enumerate}
\end{thm}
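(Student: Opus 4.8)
The plan is to prove the two directions of Theorem~\ref{thm:main2} essentially by matching up, on both sides, the combinatorial data of the game with the combinatorial data of a subgroup test, and then tracking how strategies translate. The key structural observation is that a tailored non-local game already looks like a ``presentation'': the formal generators $S_x = \{\sX^{x,i}\}$ can be amalgamated into a single free group $\cF(S)$ (with $S = \bigcup_x S_x$), and the decision functions $D_{xy}$ on $\{0,1\}^{S_{xy}}$ are exactly the data of a challenge $(K_{xy}; D_{xy})$ once one identifies each generator $\sX\in S$ with the corresponding word in $\cF(S)$ and each ``answer bit'' with the predicate ``this word lies in the sampled subgroup''. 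So the associated subgroup test will have generating set (essentially) $S$, challenge index set $Q = E$, challenge distribution equal to $\mu$, and the challenge attached to edge $xy$ built from $S_{xy}$ and $D_{xy}$; the one subtlety is encoding the involution/commutation constraints of a permutation strategy, which I expect is handled by augmenting $K_{xy}$ with words like $\sX^2$ and $[\sX,\sY]$ for $\sX,\sY \in S_x$ so that the decision function can reject whenever those do not evaluate correctly in the sampled subgroup (this is the role of the ``$Z$-aligned'' and ``commuting along edges'' conditions).

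First I would set up the dictionary between \emph{permutation strategies} $\rho\colon S \to \Sym(X)$ for the game and \emph{finitely described strategies} $\Phi(\sigma)$ for the associated test, where $\sigma\colon S \to \Sym(X)$ is literally the same map. The point is that for a fixed vertex $x\in X$, membership of a word $w\in S_{xy}$ in $\Stab(\sigma,x)$ records exactly the bit ``$\sigma(w).x = x$'', and because $\rho(S_x)$ are commuting involutions, the joint outcome of all the bits read at $x$ from a uniformly random $x\in X$ has precisely the distribution \eqref{eq:intro_sampling_according_rho} in the special (``classical-permutation'') case --- and the synchronous value of $\rho$ on $\cG$ equals $\val(\cT, \Phi(\sigma))$ up to the small loss coming from the probability that the augmenting consistency words ($\sX^2$, commutators) fail. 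For Clause~(1): given a perfect $Z$-aligned, commuting-along-edges permutation strategy $\rho$, all the consistency words are automatically stabilized at every vertex, so $\Phi(\rho)$ passes every challenge with probability $1$, giving a perfect strategy for the associated test. This direction should be a fairly direct unwinding of definitions.

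The harder direction is Clause~(2): from an almost perfect finitely described strategy $\Phi(\sigma)$ (value $\ge 1-\eps$) for the associated test, produce an almost perfect \emph{quantum} strategy for $\cG$. Here I would first use the high value to argue that $\sigma$ is, on most of $X$, ``almost a permutation strategy'': the challenges that check $\sigma(\sX)^2 = \Id$ at the root and the commutation $[\sigma(\sX),\sigma(\sY)] = \Id$ for $\sX,\sY\in S_x$ are passed with probability $\ge 1-O(\eps)$, so $\rho(\sX) := $ (the permutation matrix of $\sigma(\sX)$) are $O(\sqrt\eps)$-close to commuting involutions in normalized Hilbert--Schmidt distance. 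Then the standard rounding machinery --- replacing each $\rho(\sX)$ by the signed permutation given by its eigenspace projections, or invoking an off-the-shelf ``approximate representations are close to representations'' / Gowers--Hatami-type stability statement for the relevant relations --- yields an honest synchronous quantum (in fact permutation) strategy whose behavior on each edge $xy$ is $O(\mathrm{poly}(\eps))$-close to that of $\sigma$, hence has value $\ge 1 - O(\mathrm{poly}(\eps))$ in $\cG$. The main obstacle, and the place where I expect the real work of Sections~\ref{sec:Synch}--\ref{sec:proof_of_value_preserving_transformation} to go, is controlling this rounding \emph{quantitatively and edge-locally}: one must ensure the perturbations made to the $\rho(\sX)$ for $\sX\in S_x$ are consistent across all edges incident to $x$ simultaneously (a single global modification of $\sigma$), and that the decision predicates $D_{xy}$, which can be arbitrary Boolean functions of the bits in $S_{xy}$, are robust to the $\ell_1$-perturbation in the induced distribution over $\gamma\colon S_{xy}\to\{0,1\}$ --- this is exactly why the notion of a \emph{tailored} game, with its built-in structural constraints on the $D_{xy}$, is needed rather than an arbitrary synchronous game.
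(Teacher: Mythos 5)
Your completeness direction matches the paper in spirit: a perfect $Z$-aligned, commuting-along-edges permutation strategy stabilizes all the consistency words at every point, so it passes every challenge and $\Phi(\sigma)$ is a perfect strategy for the test. And your outline for soundness --- perturb $\sigma$ bit-by-bit into a genuine permutation strategy using flexible stability for the relations $\sX^2$, $[\sX,\sX']$, etc., then control the value loss via an edit-distance/robustness argument --- does match the paper's Propositions \ref{prop:perturbing_to_satisfy_1to3} and \ref{prop:significance_of_associated_test} and Claim \ref{claim:test_weighted_edit_distance_implies_val_close}.

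However, there are two concrete gaps. First, your associated test has generating set ``essentially $S$'': you omit the auxiliary generator $\sJ$. The paper's test is over $\tilde S = S\cup\{\sJ\}$, and $\sJ$ is not cosmetic. It is what lets a permutation encode the matrix $-\Id$ (via the $(-1)$-eigenspace $W^-$ of $\sigma(\sJ)$), and it is what makes \labelcref{clause:Check_3_in_associated_test} --- the $Z$-aligned condition $\sX\in H$ or $\sJ\sX\in H$ for readable $\sX$ --- even expressible. You fold the $Z$-aligned condition into ``words like $\sX^2$ and $[\sX,\sY]$''; that is not what it is, and without $\sJ$ there is no way to state it as a membership condition on a subgroup.

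Second, and more seriously, your soundness argument silently assumes that for a permutation strategy $\sigma$ the correlation~\eqref{eq:intro_sampling_according_rho} is reproduced by sampling a uniform root $x$ and reading membership of each $\sX\in S_{xy}$ in $\Stab(\sigma,x)$. That is true only for \emph{classical} permutation strategies (Example \ref{example:classical_perm_strategies}); for a general permutation strategy --- e.g.\ the one realizing the Mermin--Peres magic square --- the outcomes of the unreadable variables are \emph{not} membership-in-stabilizer bits. As Claim \ref{claim:procedural_sampling_perm_strat} shows, the induced quantum strategy samples Fourier eigenvectors $\vec v\in B_x^-$, $\vec u\in B_y^-$ over the per-vertex orbits, and the answer bits are eigenvalue signs. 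Consequently, even once $\sigma$ perfectly passes Checks 1--3 and has test value $1-\eps$, it is a genuine theorem --- Proposition \ref{prop:associated_quantum_strat_has_value_at_least_as_original}, whose heart is the Fourier inequality in Claim \ref{claim:tg} --- that the induced quantum strategy $\rho$ has game value at least $1-C_1 2^{2\Lambda}\eps$. This step is where the real content is, and it is not captured by any ``rounding to commuting involutions'' or ``$\ell_1$-robustness of $D_{xy}$'' argument; it is a statement about how a linear constraint that holds on most roots in $O_x\cap O_y$ forces the corresponding Fourier coefficients to be concentrated where the constraint holds. Your plan has no substitute for it.
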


Informally, a tailored game is one where the decision function $D_{xy}$ takes the following form. First, $D_{xy}$ examines the value taken by $\gamma$ on a \textbf{subset} of the generators $S_x\cup S_y$ (we refer to these as \emph{readable} variables). Based on this, $D_{xy}$ determines a collection of linear equations on the entire string $\gamma$. Finally, $D_{xy}$ accepts if and only if all these equations are satisfied. This form for the game can be seen as a generalization of binary linear constraint system games (Example \ref{example:LCSs}), which corresponds to the case where the set of readable variables is empty. 

A \emph{permutation strategy}, as its name suggests, is a quantum strategy which is induced by permutations. Such a strategy is said \emph{$Z$-aligned} when the observables associated with the readable variables (i.e.\ the bits of the answer $\gamma$ that control the parity checks in the tailored game) are all diagonal matrices (with respect to some natural basis). The restriction to $Z$-aligned strategies is crucial for item (1) in Theorem~\ref{thm:values_of_associated_tests} above to hold. 

In a companion paper \cite{Tailored_MIPRE},  we prove that the reduction of the Halting Problem to approximating the quantum value of a game presented in \cite{MIPRE} can be modified such that the resulting non-local game is tailored and satisfies the required stronger assumptions (Theorem \ref{thm:tailored_MIP*=RE}): If the input Turing machine halts, then the game has a perfect $Z$-aligned permutation strategy that commutes along edges, and if the input Turing machine does not halt, then the value of the game is bounded from above by $\nicefrac{1}{2}$. Therefore, together with our mapping from non-local games to subgroup tests, we deduce that approximating the sofic value of a game is undecidable. In turn, by Corollary \ref{cor:Aldous_Lyons_implies_SOFVAL_decidable}, 
  the Aldous--Lyons Conjecture \ref{(Aldous-Lyons)} has a negative solution.

\begin{rem}\label{rem:explicit_refutation}
    In \cite[Section 12.3]{MIPRE}, the authors were able to  specify a non-local game whose quantum value is bounded above by $\nicefrac{1}{2}$ but has a perfect quantum commuting strategy. Using the results of \cite{kim2018synchronous}, which we elaborate on further in the next remark,  this implies the existence of a finitely presented $*$-algebra which can be endowed with a tracial state and cannot be embedded into an ultrapower of the hyperfinite $II_1$ factor. This is a somewhat constructive counter example, but the state itself is only proven to exist and does not have a constructive form (at this point). By the various formulations of Connes' embedding problem, this proves that there is a non-hyperlinear character of the free group, which is the analogue of a non co-sofic IRS of the free group, but it is provided in a non-constructive form.

By applying the ``same'' construction as in \cite[Section 12.3]{MIPRE} with the algorithm constructed in Theorem \ref{Main_Thm} replacing the NPA hierarchy algorithm,  one can pin down an explicit subgroup test with perfect ergodic strategy but all sofic strategies bounded above by some constant (which can be approximated by following the proofs). This is the most constructive we can get in this case, namely, the specific non co-sofic IRS which is the perfect strategy for this subgroup test is proven only to exist, and is not constructed. Thus, we decided not to specify this test. 
    
    It would be very interesting to find an explicit non co-sofic IRS, namely separate between $\IRS(\cF)$ and $\overline{\IRS_{\fd}(\cF)}$ in a constructive manner.
\end{rem}

\begin{rem} \label{rem:slofstra} We comment on a natural attempt to extend the construction of~\cite{MIPRE} so as to directly deduce the existence of a non-sofic group, a result which would imply a negative answer to the Aldous--Lyons Conjecture. 

The idea (which is folklore, and explicitly mentioned in~\cite{paddock2023satisfiability}) is as follows. It is well-known that to every synchronous game $\mathcal{G}$ one can associate an algebra $\mathscr{A}(\mathcal{G})$ such that there is a tight connection between the $C^*$-representation theory of this algebra and perfect  values for the game in various correlation sets~\cite{paulsen2016estimating,kim2018synchronous,helton2019algebras}. In the special case where $\mathcal{G}$ is a linear constraint system (LCS) game, there is a group  $\Gamma$ such that $\mathscr{A}(\cG)$  is (isomorphic to the completion of) the complex group algebra $\mathbb{C}[\Gamma]$~\cite{kim2018synchronous,goldberg2021synchronous}.\footnote{Actually, it is isomorphic to the completion of  $\nicefrac{\mathbb{C}[\Gamma]}{\langle \sJ+1\rangle}$, where $\sJ$ is a special generator in $\Gamma$ and $\langle\sJ+1\rangle$ is the two-sided ideal generated by $\sJ+1$ in $\mathbb{C}[\Gamma]$.} 

Now, if the reduction given in \cite{MIPRE} returned only LCS games, then one would obtain the following result: there exists an LCS game with a perfect quantum commuting strategy, but no (asymptotically) perfect quantum strategy. By the connection mentioned above this implies that the group $\Gamma$ associated with $\mathscr{A}(\cG)$  is non-hyperlinear. As every sofic group is hyperlinear, one would thus obtain an example of a non-sofic group.

Since the reduction in \cite{MIPRE} \textbf{does not} return LCS games, a workaround could be to find an embedding from the algebra $\mathscr{A}(\mathcal{G})$, where $\mathcal{G}$ is returned by~\cite{MIPRE}, into $\mathscr{A}(\mathcal{G}')$ where $\mathcal{G}'$ is an LCS game. Obstacles to this ``black-box'' approach are discussed in~\cite{paddock2023satisfiability}. 

One can view our approach as finding a ``sweet spot'' that is in some sense half-way between synchronous games, which were used in~\cite{MIPRE}, and LCS games. This is because tailored games are a natural generalization of LCS games. In the companion paper we show that~\cite{MIPRE} can be adapted to return tailored games, and in the present paper we show that this suffices to resolve the Aldous--Lyons Conjecture. The existence of non-sofic groups (and non-hyperlinear groups) remains an  elusive open problem. 
\end{rem}

\subsection{Structure of the paper}
 In Section \ref{sec_gp_theory_prelims}, we prove  structural results about invariant random subgroups, some of which are standard and well known to experts. In Section \ref{sec:Proving_the_main_thm}, we  prove our first main theorem, 
 Theorem \ref{Main_Thm},  and draw corollaries from it. 
 In Section \ref{sec:Subgroup_Tests}, we provide  several examples of subgroup tests,  inspired  by  both group theory and complexity theory.  In Section \ref{sec:Robustness}, we discuss a rigidity property of tests that generalizes group stability and which is analogous to robustness of non-local games. 
In Section \ref{sec:Synch}, we define tailored non-local games and analyze their properties. 
In Section \ref{sec:test_associated_with_game},  we describe the (almost) value preserving correspondence between tailored non-local games and synchronous subgroup tests, informally outlined in Theorem \ref{thm:main2}. This allows us in Corollary \ref{cor:AL_is_false} to provide a negative solution to the Aldous--Lyons Conjecture \ref{(Aldous-Lyons)}. The proof of the aforementioned value preserving correspondence, Theorem \ref{thm:values_of_associated_tests}, is proved in Section \ref{sec:proof_of_value_preserving_transformation}. Theorem \ref{thm:tailored_MIP*=RE}, which is the other main ingredient in our refutation, is proved in  a companion paper \cite{Tailored_MIPRE}. There is no open problems section, but  further research directions can be found in Remarks  \ref{rem:explicit_refutation}, \ref{rem:slofstra} and \ref{rem:better_params}.

For a reader who seeks the main ideas of the paper without diving too deep into the proofs, we suggest reading, in addition to the introduction, only Sections \ref{sec:Proving_the_main_thm} and \ref{sec:test_associated_with_game}.

\subsection*{Acknowledgements}
We thank Alon Dogon for reading an early draft of this paper and providing many useful improvements to the presentation. We would also like to thank Mikael de la Salle for various discussions along the way. Finally, we want to thank Peter Sarnak for organizing a seminar on ``strong convergence'' in Princeton and asking the second author to give a talk, as it organized his thoughts  and led to the current presentation of ideas.

Lewis Bowen is supported by NSF grant DMS-2154680.
Michael Chapman acknowledges with gratitude the Simons Society of Fellows and is supported by a grant from the Simons Foundation (N. 965535).
Alex Lubotzky is supported by the European Research Council (ERC)
under the European Union's Horizon 2020 (N. 882751), and by a research grant from the Center for New Scientists at the Weizmann Institute of Science.
Thomas Vidick is supported by a research grant from the Center for New Scientists at the
Weizmann Institute of Science, a Simons Investigator award, AFOSR Grant No. FA9550-22-1-0391, and
ERC Consolidator Grant VerNisQDevS (101086733).
 
\section{Group theoretic preliminaries}\label{sec_gp_theory_prelims}

\subsection{Topology (and other properties) of invariant random subgroups}\footnote{The content of this section is standard. For example, it appears in Section 3 of \cite{abert2014kesten}.}
We first go through some properties of the space $\{0,1\}^\cF$ (induced with the product topology). For every  $K,L\subseteq\mathcal{F}$, let 
\[
\mathcal{C}(K,L)=\{A\subseteq\mathcal{F} \mid K\subseteq A,\ L\cap A=\emptyset\}\subseteq \{0,1\}^\cF.
\] 
Then,
\begin{enumerate}
    \item When $K$ and $L$ are finite, the set $\cC(K,L)$ is open and closed. Moreover, the collection $$\{\cC(K,L)\mid K,L\subseteq \cF,\ |K|,|L|<\infty\}$$ is a basis for the topology of $\{0,1\}^\cF$. In addition, $\{0,1\}^\cF$ is compact.
    \item For every $w\in \mathcal{F}$ and subsets $K,L\subseteq \mathcal{F}$ we have
    \[
    w.\mathcal{C}(K,L)=\mathcal{C}(wKw^{-1},wLw^{-1}).
    \]
\end{enumerate}
The following is a standard fact. A proof is added because elements of it are later used in Lemma \ref{Lem:upper_approx}.
\begin{claim} \label{close_compact_Sub(F)}
    The set of subgroups $\Sub(\cF)$ is closed in the collection of all subsets $\{0,1\}^\cF$. 
\end{claim}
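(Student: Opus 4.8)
The plan is to show that the complement of $\Sub(\cF)$ in $\{0,1\}^\cF$ is open, by exhibiting, around any subset $A\subseteq\cF$ that fails to be a subgroup, a basic open neighborhood $\cC(K,L)$ with $K,L$ finite that is entirely disjoint from $\Sub(\cF)$. The point is that failure to be a subgroup is always \emph{witnessed by finitely many elements} of $\cF$: either $A$ misses the identity, or there are $u,v\in A$ with $uv^{-1}\notin A$ (equivalently, using the other standard axiomatization, $u,v\in A$ with $uv\notin A$, or $u\in A$ with $u^{-1}\notin A$). In each case the witnessing data involves only two or three group elements, and the condition ``contains these, excludes that'' is exactly a basic clopen set.

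Concretely, suppose $A\in\{0,1\}^\cF\setminus\Sub(\cF)$. I would split into cases. If $1_\cF\notin A$, take $K=\emptyset$, $L=\{1_\cF\}$; then $A\in\cC(\emptyset,\{1_\cF\})$ and every member of this set excludes the identity, hence is not a subgroup. Otherwise $1_\cF\in A$, and since $A$ is not a subgroup there must exist $u,v\in A$ with $uv^{-1}\notin A$ (if no such pair existed, $A$ would be closed under the operation $(u,v)\mapsto uv^{-1}$ and contain $1_\cF$, hence be a subgroup). Then set $K=\{u,v\}$ and $L=\{uv^{-1}\}$; note $K\cap L=\emptyset$ because $u,v\in A$ while $uv^{-1}\notin A$. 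We have $A\in\cC(K,L)$, and any $B\in\cC(K,L)$ satisfies $u,v\in B$ but $uv^{-1}\notin B$, so $B$ is not closed under $(\cdot)(\cdot)^{-1}$ and therefore $B\notin\Sub(\cF)$. Thus $\cC(K,L)$ is a neighborhood of $A$ disjoint from $\Sub(\cF)$. Since by item (1) above each such $\cC(K,L)$ with finite $K,L$ is open, the complement of $\Sub(\cF)$ is open and $\Sub(\cF)$ is closed.

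There is essentially no hard step here; the only thing requiring a moment's care is making sure the ``witness'' of non-subgroup-ness is genuinely finitary and that the resulting $K$ and $L$ are disjoint (so that $\cC(K,L)$ is really one of the declared basic sets and is nonempty, containing $A$). One should also make sure to pick a convenient single-axiom characterization of subgroups — I would use ``$1_\cF\in A$ and $\forall u,v\in A:\ uv^{-1}\in A$'' — so that the case analysis has only two branches. If one instead uses the three-axiom version (identity, closure under product, closure under inverse), the argument is the same but with three cases; I would mention this only in passing. Finally, I anticipate the authors keep the proof because, as the excerpt notes, the explicit description of the witnessing neighborhoods $\cC(K,L)$ is reused in Lemma \ref{Lem:upper_approx} (the upper-approximation of $\val_{\erg}$), so I would state the case analysis cleanly rather than just asserting closedness.
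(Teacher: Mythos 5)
Your proof is correct and takes essentially the same approach as the paper's: both express the complement of $\Sub(\cF)$ as a union of basic clopen sets $\cC(K,L)$, each witnessing a finitary failure of a subgroup axiom, and both note that this open cover is reused in Lemma~\ref{Lem:upper_approx}. The paper uses the three-axiom characterization (identity, closure under inverses, closure under products) and writes out the cover explicitly as $\bigl(\bigcup_{w,w'}C_{w,w'}\bigr)\cup\bigl(\bigcup_{w}C_w\bigr)\cup C_\Id$, while you use the two-axiom characterization $1_\cF\in A$ and $u,v\in A\Rightarrow uv^{-1}\in A$ and phrase things as a local case analysis around each non-subgroup; the difference is cosmetic.
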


\begin{proof}
    A subset $A\subseteq \cF$ is a subgroup if it satisfies the following three conditions: 
  \[
  \begin{split}
      (1)\  \Id_\cF\in A\ ,\quad
       (2)\  w\in A\implies w^{-1}\in A\ ,\quad
       (3)\ w,w'\in A\implies w\cdot w'\in A.
  \end{split}
\]
For every $w,w'\in \cF$, define the following open sets:  The subsets not containing $\Id_\cF$, $C_{\Id}=\cC(\emptyset,\{ \Id_\cF\})$. The subsets containing $w\in\cF$ but not $w^{-1}$, $C_{w}=\cC(\{w\},\{w^{-1}\})$. The subsets containing $w,w'\in \cF$ but not $w\cdot w'$,
    $C_{w,w'}=\cC(\{w,w'\},\{w\cdot w'\})$.
     Then,
     \[
        \Big(\bigcup_{w,w'\in\cF}  C_{w,w'}\Big)\cup \Big(\bigcup_{w\in\cF} C_w \Big) \cup C_\Id=\{0,1\}^\cF \setminus \Sub(\cF),
     \] 
     which proves that $\Sub(\cF)$ is a closed set.
\end{proof}

Recall the definition of $\IRS(\cF)$, the invariant random subgroups of the free group $\cF$ (Definition \ref{defn:IRS}).

\begin{fact} \label{invariance}
Let $\mu,\nu\in {\rm Prob}(\{0,1\}^\mathcal{F})$. Then,
\begin{enumerate}
    
    \item  $\mu=\nu$ if and only if $\mu(\mathcal{C}(K,L))=\nu(\mathcal{C}(K,L))$ for every finite $K,L\subseteq \mathcal{F}$. 
    \item  $\mu$ is conjugation invariant if and only if for every $s\in S$ we have $s.\mu=\mu$. Hence, $\mu\in \IRS(\cF)$ if and only if for all $s\in S$ and finite $K,L\subseteq \mathcal{F}$ we have $\mu(\mathcal{C}(K,L))=\mu(s.\mathcal{C}(K,L))$.
\end{enumerate}
\end{fact}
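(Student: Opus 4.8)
The statement is Fact \ref{invariance}, a pair of elementary but foundational observations about probability measures on $\{0,1\}^{\cF}$. I would prove the two clauses as follows.

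For clause (1), the ``only if'' direction is trivial. For the ``if'' direction, I would invoke the fact recorded just above: the sets $\cC(K,L)$ with $K,L$ finite form a basis for the topology of $\{0,1\}^{\cF}$, and this basis is closed under finite intersections (indeed $\cC(K_1,L_1)\cap \cC(K_2,L_2)=\cC(K_1\cup K_2, L_1\cup L_2)$, which is again of this form, provided $K_i\cap L_j=\emptyset$ for all relevant pairs — otherwise the intersection is empty, which is also $\cC$ of something like $\cC(\{w\},\{w\})$). Hence $\{\cC(K,L)\}$ is a $\pi$-system generating the Borel $\sigma$-algebra, and two Borel probability measures agreeing on a generating $\pi$-system are equal, by the standard uniqueness-of-measures lemma (Dynkin's $\pi$--$\lambda$ theorem). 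This is the cleanest route; alternatively one can argue more by hand that agreement on the clopen basis extends to agreement on all clopen sets (finite disjoint unions of basis elements), then to all open sets by inner/outer regularity on the compact space $\{0,1\}^{\cF}$, then to all Borel sets.

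For clause (2), conjugation invariance of $\mu$ means $w.\mu = \mu$ for \emph{all} $w\in\cF$. Since the set $\{w\in\cF : w.\mu=\mu\}$ is a subgroup of $\cF$ (as $\mu\mapsto w.\mu$ is a left action), and $S$ generates $\cF$, it suffices that $s.\mu=\mu$ for every $s\in S$; the reverse implication is immediate. This gives the first assertion. For the second, combine this with clause (1): $\mu\in\IRS(\cF)$ iff $s.\mu=\mu$ for all $s\in S$ iff, for all $s\in S$, the measures $s.\mu$ and $\mu$ agree on every $\cC(K,L)$ with $K,L$ finite, i.e. $\mu(s^{-1}.\cC(K,L)) = \mu(\cC(K,L))$ for all such $K,L$. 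Finally, as $K,L$ range over all finite subsets, so do $s^{-1}Ks$ and $s^{-1}Ls$ (using property (2) of the $\cC(K,L)$ sets recorded above, $s^{-1}.\cC(K,L)=\cC(s^{-1}Ks,s^{-1}Ls)$), so this is equivalent to the stated condition $\mu(\cC(K,L)) = \mu(s.\cC(K,L))$ for all $s\in S$ and all finite $K,L$.

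I do not anticipate a genuine obstacle here; the only point requiring minor care is the bookkeeping in clause (1) that the collection of basic clopen sets is a $\pi$-system (handling the case where a putative intersection forces $K\cap L\neq\emptyset$ and hence $\cC(K,L)=\emptyset$), and noting explicitly that one needs \emph{Borel} probability measures together with the generating property of the basis to apply the uniqueness lemma. Everything else is formal manipulation of the group action and the already-established topological facts about $\{0,1\}^{\cF}$.
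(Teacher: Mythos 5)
Your proof is correct. The paper states this as a \emph{Fact} without proof, treating both clauses as standard; your argument supplies exactly the expected justification. For clause (1) the Dynkin $\pi$--$\lambda$ (uniqueness of measures) argument is the natural one: $\{\cC(K,L) : K,L \text{ finite}\}$ is closed under pairwise intersections via $\cC(K_1,L_1)\cap\cC(K_2,L_2)=\cC(K_1\cup K_2,L_1\cup L_2)$, contains the whole space $\cC(\emptyset,\emptyset)$, and generates the Borel $\sigma$-algebra since it is a basis for the (second-countable, as $\cF$ is countable) topology on $\{0,1\}^{\cF}$. Your parenthetical about the empty intersection is slightly over-cautious — $\cC(K,L)=\emptyset$ whenever $K\cap L\neq\emptyset$ and such pairs are already allowed, so no special normal form is needed — but that does not affect correctness. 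For clause (2), the observation that the stabilizer $\{w\in\cF : w.\mu=\mu\}$ is a subgroup (because $\mu\mapsto w.\mu$ is a left action, which one verifies from $w.\cC(K,L)=\cC(wKw^{-1},wLw^{-1})$) and therefore contains $\langle S\rangle=\cF$ once it contains $S$, is precisely the right mechanism; combining with clause (1) and re-indexing $K\mapsto sKs^{-1}$, $L\mapsto sLs^{-1}$ then gives the stated reformulation. No gaps.
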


A sequence of measures $\mu_n\in  {\rm Prob}(\{0,1\}^\mathcal{F})$ is said to weak* converge to $\mu$ if for all continuous $f\colon  \{0,1\}^\mathcal{F} \to \mathbb{R}$, 
\[
\Ex_{H\sim \mu_n}[f(H)] \xrightarrow{n\to\infty}\Ex_{H\sim \mu}[f(H)].
\]
We denote weak convergence by $\mu_n \xrightarrow{w^*} \mu$. 
\begin{rem}\label{R:weak}
    In our context, there is a more ``down to earth'' way of viewing weak* convergence. For every finite subset $A\subseteq \cF$, and every  probability measure $\mu\in \Prob(\{0,1\}^\cF)$, we can define a probability measure $\mu\cap A\in \Prob(\{0,1\}^A)\subseteq \RR^{2^A}$ by 
\[
\forall B\subseteq A\ \colon \ \ \mu\cap A(B)=\mu(\{H\subseteq \cF\mid H\cap A=B\}).
\]
Then,
\[
\mu_n\xrightarrow{w^*}\mu \iff \forall A\subseteq\cF, |A|<\infty\ \colon \ \ \mu_n\cap A\xrightarrow{n\to \infty}\mu\cap A,
\]
where the convergence on the right is the standard one in the Euclidean space $\RR^{2^A}$.
\end{rem}
\begin{claim} \label{Prob_is_cpct}
The space $\Prob(\{0,1\}^\mathcal{F})$ is compact in the weak* topology.
\end{claim}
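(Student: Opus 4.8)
The claim is that $\Prob(\{0,1\}^\cF)$ is compact in the weak* topology. The natural route is to exploit the fact, already established in this section, that $\{0,1\}^\cF$ is a compact (metrizable) space, and then invoke the standard fact from functional analysis that the space of Borel probability measures on a compact metric space is weak* compact. Concretely, $\{0,1\}^\cF$ is a countable product of the finite discrete space $\{0,1\}$, hence compact by Tychonoff and metrizable (the basis $\{\cC(K,L)\}$ with $K,L$ finite is countable since $\cF$ is countable, being finitely generated). So I would first record that $C(\{0,1\}^\cF)$, the space of continuous real-valued functions, is a separable Banach space under the sup norm.

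The plan is then as follows. View $\Prob(\{0,1\}^\cF)$ as a subset of the dual $C(\{0,1\}^\cF)^*$ via $\mu \mapsto (f \mapsto \Ex_{H\sim\mu}[f(H)])$; this is exactly the identification under which weak* convergence of measures (as defined just above the claim) coincides with the weak* topology on the dual. By the Banach--Alaoglu theorem, the closed unit ball of $C(\{0,1\}^\cF)^*$ is weak* compact, and since $C(\{0,1\}^\cF)$ is separable, this ball is weak* metrizable, so it suffices to show $\Prob(\{0,1\}^\cF)$ is a weak* closed subset of that ball. A probability measure is exactly a linear functional $\Lambda$ with $\Lambda(\mathbf{1}) = 1$ and $\Lambda(f) \ge 0$ whenever $f \ge 0$ (positivity plus normalization; the Riesz representation theorem guarantees such a functional is represented by a genuine Borel probability measure). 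Both conditions $\Lambda(\mathbf{1})=1$ and $\{\Lambda : \Lambda(f)\ge 0\}$ for each fixed $f\ge 0$ are weak* closed, and an intersection of weak* closed sets is weak* closed; hence $\Prob(\{0,1\}^\cF)$ is a weak* closed subset of a weak* compact set, therefore weak* compact.

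Alternatively — and perhaps more in the self-contained spirit of the paper, using Remark \ref{R:weak} — one can give a direct sequential argument since the space is metrizable: enumerate the finite subsets $A_1, A_2, \ldots$ of $\cF$; given a sequence $\mu_n$, the measures $\mu_n \cap A_1$ live in the compact simplex of probability vectors in $\RR^{2^{A_1}}$, so a subsequence converges; diagonalize over $k$ to extract a single subsequence $\mu_{n_j}$ such that $\mu_{n_j}\cap A_k$ converges for every $k$. The limiting finite-dimensional distributions are consistent (since the $\mu_{n_j}\cap A_k$ are consistent under restriction and consistency is preserved in the limit), so by Kolmogorov's extension theorem (or directly by Carath\'eodory, as the cylinder sets $\cC(K,L)$ generate the Borel $\sigma$-algebra and the premeasure defined on them is countably additive by compactness) they assemble into a genuine Borel probability measure $\mu$ on $\{0,1\}^\cF$, and $\mu_{n_j}\xrightarrow{w^*}\mu$ by Remark \ref{R:weak}.

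I expect the only real subtlety to be the bookkeeping in showing that weak* closedness (or, in the second approach, consistency of the limiting finite-dimensional laws and the fact that they extend to a measure) is handled correctly; the topological input ($\{0,1\}^\cF$ compact metrizable, $C(\{0,1\}^\cF)$ separable) is already available from item (1) in the list preceding Claim \ref{close_compact_Sub(F)}, and Banach--Alaoglu plus Riesz representation do the rest. There is no genuine obstacle here — this is a standard compactness statement — so the main task is simply to present it cleanly.
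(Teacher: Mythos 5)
Your main argument is the paper's: the paper's entire proof is a one-line citation of the Banach--Alaoglu theorem (pointing to Rudin), and your first paragraph simply unpacks that citation — identify $\Prob(\{0,1\}^\cF)$ with a weak* closed subset of the closed unit ball of $C(\{0,1\}^\cF)^*$ and invoke compactness of that ball. The second, diagonalization-based argument you sketch is a correct and more self-contained alternative, but it is not the route the paper takes; since the standard route works and is what the authors intended, there is nothing to add.
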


\begin{proof}
Since $\{0,1\}^\mathcal{F}$ is compact, the fact can be deduced from the Banach--Alaoglu Theorem (cf.\ Theorem 3.15 in \cite{Rudin_Functional_Analysisi_BOOK}).
\end{proof}

\subsection{Pseudo subgroups}
For every $B\subseteq C\subseteq \cF$, the restriction map $R_{B\subseteq C}\colon \{0,1\}^C\to \{0,1\}^B$ is \[
\forall A\subseteq C\  \colon\ \ R_{B\subseteq C}(A)=A\cap B.\] When $C=\cF$ we remove it from the notation and write $R_B$ instead of $R_{B\subseteq \cF}$.
For sets $K,L\subseteq B\subseteq \cF$ we can define
\[
\cC_B(K,L)=\{A\subseteq B\mid K\subseteq A,\ A\cap L=\emptyset\}.
\]
Note that $\cC_\cF(K,L)=\cC(K,L)$.
\begin{claim} \label{Restriction_of_C(K,L)}
    Let $K,L\subseteq B\subseteq C\subseteq \cF$. Then $R_{B\subseteq C}(\cC_C(K,L))=\cC_B(K,L)$ and $R_{B\subseteq C}^{-1}(\cC_B(K,L))=\cC_C(K,L).$
\end{claim}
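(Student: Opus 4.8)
The statement to prove is Claim \ref{Restriction_of_C(K,L)}: for $K,L\subseteq B\subseteq C\subseteq \cF$, we have $R_{B\subseteq C}(\cC_C(K,L))=\cC_B(K,L)$ and $R_{B\subseteq C}^{-1}(\cC_B(K,L))=\cC_C(K,L)$. This is a routine set-theoretic unwinding of the definitions, so the plan is simply to verify the two set equalities by double inclusion, being careful about where the hypotheses $K,L\subseteq B$ are used.

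For the first equality, I would argue as follows. For the inclusion $\subseteq$: take $A\in \cC_C(K,L)$, so $A\subseteq C$, $K\subseteq A$, and $A\cap L=\emptyset$. Then $R_{B\subseteq C}(A)=A\cap B$ is a subset of $B$; since $K\subseteq B$ and $K\subseteq A$ we get $K\subseteq A\cap B$; and $(A\cap B)\cap L\subseteq A\cap L=\emptyset$, so $A\cap B\in\cC_B(K,L)$. For the inclusion $\supseteq$: given $A'\in\cC_B(K,L)$, i.e.\ $A'\subseteq B$, $K\subseteq A'$, $A'\cap L=\emptyset$, we must exhibit a preimage; the natural choice is $A'$ itself viewed as a subset of $C$ (legitimate since $B\subseteq C$). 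Then $A'\in\cC_C(K,L)$ because $A'\subseteq B\subseteq C$, $K\subseteq A'$, $A'\cap L=\emptyset$, and $R_{B\subseteq C}(A')=A'\cap B=A'$ since $A'\subseteq B$. This establishes equality.

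For the second equality, again by double inclusion. If $A\in\cC_C(K,L)$ then by the computation above $R_{B\subseteq C}(A)\in\cC_B(K,L)$, so $A\in R_{B\subseteq C}^{-1}(\cC_B(K,L))$; this gives $\subseteq$. Conversely, suppose $A\subseteq C$ satisfies $R_{B\subseteq C}(A)=A\cap B\in\cC_B(K,L)$, i.e.\ $K\subseteq A\cap B$ and $(A\cap B)\cap L=\emptyset$. Then $K\subseteq A$ is immediate, and since $L\subseteq B$ we have $A\cap L=A\cap B\cap L=\emptyset$; together with $A\subseteq C$ this gives $A\in\cC_C(K,L)$, proving $\supseteq$.

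There is no real obstacle here; the only point requiring the slightest attention is that the second equality genuinely needs $L\subseteq B$ (otherwise $A\cap L$ could be nonempty outside $B$ while $A\cap B\cap L$ is empty), whereas only $K\subseteq B$ (in fact $K\subseteq B\subseteq C$) is needed in the other places. I would state the proof concisely, perhaps in three or four lines, emphasizing the role of $B\subseteq C$ (so that a subset of $B$ is a legitimate subset of $C$) and of $L\subseteq B$ in the preimage computation.
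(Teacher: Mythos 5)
Your proof is correct and is essentially the paper's argument, just unwound into explicit double inclusions: the paper compresses both equalities into the single observation that, since $K,L\subseteq B$, whether $A'\subseteq C$ lies in $\cC_C(K,L)$ depends only on $A'\cap B$, and then invokes surjectivity of $R_{B\subseteq C}$ where you instead exhibit $A'$ itself as a preimage. (Minor cosmetic point: in your second paragraph the labels $\subseteq$ and $\supseteq$ are swapped relative to the equality $R_{B\subseteq C}^{-1}(\cC_B(K,L))=\cC_C(K,L)$ as written, but the two inclusions you actually prove are the right ones.)
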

\begin{proof}
    Let $A\subseteq B$ and $A'\subseteq C$ such that $A=A'\cap B$. As $K,L\subseteq B$, the intersection of $A'$ with $K$ and $L$ depends only on $A'\cap B=A$. Hence  
    \begin{equation}\label{eq_CB_CC}
         A\in \cC_B(K,L) \iff A'\in \cC_C(K,L).
    \end{equation}
  Adding this observation to the fact $R_{B\subseteq C}$ is onto proves the desired claim. 
\end{proof}

Now, we can pushforward measures along the inverse system of restrictions. Namely, given  $B\subseteq C\subseteq \cF$ and a probability measure $\pi$ on $\{0,1\}^C$ we can define the probability measure $R_{B\subseteq C*}\pi$ on $\{0,1\}^B$ by
\[
\forall Y \subseteq_{\rm Borel} \{0,1\}^B\ \colon \ \ R_{B\subseteq C*}\pi(Y)=\pi(R_{B\subseteq C}^{-1}(Y)).
\]

\begin{cor} \label{cor:contin_of_R_B}
   Claim \ref{Restriction_of_C(K,L)} implies that $R_{B\subseteq C}$ is continuous. Therefore,  $R_{B\subseteq C*}\colon \Prob(\{0,1\}^C)\to \Prob(\{0,1\}^B)$ is continuous.
\end{cor}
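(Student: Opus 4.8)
\textbf{Proof plan for Corollary \ref{cor:contin_of_R_B}.}

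The plan is to first establish continuity of the restriction map $R_{B\subseteq C}\colon \{0,1\}^C\to \{0,1\}^B$ at the level of the compact product spaces, and then deduce continuity of the induced pushforward map $R_{B\subseteq C*}$ on the spaces of probability measures equipped with the weak* topology. For the first part, I would use the basis description of the product topology: a sub-basis (indeed basis) for $\{0,1\}^B$ is given by the cylinder sets $\cC_B(K,L)$ with $K,L\subseteq B$ finite (this is clause (1) of the listed properties of $\{0,1\}^\cF$, applied with $\cF$ replaced by $B$). By Claim \ref{Restriction_of_C(K,L)}, the preimage $R_{B\subseteq C}^{-1}(\cC_B(K,L)) = \cC_C(K,L)$, which is open (indeed clopen) in $\{0,1\}^C$. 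Since preimages of basic open sets are open, $R_{B\subseteq C}$ is continuous. (One should be slightly careful that Claim \ref{Restriction_of_C(K,L)} is stated for $K,L\subseteq B\subseteq C$, which is exactly what is needed; no finiteness of $K,L$ is even required here, though finiteness is what makes these sets form a basis.)

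For the second part, I would invoke the standard fact that a continuous map $f\colon Y\to Z$ between compact Hausdorff (here, compact metrizable) spaces induces a weak*-continuous pushforward $f_*\colon \Prob(Y)\to \Prob(Z)$. Concretely: if $\mu_n\xrightarrow{w^*}\mu$ in $\Prob(\{0,1\}^C)$, then for every continuous $g\colon \{0,1\}^B\to \RR$ the composition $g\circ R_{B\subseteq C}$ is continuous on $\{0,1\}^C$, so
\[
\Ex_{H\sim R_{B\subseteq C*}\mu_n}[g(H)] = \Ex_{H\sim \mu_n}[g(R_{B\subseteq C}(H))] \xrightarrow{n\to\infty} \Ex_{H\sim \mu}[g(R_{B\subseteq C}(H))] = \Ex_{H\sim R_{B\subseteq C*}\mu}[g(H)],
\]
where the middle convergence is the definition of $\mu_n\xrightarrow{w^*}\mu$ applied to the continuous test function $g\circ R_{B\subseteq C}$, and the outer equalities are the change-of-variables formula for pushforward measures. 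Since $g$ was an arbitrary continuous function on $\{0,1\}^B$, this says exactly $R_{B\subseteq C*}\mu_n\xrightarrow{w^*}R_{B\subseteq C*}\mu$, proving weak*-continuity of $R_{B\subseteq C*}$.

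I do not expect any genuine obstacle here; the statement is essentially a formal consequence of Claim \ref{Restriction_of_C(K,L)} together with general topology. The only points requiring a modicum of care are: (i) making sure the cylinder sets really do form a basis for $\{0,1\}^B$ (so that checking openness of preimages of basic sets suffices), and (ii) invoking the change-of-variables identity $\int g\, d(f_*\mu) = \int (g\circ f)\, d\mu$ for the pushforward, which is immediate from the definition $f_*\mu(Y) = \mu(f^{-1}(Y))$ by the usual approximation of $g$ by simple functions (or, since everything is finite-dimensional after restricting to a finite window, even more elementarily via Remark \ref{R:weak}). If one prefers to avoid test functions altogether, an alternative is to argue directly through Remark \ref{R:weak}: for a finite $A\subseteq B$ one has $(R_{B\subseteq C*}\mu)\cap A = \mu\cap A$ (as subsets $A\subseteq B\subseteq C$, intersecting with $A$ is unaffected by the restriction to $B$), so convergence of the finite-window marginals of $\mu_n$ transfers verbatim. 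Either route closes the argument.
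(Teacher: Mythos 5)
Your proposal is correct and takes essentially the same approach as the paper's own (commented-out) proof: pull back a continuous test function $g$ on $\{0,1\}^B$ along $R_{B\subseteq C}$, apply weak$^*$ convergence to the continuous function $g\circ R_{B\subseteq C}$, and conclude via the change-of-variables identity for pushforward measures. You additionally spell out the elementary step --- preimages of the basic clopen cylinders $\cC_B(K,L)$ are $\cC_C(K,L)$ by Claim~\ref{Restriction_of_C(K,L)}, hence $R_{B\subseteq C}$ is continuous --- which the paper asserts without detail, and your alternative route via Remark~\ref{R:weak} (finite-window marginals are unaffected by restriction) is a valid, essentially equivalent phrasing.
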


\begin{defn}
   Let $B\subseteq \cF$. A subset  $A\subseteq B$ is said to be a \emph{pseudo subgroup} of $B$ if there is a subgroup $H\leq \cF$ such that $R_B(H)=H\cap B=A$. Denote by $P\Sub(B)$ the collection of pseudo subgroups of $B$.\footnote{Note that this collection is a closed subset of the subsets of $B$.}
\end{defn}
\begin{claim} \label{A_in_Im(R_B)}
    Let $A\subseteq B\subseteq \cF$. Then, $A$ is a pseudo subgroup of $B$ if and only if $\langle A\rangle \cap B = A$, where $\langle A\rangle$ is the subgroup of $\cF$ generated by $A$. 

\end{claim}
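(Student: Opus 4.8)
The plan is to prove both implications directly, exploiting only the fact that $\langle A\rangle$ is the smallest subgroup of $\cF$ containing $A$, together with the definition of $R_B(H)=H\cap B$.

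For the forward direction I would assume that $A$ is a pseudo subgroup of $B$, so that there is a subgroup $H\le \cF$ with $R_B(H)=H\cap B=A$. Since $A\subseteq H$ and $H$ is a subgroup, minimality of the generated subgroup yields $\langle A\rangle\subseteq H$, and hence $\langle A\rangle\cap B\subseteq H\cap B=A$. The reverse inclusion $A\subseteq \langle A\rangle\cap B$ is immediate, since $A\subseteq \langle A\rangle$ and $A\subseteq B$ by hypothesis. Combining the two inclusions gives $\langle A\rangle\cap B=A$.

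For the converse I would assume $\langle A\rangle\cap B=A$ and simply exhibit a witnessing subgroup: take $H=\langle A\rangle\le \cF$. Then $R_B(H)=\langle A\rangle\cap B=A$ by hypothesis, so $A\in P\Sub(B)$.

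There is no genuine obstacle here; the statement is a direct unwinding of the definitions. The one point worth recording is that in the converse the witnessing subgroup can always be taken to be $\langle A\rangle$ itself, which is moreover the minimal subgroup of $\cF$ realizing $A$ as $H\cap B$; this canonical choice is likely to be convenient whenever one later needs a distinguished representative among the subgroups inducing a given pseudo subgroup.
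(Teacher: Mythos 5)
Your proof is correct and matches the paper's argument: the forward direction uses minimality of $\langle A\rangle$ to get $\langle A\rangle\cap B\subseteq H\cap B=A$ together with the trivial reverse inclusion, and the converse takes $H=\langle A\rangle$ as a witness. You spell out the converse more explicitly than the paper does, but the content is the same.
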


\begin{proof}
    If there is a subgroup $H$ for which  $A=R_B(H)=H\cap B\subseteq H$, then in particular $\langle A\rangle\subseteq H$. Because $A$ is always contained in $\langle A\rangle \cap B$, we can conclude.
\end{proof}

\begin{cor}\label{cor:restriction_of_pseudo_subgroups}
    Let $A\subseteq B\subseteq C\subseteq \cF$. If $A$ is \textbf{not} a pseudo subgroup of $B$, and $A'\subseteq C$ restricts to $A$ (i.e., $A'\cap B=A$), then $A'$ is \textbf{not} a pseudo subgroup of $C$.
\end{cor}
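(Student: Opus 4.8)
The plan is to prove the contrapositive-style statement directly from Claim \ref{A_in_Im(R_B)}, which characterizes pseudo subgroups via the condition $\langle A\rangle \cap B = A$. The key observation is that the generated subgroup $\langle A'\rangle$ contains $\langle A\rangle$ whenever $A \subseteq A'$, and so one should be able to descend a failure of the pseudo-subgroup condition from $A$ up to $A'$.

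Here is how I would carry it out. Suppose, for contradiction, that $A'$ \emph{is} a pseudo subgroup of $C$. By Claim \ref{A_in_Im(R_B)} applied to $A' \subseteq C$, this means $\langle A'\rangle \cap C = A'$. Now I want to conclude that $A$ is a pseudo subgroup of $B$, i.e.\ that $\langle A\rangle \cap B = A$. First, note that $A \subseteq A'$ implies $\langle A \rangle \subseteq \langle A' \rangle$. Intersecting with $B$ (and using $B \subseteq C$), we get
\[
\langle A\rangle \cap B \ \subseteq\ \langle A'\rangle \cap B \ =\ \langle A'\rangle \cap C \cap B \ =\ A' \cap B \ =\ A,
\]
where the second equality uses $B \subseteq C$ and the third uses the assumed identity $\langle A'\rangle \cap C = A'$, and the last uses the hypothesis $A' \cap B = A$. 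The reverse inclusion $A \subseteq \langle A\rangle \cap B$ is automatic since $A \subseteq B$ and $A \subseteq \langle A\rangle$. Hence $\langle A\rangle \cap B = A$, so by Claim \ref{A_in_Im(R_B)}, $A$ \emph{is} a pseudo subgroup of $B$, contradicting the hypothesis. Therefore $A'$ is not a pseudo subgroup of $C$.

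Alternatively, one can phrase the same argument without contradiction, using the equivalence between pseudo subgroups of $C$ and restrictions of subgroups: if $A'$ were a pseudo subgroup of $C$, there would be a subgroup $H \le \cF$ with $H \cap C = A'$; then $H \cap B = (H \cap C) \cap B = A' \cap B = A$ (using $B \subseteq C$), exhibiting $A$ as the restriction of the subgroup $H$ to $B$, i.e.\ a pseudo subgroup of $B$. This is essentially the same computation but may read more cleanly since it avoids invoking $\langle \cdot \rangle$ at all.

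I do not expect any serious obstacle here — this is a short bookkeeping argument about nested intersections, and the only subtlety is being careful with which set inclusions ($A \subseteq B \subseteq C$) are used at each step. The main "idea," such as it is, is simply that restricting a witnessing subgroup for $A'$ down to $B$ witnesses $A$, so non-existence propagates upward. I would present the second (contradiction-free, subgroup-witness) version as the primary proof for clarity.
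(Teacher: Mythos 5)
Both of your arguments are correct. Your first argument is essentially the contrapositive of what the paper does: the paper uses Claim \ref{A_in_Im(R_B)} in the forward direction to extract a witness $w \in B$ with $w \in \langle A\rangle \setminus A$ and then checks that this same $w$ lies in $\langle A'\rangle \setminus A'$, whereas you assume $\langle A'\rangle \cap C = A'$ and push the equality $\langle A\rangle \cap B = A$ through by nested intersections. Same lemma, same content, slightly different bookkeeping. Your second argument, however, is a genuinely cleaner route: it bypasses Claim \ref{A_in_Im(R_B)} entirely and works straight from the definition of a pseudo subgroup (existence of a subgroup $H \le \cF$ with $H \cap C = A'$), then just restricts $H$ further to $B$. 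This avoids the characterization via $\langle \cdot \rangle$ altogether and makes it transparent that the statement is really just the contrapositive of "restrictions of subgroups remain restrictions of subgroups." The paper's witness-based version has the small advantage of being constructive in a way that reads well alongside the algorithm in Claim \ref{Im(R_n)_is_decidable}, but for the corollary itself your second argument is the more economical one.
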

\begin{proof}
    By Claim \ref{A_in_Im(R_B)}, if $A$ is not a pseudo subgroup of $B$, then there is a $w\in B$ such that $w\notin A$ while $w\in \langle A\rangle$.  Since $w\in B\subseteq C$, and $A'\cap B=A$, the same $w$ satisfies that $w\notin A'$ while $w\in \langle A\rangle\subseteq \langle A'\rangle$, proving that $A'$ is not a pseudo subgroup of $C$.
\end{proof}

\begin{defn}\label{def:P_B_Q_B}
Let $B\subseteq \cF$ be a subset.

\begin{enumerate}
\item The set $\mathcal{P}_B$ is the collection of all $\pi \in \Prob(\{0,1\}^{B})$  that are \emph{concentrated on} pseudo subgroups of $B$, namely $\pi(P\Sub(B))=1$. Equivalently, $\pi(\{0,1\}^B\setminus P\Sub(B))=0$.
\item The set $\mathcal{Q}_B$ is the subset of $\cP_B$ of distributions that are \emph{$S$-invariant when defined}. Namely, assuming  $K, L \subseteq B$ are finite subsets such that $$\forall s\in S\cup S^{-1}\ \colon\ \  sKs^{-1}, sLs^{-1} \subseteq B,$$ we have $\pi( \cC_B(K,L)) = \pi(\cC_B( sKs^{-1}, sLs^{-1}))$.  We call  probability distributions in $Q_B$ \emph{pseudo invariant random subgroups} over $B$, or pseudo IRSs in short.
\item Let $\tilde\cP_B=R_{B*}^{-1}(\cP_B)$ and $\tilde\cQ_B=R_{B*}^{-1}(\cQ_B)$.
\end{enumerate}
\end{defn}

\begin{rem}
    Note that $\cP_\cF=\tilde\cP_\cF=\Prob(\Sub(\cF))$ and $\cQ_\cF=\tilde\cQ_\cF=\IRS(\cF)$.
\end{rem}

\begin{lem}\label{Lem:upper_approx}
Let $B\subseteq C\subseteq \cF$ be subsets.
    \begin{enumerate}
       
        \item The restriction to $B$ of a distribution over pseudo subgroups of $C$, is a distribution over pseudo subgroups of $B$. Namely,
        \[
        \forall \pi\in \cP_{C}\ \colon \ \ R_{B\subseteq C*}\pi\in \cP_B.
        \] 
        Similarly, the restriction to $B$ of a pseudo IRS of $C$ is a pseudo IRS of $B$. Namely, 
        \[
        \forall \pi\in \cQ_{C}\ \colon \ \ R_{B\subseteq C*}\pi\in \cQ_B.
        \]
        \item The (Borel) probability distributions over subgroups of $\cF$ are the intersection of pull-backs of all distributions over pseudo subgroups in finite subsets. Similarly, the IRSs of $\cF$ are the intersection of pull-backs of all pseudo IRSs of finite subsets. Namely,
        \[
        \bigcap_{B\subseteq \cF, |B|<\infty} \tilde\cP_B=\Prob(\Sub(\cF)),\ \bigcap_{B\subseteq \cF, |B|<\infty} \tilde\cQ_B=\IRS(\cF).
        \]
    \end{enumerate}
\end{lem}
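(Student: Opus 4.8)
The plan is to handle the two clauses in turn, in each case reducing the statement to the point-set description of pseudo subgroups and to the cylinder-set identities of Claim \ref{Restriction_of_C(K,L)}. For clause (1) I would argue directly with measures. For the statement about $\cP$, the identity $R_{B\subseteq C*}\pi(P\Sub(B)) = \pi\big(R_{B\subseteq C}^{-1}(P\Sub(B))\big)$ reduces the claim to the set inclusion $P\Sub(C)\subseteq R_{B\subseteq C}^{-1}(P\Sub(B))$, which is exactly the contrapositive of Corollary \ref{cor:restriction_of_pseudo_subgroups}: the restriction to $B$ of a pseudo subgroup of $C$ is a pseudo subgroup of $B$. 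Since $\pi\in\cP_C$ is concentrated on $P\Sub(C)$, its pushforward is then concentrated on $P\Sub(B)$. For the $\cQ$-statement we already know the pushforward lies in $\cP_B$, so only the ``invariance when defined'' condition needs checking: given finite $K,L\subseteq B$ with $sKs^{-1},sLs^{-1}\subseteq B$ for every $s\in S\cup S^{-1}$, these four sets also lie in $C$, and Claim \ref{Restriction_of_C(K,L)} lets me rewrite both $R_{B\subseteq C*}\pi(\cC_B(K,L))$ and $R_{B\subseteq C*}\pi(\cC_B(sKs^{-1},sLs^{-1}))$ as the corresponding values of $\pi$ on $\cC_C(\cdot,\cdot)$, which coincide because $\pi\in\cQ_C$.

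For the ``$\supseteq$'' inclusions in clause (2): if $\mu\in\Prob(\Sub(\cF))$ then $R_B(H)=H\cap B$ is a pseudo subgroup of $B$ for every subgroup $H$, so $\Sub(\cF)\subseteq R_B^{-1}(P\Sub(B))$ and hence $R_{B*}\mu\in\cP_B$, i.e.\ $\mu\in\tilde\cP_B$. If in addition $\mu$ is conjugation-invariant, then for finite $K,L\subseteq B$ with all conjugates $sKs^{-1},sLs^{-1}$ staying in $B$, Claim \ref{Restriction_of_C(K,L)} turns the required equality into $\mu(\cC(K,L))=\mu(\cC(sKs^{-1},sLs^{-1}))$, which holds by Fact \ref{invariance}(2) (noting that $s.\mu=\mu$ for $s\in S$ forces $s^{-1}.\mu=\mu$ as well). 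Hence $\mu\in\tilde\cQ_B$ for every finite $B$.

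The heart of the lemma is the two ``$\subseteq$'' inclusions. For $\cP$ I would first re-derive, from the proof of Claim \ref{close_compact_Sub(F)} together with Claim \ref{A_in_Im(R_B)}, the set-level identity $\Sub(\cF)=\bigcap_{B\subseteq\cF,\,|B|<\infty}R_B^{-1}(P\Sub(B))$: a subset $A$ that violates one of the three subgroup axioms already fails $A\cap B\in P\Sub(B)$ for a suitable three-element witness $B$ (one of $\{\Id\}$, $\{w,w^{-1}\}$, $\{w,w',ww'\}$), and conversely every subgroup restricts to a pseudo subgroup of every $B$. Then for $\mu\in\bigcap_B\tilde\cP_B$ we have $\mu\big(R_B^{-1}(P\Sub(B))\big)=1$ for every finite $B$, and since $\cF$ is countable this is a countable family of full-measure sets, so their intersection $\Sub(\cF)$ has full measure. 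For $\cQ$: an element $\mu\in\bigcap_B\tilde\cQ_B$ is already in $\Prob(\Sub(\cF))$ by the previous paragraph, and to obtain conjugation-invariance it suffices, by Fact \ref{invariance}(2), to prove $\mu(\cC(K,L))=\mu(\cC(sKs^{-1},sLs^{-1}))$ for each $s\in S$ and each finite $K,L$. Here the key point is that the set $B:=\bigcup_{t\in S\cup S^{-1}\cup\{\Id\}}(tKt^{-1}\cup tLt^{-1})$ is \emph{finite} (because $S\cup S^{-1}$ is), contains $K$ and $L$, and contains $tKt^{-1},tLt^{-1}$ for every $t\in S\cup S^{-1}$, so the pair $(K,L)$ is admissible for the invariance condition defining $\cQ_B$; applying $R_{B*}\mu\in\cQ_B$ and then Claim \ref{Restriction_of_C(K,L)} delivers exactly the desired equality.

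I expect everything here to be routine bookkeeping except for the $\cQ$-half of the last inclusion, where the one genuine idea is to close $K$ and $L$ under conjugation by the \emph{finite} generating set $S\cup S^{-1}$ rather than by all of $\cF$, which keeps the witness set $B$ finite while still forcing the ``invariance when defined'' clause to bite; the rest is a matter of carefully threading the pushforward/pullback identities through Claim \ref{Restriction_of_C(K,L)} and the elementary description of subgroups.
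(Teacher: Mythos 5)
Your proposal is correct and follows essentially the same route as the paper's proof: clause (1) via Corollary \ref{cor:restriction_of_pseudo_subgroups} and Claim \ref{Restriction_of_C(K,L)}, and for clause (2) the key observation that closing $K,L$ under conjugation by the finite set $S\cup S^{-1}$ keeps the witness $B$ finite. The only cosmetic difference is in the $\subseteq$ direction for $\cP$: the paper argues by contrapositive (a positive-measure violation of one of the three subgroup axioms yields a finite $B$ with $\pi\notin\tilde\cP_B$, via a union bound over the countable cover from Claim \ref{close_compact_Sub(F)}), whereas you establish the set identity $\Sub(\cF)=\bigcap_{|B|<\infty}R_B^{-1}(P\Sub(B))$ and then intersect countably many full-measure sets --- logically equivalent, equally valid.
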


\begin{proof}

      Let $\pi\in \cP_C$.  By definition, $$\pi(\{0,1\}^C\setminus P\Sub(C))=0.$$ By Corollary \ref{cor:restriction_of_pseudo_subgroups}, $$R_{B\subseteq C}^{-1}(\{0,1\}^B\setminus P\Sub(B))\subseteq \{0,1\}^C\setminus P\Sub(C).$$ Hence, by the definition of the pushforward, 
      \[
      \begin{split}
R_{B\subseteq C*}\pi(\{0,1\}^B\setminus P\Sub(B))&=\pi(R_{B\subseteq C}^{-1}(\{0,1\}^B\setminus P\Sub(B)))\\
&\leq \pi(\{0,1\}^C\setminus P\Sub(C))\\
&=0,
 \end{split}
      \]
    and  so $R_{B\subseteq C*}\pi\in \cP_B$.  

        Assume further that $\pi\in \cQ_C$. By Claim \ref{Restriction_of_C(K,L)}, for every finite $K,L\subseteq B$, $$R^{-1}_{B\subseteq C}(\cC_B(K,L))=\cC_C(K,L).$$ 
        Thus  
        \[
        \begin{split}
R_{B\subseteq C*}\pi(\cC_B(K,L))&=\pi(R_{B\subseteq C}^{-1}(\cC_B(K,L)))
=\pi(\cC_C(K,L)).
\end{split}
        \]
        If $K,L\subseteq B$ such that $sKs^{-1},sLs^{-1}\subseteq B$ for every $s\in S\cup S^{-1}$, then the same applies to $C$ since it contains $B$, and hence
        \[
        \begin{split}
R_{B\subseteq C*}\pi(\cC_B(K,L))&=\pi(\cC_C(K,L))\\
&=\pi(\cC_C(sKs^{-1},sLs^{-1}))\\
&=R_{B\subseteq C*}\pi(\cC_B(sKs^{-1},sLs^{-1})).
\end{split}
        \]
        Therefore $R_{B\subseteq C*}\pi$ is in $\cQ_B$ and $(1)$ is proved.
 \\
 
  By clause $(1)$, for every $B\subseteq C\subseteq \cF$ we have $\tilde\cP_C\subseteq \tilde \cP_B$ and $\tilde \cQ_C\subseteq \tilde\cQ_B$. In particular, when $C=\cF$ we have $\Prob(\Sub(\cF))\subseteq \tilde \cP_B$ and $\IRS(\cF)\subseteq \tilde\cQ_B$. Hence  \[
        \Prob(\Sub(\cF))\subseteq  \bigcap_{B\subseteq \cF, |B|<\infty} \tilde\cP_B,\quad \IRS(\cF)\subseteq \bigcap_{B\subseteq \cF, |B|<\infty} \tilde\cQ_B.
        \]
     We are left to prove the reverse containments. Recall the open cover of the complement of the subgroups described in the proof of Claim \ref{close_compact_Sub(F)}.
     Given $\pi\in \Prob(\{0,1\}^\cF)$ such that $\pi\notin \Prob(\Sub(\cF))$, we have $$\pi\left(\left(\bigcup_{w,w'\in\cF}  C_{w,w'}\right)\cup \left(\bigcup_{w\in\cF} C_w \right) \cup  C_\Id\right)=\pi (\{0,1\}^\cF \setminus \Sub(\cF))>0.$$
    Hence, by the union bound (Boole's inequality), there is either a pair $w,w'\in\cF$ such that $\pi(C_{w,w'})>0$, or a single element $w\in \cF$ such that $\pi(C_w)>0$, or $\pi(C_\Id)>0$. Thus, if $B=\{ \Id_\cF,w,w',w\cdot w',w^{-1}\}$,  then $\pi\notin \tilde\cP_B$. In particular $\pi\notin \bigcap \tilde\cP_B$, proving the first reverse containment.
     
     Lastly, let $\pi \in \bigcap \tilde\cQ_B$.  For every finite $K,L\subseteq \cF$, there is a larger enough finite set $B$ such that $K,L\subseteq B$ and also $sKs^{-1},sLs^{-1}\subseteq B$ for every $s\in S\cup S^{-1}$. Since $\pi\in \tilde\cQ_B$,  by Claim \ref{Restriction_of_C(K,L)} applied to $C=\cF$, we have for every $s\in S\cup S^{-1}$ that
    \[
    \begin{split}
    \pi(\cC(K,L))&=\pi(R_B^{-1}(\cC_B(K,L)))\\
    &=R_{B*}\pi(\cC_B(K,L))\\
    &=R_{B*}\pi(\cC_B(sKs^{-1},sLs^{-1}))\\
    &=\pi(R_B^{-1}(\cC_B(sKs^{-1},sLs^{-1})))\\
    &=\pi(\cC(sKs^{-1},sLs^{-1})).
    \end{split}
    \]
    Hence, by Fact \ref{invariance}, $(2)$ is deduced.
\end{proof}

\begin{rem}
The system of finite subsets of $\cF$  forms a directed system induced by inclusion. The pushforwards along  restriction maps $R_{B\subseteq C*}$ thus form an inverse system of maps between the compact sets $\Prob(B)$ for $B\subseteq \cF, |B|<\infty$. In this language, Lemma \ref{Lem:upper_approx} says that 
\[
\Prob(\Sub(\cF))=\varprojlim_B\cP_B\qquad \textrm{and}\qquad \IRS(\cF)=\varprojlim_B \cQ_B.
\]
\end{rem}

\subsection{Computational aspects of pseudo invariant random subgroups}

\begin{claim} \label{Im(R_n)_is_decidable}
Given $A\subseteq B\subseteq \cF$ where $|B|<\infty$, there is an algorithm deciding whether $A$ is a pseudo subgroup of $B$. 
\end{claim}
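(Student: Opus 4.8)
The plan is to reduce the question to finitely many instances of the membership (generalized word) problem for finitely generated subgroups of a free group, which is classically decidable. By Claim~\ref{A_in_Im(R_B)}, $A$ is a pseudo subgroup of $B$ if and only if $\langle A\rangle\cap B=A$. Since $A\subseteq B$ always holds, and since $A\subseteq\langle A\rangle$, this is equivalent to the condition that for every $w\in B\setminus A$ we have $w\notin\langle A\rangle$. As $B$ is finite, there are only finitely many such $w$ to test, so it suffices to exhibit an algorithm that, given a finite subset $A\subseteq\cF$ (a finite list of reduced words in $S\cup S^{-1}$) and an element $w\in\cF$, decides whether $w\in\langle A\rangle$.

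For the membership test I would invoke the standard Stallings folding construction: starting from a wedge of loops at a base vertex labeled by the words of $A$, one iteratively identifies (``folds'') pairs of edges with the same label emanating from a common vertex. This process terminates because the number of edges strictly decreases at each fold, and it produces a finite folded $S$-labeled graph $\Gamma_A$ with base vertex $v_0$, whose set of reduced words labeling closed paths at $v_0$ is exactly $\langle A\rangle$. Then $w\in\langle A\rangle$ if and only if, reading the reduced word representing $w$ as an edge path from $v_0$ in $\Gamma_A$ (using inverse edges for inverse letters), the path is defined throughout and returns to $v_0$; this is a finite and effective check. (Alternatively, one may first compute a Nielsen-reduced generating set of $\langle A\rangle$ by Nielsen transformations and then decide membership by greedy length reduction; either route works.)

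Combining the two steps gives the algorithm: compute $\Gamma_A$ once, then for each of the finitely many $w\in B\setminus A$ test membership in $\langle A\rangle$ as above; output ``$A$ is a pseudo subgroup of $B$'' if and only if all these tests return ``no''.

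There is no serious obstacle here; the only point requiring care is the termination and correctness of the folding algorithm, both of which are classical, and the observation that the finiteness of $B$ is what reduces the problem to finitely many membership queries.
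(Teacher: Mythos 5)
Your proposal is correct and matches the paper's proof essentially exactly: both reduce the question via Claim~\ref{A_in_Im(R_B)} to finitely many membership queries $w\in\langle A\rangle$ for $w\in B\setminus A$, and both decide each query by building the Stallings core graph of $\langle A\rangle$ via foldings and checking whether $w$ labels a closed path at the basepoint. The extra remarks you give (termination of folding, the Nielsen-reduction alternative) are sound but not needed beyond citing the classical result.
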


\begin{proof}

Given a finite set of elements $A=\{ w_1,...,w_k \}\subseteq \cF$, and another element $w\in \cF$, there is an algorithm to decide whether $w\in \langle A\rangle$. Indeed, one can use the following:
\begin{enumerate}
\item Construct ${\rm Core}(A)$ the core graph of $\langle A\rangle$  via the method of Stallings' foldings  \cite{Stallings_Foldings_of_G_trees,KAPOVICH2002608}.
\item Check whether the path that begins at the basepoint of ${\rm Core}(A)$ and traverses $w$ is closed.
\item If the path is closed, then $w\in \langle A\rangle$. Otherwise $w\notin \langle A\rangle$.
\end{enumerate}

By Claim \ref{A_in_Im(R_B)}, $A$ is not a pseudo subgroup of $B$ if and only if there exists a $w\in B$ such that $w\notin A$ but $w\in \langle A \rangle$. Hence, a sufficient and necessary condition for $A$ to be a pseudo subgroup of $B$ is to be accepted by the following algorithm:
\begin{enumerate}
    \item Construct  ${\rm Core}(A)$.
    \item For every $w\in B$ satisfying $w\notin A$, \emph{reject} if $w$ is a closed path in ${\rm Core}(A)$.
    \item   \emph{Accept} if $A$ passed $(2)$ for every $w\in B$.
\end{enumerate}
\end{proof}

\begin{rem}
    Recall that a free group is LERF (locally extended residually finite), i.e., for every finite set $A=\{w_1,...,w_k\}$ and every $w\in \cF$ such that $w\notin \langle A\rangle$, there exists a finite index subgroup $H$ for which 
\[
A\subseteq H\  \textrm{and}\ w\notin H.
\]
This was originally proved by M. Hall \cite{hall_1949}, and can be proven using the method of Stallings' foldings \cite{Stallings_Foldings_of_G_trees,KAPOVICH2002608}. It follows that for  every $A$ a pseudo subgroup of a finite $B$, there exists a \emph{finite index} subgroup $H$ for which $R_B(H)=A$.
\end{rem}

\begin{lem}\label{lem:computable_polytopes}
    Let $B\subseteq \cF$ be a finite set. Then, the set $\cP_B$ of distributions over pseudo subgroups of $B$ and the set $\cQ_B$ of pseudo IRSs over $B$ are computable polytopes in $\mathbb{R}^{\{0,1\}^B}$. Namely, they are defined by (computable) integral linear equations and inequalities.
\end{lem}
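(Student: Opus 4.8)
The plan is to realize $\mathbb{R}^{\{0,1\}^B}$ as the space of vectors $(\pi(A))_{A\subseteq B}$ — a genuinely finite-dimensional space, since $B$, hence its power set $\{0,1\}^B$, is finite — and to write down, algorithmically from $B$ (together with the ambient data $\cF$, $S$), a finite list of integral linear equalities and inequalities cutting out $\cP_B$ and $\cQ_B$. The key point is that, because $B$ is finite, all the conditions appearing in Definition \ref{def:P_B_Q_B} — which a priori refer to subgroups of the infinite group $\cF$ and to arbitrary finite $K,L\subseteq B$ — collapse to finitely many linear constraints that can be enumerated effectively.

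First I would record the simplex: $\Prob(\{0,1\}^B)$ is defined by the integral inequalities $\pi(A)\ge 0$ for all $A\subseteq B$ together with the integral equality $\sum_{A\subseteq B}\pi(A)=1$. Next, for $\cP_B$: since the coordinates are nonnegative and sum to $1$, the condition $\pi(P\Sub(B))=1$ is equivalent to $\pi(A)=0$ for every $A\subseteq B$ that is not a pseudo subgroup of $B$. By Claim \ref{Im(R_n)_is_decidable}, membership in $P\Sub(B)$ is decidable; so one enumerates all $A\subseteq B$, tests each, and adjoins the equation $\pi(A)=0$ whenever $A\notin P\Sub(B)$. Together with the simplex constraints, this is a finite integral system defining $\cP_B$.

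For $\cQ_B$ one keeps all of the above and adds the ``invariance when defined'' constraints. Here the observation is that $\pi(\cC_B(K,L))=\sum_{A\subseteq B,\ K\subseteq A,\ A\cap L=\emptyset}\pi(A)$ is a linear functional with $0/1$ coefficients whose support is explicitly listable from $K$ and $L$; hence each required identity $\pi(\cC_B(K,L))=\pi(\cC_B(sKs^{-1},sLs^{-1}))$ is a linear equation with coefficients in $\{-1,0,1\}$. One then enumerates the finitely many triples $(K,L,s)$ with $K,L\subseteq B$ and $s\in S\cup S^{-1}$, checks for each whether the conjugated sets $sKs^{-1}$ and $sLs^{-1}$ (obtained by reducing words in $\cF$) lie in $B$ — a finite, effective check — and, when they do, adjoins the corresponding equation. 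The resulting finite integral system, together with the one defining $\cP_B$, cuts out $\cQ_B$.

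There is no substantial obstacle in this argument: every step (enumerating subsets of $B$; deciding pseudo-subgroup membership via Stallings foldings, per Claim \ref{Im(R_n)_is_decidable}; reducing words and testing membership in the finite set $B$) is effective, and all coefficients produced lie in $\{-1,0,1\}$ with right-hand sides $0$ or $1$, so the systems are integral and computable. If anything requires care it is precisely the reduction of the infinitary-looking hypotheses of Definition \ref{def:P_B_Q_B} to this finite list, which is where the finiteness of $B$ and the decidability supplied by Claim \ref{Im(R_n)_is_decidable} do the work.
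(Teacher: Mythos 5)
Your overall approach — realize $\mathbb{R}^{\{0,1\}^B}$ as a finite-dimensional space, cut out the simplex, zero out non-pseudo-subgroup coordinates via Claim \ref{Im(R_n)_is_decidable}, and then add the invariance equations as $\{-1,0,1\}$-coefficient linear constraints — is exactly the paper's, and everything about $\cP_B$ is correct as written. There is, however, a genuine discrepancy in your treatment of $\cQ_B$.

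You enumerate \emph{triples} $(K,L,s)$ and adjoin the equation $\pi(\cC_B(K,L))=\pi(\cC_B(sKs^{-1},sLs^{-1}))$ whenever the single $s$-conjugates $sKs^{-1},sLs^{-1}$ lie in $B$. But Definition \ref{def:P_B_Q_B}(2) imposes the invariance only for those $(K,L)$ such that $sKs^{-1},sLs^{-1}\subseteq B$ holds simultaneously \emph{for every} $s\in S\cup S^{-1}$, and then imposes it for all $s$; the paper's proof enumerates pairs $(K,L)$, checks this stronger precondition, and only then adds the family of equations. Your system is therefore a strictly stronger set of constraints, cutting out a polytope that is in general a \emph{proper} subset of $\cQ_B$. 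For a concrete instance, take $S=\{a,b\}$ and $B=\{e,a,a^{-1},b,b^{-1},bab^{-1}\}$: with $K=\{a\}$, $L=\emptyset$, the conjugate $b^{-1}ab$ is not in $B$, so $\cQ_B$ imposes no constraint relating $\pi(\cC_B(\{a\},\emptyset))$ and $\pi(\cC_B(\{bab^{-1}\},\emptyset))$, and the Dirac measure on the pseudo subgroup $\{e,a,a^{-1}\}$ lies in $\cQ_B$; but your system forces those two values equal and excludes this $\pi$. Since the lemma asserts that $\cQ_B$ as defined is a computable polytope, your proof as written establishes computability of a different set. The repair is one line: enumerate pairs $(K,L)$, verify the conjugates remain in $B$ for \emph{all} $s\in S\cup S^{-1}$, and only then adjoin the equations for all $s$.
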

\begin{proof}
         Recall that $\cQ_B\subseteq \cP_B\subseteq \Prob(\{0,1\}^B)\subseteq \RR^{\{0,1\}^B}$. Namely, all these distributions  are vectors which are indexed by subsets of $B$.     
     We describe the defining system of equations and inequalities forcing a vector $\pi\in \mathbb{R}^{\{0,1\}^B}$ to belong to $\cP_B$. First, it needs to be a probability measure, hence  
        \[
        \begin{split}
             \forall A\subseteq B\ \colon \ \ \pi(A)&\geq 0,\\
             \sum_{A\subseteq B} \pi(A)&=1.
        \end{split}
        \]
        Further, for every $A\subseteq B$, we can run the algorithm in Claim \ref{Im(R_n)_is_decidable} to decide whether $A$ is a pseudo subgroup, and if not to add the constraint $\pi(A)=0$. This is the defining system of $\cP_B$. To get the defining system of $\cQ_B$, we need to add the following:
        For every $K,L\subseteq B$, we can check whether for all $s\in S\cup S^{-1}$, the sets $sKs^{-1}$ and $sLs^{-1}$ are also contained in $B$, and if so add the constraints
        \[
            \forall s\in S\cup S^{-1}\ \colon \ \ \underbrace{\sum_{A\in \cC_B(K,L)}\pi(A)}_{\pi(\cC_B(K,L))}=\underbrace{\sum_{A\in \cC_B(sKs^{-1},sLs^{-1})}\pi(A)}_{\pi(\cC_B(sKs^{-1},sLs^{-1}))}.
        \]
        
\end{proof}

\section{Main Theorem I: Approximating the values of subgroup tests} \label{sec:Proving_the_main_thm}

Recall the definitions of challenges, tests and values from Section \ref{subsec_subgroup_tests}.
Let $\cT$ be a test over the generating set $S$, with challenges $(K_i;D_i)_{i\in Q}$ and \textbf{rational} distribution $\mu$ over $Q$. Let $K=\bigcup_{i\in Q} K_{i}$, which is a finite subset of $\cF(S)=\cF$. We can extend the definition of $\val(\cT,\cdot)$ in the following way: For every  $B$ satisfying $K\subseteq B\subseteq \cF$, and for every distribution $\pi \in \Prob(\{0,1\}^B)$,  let 
\[
\val(\cT,\pi)=\Ex_{A\sim \pi}\Ex_{i\sim \mu}[D_i(A\cap K_{i})].
\]
The next claim is formalizing the following intuitive idea: Since $D_i$ depends only on the restriction of $H$ to $K_{i}$, then replacing $\pi$ by its push-forward along any restriction $R_{B}$ will not change its value against $\cT$.

\begin{claim}\label{claim:value_under_restriction}
Under the extended definition of the value, for every $B$ and $C$ such that $K\subseteq B\subseteq C\subseteq \cF$, and for every $\pi\in \Prob(\{0,1\}^C)$, we have
\[
\val(\cT,\pi)=\val(\cT,R_{B\subseteq C*}\pi).
\]
\end{claim}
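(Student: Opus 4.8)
The plan is to observe that the integrand in $\val(\cT,\pi)$ only ``sees'' the intersection of the sampled set with $K=\bigcup_{i\in Q}K_i$, and that $K\subseteq B$, so the whole value functional factors through the restriction map $R_{B\subseteq C}$; the claim then follows from the standard change-of-variables formula for pushforward measures.

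Concretely, first I would introduce the auxiliary function $g\colon\{0,1\}^B\to\RR$ defined by $g(A')=\Ex_{i\sim\mu}[D_i(A'\cap K_i)]$. Since each $K_i\subseteq K$ is finite, $g$ depends only on $A'\cap K$, hence it is a bounded Borel (in fact locally constant, thus continuous) function on $\{0,1\}^B$, so all the integrals below make sense. Next I would note that for every $A\subseteq C$ and every $i\in Q$ we have $A\cap K_i=(A\cap B)\cap K_i$ because $K_i\subseteq K\subseteq B$; therefore
\[
g\circ R_{B\subseteq C}(A)=g(A\cap B)=\Ex_{i\sim\mu}\big[D_i\big((A\cap B)\cap K_i\big)\big]=\Ex_{i\sim\mu}\big[D_i(A\cap K_i)\big].
\]
Taking expectation over $A\sim\pi$ gives $\Ex_{A\sim\pi}\big[g\circ R_{B\subseteq C}(A)\big]=\val(\cT,\pi)$.

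Finally I would apply the pushforward integration identity: for the measurable map $R_{B\subseteq C}\colon\{0,1\}^C\to\{0,1\}^B$ and the bounded Borel function $g$,
\[
\Ex_{A'\sim R_{B\subseteq C*}\pi}[g(A')]=\int_{\{0,1\}^B} g\,\mathrm{d}\big(R_{B\subseteq C*}\pi\big)=\int_{\{0,1\}^C} g\circ R_{B\subseteq C}\,\mathrm{d}\pi=\Ex_{A\sim\pi}\big[g\circ R_{B\subseteq C}(A)\big].
\]
The left-hand side is exactly $\val(\cT,R_{B\subseteq C*}\pi)$ by the extended definition of the value applied on $\{0,1\}^B$ (valid since $K\subseteq B$), and combining with the previous display yields $\val(\cT,\pi)=\val(\cT,R_{B\subseteq C*}\pi)$. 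There is no real obstacle here; the only point requiring a little care is checking that $g$ is genuinely measurable/continuous (immediate from finiteness of $K$) and that one is entitled to use the pushforward change-of-variables formula, which holds for any bounded Borel function, and $g$ takes values in $[0,1]$.
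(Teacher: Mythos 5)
Your proof is correct and takes essentially the same approach as the paper's: both hinge on the observation that $A\cap K_i=(A\cap B)\cap K_i$ because $K_i\subseteq K\subseteq B$, and then identify integration against the pushforward with integration of the composite against $\pi$. You merely phrase the final step via the abstract change-of-variables identity, whereas the paper states directly that sampling $A\sim R_{B\subseteq C*}\pi$ is the same as sampling $A'\sim\pi$ and outputting $A'\cap B$; these are the same argument.
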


\begin{proof}
    By definition, sampling $A\sim R_{B\subseteq C*}\pi$ is the same as sampling $A'\sim \pi$ and outputting $A=A'\cap B$.
    Also, $K_i=B\cap K_{i}$ since $K_{i}\subseteq K\subseteq B$. Hence,
    \[
    \begin{split}
        \val(\cT,R_{B\subseteq C*}\pi)&=\Ex_{A\sim R_{B\subseteq C*}\pi}\Ex_{i\sim \mu}[D_i(A\cap K_{i})]\\
        &=\Ex_{A'\sim \pi}\Ex_{i\sim \mu}[D_i((A'\cap B)\cap K_{i})]\\
        &=\Ex_{A'\sim \pi}\Ex_{i\sim \mu}[D_i(A'\cap K_{i})]\\
        &=\val(\cT,\pi).
    \end{split}
    \]
\end{proof}

Recall the first clause of Theorem \ref{Main_Thm}: There is a Turing machine $M_1$ that takes as an input the encoding of a test $\cT$, and outputs an infinite sequence of non-decreasing numbers $(\alpha_t)_{t=1}^\infty \subseteq [0,1]$  such that $\lim_{t\to \infty}\alpha_t=\val_{\sof}(\cT)$.

\begin{proof}[Proof of Theorem \ref{Main_Thm} $(1)$]

Let $\sigma\colon S\to \Sym(X)$ be a finite action. The value of $\Phi(\sigma)$ against $\cT$ is 
 \[
 \begin{split}
 \val(\cT,\Phi(\sigma))&=\Ex_{H\sim \Phi(\sigma)}\Ex_{i\sim \mu}D_i(H\cap K_i)\\
 &=\sum_{x\in X}\sum_{i\in Q}\frac{\mu(i)}{|X|}\cdot D_i(\Stab(\sigma,x)\cap K_i)
 \end{split}
 \]
 which is a rational combination of  evaluations of $D_i$. Since the sets $K_i$, functions $D_i$ and coefficients $\mu(i)$ can be calculated in finite time from  the encoding of $\cT$, and $\Stab(\sigma,x)\cap K_i$ can be calculated from $\sigma$ for every $x\in X$, the value of $\Phi(\sigma)$ against $\cT$ can be computed in finite time. 

  There is a computable countable enumeration of all (equivalence classes of) finite $\cF(S)$-actions. To see that, note that a map $\sigma\colon S\to \Sym(n)$ is defined by the tuple $(n\ ;\ \{\sigma(s)\}_{s\in S})$. Since the elements of $\Sym(n)$ can be ordered (e.g., lexicographically), as well as the elements of $S$, the infinite set of tuples $(n\ ;\ \{\sigma(s)\}_{s\in S})$ inherits a countable enumeration. Denote by $\sigma_t$ the $t^{\rm th}$ action according to this enumeration.
 For $t\in \mathbb{N}$, let 
 \[
\alpha_t=\max_{s\leq t} \{\val(\cT,\Phi(\sigma_s))\}.
 \]
 By our discussion so far, the number $\alpha_t$ can be computed in finite time. Moreover, $(\alpha_t)_{t=1}^\infty$ is a non-decreasing sequence by definition. 
 Since  ${\rm Im}(\Phi)=\IRS_{\fd}(\cF)$, the sequence $(\alpha_t)_{t=1}^\infty$ tends to the supremum of $\val(\cT,\cdot)$ over $\IRS_{\fd}(\cF)$, which is, by \eqref{sofic_val_equation}, $\val_{\sof}(\cT)$.
\end{proof}

We now recall the second clause of Theorem \ref{Main_Thm}:  There is a Turing machine $M_2$ that takes as an input the encoding of a test $\cT$, and outputs an infinite sequence of non-increasing numbers $(\beta_t)_{t=1}^\infty \subseteq [0,1]$  such that $\lim_{t\to \infty}\beta_t=\val_{\erg}(\cT)$.

\begin{proof}[Proof of Theorem \ref{Main_Thm} $(2)$]

 Let $K=\bigcup K_{i}$ as before, and let $B\subseteq \cF$ be a finite subset containing $K$.  Recall the definition of $\cQ_B$ the pseudo invariant random subgroups over $B$ (Definition \ref{def:P_B_Q_B}), and its pre-image $\tilde \cQ_B$. Recall also  Claim \ref{claim:value_under_restriction} and the discussion before it, where we extended the notion of the value to probability distributions over  subsets of $\cF$ that contain $K$. The $B$-approximation to the ergodic value of $\cT$ is defined to be
$$\val_B(\cT) = \sup \{\val(\cT,\pi)\mid  \pi \in \cQ_B\}=\sup \{\val(\cT,\pi)\mid  \pi \in \tilde\cQ_B\}.$$

Let $\{B_t\}_{t=1}^\infty$ be a sequence of finite subsets of $\cF$, all containing $K$, such that $B_t\subseteq B_{t+1}$ and $\bigcup_{t=1}^\infty B_t=\cF$. 
We now claim the following:
\begin{enumerate}
    \item $\val_{B_{t}}(\cT)$ is computable.
    \item The sequence $\val_{B_t}(\cT)$ is non-increasing.
    \item $\lim_{t\to \infty}\val_{B_t}(\cT)=\val_{\erg}(\cT)$.
\end{enumerate}

By Lemma \ref{lem:computable_polytopes}, $\cQ_B$ is defined by a computable integral system of equations and inequalities. Thus, maximizing a linear functional $\val(\cT,\cdot)$ over it can be done using linear programming in finite time, proving $(1)$.

By Lemma \ref{Lem:upper_approx}, whenever $B\subseteq C\subseteq \cF$ we have $R_{B\subseteq C*}(\cQ_{C})\subseteq \cQ_{B}$. Hence $\tilde \cQ_{B_{t+1}}\subseteq \tilde \cQ_{B_t}$, which implies $\val_{B_{t+1}}(\cT)\leq \val_{B_t}(\cT)$, proving $(2)$.

Moreover,  $\IRS(\cF)=\tilde \cQ_{\cF}\subseteq \tilde \cQ_{B_t}$  which implies $\val_{B_t}(\cT)\geq \val_{\erg}(\cT)$.
On the other hand, by Claim \ref{Prob_is_cpct} the space $\Prob(\{0,1\}^\cF)$ is compact, and since $\tilde \cQ_B$ are closed sets they are also compact. Hence there is a distribution $\pi_t\in \tilde\cQ_{B_t}$ satisfying $\val_{B_t}(\cT)=\val(\cT,\pi_t)$. By the compactness of $\Prob(\{0,1\}^\cF)$, there is a weak limit to some subsequence of $(\pi_t)_{t=1}^\infty$, which we denote by $\pi_\infty$. Without loss of generality the subsequence is the original sequence. Since $\pi_s\in\tilde \cQ_{B_t}$ for every $s\geq t$, and since   $\tilde \cQ_B$ are closed, we deduce that $\pi_\infty\in \bigcap \tilde \cQ_{B_t}=\IRS(\cF)$. By the definition of a weak limit, since $\val(\cT,\cdot)$ is a continuous functional, we have 
\[
\lim_{t\to \infty}\val_{B_t}(\cT)=\lim_{t\to \infty}\val(\cT,\pi_t)=\val(\cT,\lim_{w^*}\pi_t)=\val(\cT,\pi_\infty)\leq \val_{\erg}(\cT),
\]
proving $(3)$. Therefore, choosing $\beta_t=\val_{B_t}(\cT)$ finishes the proof.
\end{proof}

\section{Examples of tests} \label{sec:Subgroup_Tests}

\subsection{Relations verification test}  Given a finite subset $R\subseteq \cF(S)$, the $R$-\emph{verification test} $\mathcal{V}_R$ checks whether $R$ is contained in the sampled subgroup $H$. It contains a single challenge $(R;D)$ with $D={\bf 1}_R$, namely
\[
\forall A\subseteq R\ \colon \ \ D(A)=\begin{cases}
    1 & A=R,\\
    0 & A\neq R.
\end{cases}
\]

The sofic value of $\cV_R$ is always $1$, e.g., using the Dirac measure concentrated on the group $\cF$ itself (which associates via $\Phi$ with the trivial action of $\cF$ on a singleton). Moreover, if the conjugation orbit of a finite index subgroup $H$ passes $\cV_R$ with probability $1$, then $R\subseteq \bigcap_{w\in \cF} wHw^{-1}$.

\begin{rem}
Though the value of the $R$-verification test does not give us much information about $R$, we will see  in Section \ref{sec:Robustness} that the potential robustness of this test is an important property of $R$.
Moreover, this test has variations where you sample $r\in R$ according to some distribution and check only that $r\in H$ and not that all of $R$ is in $H$. See for example \cites{CL_part1,CVY_efficient} for more on that.
\end{rem}

\subsection{Separation test}  \label{subsec_separation_test}
Given two finite subsets $R,L\subseteq \cF(S)$, the $(R,L)$-\emph{separation test} $\mathcal{V}_{R||L}$ checks whether $R$ is contained in the sampled subgroup $H$ but $L\setminus R$ is not. It contains a single challenge $(R\cup L;D)$ with $D={\bf 1}_R$, namely
\[
\forall A\subseteq R\cup L\ \colon \ \ D(A)=\begin{cases}
    1 & A=R,\\
    0 & A\neq R.
\end{cases}
\]

\begin{defn} \label{def:res_fin}
A group $\Gamma$ is \emph{residually finite} if for every finite subset $L\subseteq \Gamma$ where $\Id_\Gamma\notin L$, there is a finite index normal subgroup $N\trianglelefteq \Gamma$ such that $L\cap N=\emptyset$.
\end{defn}

\begin{claim}
    A finitely presented group $\Gamma=\langle S|R\rangle$ is residually finite if and only if for every finite $L\subseteq \cF$ such that $L\cap \langle \langle R\rangle \rangle=\emptyset$\footnote{The subgroup $\langle\langle R\rangle\rangle$ is the smallest normal subgroup containing $R$.} there is a finitely described IRS passing $\cV_{R||L}$ with probability $1$. 
\end{claim}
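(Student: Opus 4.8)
The plan is to unwind both sides of the equivalence into a single statement about finite permutation actions of $\Gamma=\cF/\langle\langle R\rangle\rangle$, and then move between finite quotients of $\Gamma$ and fixed-point-free actions using the left-translation action. First I would fix notation: let $N=\langle\langle R\rangle\rangle$, $\Gamma=\cF/N$, and $q\colon\cF\to\Gamma$ the quotient map. The key preliminary observation is that the hypothesis $L\cap N=\emptyset$ forces $L\cap R=\emptyset$ (since $R\subseteq N$), hence $L\setminus R=L$, so the single challenge $(R\cup L;\mathbf 1_R)$ of $\cV_{R||L}$ decodes cleanly to ``$R$ is in $H$ and $L$ is disjoint from $H$''.

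Next I would record the reformulation. For a finite nonempty set $X$ and $\sigma\colon S\to\Sym(X)$ with extension $\hat\sigma\colon\cF\to\Sym(X)$, the computation $\val(\cV_{R||L},\Phi(\sigma))=|X|^{-1}\sum_{x\in X}\mathbf 1_R(\Stab(\sigma,x)\cap(R\cup L))$ shows that $\Phi(\sigma)$ passes $\cV_{R||L}$ with probability $1$ if and only if, for every $x\in X$, $R\subseteq\Stab(\sigma,x)$ and $L\cap\Stab(\sigma,x)=\emptyset$. The first condition for all $x$ says $R\subseteq\ker\hat\sigma$; since $\ker\hat\sigma$ is normal it then contains $N$, so $\hat\sigma$ descends to $\bar\sigma\colon\Gamma\to\Sym(X)$. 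The second condition says $\bar\sigma(q(w))$ has no fixed point in $X$ for every $w\in L$. So the right-hand side of the claim is equivalent to: for every finite $L\subseteq\cF$ with $L\cap N=\emptyset$ there is a finite nonempty $X$ and an action $\bar\sigma\colon\Gamma\to\Sym(X)$ for which every $q(w)$, $w\in L$, acts fixed-point-freely.

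For the forward direction I would take an arbitrary finite $L\subseteq\cF$ with $L\cap N=\emptyset$, set $\bar L=q(L)\subseteq\Gamma\setminus\{\Id_\Gamma\}$, and apply Definition \ref{def:res_fin} to obtain a finite-index normal subgroup $M\trianglelefteq\Gamma$ with $\bar L\cap M=\emptyset$. Taking $X=\Gamma/M$ (finite and nonempty) with $\bar\sigma$ the left-translation action through $\Gamma\to\Gamma/M$, and $\sigma=\bar\sigma\circ q|_S$, the first condition of the reformulation holds by construction, and for $w\in L$ the image of $q(w)$ in $\Gamma/M$ is nontrivial, so left-translation by it is fixed-point-free; hence $\Phi(\sigma)$ passes $\cV_{R||L}$ with probability $1$. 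Conversely, given a finite $\bar L\subseteq\Gamma\setminus\{\Id_\Gamma\}$, I would choose a lift $w\in\cF$ of each element (automatically $w\in\cF\setminus N$, since $q^{-1}(g)\cap N=\emptyset$ for $g\neq\Id_\Gamma$), forming $L$ with $L\cap N=\emptyset$ and $q(L)=\bar L$; by hypothesis and the reformulation there is $\bar\sigma\colon\Gamma\to\Sym(X)$, $X$ nonempty, with $\bar\sigma(g)$ fixed-point-free for all $g\in\bar L$. Since $X\neq\emptyset$, fixed-point-free upgrades to $\bar\sigma(g)\neq\mathrm{Id}$, so $\ker\bar\sigma\trianglelefteq\Gamma$ is finite-index and disjoint from $\bar L$, which is exactly Definition \ref{def:res_fin}.

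I do not expect a genuine obstacle here; the proof is essentially a dictionary between residual finiteness and the Schreier-graph/stabilizer picture. The only two points that need a little care are the observation that $L\cap N=\emptyset$ implies $L\setminus R=L$ (so the separation test does what its name suggests), and the fact that finitely described IRSs come from actions on \emph{nonempty} finite sets, which is what lets ``fixed-point-free'' be promoted to ``acts nontrivially'' in the converse direction.
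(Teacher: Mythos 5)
Your proposal is correct and takes essentially the same approach as the paper: both directions translate residual finiteness into the existence of finite $\Gamma$-actions in which the relevant elements act fixed-point-freely, with the forward direction using the left-translation action on $\Gamma/N$ and the converse extracting the finite-index normal subgroup from the kernel (the paper first reduces to a transitive orbit and takes $\bigcap_x\Stab(\rho',x)$, while you skip the reduction and take $\ker\bar\sigma$ directly, which is a minor simplification).
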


\begin{proof}
    Let $\Gamma=\nicefrac{\cF(S)}{\langle\langle R\rangle\rangle}$ be residually finite, and $L\subseteq \cF$ a finite set disjoint from $\langle\langle R\rangle\rangle$. By Definition \ref{def:res_fin}, there is a finite index normal subgroup $N\trianglelefteq \Gamma$ such that $L\langle\langle R\rangle\rangle\cap N=\emptyset$. Using the sequence of epimorphisms
    \[
    \cF\twoheadrightarrow \Gamma \twoheadrightarrow \Gamma/N
    \]
    we can define the left action of $\cF$ on $X=\Gamma/N$ which passes $\cV_{R||L}$ with probability $1$. 
    
    On the other hand, given a finite subset $\Id_\Gamma \notin L\subseteq \Gamma$, let  $L'\subseteq \cF$ be any set of representatives for $L$. Since $\Id_\Gamma \notin L$, we have $L'\cap \langle\langle R\rangle \rangle=\emptyset$. 
    Hence, by our assumption,  there is an  action $\rho\colon S\to \Sym(X)$ with $\val(\cV_{R||L'},\Phi(\rho))=1$. Without loss of generality, $\rho$ is transitive (otherwise, its restriction to any orbit will do).  
    The fact $\val(\cV_{R||L'},\Phi(\rho))=1$ implies in particular that $R\subseteq \Stab(\rho,x)$ for any $x\in X$. Thus, $\rho$ factors through $\Gamma$ and defines an action $\rho'\colon \Gamma\to \Sym(X)$ by $\rho'(w\langle\langle R\rangle \rangle)=\rho(w)$. By the fact $\val(\cV_{R||L'},\Phi(\rho))=1$, the permutation $\rho(w)$ has no fixed points for every $w\in L'$, and thus $\rho'(u)$ has no fixed points for every $u\in L$. Hence $N=\bigcap_{x\in X}\Stab(\rho',x)$ is a normal subgroup of $\Gamma$ satisfying $N\cap L=\emptyset$, as required to deduce $\Gamma$ is residually finite.
\end{proof}

    Let $X$ be a finite set. Given two permutations $\sigma,\tau\in \Sym(X)$, we define their \emph{normalized Hamming distance} to be
\begin{equation}\label{regular_Hamming_dist}
d_{H}(\sigma,\tau):=\frac{\left|\left\{ x\in X\mid\sigma(x)\neq\tau(x)\right\} \right|}{|X|}=\mathbb{P}_{x\in X}\left[\sigma(x)\neq\tau(x)\right].
\end{equation}

\begin{defn}\label{defn:sofic}
    A finitely presented group $\Gamma=\langle S|R\rangle$ is \emph{sofic} if for every $L\subseteq \cF(S)$ such that $L\cap \langle\langle R\rangle\rangle=\emptyset$, and for every $\eps>0$, there is a finite set $X$ and an action $\rho\colon S\to \Sym(X)$ such that 
    \[
    \begin{split}
\forall r&\in R\ \colon \ \ d_H(\rho(r),\Id_X)\leq \eps,\\
\forall w&\in L\ \colon \ \ d_H(\rho(w),\Id_X)\geq 1-\eps.
\end{split}
    \]
\end{defn}

\begin{rem}
It is straightforward to see that soficity is a relaxation of residual finiteness. 
\end{rem}

\begin{prop}
    A finitely presented group $\Gamma=\langle S|R\rangle$ is sofic if and only if for every finite $L\subseteq \cF$ such that $L\cap \langle \langle R\rangle \rangle=\emptyset$ we have $\val_{\sof}(\cV_{R||L})=1$.
\end{prop}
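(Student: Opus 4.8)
The statement is an "if and only if" between soficity of $\Gamma = \langle S \mid R\rangle$ and the equation $\val_{\sof}(\cV_{R\|L}) = 1$ for all finite $L \subseteq \cF$ disjoint from $\langle\langle R\rangle\rangle$. Since $\val_{\sof}$ is defined as a supremum over finitely described IRSs $\Phi(\sigma)$, the condition $\val_{\sof}(\cV_{R\|L}) = 1$ is exactly the assertion that there exist finite actions $\sigma_n \colon S \to \Sym(X_n)$ with $\val(\cV_{R\|L}, \Phi(\sigma_n)) \to 1$. My plan is to unpack $\val(\cV_{R\|L}, \Phi(\sigma))$ in terms of fixed-point statistics of the permutations $\sigma(w)$, and then match this up directly with the Hamming-distance conditions in Definition~\ref{defn:sofic}.

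\textbf{First direction ($\Gamma$ sofic $\Rightarrow \val_{\sof} = 1$).} Fix a finite $L$ disjoint from $\langle\langle R\rangle\rangle$ and $\eps > 0$. Soficity gives a finite action $\rho \colon S \to \Sym(X)$ with $d_H(\rho(r), \Id_X) \le \eps$ for all $r \in R$ and $d_H(\rho(w), \Id_X) \ge 1 - \eps$ for all $w \in L$. The key computation is that for a uniformly random $x \in X$, the subgroup $\Stab(\rho, x)$ passes the challenge $(R \cup L; \mathbf{1}_R)$ iff (i) $r.x = x$ for all $r \in R$ and (ii) $w.x \ne x$ for all $w \in L \setminus R$. (One should note $L$ can be taken disjoint from $R$ without loss of generality, or handle the overlap; since $R \subseteq \langle\langle R\rangle\rangle$ and $L \cap \langle\langle R\rangle\rangle = \emptyset$, in fact $L \cap R = \emptyset$ automatically.) By a union bound over the finitely many elements of $R \cup L$, the fraction of $x$ failing either condition is at most $|R|\eps + |L|\eps$, so $\val(\cV_{R\|L}, \Phi(\rho)) \ge 1 - (|R| + |L|)\eps$. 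Letting $\eps \to 0$ gives $\val_{\sof}(\cV_{R\|L}) = 1$.

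\textbf{Second direction ($\val_{\sof} = 1$ for all $L$ $\Rightarrow \Gamma$ sofic).} Fix $L$ disjoint from $\langle\langle R\rangle\rangle$ and $\eps > 0$; I want to produce a single action certifying the two Hamming conditions. The subtlety is that $\val(\cV_{R\|L}, \Phi(\sigma))$ close to $1$ gives, via the union bound, that \emph{on average} each $\sigma(w)$ for $w \in L$ has few fixed points and each $\sigma(r)$ has few non-fixed points — but "few fixed points on average over $L$" is weaker than "$d_H(\sigma(w), \Id) \ge 1 - \eps$ for every $w \in L$." To fix this, I would run the test separately for each $w \in L$: apply the hypothesis to the singleton-style separation test $\cV_{R \| \{w\}}$ (valid since $\{w\}$ is disjoint from $\langle\langle R\rangle\rangle$), obtaining an action $\sigma_w \colon S \to \Sym(X_w)$ with $\val(\cV_{R\|\{w\}}, \Phi(\sigma_w)) \ge 1 - \eps'$. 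Unpacking: the fraction of $x \in X_w$ with $\sigma_w(r).x = x$ for all $r \in R$ and $\sigma_w(w).x \ne x$ is $\ge 1 - \eps'$; hence $d_H(\sigma_w(r), \Id) \le \eps'$ for each $r \in R$ and $d_H(\sigma_w(w), \Id) \ge 1 - \eps'$. Then take the disjoint union action $\sigma = \bigsqcup_{w \in L} \sigma_w$ on $X = \bigsqcup_w X_w$: a disjoint union of actions has $d_H(\sigma(u), \Id_X)$ equal to a convex combination of the $d_H(\sigma_w(u), \Id_{X_w})$, weighted by $|X_w|/|X|$. So $d_H(\sigma(r), \Id) \le \eps'$ for all $r \in R$ (good). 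For the lower bound on $d_H(\sigma(w_0), \Id)$ for a fixed $w_0 \in L$: on the block $X_{w_0}$ it is $\ge 1 - \eps'$, but on the other blocks $X_w$ ($w \ne w_0$) it could be small. This is the main obstacle, and the clean workaround is to not take one big union but instead observe that Definition~\ref{defn:sofic} only requires, for each $L$ and $\eps$, \emph{some} action; so it suffices to realize the conditions for $L$ by taking, for that particular $L$, the union over $w \in L$ of suitably sized copies — or more simply, to prove soficity it is enough to treat singleton $L = \{w\}$: indeed soficity for all singletons implies soficity for all finite $L$ by the same disjoint-union-with-many-copies trick (pad with enough copies of each $\sigma_w$ so that every block is a vanishing fraction except... ) — actually the correct standard move is: soficity is equivalent to its restriction to singleton $L$, because given $\sigma_w$ for each $w \in L$ one passes to the action on the \emph{product} $\prod_{w} X_w$ (or a large power), where a non-fixed point in any coordinate is a non-fixed point overall, making $d_H(\sigma(w), \Id)$ on the product at least $1 - (1-(1-\eps'))$ wait — on a product action $\sigma(w).(x_i)_i = (\sigma_{w_i}(w).x_i)_i$ is not well-defined across different factor actions. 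The genuinely correct argument: use that $d_H(\sigma(w), \Id) \ge 1-\eps$ for all $w\in L$ simultaneously can be obtained from the single test $\cV_{R\|L}$ by a \emph{pigeonhole/averaging refinement} — if $\val(\cV_{R\|L}, \Phi(\sigma)) \ge 1 - \eps/|L|$ then for each individual $w \in L$, since failing on $w$ alone already costs at least $\Pro_x[\sigma(w).x = x \wedge \forall r\ \sigma(r).x=x]$, one gets $d_H(\sigma(w),\Id) \ge 1 - \eps/|L| - \sum_{r}d_H(\sigma(r),\Id) \ge 1 - \eps$ after also bounding $\sum_r d_H(\sigma(r),\Id) \le \eps$. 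So in fact a single application of the hypothesis to $\cV_{R\|L}$ with accuracy $1 - \eps/(|R|+|L|)$ delivers all the required inequalities at once via careful inclusion–exclusion on the event $\{\forall r\in R: \sigma(r).x = x\}$.

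\textbf{Where the difficulty lies.} The forward direction is a routine union bound. The reverse direction's only real content is the bookkeeping showing that high value of the \emph{single} test $\cV_{R\|L}$ forces $d_H(\sigma(w),\Id) \ge 1 - \eps$ for \emph{each} $w \in L$ individually (not just on average); this works because passing the challenge requires \emph{simultaneously} $\sigma(r).x = x$ for all $r$ \emph{and} $\sigma(w).x \ne x$ for all $w$, so for a fixed $w_0$, $\Pro_x[\sigma(w_0).x \ne x] \ge \Pro_x[\text{pass challenge}] \ge \val - 0$, giving the per-element bound directly with no averaging loss. I would also record the trivial reductions: $L$ may be assumed disjoint from $R$, actions may be assumed transitive by restricting to an orbit (or not — it is unnecessary here), and $\val(\cV_{R\|L}, \Phi(\sigma))$ equals $\Pro_{x}[\sigma(r).x=x\ \forall r\in R,\ \sigma(w).x \ne x\ \forall w \in L\setminus R]$, which is the one identity everything rests on.
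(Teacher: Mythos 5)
Your proposal is correct and, once the false starts are stripped away, takes essentially the same approach as the paper: the forward direction via a union bound, and the reverse direction via the observation that the probability of passing the challenge is at most $\min\bigl(\min_{r\in R}(1-d_H(\sigma(r),\Id)),\,\min_{w\in L} d_H(\sigma(w),\Id)\bigr)$, so a high value forces each individual Hamming bound with no averaging loss. The meandering middle of your reverse direction (disjoint unions, products, pigeonhole with $\eps/|L|$, inclusion--exclusion) should simply be cut — your final paragraph already contains the clean one-line argument that the paper uses, and none of the other machinery is needed.
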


\begin{proof}
Assume $\Gamma$ is sofic, and let  $L\subseteq\cF$ be a finite set such that $L\cap \langle \langle R\rangle \rangle=\emptyset$. For every $\eps>0$, there is an action $\rho_\eps\colon S\to \Sym(X_\eps)$, such that 
  \[
    \begin{split}
\forall r&\in R\ \colon \ \ d_H(\rho_\eps(r),\Id_{X_\eps})\leq \eps,\\
\forall w&\in L\ \colon \ \ d_H(\rho_\eps(w),\Id_{X_\eps})\geq 1-\eps.
\end{split}
    \]
    Hence,
    \[
    \begin{split}
        \val(\cV_{R||L},\Phi(\rho_\eps))&=\Pro_{x\in X_\eps}[\forall r\in R,\forall w\in L\ \colon\  \rho_\eps(r).x=x,\ \rho_\eps(w).x\neq x]\\
        &\geq 1-\sum_{r\in R}\Pro_{x\in X_\eps}[\rho_\eps(r).x\neq x]-\sum_{w\in L}\Pro_{x\in X_\eps}[\rho_\eps(w).x=x]\\
        &\geq 1-\sum_{r\in R}d_H(\rho_\eps(r),\Id_{X_\eps})-\sum_{w\in L}\left(1-d_H(\rho_\eps(w),\Id_{X_\eps})\right)\\
        &\geq 1-|R|\eps-|L|\eps.
    \end{split}
    \]
    Since $|R|$ and $|L|$ are fixed, and $\eps>0$ is arbitrarily small, we have $\val_{\sof}(\cV_{R||L})=1$.

    On the other hand, if for every $L$ we have $\val_{\sof}(\cV_{R||L})=1$, then for every $\eps>0$ there is an action $\rho\colon S\to \Sym(X)$ with $\val(\cV_{R||L},\Phi(\rho))\geq 1-\eps$. Now, since 
    \[
      \begin{split}
\val(\cV_{R||L},\Phi(\rho))&=\Pro_{x\in X}[\forall r\in R,\forall w\in L\ \colon\  \rho(r)x=x,\ \rho(w)x\neq x]\\
&\leq \min(\min_{r\in R}(\Pro_{x\in X}[\rho(r)x=x]),\  \min_{w\in L}(\Pro_{x\in X}[\rho(w)x\neq x]))\\
&=\min(\min_{r\in R}(1-d_H(\rho(r),\Id_X)),\  \min_{w\in L}(d_H(\rho(w),\Id_X))),
\end{split}
    \]
    we deduce that
 \[
  \begin{split}
\forall r&\in R\ \colon \ \ d_H(\rho(r),\Id_X)\leq \eps,\\
\forall w&\in L\ \colon \ \ d_H(\rho(w),\Id_X)\geq 1-\eps.
\end{split}
    \]
    and $\Gamma$ is sofic.
\end{proof}
\subsection{Formula satisfaction test}
Let $\varphi(y_1,...,y_n)$ be a boolean formula. The test $\cT_\varphi$ is defined over the formal set of generators $S=\{x_1,...,x_n\}$. It has a single challenge $(K;D)$, where $K=S$. Note that the characteristic function of every subset $A\subseteq S$ is already in the form ${\bf 1}_A\colon \{x_1,...,x_n\}\to \{0,1\}$, and thus induces an assignment for the formal variables. The decision predicate $D$ accepts $A\subseteq S$ if ${\bf 1}_{A}$ induces a satisfying assignment for $\varphi$, i.e.,
\[
D(A)=\varphi({\bf 1}_{A}(x_1),...,{\bf 1}_{A}(x_n)).
\]
It is straightforward that $\val(\cT_\varphi)=1$ if and only if $\varphi$ is satisfiable. 

There is a more interesting version of this test, when the formula $\varphi$ is a 3CNF. Recall that a 3CNF is an and of clauses of the form $\varphi_t=y_i^{\eps_1} \lor y_j^{\eps_2}\lor y_k^{\eps_3}$, where $y_i,y_j,y_k$ are taken from a fixed set of formal generators (as before), $\eps_1,\eps_2,\eps_3\in\{0,1\}$, and we interpret $y^{0}=y$ and $y^{1}=\lnot y$. Now, checking that $\varphi$ is satisfied by ${\bf 1}_A$ is the same as going over all the clauses and checking that they are all satisfied. Thus, we can define a randomized version of this check by associating a challenge $(K_t;D_t)$ with each clause $\varphi_t$, by letting $K_t=\{x_i,x_j,x_k\}$ and $D_t(A)=\varphi_t({\bf 1}_A(x_i),{\bf 1}_A(x_j),{\bf 1}_A(x_k))$. Then, by choosing a uniform distribution over challenges,  the associated test $\cT_\varphi$ checks whether a uniformly chosen clause is satisfied by ${\bf 1}_A$. The famous PCP theorem \cite{PCP_thm} says that any 3CNF $\varphi$ can be (efficiently) transformed into a not much larger 3CNF $\varphi'$, such that if $\val_{\sof}(\cT_\varphi)=1$, then $\val_{\sof}(\cT_{\varphi'})=1$, and if $\val_{\sof}(\cT_\varphi)=0$, then  $\val_{\sof}(\cT_{\varphi'})\leq \nicefrac{1}{2}.$
The scaled up version of this argument shows that any $\NEXP$ problem (see \cite{BFL91}) can be reduced to approximating the sofic value of a test.\footnote{This last remark is meaningless as long as we do not specify the exact way we encode tests. The point  is that \textbf{there is} a natural way to encode tests such that $\NEXP$ can be reduced to approximating the sofic value immediately from \cite{BFL91}. In any case,  this paper is devoted to proving a much stronger conclusion, which is that approximating the sofic value is as \textbf{hard as the Halting Problem}.}

\section{Robustness} \label{sec:Robustness}
 In this section we define  a rigidity property of tests called \emph{robustness} (see Definition \ref{defn:robustness}). Loosely, a test is robust if every almost optimal strategy against it is close to an optimal one. Though this notion is natural, one needs to clarify what is the measure of distance between strategies. 

\subsection{Edit distance}
We first define a generalized version of the normalized Hamming distance (\ref{regular_Hamming_dist}). Let $X$ and $Y$ be two finite sets, and assume $X\subseteq Y$. Given two permutations $\sigma\in \Sym(X)$ and $\tau\in \Sym(Y)$, we define their \emph{normalized Hamming distance with errors} to be
\begin{equation}\label{generalized_Hamming_dist}
   d_{H}(\sigma,\tau)=1-\frac{\left|\left\{ x\in X\mid\sigma(x)=\tau(x)\right\} \right|}{|Y|}. 
\end{equation}
If for every $y\notin X$ we let $\sigma(y)=\frak{error}$ $\notin Y,$
then we can define equivalently 
\[
d_{H}(\sigma,\tau)=\mathbb{P}_{y\in Y}\left[\sigma(y)\neq\tau(y)\right],
\]
which is the way we defined the normalized Hamming distance in (\ref{regular_Hamming_dist}). Since the generalized version is the same as the usual Hamming distance when $Y=X$, we use the same notation for both and just call them \emph{the Hamming distance} from now on.

Let $\frS \colon S\to \mathbb{R}_{>0}$ be a function, which we call the \emph{significance} function. 
Given two actions $\rho\colon S\to \Sym(X)$ and $\varphi\colon S\to \Sym(Y)$, we  define their $\frS$-\emph{weighted distance} as follows:
\[
d^
\frS(\rho,\varphi)=\sum_{s\in S}\frS(s)\cdot d_H(\rho(s),\varphi(s)).\footnote{Though the Hamming distance is always bounded by $1$, the weighted distance may be larger than $1$.}
\]
For every $\theta\in\Sym(Y)$, let $\varphi^\theta \colon S\to \Sym(Y)$ be $\varphi^\theta(s)=\theta \varphi(s) \theta^{-1}$.
Now, the $\frS$-\emph{weighted edit distance}\footnote{
There is a more graph theoretical way of viewing the edit distance, in the spirit of the  section \ref{sec:graph_theoretical_interpret}. It essentially measures how much one Schreier graph needs to be changed to get to the other graph. For more on that, see Section 4 of \cite{CL_part1} and the $\Gamma$-graph notion in \cite{BC22}.} of $\rho$ and $\varphi$ is  the following minima: 
\[
d^\frS_{\rm edit}(\rho,\varphi)=\min_{\theta\in \Sym(Y)}(d^\frS(\rho,\varphi^\theta)). 
\]
Note that the edit distance is independent of the specific embedding of $X$ in $Y$.
The edit distance induces a metric on $\IRS_{\fd}(\cF)$. For every $\pi,\pi' \in \IRS_{\fd}(\cF)$, let
\begin{equation}\label{eq:edit_dist_of_IRSs}
    d^\frS_{\rm edit}(\pi,\pi')=\inf\{ d^\frS_{\rm edit}(\rho,\varphi) \mid \rho\in \Phi^{-1}(\pi),\varphi\in \Phi^{-1}(\pi')\}\;,
\end{equation}
where recall the definition of $\Phi$ in~\eqref{correspondence_actions_IRSs}.

\subsubsection{A graph theoretic perspective} \label{sec:graph_theoretical_interpret}

\begin{defn}
 Let $\Gamma$ be a  group with $S$ a finite  generating set. Let $X$ be a set and $\sigma\colon \Gamma \to \Sym(X)$ an action of $\Gamma$ on $X$. The \emph{generalized Schreier graph}  $\Sch(\sigma,S)$ is a directed graph with vertex set $X$ and edges labeled by $S$. The edges of $\Sch(\sigma,S)$ are defined as follows: For every $x\in X$  and $s\in S$, there is a directed edge from  $\sigma(s).x$ to $x$ labeled by $s$, namely
\[
\sigma(s).x\xrightarrow[]{s} x.
\]
We may say that an edge is labeled by some inverse of $s\in S$, such as $x\xrightarrow{s^{-1}}y$. By that we mean the edge labeled by $s$ and oriented in the other direction, namely $y\xrightarrow{s}x$. This is natural as $\sigma(s^{-1})=\sigma(s)^{-1}$. Moreover, when $S$ generates $\Gamma$, it includes words with inverses, and so when one associates words in the generators to paths in a labeled graph, this interpretation is natural (cf.\ \cite{KAPOVICH2002608}). 
\end{defn}
\begin{rem}
    This  generalizes the usual notion of a Schreier coset graph $\Sch(\Gamma,H,S)$ by choosing $\sigma \colon \Gamma \to {_H\backslash^\Gamma} $ to be the natural action of $\Gamma$ on right $H$-cosets. 
\end{rem} 
The Schreier graph does not provide any new data about $\sigma$, but it  provides a graph theoretic perspective for approaching IRSs.
For example, a fact that we repeatedly use is that orbits of the action are in correspondence with the connected components of the Schreier graph. Moreover, the edit distance is better understood as a distance between graphs: Given two labeled graphs, one wants to transform one to the other. Whenever the endpoint of an edge labeled by $s$ is changed, there is an associated cost $\frS(s)$. The same cost is incurred when deleting completely or adding such an $s$-labeled edge. The goal is to minimize the cost of moving between the graphs (up  to isomorphism). The edit distance is exactly the minimal cost of such a transformation.

\subsubsection{Topology induced by the edit distance}
 It turns out that regardless of the significance function $\frS$, the topology induced by the edit distance is strictly stronger than the weak$^*$ topology:

\begin{prop}[Topology induced by the edit distance] \label{prop:top_edit_dist} We have the following:
\begin{enumerate}
\item Given two sequences $\pi_n,\pi'_n\in \IRS_{\fd}(\cF)$ such that $\pi_n$ converges  to $\pi_\infty\in \IRS(\cF)$ in the weak$^*$ topology and $\lim_{n\to \infty}d^\frS_{\rm edit}(\pi_n,\pi'_n)=0$, then $\pi'_n$ converge to the same $\pi_\infty$ in the weak$^*$ topology.
\item There are sequences $\pi_n\in \IRS_{\fd}(\cF)$ that converge in the weak$^*$ topology but are not Cauchy sequences in the edit distance topology.
\end{enumerate}
\end{prop}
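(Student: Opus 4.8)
The plan is to prove the two clauses separately, starting with clause (1). The key observation is that the weak\(^*\) topology on \(\IRS_{\fd}(\cF)\) is, by Remark \ref{R:weak}, controlled by the finitely many coordinates \(\pi \cap A\) for finite \(A \subseteq \cF\); and for \(A\) a fixed finite set, the quantity \(\Phi(\rho) \cap A\) is a continuous (in fact locally constant on a suitable scale) function of the ``local statistics'' of the Schreier graph \(\Sch(\rho, S)\) around a uniformly random vertex, out to radius \(r = \max_{w \in A}|w|\). First I would fix a finite \(A\) and an \(\eps > 0\), and show that if \(d^\frS_{\rm edit}(\rho, \varphi)\) is small, then \(\Phi(\rho)\cap A\) and \(\Phi(\varphi)\cap A\) are close in \(\RR^{2^A}\). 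Concretely: pick the optimal \(\theta \in \Sym(Y)\) realizing the edit distance, so that \(\sum_{s\in S}\frS(s)\,d_H(\rho(s), \varphi^\theta(s))\) is small; since \(\frS(s) > 0\) for all \(s\) and \(S\) is finite, this forces \(d_H(\rho(s), \varphi^\theta(s)) \le \delta\) for every \(s\), where \(\delta \to 0\) as the edit distance \(\to 0\). A vertex whose radius-\(r\) neighborhood in \(\Sch(\rho,S)\) agrees with that in \(\Sch(\varphi^\theta, S)\) contributes identically to \(\Phi(\rho) \cap A\) and \(\Phi(\varphi^\theta)\cap A = \Phi(\varphi)\cap A\) (conjugation by \(\theta\) does not change the isomorphism type of the rooted labeled graph, only relabels vertices). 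The set of ``bad'' vertices — those on which some generator \(s\) or its action differs within radius \(r\) — has measure at most (roughly) \(2\sum_{s}|S_{\le r}|\cdot \delta\) by a union bound over the boundedly many generator-applications defining the radius-\(r\) ball, where the factor accounts for the possibly differing ambient set sizes \(|X|\) vs \(|Y|\). Hence \(\|\Phi(\rho)\cap A - \Phi(\varphi)\cap A\|\) is bounded by a constant (depending on \(A\) and \(S\) only) times \(\delta\). Applying this with \(\rho \in \Phi^{-1}(\pi_n)\), \(\varphi \in \Phi^{-1}(\pi'_n)\) chosen to nearly realize \(d^\frS_{\rm edit}(\pi_n, \pi'_n)\), we get \(\|\pi_n \cap A - \pi'_n \cap A\| \to 0\); combined with \(\pi_n \cap A \to \pi_\infty \cap A\) we conclude \(\pi'_n \cap A \to \pi_\infty \cap A\), and since \(A\) was arbitrary, \(\pi'_n \xrightarrow{w^*} \pi_\infty\).

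For clause (2) the plan is to exhibit an explicit sequence. Take \(S = \{s\}\) a single generator (or work inside a free factor), so that an action on \(X\) is just a permutation, and a finitely described IRS records the distribution of cycle lengths of a uniform random point. Let \(\rho_n\) be a single \(n\)-cycle on \(X_n = \{1,\dots,n\}\). Then \(\Phi(\rho_n) \cap A\) converges, as \(n \to \infty\), to the finitely described (indeed, the IRS corresponding to the trivial subgroup's orbit structure) limit in which the root lies on a bi-infinite \(\ZZ\)-line — i.e.\ \(\pi_n \xrightarrow{w^*} \pi_\infty\) where \(\pi_\infty\) is the Dirac mass on the trivial subgroup \(\{e\}\) (equivalently, the Benjamini--Schramm limit is the labeled line). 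However, I claim \((\pi_n)\) is not Cauchy in \(d^\frS_{\rm edit}\): to transform an \(n\)-cycle into an \(m\)-cycle (with \(m \ll n\), or even into \(k\) disjoint cycles of length \(n/k\)) by rerouting \(s\)-edges, one must change the successor of a \(\Theta(1)\) fraction of points (because a single \(n\)-cycle and, say, two \((n/2)\)-cycles differ in the successor map on at least a constant fraction of inputs after optimizing over relabelings — cutting one cycle into two requires re-pointing at least, up to lower-order terms, a constant proportion when measured against the larger ground set, and more to the point no amount of relabeling can make a connected Schreier graph agree with a disconnected one except on a set whose complement has measure bounded below). So \(d^\frS_{\rm edit}(\pi_n, \pi_{2n}) \ge \frS(s)\cdot c\) for a universal \(c > 0\), violating Cauchy-ness.

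The main obstacle I anticipate is in clause (2): making the lower bound \(d^\frS_{\rm edit}(\pi_n, \pi_m) \ge c\) rigorous requires a clean combinatorial argument that no relabeling \(\theta\) can bring a connected permutation close to one with many small orbits — one has to rule out the degenerate possibility that the \emph{infimum} over preimages \(\rho \in \Phi^{-1}(\pi_n)\), \(\varphi \in \Phi^{-1}(\pi_m)\) in \eqref{eq:edit_dist_of_IRSs} collapses by choosing cleverly padded representatives (e.g.\ \(\rho\) on a huge set with mostly fixed points). The right fix is probably to argue at the level of the IRSs directly: if \(d^\frS_{\rm edit}(\pi, \pi')\) were \(< \eps\), then by clause (1)'s estimate applied to a well-chosen finite \(A\) (detecting, say, ``does the root lie in an orbit of size \(\le N\)''), the values \(\pi(\{\text{orbit size} \le N\})\) and \(\pi'(\{\text{orbit size}\le N\})\) would differ by \(O(\eps)\); but for \(\pi_n\) this probability is \(0\) for all \(N < n\), giving the desired separation between \(\pi_n\) and any \(\pi_m\) with \(m\) small — wait, that only separates from *small* \(m\); to get a genuine non-Cauchy sequence one instead compares \(\pi_n\) (one big cycle) with a sequence \(\pi'_n\) that is \(w^*\)-equivalent but ``locally disconnected'', e.g.\ \(\pi'_n\) = uniform point in a disjoint union of two \(n\)-cycles versus... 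I would instead simply take \(\pi_n\) alternating between ``one \(n\)-cycle'' and ``\(\sqrt n\) cycles of length \(\sqrt n\)'': both \(w^*\)-converge to the \(\ZZ\)-line IRS, but the edit distance between consecutive terms stays \(\ge c\,\frS(s)\) because the orbit-size-\(\le N\) statistics differ by \(1\) in the limit. Nailing this dichotomy cleanly — using clause (1)'s continuity estimate in the contrapositive — is the crux of the argument.
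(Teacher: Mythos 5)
Your argument for clause (1) is essentially the paper's: reduce to the unweighted edit distance using $\min(\frS) > 0$, conjugate so that the edit distance is realized, and bound the measure of ``bad'' vertices --- those on which some word of bounded length acts differently in the two actions --- by a union bound involving the size of a ball in $\cF(S)$. The paper organizes this as an induction on $\Bad(k)$ in terms of $\Bad(1)$ and $|B(k-1)|$, but your outline captures the same idea and is sound.

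Clause (2) is where there is a genuine gap, and the proposed construction does not work. There is first a structural obstruction: over a single generator $s$, the acting group is $\ZZ$, which is amenable, and the remark following the proposition points out that for amenable groups the \emph{converse} of clause (1) holds; so no counterexample to (2) can be built from a single permutation coordinate. Concretely, the quantitative claim fails: an $n$-cycle can be turned into $\sqrt{n}$ disjoint cycles of length $\sqrt{n}$ by rerouting only $\sqrt{n}$ of the $s$-edges, so the normalized Hamming distance is $1/\sqrt{n} \to 0$, and your alternating sequence is in fact Cauchy in $d^{\frS}_{\rm edit}$. The deeper issue is that the statistic you reach for --- the probability that the root's orbit has size at most $N$, for fixed $N$ --- is a weak$^*$-continuous functional (it is determined by the radius-$N$ ball around the root), so by hypothesis it converges along any two sequences sharing a weak$^*$ limit and can never stay bounded apart. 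What is needed is a functional that is Lipschitz for the edit distance but \emph{discontinuous} for weak$^*$ convergence. The paper's choice is the independence ratio $\alpha$ of the action graph, via $d_{\rm edit}(\rho,\rho') \ge |\alpha(\rho)-\alpha(\rho')|$: when $|S|\ge 2$, a uniformly random $\rho_n\colon S\to \Sym(n)$ yields an action graph with $\alpha < 1/2 - c$ with high probability, while a bipartite action graph has $\alpha = 1/2$, and yet both families weak$^*$-converge to the Dirac mass at the trivial subgroup. Replacing the cycle-length idea with such an edit-Lipschitz, weak$^*$-discontinuous functional is essential, not a cosmetic fix.
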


\begin{rem}
    The above proposition should be contrasted with the \textbf{amenable} case, studied in \cite[Proposition 6.8]{BLT}, where the authors showed that the converse of $(1)$ is true when the free group is replaced by an amenable group. See \cite[Remark 6.9]{BLT} for a different proof of the above clause $(2)$.
\end{rem}

\begin{proof}[Proof of Proposition \ref{prop:top_edit_dist}]

Let $\min(\frS)$ be the minimum value of $\frS$. Since $S$ is finite, $\min(\frS)>0$. So $d^\frS_{\rm edit} \ge \min(\frS)d_{\rm edit}$ where $d_{\rm edit}$ is the edit distance with significance function equal to the constant $1$. So $\lim_{n\to \infty}d_{\rm edit}(\pi_n,\pi'_n)=0$. This reduces the problem to the ordinary edit distance.

Let $K,L$ be finite subsets of $\cF$. By Remark \ref{R:weak}, we have
$$\lim_{n\to\infty} \pi_n(\cC(K,L)) = \pi_\infty(\cC(K,L)).$$
So it suffices to prove
$$\lim_{n\to\infty} |\pi_n(\cC(K,L))-\pi'_n(\cC(K,L))| = 0.$$
Let $r>0$ be a large enough radius so that $K\cup L$ is contained in the ball $B(r) \subset \cF$ of radius $r$ with respect to the word metric induced by $S$.

Because $\pi_n$ and $\pi'_n$ are finitely described, there are actions $\rho_n\colon S\to \Sym(X_n)$ and $\rho'_n\colon S\to \Sym(Y_n)$ which induce $\pi_n$ and $\pi'_n$ respectively. We may assume that for each $n$, either $X_n \subset Y_n$ or $Y_n \subset X_n$. After conjugating if necessary, we may further assume
$$d_{\rm edit}(\rho_n,\rho'_n) = d(\rho_n,\rho'_n) = \sum_{s\in S} d_H(\rho_n(s),\rho'_n(s)).$$

Given an integer $k \ge 1$, let $\Bad(k)$ be the set of all $v\in X_n \cup Y_n$ such that there exists $g$ in the ball $B(k)\subset \cF$ with $\rho_n(g)^{-1}v \ne \rho'_n(g)^{-1}v$. The definition of edit distance gives
$$|\Bad(1)| \le d_{\rm edit}(\rho_n,\rho'_n) |X_n \cup Y_n|.$$
Suppose $v \in \Bad(k) \setminus \Bad(k-1)$ for some $k\ge 2$. Then there is a $g \in B(k)$ such that $\rho_n(g)^{-1}v \ne \rho'_n(g)^{-1}v$. Let $g=s_1\cdots s_k$ with $s_i \in S \cup S^{-1}$. Since $v \notin \Bad(k-1)$, if $h=s_1\cdots s_{k-1}$, then $w:=\rho_n(h)^{-1}v = \rho'_n(h)^{-1}v$ and $\rho_n(s_k)^{-1}w \ne \rho'_n(s_k)^{-1}w$. Hence $w \in \Bad(1)$ and so $v \in B(k-1)\Bad(1)$. Therefore,
$$|\Bad(k)| \le |B(k-1)| |\Bad(1)| \le |B(k-1)|d_{\rm edit}(\rho_n,\rho'_n) |X_n \cup Y_n|.$$
Since $|B(k-1)|$ is independent of $n$, and $d_{\rm edit}(\rho_n,\rho'_n)\xrightarrow{n\to \infty} 0$, we can deduce that
$$\lim_{n\to\infty} \frac{|\Bad(k)|}{|X_n \cup Y_n|} = 0$$
for all $k$. 

Because $K \cup L \subset B(r)$, if $v \notin \Bad(r)$, then $\Stab(\rho_n, v) \in \cC(K,L)$ if and only if $\Stab(\rho'_n, v) \in \cC(K,L)$. Hence
$$|\pi_n(\cC(K,L))-\pi'_n(\cC(K,L))| \le \frac{|\Bad(k)|}{|X_n \cup Y_n|} \to 0$$
as $n\to\infty$. This finishes the proof of (1).

To prove item (2), recall that if $G=(V,E)$ is a graph then a subset $I \subset V$ is independent if no edge in $E$ has both endpoints in $I$. If $G$ is finite then the independence ratio of $G$ is $\alpha(G)=|I|/|V|$ where $I$ is an independent set of maximum cardinality. If $\rho:S \to \Sym(n)$ is an action then we let $\alpha(\rho)=\alpha(G)$ where $G$ is the associated action graph.

For any actions $\rho$ and $\rho'$,  we have $d_{edit}(\rho,\rho') \ge |\alpha(\rho)-\alpha(\rho')|$. So it suffices to show there exist action sequences $\{\rho_n\}_n$, $\{\rho'_n\}_n$ whose corresponding IRSs weak$^*$ converge to the same IRS and  $|\alpha(\rho)-\alpha(\rho')|>c$ for some constant $c>0$.

By using a first moment argument, it can be shown that if $\rho_n:S \to \Sym(n)$ is chosen uniformly at random and if $G_n$ is the associated action graph then with high probability $\alpha(G_n)<1/2 - c$ where $c>0$ is a constant depending only on $|S|$ (which we assume is at least 2). Moreover,  there are sets $\Omega_n \subset \Sym(n)^S$ with $|\Omega_n|/|\Sym(n)^S| \to 1$ as $n\to\infty$ such that if $\rho_n \in \Omega_n$ for all $n$ and $\pi_n$ are the corresponding IRSs then $\pi_n$ weak$^*$ converges to the trivial subgroup $\{\Id\}$. This can be derived by estimating the expected number of short cycles in the graph $G_n$ \cite{MR1725006}. A more precise estimate on $\Omega_n$ is obtained in \cite{MR4383230}.

Therefore, there exists a sequence $\{\rho_n\}$ of actions such that if $\pi_n$ are the corresponding IRSs then $\pi_n$ weak$^*$ converges to the trivial subgroup $\{\Id\}$ and $\alpha(G_n) < 1/2 - c$.

On the other hand, there exist $\rho'_n:S \to \Sym(n)$ whose action graphs are bi-partite such that if $\pi'_n$ is the corresponding IRS then $\pi'_n$ also weak$^*$ converges to the trivial subgroup $\{\Id\}$. Since the independence ratio in this case is 1/2, it follows that the sequence $\pi_1,\pi'_1,\pi_2,\pi'_2,\ldots$ cannot be Cauchy in the edit distance.




\end{proof}

\subsection{Robustness}
Given $s\in S$, we can define a map $v_s\colon \cF\to \mathbb{N}$ that given a reduced word $w\in \cF$ counts the number of appearances of $s$ or $s^{-1}$ in $w$. Namely, if $w=\prod_{j=1}^\ell s_j^{\eps_j}$,  where $s_j\in S$ and $\eps_j\in \{\pm 1\}$, is a reduced word, then $v_s(w)=\sum_{j=1}^\ell {\bf 1}_s(s_j)$.

Let $\cT$ be 
a test with its usual associated data $S$, $(K_i;D_i)_{i\in Q}$ and $\mu$. We define $\frS_\cT\colon S\to \mathbb{R}_{>0}$, the \emph{significance function associated with $\cT$}, as follows: 
\begin{equation} \label{equation:weight_function_of_a_test}
\frS_\cT(s)=\Ex_{i\sim \mu}\left[\sum_{w\in K_{i}} v_s(w)\right].
\end{equation}

\begin{claim} \label{claim:test_weighted_edit_distance_implies_val_close}
    Let $\eps>0$, and $\cT$ be 
a test. Let $X$ and $Y$ be finite sets, and $\rho\colon S\to \Sym(X)$ and $\varphi\colon S\to \Sym(Y)$ be actions. Assume $d^{\frS_\cT}_{\rm edit}(\rho,\varphi)\leq \eps$. Then $|\val(\cT,\Phi(\rho))-\val(\cT,\Phi(\varphi))|\leq \eps$.
\end{claim}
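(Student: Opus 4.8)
The plan is to unfold the definition of the $\frS_\cT$-weighted edit distance, pass to the realizing conjugation $\theta \in \Sym(Y)$, and then carry out a union-bound estimate on the set of ``bad'' vertices --- those vertices $y \in Y$ at which some word from some $K_i$ is sent to a different place by $\rho$ (extended by the error symbol on $Y \setminus X$) than by $\varphi^\theta$. The key point is to relate the vertex-level discrepancy of a \emph{word} $w$ to the edge-level discrepancies of the \emph{generators} appearing in $w$, with each occurrence of $s$ (or $s^{-1}$) in $w$ contributing at most $d_H(\rho(s),\varphi^\theta(s))$ to the probability of discrepancy.

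First I would note that without loss of generality $X \subseteq Y$ and $\theta$ achieves the minimum, so that $d^{\frS_\cT}(\rho,\varphi^\theta) = \sum_{s \in S} \frS_\cT(s)\, d_H(\rho(s),\varphi^\theta(s)) \le \eps$, where $\rho(s)$ is extended to $Y$ by sending $Y \setminus X$ to an error symbol. Write $\psi = \varphi^\theta$ for brevity. The elementary combinatorial lemma I need is: for any reduced word $w = \prod_{j=1}^{\ell} s_j^{\eps_j}$,
\[
\Pro_{y \in Y}\big[\rho(w).y \neq \psi(w).y\big] \;\le\; \sum_{j=1}^{\ell} d_H\big(\rho(s_j),\psi(s_j)\big) \;=\; \sum_{s \in S} v_s(w)\, d_H(\rho(s),\psi(s)).
\]
This follows by a telescoping/triangle-inequality argument on $y$: if $\rho(s_1^{\eps_1}\cdots s_j^{\eps_j}).y = \psi(s_1^{\eps_1}\cdots s_j^{\eps_j}).y$ for all $j$ then in particular it holds for $j = \ell$; so the bad set for $w$ is contained in the union over $j$ of the sets where applying the $j$-th letter first causes a discrepancy, and each such set has measure at most $d_H(\rho(s_j),\psi(s_j))$ since applying the common prefix is a bijection of $Y$ (or decreases the relevant count when the error symbol is involved, which only helps). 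One subtlety is the error symbol: once $\rho(\cdot).y$ hits the error symbol it stays there, but then $\rho(w).y \ne \psi(w).y$ is already accounted for at the step where the error was introduced, so the bound is unaffected.

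Next, fix $i \in Q$ and $y \in Y$; if $y$ is not bad for any $w \in K_i$, then $\rho(w).y = y \iff \psi(w).y = y$ for every $w \in K_i$ (using also that $y$ itself is not the error symbol, i.e. $y \in X$ --- vertices outside $X$ are automatically bad for any nontrivial word, or can simply be absorbed into the error count), hence $\Stab(\rho,y) \cap K_i = \Stab(\psi,y) \cap K_i$ as subsets of $K_i$, and therefore $D_i$ returns the same value. Wait --- more carefully, I should define $y$ to be \emph{$i$-bad} if there is $w \in K_i$ with $\rho(w).y \ne \psi(w).y$; on the complement, $D_i(\Stab(\rho,y) \cap K_i) = D_i(\Stab(\psi,y) \cap K_i)$. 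By the union bound and the word lemma,
\[
\Pro_{y \in Y}[y \text{ is } i\text{-bad}] \;\le\; \sum_{w \in K_i} \sum_{s \in S} v_s(w)\, d_H(\rho(s),\psi(s)).
\]
Now average over $i \sim \mu$: since $\val(\cT,\Phi(\rho)) = \Ex_{i \sim \mu}\Ex_{y \in Y}[D_i(\Stab(\rho,y)\cap K_i)]$ and similarly for $\psi$, and $D_i$ is $\{0,1\}$-valued, we get
\[
|\val(\cT,\Phi(\rho)) - \val(\cT,\Phi(\psi))| \;\le\; \Ex_{i \sim \mu}\,\Pro_{y\in Y}[y \text{ is } i\text{-bad}] \;\le\; \sum_{s \in S} d_H(\rho(s),\psi(s)) \cdot \Ex_{i\sim\mu}\Big[\sum_{w \in K_i} v_s(w)\Big],
\]
and the bracketed quantity is exactly $\frS_\cT(s)$ by \eqref{equation:weight_function_of_a_test}. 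Hence the right-hand side equals $d^{\frS_\cT}(\rho,\psi) = d^{\frS_\cT}(\rho,\varphi^\theta) \le \eps$. Finally, since $\Phi(\varphi^\theta) = \Phi(\varphi)$ (conjugation by $\theta$ only relabels the vertex set $Y$, hence does not change the induced IRS --- the multiset of stabilizers is the same), we conclude $|\val(\cT,\Phi(\rho)) - \val(\cT,\Phi(\varphi))| \le \eps$.

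I expect the main obstacle to be the careful bookkeeping around the error symbol in the generalized Hamming distance: one must check that extending $\rho$ to $Y$ by the error symbol on $Y \setminus X$ interacts correctly with the telescoping bound (in particular that ``once error, always error'' does not break the per-letter accounting), and that vertices in $Y \setminus X$ are handled consistently on both sides. Everything else is a routine union bound plus the observation that $\Phi$ is conjugation-invariant. I would also double-check the edge-cases $w = \Id_\cF \in K_i$ (contributes nothing, $v_s = 0$) and whether $K_i$ should be assumed to consist of reduced words (it may be taken so without loss of generality, since $v_s$ is defined on reduced words and membership of $w$ in a subgroup only depends on the reduced form).
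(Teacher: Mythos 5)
Your proposal mirrors the paper's proof at every level: pass to the realizing conjugation $\theta$ (WLOG $X\subseteq Y$, $\psi=\varphi^\theta$), prove a word-to-letter telescoping bound $\Pro_{y\in Y}[\rho(w).y\ne\psi(w).y]\le\sum_{s}v_s(w)\,d_H(\rho(s),\psi(s))$, then union-bound over $w\in K_i$ and average over $i\sim\mu$ so that the coefficient of $d_H(\rho(s),\psi(s))$ is exactly $\frS_\cT(s)$ from \eqref{equation:weight_function_of_a_test}, and finally observe $\Phi(\varphi^\theta)=\Phi(\varphi)$. The error-symbol remarks and the edge cases ($\Id_\cF\in K_i$, non-reduced words) are handled correctly and match the paper's implicit treatment.

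The one step that fails as written is the telescoping itself. You decompose the bad set by the first \emph{prefix} $s_1^{\eps_1}\cdots s_j^{\eps_j}$ at which $\rho$ and $\psi$ diverge on $y$, and claim each piece has measure at most $d_H(\rho(s_j),\psi(s_j))$ ``since applying the common prefix is a bijection.'' Under the paper's (standard) convention that $\rho$ extends to a homomorphism, $\rho(s_1\cdots s_j).y$ is $\rho(s_1\cdots s_{j-1})$ applied to $\rho(s_j).y$, \emph{not} $\rho(s_j)$ applied to the common agreement point $\rho(s_1\cdots s_{j-1}).y$; so a first divergence at prefix step $j$ can occur even where $\rho(s_j)$ and $\psi(s_j)$ agree. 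Concretely, with $Y=\{1,2,3,4\}$, $w=ab$, $\rho(a)=(12)$, $\psi(a)=(12)(34)$, $\rho(b)=\psi(b)=(13)(24)$, one has $d_H(\rho(b),\psi(b))=0$ but the second prefix-divergence set is $\{1,2\}$. The paper instead decomposes along \emph{suffixes} $w_k=s_k^{\eps_k}\cdots s_\ell^{\eps_\ell}$; then $\rho(w_k).y=\rho(s_k^{\eps_k}).(\rho(w_{k+1}).y)$ traces an actual path from $y$, the common point at a first divergence is $\rho(w_{k+1}).y=\psi(w_{k+1}).y$, and injectivity of $\psi(w_{k+1})$ on $Y$ gives the per-letter bound. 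Your own informal description (``applying the $j$-th letter first,'' ``common prefix is a bijection'') is the path/suffix picture, so the correction is purely to replace the prefix products by the suffixes $w_k$; with that change the proof goes through and coincides with the paper's.
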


\begin{proof}
    Assume without loss of generality that $X\subseteq Y$ and that $d^{\frS_\cT}_{\rm edit}(\rho,\varphi)=d^{\frS_\cT}(\rho,\varphi)$. Let $i\in Q$. For every $w=\prod_{j=1}^\ell s_j^{\eps_j}\in \cF$, we have
    \begin{equation} \label{eq1_Claim_test_weighted_edit_distance}
    \Pro_{y\in Y}\left[{\bf 1}_{\Stab(\rho,y)}(w)\neq{\bf 1}_{\Stab(\varphi,y)}(w)\right]\leq  \Pro_{y\in Y}\left[\rho(w).y\neq\varphi(w).y\right].
\end{equation}
Inequality (\ref{eq1_Claim_test_weighted_edit_distance}) is a consequence of the following implication: If  $\rho(w)$ acts on $y$ the same way as $\varphi(w)$, then in particular  $w$ is either in both $\Stab(\rho,y)$ and $\Stab(\varphi,y)$ or in none of them. For $1\leq k\leq \ell$, let $w_{k}$ be the suffix of $w$ from position $k$, namely $w_k=\prod_{j=k}^\ell s_j^{\eps_j}$. Moreover, let $w_{\ell+1}$ be the empty word $\Id_\cF$. Then 
\begin{equation}\label{eq2_Claim_test_weighted_edit_distance}
\Pro_{y\in Y}\left[\rho(w).y\neq\varphi(w).y\right]\leq\Pro_{y\in Y}\left[\bigvee_{1\leq k\leq \ell}\rho\left(w_k\right). y \neq \varphi\left(w_k\right). y\right].
    \end{equation}
Inequality (\ref{eq2_Claim_test_weighted_edit_distance}) is a consequence of  the following implication: If the endpoint of the path beginning at $y$ and labeled by $w^{-1}$ in $\Sch(\rho,S)$ is different than the endpoint of  the same path in $\Sch(\varphi,S)$, then the two paths diverged at \emph{some} point. Now,
   \begin{equation} \label{eq3_Claim_test_weighted_edit_distance}
   \Pro_{y\in Y}\left[\bigvee_{1\leq k\leq \ell}\rho\left(w_k\right). y \neq \varphi\left(w_k\right) .y\right]\leq\sum_{k=1}^\ell\Pro_{y\in Y}\left[\rho\left(w_k\right). y \neq \varphi\left(w_k\right). y \land  \rho\left(w_{k+1}\right). y = \varphi\left(w_{k+1}\right). y\right].
   \end{equation}
Inequality (\ref{eq3_Claim_test_weighted_edit_distance}) is a consequence of the union bound and the logic tautology $\bigvee_{j=1}^m \varphi_j=\bigvee_{j=1}^m(\varphi_j\land \lnot \varphi_{j-1})$, where $\varphi_0={\rm False}$. But, since 
   \[
\rho\left(w_k\right) .y \neq \varphi\left(w_k\right) .y \land  \rho\left(w_{k+1}\right) .y = \varphi\left(w_{k+1}\right) .y \implies  \rho(s_k^{\eps_k})\circ \rho(w_{k+1}).y\neq \varphi(s_k^{\eps_k}) \circ \rho(w_{k+1}).y,
   \]
   and since $\rho(w_{k+1}).y$ is uniformly distributed given that $y$ is uniformly distributed, we can deduce that 
    \begin{equation}
    \Pro_{y\in Y}\left[\rho\left(w_k\right) .y \neq \varphi\left(w_k\right) .y \land  \rho\left(w_{k+1}\right). y = \varphi\left(w_{k+1}\right) .y\right]
    \leq  \underbrace{\Pro_{y\in Y}\left[\rho(s_k) .y\neq \varphi(s_k) .y \right]}_{=d_H(\rho(s_k),\varphi(s_k))}.
    \end{equation}
   All in all, 
\begin{equation}\label{eq4_Claim_test_weighted_edit_distance}
       \Pro_{y\in Y}\left[{\bf 1}_{\Stab(\rho,y)}(w)\neq{\bf 1}_{\Stab(\varphi,y)}(w)\right]\leq \sum_{j=1}^\ell d_H(\rho(s_j),\varphi(s_j))=\sum_{s\in S}v_s(w)d_H(\rho(s),\varphi(s))
   \end{equation}
Hence, by the union bound and inequality (\ref{eq4_Claim_test_weighted_edit_distance}), we have
\begin{equation}\label{eq5_Claim_test_weighted_edit_distance}
\begin{split}
    \Pro_{i\sim \mu}\Pro_{y\in Y}[{\Stab(\rho,y)}\cap{K_{i}}\neq {\Stab(\varphi,y)}\cap{K_{i}}]&\leq \Ex_{i\sim \mu}\left[\sum_{w\in K_{i}}\sum_{s\in S} v_s(w)d_H(\rho(s),\varphi(s))\right]\\
    &=\sum_{s\in S}\underbrace{\Ex_{i\sim \mu}\left[\sum_{w\in K_{i}}v_s(w)\right]}_{\eqref{equation:weight_function_of_a_test}}d_H(\rho(s),\varphi(s))\\
    &=\sum_{s\in S} \frS_\cT(s)d_H(\rho(s),\varphi(s))\\
    &=d^{\frS_\cT}(\rho,\varphi)\leq\eps.
    \end{split}
\end{equation}
Lastly, by combining  
\[
\begin{split}
   |\val(\cT,\rho)-\val(\cT,\varphi)|&\leq \Pro_{i\sim \mu}\Pro_{y\in Y}[D_i(\Stab(\rho,y)\cap{K_{i}})\neq D_i(\Stab(\varphi,y)\cap{K_{i}})]\\
   &\leq \Pro_{i\sim \mu}\Pro_{y\in Y}[\Stab(\rho,y)\cap{K_{i}}\neq \Stab(\varphi,y)\cap{K_{i}}]
    \end{split}
\]
with 
 (\ref{eq5_Claim_test_weighted_edit_distance}), we deduce the claim.
\end{proof}

\begin{rem}
    For those familiar with  robustness of non-local games, $d^{\frS_\cT}_{\rm edit}$ plays a somewhat analogous role to the state-dependent distance (cf.\ Section 4 of \cite{coladangelo2017robust}, for example).
\end{rem}

Robustness is a reverse implication  to the one in Claim \ref{claim:test_weighted_edit_distance_implies_val_close}.

\begin{defn}\label{defn:robustness}
Let  $\cT$ be a test and $\delta\colon \mathbb{R}_{\geq0}\to \mathbb{R}_{\geq0}$ a function satisfying $\delta(\eps)\xrightarrow{\eps\to 0}0$. We say that $\cT$ is $\delta$-robust if: 
\begin{enumerate}
\item \emph{There exists an optimal strategy}, namely $\pi\in \IRS_{\fd}(\cF)$ such that $\val(\cT,\pi)=\val_{\sof}(\cT)$.
\item \emph{Almost optimal strategies are close to optimal ones}, namely, if  $\pi'\in \IRS_{\fd}(\cF)$ satisfies $\val(\cT,\pi')\geq\val_{\sof}(\cT)-\eps$, then there is an optimal strategy $\pi$ such that $d^{\frS_\cT}_{\rm edit}(\pi,\pi')\leq \delta(\eps)$.
\end{enumerate}
\end{defn}

\begin{defn}[See \cite{CL_part1}]
Let $\Gamma=\langle S|R\rangle$ be a finitely presented group. Then, $\Gamma$ is said to be $\delta$-homomorphism stable, where $\delta(\eps)\xrightarrow{\eps\to 0}0$, if for every $\rho\colon S\to \Sym(X)$ where  $\Ex_{r\in R}[d_H(\rho(r),\Id_X)]\leq \eps$ there is a $\Gamma$-action $\varphi\colon S\to \Sym(Y)$, where $X\subseteq Y$ and such that $\Ex_{s\in S}[d_H(\rho(s),\varphi(s))]\leq \delta(\eps).$
\end{defn}
\begin{rem}
    This notion is more commonly known as \emph{flexible pointwise group stability in permutations}. See \cites{GlebskyRivera,ArzhantsevaPaunescu,BLT,BeckerLubotzky,CL_part1,CVY_efficient} for various results in this theory.
\end{rem}
\begin{fact}
    The group $\Gamma=\langle S|R\rangle$ is $\delta$-homomorphism stable if and only if the  $R$-verification test $\cV_R$ is $\delta'$-robust. The exact relation between $\delta$ and $\delta'$ can be calculated explicitly --- they are constant multiples of one another, where the constants depend only on $S$ and $R$.
\end{fact}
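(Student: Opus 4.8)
The plan is to build a dictionary between the data of $\delta$-homomorphism stability of $\Gamma=\langle S|R\rangle$ and the data of $\delta'$-robustness of $\cV_R$, and then transfer each implication across it. Three translations are needed. \emph{(Value vs.\ defect.)} Since $\cV_R$ has the single challenge $(R;\mathbf 1_R)$, any finite action $\rho\colon S\to\Sym(X)$ satisfies $\val(\cV_R,\Phi(\rho))=\Pro_{x\in X}[\forall r\in R\colon\rho(r).x=x]$; writing $e(\rho):=\Ex_{r\in R}[d_H(\rho(r),\Id_X)]$, a union bound on one side and the trivial bound $\val(\cV_R,\Phi(\rho))\le\min_{r\in R}\Pro_x[\rho(r).x=x]$ on the other give
\[
e(\rho)\ \le\ 1-\val(\cV_R,\Phi(\rho))\ \le\ |R|\cdot e(\rho),
\]
and recall $\val_{\sof}(\cV_R)=1$ (trivial action on a point). \emph{(Optimal strategy vs.\ $\Gamma$-action.)} For $\psi\colon S\to\Sym(Y)$ one has $\val(\cV_R,\Phi(\psi))=1$ iff $\psi(r)=\Id_Y$ for every $r\in R$, i.e.\ iff $\psi$ factors through $\Gamma$; in particular clause $(1)$ of robustness holds automatically. \emph{(Distances.)} With $\frS_{\cV_R}(s)=\sum_{w\in R}v_s(w)$ and $c_1:=\min_{s}\frS_{\cV_R}(s)$, $c_2:=\max_s\frS_{\cV_R}(s)$ — positive constants depending only on $S,R$, after discarding any generator not occurring in $R$ — for any two actions on sets $X\subseteq Y$ and any $\theta\in\Sym(Y)$,
\[
c_1|S|\cdot\Ex_{s\in S}\big[d_H(\rho(s),\varphi^\theta(s))\big]\ \le\ d^{\frS_{\cV_R}}(\rho,\varphi^\theta)\ \le\ c_2|S|\cdot\Ex_{s\in S}\big[d_H(\rho(s),\varphi^\theta(s))\big],
\]
and minimizing over $\theta$, then taking infima over $\Phi$-fibers, carries this comparison down to $d^{\frS_{\cV_R}}_{\rm edit}$ on $\IRS_{\fd}(\cF)$ versus its $\frS\equiv 1$ counterpart.

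Given the dictionary, I would first handle the forward direction. Suppose $\Gamma$ is $\delta$-homomorphism stable and $\pi'\in\IRS_{\fd}(\cF)$ has $\val(\cV_R,\pi')\ge 1-\eps$; choose $\rho\in\Phi^{-1}(\pi')$, so $e(\rho)\le 1-\val(\cV_R,\Phi(\rho))\le\eps$, and let $\varphi\colon S\to\Sym(Y)$, $X\subseteq Y$, be the $\Gamma$-action supplied by stability with $\Ex_{s\in S}[d_H(\rho(s),\varphi(s))]\le\delta(\eps)$. Then $\pi:=\Phi(\varphi)$ is optimal, and taking $\theta=\Id$ in the distance comparison gives $d^{\frS_{\cV_R}}_{\rm edit}(\pi,\pi')\le d^{\frS_{\cV_R}}(\rho,\varphi)\le c_2|S|\,\delta(\eps)$; hence $\cV_R$ is $\delta'$-robust with $\delta'(\eps):=c_2|S|\,\delta(\eps)$. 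Conversely, suppose $\cV_R$ is $\delta'$-robust and let $\rho\colon S\to\Sym(X)$ have $e(\rho)\le\eps$; then $\val(\cV_R,\Phi(\rho))\ge 1-|R|\eps=\val_{\sof}(\cV_R)-|R|\eps$, so robustness yields an optimal $\pi\in\IRS_{\fd}(\cF)$ — necessarily $\pi=\Phi(\varphi)$ for some $\Gamma$-action $\varphi$ — with $d^{\frS_{\cV_R}}_{\rm edit}(\pi,\Phi(\rho))\le\delta'(|R|\eps)$, and unwinding \eqref{eq:edit_dist_of_IRSs} together with the distance comparison produces a $\Gamma$-action on a set containing $X$ whose uniform Hamming distance to $\rho$ is at most $\tfrac{1}{c_1|S|}\delta'(|R|\eps)$; so $\Gamma$ is $\delta$-homomorphism stable with $\delta(\eps):=\tfrac{1}{c_1|S|}\delta'(|R|\eps)$. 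In both directions the constants linking $\delta$ and $\delta'$ are built only from $|R|,|S|,c_1,c_2$, hence depend only on $S$ and $R$.

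The step I expect to demand the most care is that last extraction: Definition \ref{defn:robustness} measures closeness with the $\IRS_{\fd}$-edit distance \eqref{eq:edit_dist_of_IRSs}, an infimum over \emph{all} $\Phi$-preimages of both IRSs, whereas homomorphism stability wants a $\Gamma$-action close to the \emph{given} action $\rho$ on a superset of its \emph{given} ground set $X$. I would handle this by taking a near-minimizing pair of preimages in that infimum, reading off the accompanying witness (a common finite set $W\supseteq X$, an embedding of $X$ into $W$, and a conjugating permutation $\theta$), and noting that $\theta\psi\theta^{-1}$ — for $\psi$ the chosen $\Gamma$-action preimage of $\pi$ — is again a $\Gamma$-action on $W$ whose per-generator Hamming distances to $\rho$ are exactly the summands appearing in $d^{\frS_{\cV_R}}$ (here the ``$\frak{error}$ outside $X$'' convention for $d_H$ is what makes this bookkeeping exact), with any slack from the infimum not being attained absorbed into $\delta'$. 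One also records that replacing $\rho$ by another preimage of $\Phi(\rho)$ perturbs $e(\cdot)$ and the achievable edit distance only within the same constants, so neither notion is sensitive to that choice.
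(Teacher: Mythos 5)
The paper states this as a \textbf{Fact} and gives no proof, so there is no paper argument to compare against; I am evaluating your proposal on its own. Your dictionary (value vs.\ defect, the identification of optimal strategies with $\Gamma$-actions, the two-sided comparison of $d^{\frS_{\cV_R}}$ with the unweighted average Hamming distance) is correct as far as it goes, and the forward implication (stability $\Rightarrow$ robustness) is sound: given $\pi'=\Phi(\rho)$ almost optimal, you pick the given $\rho$ as a preimage, apply stability to get $\varphi$, and then $(\rho,\varphi)$ with $\theta=\Id$ is a witness pair for the IRS edit distance, so that direction goes through with $\delta'=c_2|S|\cdot\delta$.

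The gap is exactly at the step you flag, and your proposed fix does not close it. You assert that the near-minimizing witness gives ``per-generator Hamming distances \emph{to $\rho$}'' --- but the infimum in \eqref{eq:edit_dist_of_IRSs} ranges over preimages of \emph{both} IRSs, so what you actually get is a $\Gamma$-action $\psi^\theta$ close to \emph{some} $\rho'\in\Phi^{-1}(\Phi(\rho))$, and $\rho'$ need not be $\rho$. Since $\Phi^{-1}(\Phi(\rho))$ consists of multiples $k\rho_{\min}$ of the minimal action with $\rho$ itself being $k_0\rho_{\min}$, the near-minimizer can have $\rho'=k\rho$ for $k\gg 1$, in which case $\psi^\theta$ lives on a set $W$ of size $\approx k|X|$. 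Under the ``$\frak{error}$ outside $X$'' convention \eqref{generalized_Hamming_dist}, \emph{any} action on a set that much larger than $X$ has $d_H(\rho(s),\cdot)\ge 1-|X|/|W|\approx 1-1/k$, so $\psi^\theta$ itself is useless as a stability witness for the given $\rho$. To salvage the argument one must cut $\psi^\theta$ down to a $\Gamma$-subaction on a set of size $\approx|X|$ that tracks one copy of $\rho$; an averaging over the $k$ copies does locate a copy with few per-generator mismatches, but the union of $\psi^\theta$-orbits through that copy has no a priori size bound (one bad boundary edge can leak an entire large orbit out of the copy), so this restriction step is genuinely nontrivial. Your concluding sentence --- that ``replacing $\rho$ by another preimage of $\Phi(\rho)$ perturbs $e(\cdot)$ and the achievable edit distance only within the same constants'' --- is true for $e(\cdot)$ (which is constant on $\Phi$-fibers), but for the edit-distance half it is precisely the statement you need to prove, not something you may record as an aside. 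As it stands this is a missing lemma, not a bookkeeping remark; either prove that $\inf_{\psi\in\Phi^{-1}(\pi)}d^{\frS}_{\rm edit}(\rho,\psi)$ is controlled (with $S,R$-dependent constants) by $d^{\frS}_{\rm edit}(\Phi(\rho),\pi)$ uniformly in the choice of $\rho\in\Phi^{-1}(\Phi(\rho))$, or rewrite the argument so that it never leaves the given $\rho$.

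Two smaller remarks. First, the discard of generators not occurring in $R$ should be accompanied by the observation that such a generator is a free factor of $\Gamma$, so it can be assigned arbitrarily in any $\Gamma$-action and contributes zero significance in $\frS_{\cV_R}$; both notions are insensitive to it, so the discard is harmless, but you should say so rather than leave it to a parenthetical. Second, the backward direction yields $\delta(\eps)=\tfrac{1}{c_1|S|}\delta'(|R|\eps)$, which involves a rescaling of the argument of $\delta'$ and is therefore not literally a ``constant multiple'' in the sense a reader might first parse the Fact; you should note that the intended meaning allows linear rescaling of both the input $\eps$ and the output.
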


\section{Tailored non-local games}\label{sec:Synch}

We commonly use both $\{0,1\}$ and $\FF_2$ for the set with $2$-modular arithmetic.

\subsection{Non-local games preliminaries}
We repeat the definitions which appeared in Section \ref{sec:intro_non-local} of the introduction. Since this paper is not focused on non-local games, we treat them somewhat technically. For formal definitions associated with the complexity class $\MIP^*$, we refer to~\cite{watrous2009quantum,vidick2016quantum}. For the connection between $\MIP^*$ and nonlocal games, a good starting point is~\cite{cleve2004consequences}.

\begin{defn}\label{defn:non-local_game}
   A (synchronous) \emph{non-local game} $\cG$ consists of a finite graph $G=(V,E)$, a length function $\ell\colon V\to \mathbb{N}$, formal sets of generators $S_x=\{\sX^{x,i}\mid 1\leq i\leq \ell(x)\}$ for every vertex $x\in V$, a distribution $\mu$ over the edges $E$, and decision functions $D_{xy}\colon \{0,1\}^{S_{xy}}\to \{0,1\}$ for every edge $xy\in E$, where $S_{xy}=S_x\cup S_y$. We denote by $S$ the set $\bigcup_{x\in V} S_x$.
\end{defn}
\begin{rem}\label{rem:standard_defn_games_and_drama}
   Anyone familiar with the definition of a non-local game will immediately notice that the standard formalism is different than the one we chose here. Usually, a (synchronous) non-local game is defined as a pair of finite sets $X,A$, commonly referred to as the question and answer sets respectively, together with a   distribution $\mu$ over pairs of questions $X\times X$ and a decision predicate $D\colon X\times X\times A\times A\to \{0,1\}$ satisfying $D(x,x,a,b)=0$ for any $x\in X$ and $a\neq b\in A$. 

   It is quite straightforward to move between the definitions.   A way one can extract the data of Definition \ref{defn:non-local_game} from the above is as follows: Let $\ell$ be the constant function $\Lambda=\lceil \log |A|\rceil$, and fix an embedding of $A$ into $\{0,1\}^\Lambda$. The vertices $V$ of the underlying graph $G$ will be $X$, and the support of $\mu$ will be the edge set $E\subseteq X\times X$. The formal generator $\sX^{x,i}$ corresponds to the $i^{\rm th}$ bit of the answer $a$ when $x\in X$ is asked as a question. 
   Then, given that $x,y$ were asked, a pair of answers $a,b$ can be encoded as a map $\gamma \colon S_x\cup S_y\to \{0,1\}$, where 
   \begin{equation}\label{eq:a_b_from_gamma}
   \gamma|_{S_x}=a\quad{\rm and}\quad \gamma|_{S_y}=b\;.
   \end{equation} 
  The  notation \eqref{eq:a_b_from_gamma} will be used repeatedly in the text. 
   Lastly, $D_{xy}(\gamma)=D(x,y,a,b)$, where $\gamma$ is the aforementioned encoding. 

   In the other direction, we can define $A$ to be all bit strings shorter than $\Lambda=\max_{x\in V} \ell(x)$, $X$ to be the vertex set $V$, and $\mu$ stays the same. Finally, $D(x,y,a,b)=D_{xy}(\gamma)$ when $a,b$ are of the correct lengths, i.e. $a=\ell(x),b=\ell(y)$, and is $0$ when either of them is of the incorrect length. Since we consider only synchronous strategies, the condition $D(x,x,a,b)$ is never checked in practice for $a\neq b$ and we can assume it was satisfied beforehand.
Because of this correspondence, we may refer to $\ell$ as the \emph{answer length} function.
\\

Non-local games are often dramatized as two-prover interactive proofs with one round \cite{ben1988multi}. This perspective  may help some readers to absorb the upcoming technicalities better. 
Two players, that can share some resources beforehand (random bits in the classical case and entangled qubits in the quantum case), are separated spatially --- e.g., they are seated in far away rooms. 
A referee  samples a pair of questions and sends one to each player. 
The players then use their shared resources to come up with answers, and send them back to the referee. 
The referee then decides, using the decision predicate, whether the players won or lost. 
The decision predicate as well as the distribution over possible questions are assumed to be known to the players beforehand.  
We demonstrate this dramatization in Example \ref{example:LCSs}.
   
\end{rem}

The following is the quantum mechanics' analogue of a probability distribution. Similar to the way probability distributions (over finite sets) are collections of non-negative real numbers that add up to $1$, the quantum analogue would be ``non-negative'' matrices that add up to the identity. 

\begin{defn}\label{defn:POVM}
    A \emph{positive operator valued measure} (POVM) of dimension $n$ with outcomes in a set $A$  is a mapping $\sP\colon A\to M_{n\times n}(\mathbb{C})$ such that $\sP_a$ is a positive (semi-definite) matrix for every $a\in A$ and $\sum_{a\in A}\sP_a=\Id_n$. It is called \emph{projective}, or a \emph{projection valued measure} (PVM), if in addition $\sP_a$ is an orthogonal projection for every $a\in A$, namely $(\sP_a)^2=\sP_a=(\sP_a)^*$, where $*$ is the conjugate transpose operation. 
Given that $\sP$ is a PVM and $A=\{0,1\}^\Lambda$,  we define for $1\leq i\leq \Lambda$ the $i^{\rm th}$ marginal of $\sP$ to be $\sP^i\colon \{0,1\}\to M_{n\times n}(\mathbb{C})$  by 
\begin{equation}\label{eq:marginalization_of_PVM}
    \sP^i_0=\sum_{\substack{a\colon \Lambda \to \{0,1\}\\ a_i=0}}\sP_a\quad{\rm and}\quad \sP^i_1=\sum_{\substack{a\colon \Lambda \to \{0,1\}\\ a_i=1}}\sP_a.
\end{equation}
Thus, $\sP_a=\prod_{i=1}^\Lambda \sP^i_{a_i}$. Furthermore, the matrix $\sU^i=\sP^i_0-\sP^i_1$ is an order $2$  unitary, which we call the $i^{\rm th}$ binary observable. All in all, a PVM can be given either as a map $\sP\colon \{0,1\}^\Lambda \to M_{n\times n}(\mathbb{N})$, or as a map $\sU\colon [\Lambda]\to U(n)$ such that the images of $\sU$ are commuting involutions --- i.e., they square to the identity.

As its name suggests, every POVM $\sP$ defines a probability distribution over its answer set $A$ as follows
\begin{equation}\label{eq:sampling_according_POVM}
    \Pro [a\ {\rm is\ sampled}]=\tau(\sP_a),
\end{equation}
where $\tau$ is the dimension normalized trace on $n\times n$ matrices. Such an answer is said to be \emph{sampled according} to $\sP$ and we denote it by $a\sim \sP$.
\end{defn}
\begin{rem}
    In the case of a PVM with outcome set $A=\{0,1\}^\Lambda$, the observables $\{\sU^i\}_{i=1}^\Lambda$ generate a unitary representation of $\FF_2^\Lambda$. The $\sP_a$'s in this case can be read from the unitary representation using the Fourier transform of the representation. Hence, for the case of $A=\{0,1\}^\Lambda$, a PVM can be given as a collection of mutually perpendicular ortohgonal projections that sum up to the identity, or as a unitary representation of $\FF_2^\Lambda$, and one can move from one perspective to the other without losing information.
\end{rem}
\begin{defn}\label{defn:strategy_for_a_game}
     A (synchronous quantum) $n$-dimensional \emph{strategy} for $\cG$ is a map $\rho\colon S\to U(n)$, where the images are involutions, and $\rho({S_x})$ commutes for every fixed $x\in V$ --- by saying that a set of matrices (or later permutations) commutes, we mean that every pair of elements in this set commute.
     Another way of viewing strategies is by saying that they associate with every vertex $x\in V$ an $n$-dimensional PVM (in observable form).
     The strategy $\rho$ is said to be \emph{commuting along edges} if $\rho(S_{xy})$ commutes for every edge $xy\in E$.

     Such a strategy defines for every edge $xy\in E$ a probability distribution over functions $\gamma\colon S_{xy}\to \{0,1\}$ as follows:
     \begin{equation}\label{eq:def-gamma-samp}
\Pro[\gamma\colon S_{xy}\to \{0,1\}\  {\rm is\ sampled}]=\tau\left(\prod_{\sX\in S_{x}}\sP^{\sX}_{\gamma(\sX)}\prod_{\sY\in S_y}\sP^{\sY}_{\gamma(\sY)}\right),
     \end{equation}
where $\tau$ is again the dimension normalized trace on $n\times n$ matrices, $\sP^{\sX}_0\colon \mathbb{C}^n\to\mathbb{C}^n $ is the projection on the $(+1)$-eigenspace of $\rho(\sX)$, and $\sP^\sX_{1}\colon \mathbb{C}^n\to\mathbb{C}^n$ is the projection on its $(-1)$-eigenspace.\footnote{Note that $\rho(\sX)=\sP^\sX_0-\sP^\sX_1$, which exactly shows how to move from this observable form of the PVMs induced by $\rho$ back to its projection based form.} We say that $\gamma$ was \emph{sampled according to} $\rho$ if it has this distribution, and we denote it by $\gamma\sim \rho$.

For $\gamma\colon S_{xy}\to \{0,1\}$, let $\sP^x_\gamma=\prod_{\sX\in S_{x}}\sP^{\sX}_{\gamma(\sX)}.$ Then, the map $\sP^{xy}\colon \{0,1\}^{S_{xy}}\to M_{n\times n}(\mathbb{C})$ defined by $\sP^{xy}_\gamma=\sP^x_\gamma\sP^y_\gamma\sP^x_\gamma$  is  a POVM. Thus, $\gamma\sim \rho$ is a special case of sampling according to a POVM \eqref{eq:sampling_according_POVM}, which in this case is $\sP^{xy}$. In the spirit of the notation in \eqref{eq:a_b_from_gamma}, it may be appropriate to write $\sP^x_\gamma=\sP^x_a$ and $\sP^y_\gamma=\sP^y_b$.
\end{defn}
\begin{rem}
    The images of $\sX$ according to $\rho$ are usually called \emph{binary observables}, and the value $\gamma(\sX)$ is usually called the \emph{measurement outcome}.
\end{rem}

\begin{claim} \label{claim:Born's_rule}
    There is a procedural (i.e., algorithmic) way of  sampling  $\gamma\colon S_{xy}\to \{0,1\}$ {according to} $\rho$: 
Since $\rho(S_x)\subseteq U(n)$ commutes, these matrices have a mutual orthonormal basis of eigenvectors $B_x$. Similarly, $\rho(S_y)$ have a mutual orthonormal basis of eigenvectors $B_y$.
 First, sample $\vec v\in B_x$ uniformly at random. 
 Then, sample $\vec u\in B_y$ according to the  squared length of its projection on $v$, namely $$\Pro[\vec u\in B_y\ {\rm is\ sampled}\mid \vec v\in B_x\ {\rm was\ sampled}]=|\langle \vec v|\vec u\rangle |^2,$$
        where $\langle \cdot| \cdot \rangle$ is the standard inner product on $\mathbb{C}^n$, i.e., $\langle \vec v|\vec u\rangle=\sum_{i=1}^n \overline v_i\cdot u_i$.
 Finally,  $\gamma\colon S_{xy}\to \{0,1\}$ is determined as follows --- for $\sX\in S_x$, $\gamma(\sX)=\begin{cases}
        0 & \rho(\sX)\vec v=\vec v,\\
        1 & \rho (\sX)\vec v =-\vec v,
    \end{cases}$, and for $ \sY\in S_y$, $\gamma(\sY)=\begin{cases}
        0 & \rho(\sY)\vec u=\vec u,\\
        1 & \rho (\sY)\vec u =-\vec u.
    \end{cases}$.
        All in all, 
        \[
\Pro[ (\vec v,\vec u)\ {\rm is\ sampled}]=\nicefrac{|\langle \vec v|\vec u\rangle|^2}{n},
        \]
        which is independent of the ordering between $x$ and $y$. Note also that the probability a specific $\gamma$ is measured is  independent of the  chosen bases $B_x$ and $B_y$.
\end{claim}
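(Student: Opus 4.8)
The plan is to reduce the statement to a one-line computation with the joint spectral decompositions of the commuting families $\rho(S_x)$ and $\rho(S_y)$. First I would record the linear-algebra input: each $\rho(\sX)$ is a unitary involution, hence Hermitian, and a commuting family of Hermitian matrices is simultaneously unitarily diagonalizable, so the common orthonormal eigenbases $B_x$ of $\rho(S_x)$ and $B_y$ of $\rho(S_y)$ exist. For $\vec v\in B_x$ let $\gamma_{\vec v}\colon S_x\to\{0,1\}$ be its sign pattern, i.e.\ $\gamma_{\vec v}(\sX)=0$ if $\rho(\sX)\vec v=\vec v$ and $\gamma_{\vec v}(\sX)=1$ if $\rho(\sX)\vec v=-\vec v$, and similarly $\gamma_{\vec u}\colon S_y\to\{0,1\}$ for $\vec u\in B_y$. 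The key observation is that, since the $\sP^{\sX}$ ($\sX\in S_x$) commute and $B_x$ is an eigenbasis for all of them, for every $a\colon S_x\to\{0,1\}$ the projection $\sP^x_a=\prod_{\sX\in S_x}\sP^{\sX}_{a(\sX)}$ acts as the identity on those $\vec v\in B_x$ with $\gamma_{\vec v}=a$ and kills the rest, so $\sP^x_a=\sum_{\vec v\in B_x,\,\gamma_{\vec v}=a}\vec v\vec v^{*}$, the orthogonal projection onto the common eigenspace with sign pattern $a$; in particular it depends only on $\rho$ and not on the choice of $B_x$, and likewise for $\sP^y_b$.

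Next I would check the recipe is a bona fide sampling procedure and compute the law it induces. The conditional distribution $\vec u\mapsto|\langle\vec v|\vec u\rangle|^2$ sums to $\|\vec v\|^2=1$ over $\vec u\in B_y$ by Parseval, so the joint law of $(\vec v,\vec u)$ is the well-defined distribution $|\langle\vec v|\vec u\rangle|^2/n$, manifestly symmetric under exchanging the roles of $x$ and $y$ (matching the cyclicity $\tau(\sP^x_a\sP^y_b)=\tau(\sP^y_b\sP^x_a)$). Since the output has $\gamma|_{S_x}=\gamma_{\vec v}$ and $\gamma|_{S_y}=\gamma_{\vec u}$, the probability that a fixed $\gamma$ (write $a=\gamma|_{S_x}$, $b=\gamma|_{S_y}$) is produced is
\begin{align*}
\sum_{\substack{\vec v\in B_x\\ \gamma_{\vec v}=a}}\ \sum_{\substack{\vec u\in B_y\\ \gamma_{\vec u}=b}}\ \frac1n\,|\langle\vec v|\vec u\rangle|^2
&= \frac1n\sum_{\substack{\vec v\in B_x\\ \gamma_{\vec v}=a}}\ \sum_{\substack{\vec u\in B_y\\ \gamma_{\vec u}=b}}\operatorname{Tr}\big(\vec v\vec v^{*}\,\vec u\vec u^{*}\big)\\
&= \frac1n\operatorname{Tr}\big(\sP^x_a\,\sP^y_b\big)
= \tau\big(\sP^x_a\,\sP^y_b\big),
\end{align*}
where I used $|\langle\vec v|\vec u\rangle|^2=\operatorname{Tr}(\vec v\vec v^{*}\vec u\vec u^{*})$ (here $\vec v\vec v^{*}$ is the rank-one orthogonal projection onto $\mathbb{C}\vec v$), linearity of the trace, and the identity from the previous paragraph. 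Expanding $\sP^x_a$ and $\sP^y_b$ back into products over $S_x$ and $S_y$, this is exactly the right-hand side of \eqref{eq:def-gamma-samp}, so $\gamma$ has the same distribution as in Definition \ref{defn:strategy_for_a_game}. The same computation delivers the last two sentences of the claim: the joint law of $(\vec v,\vec u)$ is $|\langle\vec v|\vec u\rangle|^2/n$, which is ordering-independent, and since $\tau(\sP^x_a\sP^y_b)$ does not mention $B_x$ or $B_y$, neither does the induced law of $\gamma$.

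Finally I would note that every step — diagonalizing the two commuting families, reading off the sign patterns $\gamma_{\vec v}$, and sampling the two finite distributions — is algorithmic, which is the ``procedural'' content asserted. I do not expect a real obstacle here; the only point requiring a word of care is the case of degenerate eigenvalues, where $B_x$ is not unique, but this is handled by the observation that $\sum_{\vec v\in B_x,\,\gamma_{\vec v}=a}\vec v\vec v^{*}$ equals the eigenspace projection $\sP^x_a$ no matter which orthonormal basis of that eigenspace occurs in $B_x$, so the conclusion is independent of all choices.
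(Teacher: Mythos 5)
Your proof is correct and follows essentially the same approach as the paper: both expand $\tau(\sP^x_\gamma \sP^y_\gamma)$ in the joint eigenbases $B_x, B_y$ and reduce to the sum of $|\langle \vec v \mid \vec u\rangle|^2/n$ over pairs with the right sign patterns. Your rearrangement via the rank-one identity $|\langle\vec v\mid\vec u\rangle|^2 = \operatorname{Tr}(\vec v\vec v^{*}\vec u\vec u^{*})$ and $\sP^x_a = \sum_{\gamma_{\vec v}=a}\vec v\vec v^{*}$ is slightly more streamlined than the paper's Parseval-and-cyclicity manipulation, and your explicit remark on basis-independence via uniqueness of spectral projections is a welcome addition, but the substance is the same.
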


\begin{proof}
The function $\gamma\colon S_{xy}\to \FF_2$ is sampled in the above procedure if for the resulting pair $(\vec v,\vec u)$ we have $\vec v\in \Img\left( \prod_{\sX\in S_x}\sP^\sX_{\gamma(\sX)}\right)$ and $\vec u\in \Img\left(\prod_{\sY\in S_y}\sP^\sY_{\gamma(\sY)}\right)$. Denote by $\sP^x_{\gamma}$ the product $\prod_{\sX\in S_x}\sP^\sX_{\gamma(\sX)}$, and similarly $\sP^y_\gamma$. So, 
\[
\Pro [\gamma\ {\rm is\ sampled}]=\sum_{\substack{\vec v\in \Img(\sP^x_\gamma)\\ \vec u\in \Img(\sP^y_\gamma)}}\frac{|\langle \vec v|\vec u\rangle |^2}{n}.
\]
By the fact that $B_x$ is a mutual orthonormal basis of eigenvectors for $\rho(S_X)$, every $\vec v\in B_x$ is either in the image of $\sP^x_\gamma$ or in its kernel. Therefore, the above sum can be written as 
\[
\sum_{\substack{\vec v\in \Img(\sP^x_\gamma)\\ \vec u\in \Img(\sP^y_\gamma)}}\frac{|\langle \vec v|\vec u\rangle |^2}{n}=\sum_{\substack{\vec v\in B_x\\ \vec u\in B_y}}\frac{|\langle \sP^x_\gamma\vec v|\sP^y_\gamma\vec u\rangle |^2}{n}=\sum_{\substack{\vec v\in B_x\\ \vec u\in B_y}}\frac{|\langle \sP^y_\gamma\sP^x_\gamma\vec v|\vec u\rangle |^2}{n}.
\]
Since $B_y$ is an orthonormal basis and $\sP^x_\gamma,\sP^y_\gamma$ are orthogonal projections,  for every $\vec v\in B_x$ we have 
\[
\sum_{\vec u\in B_y}|\langle \sP^y_\gamma\sP^x_\gamma\vec v|\vec u\rangle |^2=\|\sP^y_\gamma\sP^x_\gamma\vec v\|^2=\langle \sP^y_\gamma\sP^x_\gamma\vec v|\sP^y_\gamma\sP^x_\gamma\vec v\rangle =\langle \sP^x_\gamma\sP^y_\gamma\sP^x_\gamma\vec v|\vec v\rangle.
\]
Thus,
\[
\sum_{\substack{\vec v\in B_x\\ \vec u\in B_y}}\frac{|\langle \sP^y_\gamma\sP^x_\gamma\vec v|\vec u\rangle |^2}{n}=\sum_{\vec v\in B_x}\frac{\langle \sP^x_\gamma\sP^y_\gamma\sP^x_\gamma\vec v|\vec v\rangle}{n}=\tau(\sP^x_\gamma\sP^y_\gamma \sP^x_\gamma)=\tau(\sP^x_\gamma\sP^y_\gamma),
\]
where the last equality uses the cyclicity of the trace and the fact $\sP^x_\gamma$ is a projection. All in all, 
\[
\Pro [\gamma\ {\rm is\ sampled}]=\tau(\sP^x_\gamma\sP^y_\gamma),
\]
as needed.
\end{proof}

 \begin{rem}\label{rem:Born_rule}
The formula~\eqref{eq:def-gamma-samp} for the probability a specific $\gamma$ is sampled is motivated by the Born measurement rule from quantum mechanics. In quantum mechanics, a (projective) measurement is specified by choosing a PVM  $\sP\colon A\to B(\mathcal{H})$ acting on a separable Hilbert space $\mathcal{H}$ (Definition \ref{defn:POVM} provided the finite dimensional case). 
Given a quantum state (unit vector) $\psi\in \mathcal{H}$, a measurement of $\psi$ returns the outcome $a$ with probability $$\Pro[a \ {\rm is\ the\  measurement\ outcome}]=\psi^* \sP_a \psi.$$ 
If there are two systems (e.g.\ two ``players''), then a Hilbert space $\mathcal{H}_A$, $\mathcal{H}_B$ is associated to each of them
respectively, such that the \textbf{joint} Hilbert space is $\mathcal{H}=\mathcal{H}_A \otimes \mathcal{H}_B$. 
The joint distribution on outcomes is then given by probabilities 
\begin{equation}\label{eq1:born_rem}
    \Pro[(a,b)\ {\rm are\ the\  measurement\ outcomes}]=\psi^* (\sP_a^x \otimes (\sP^y_b)^\dagger) \psi,
\end{equation}
where $\dagger$ is transposition with \textbf{no} complex conjugation, as opposed to $*$.\footnote{It is common to drop the transposition $\dagger$ on the $\mathcal{H}_B$ POVM in \eqref{eq1:born_rem}. But, in this case, a transposition will appear in the calculation using traces \eqref{eq2:born_rem}. So, we decided on this equivalent formulation.}
The case where $\mathcal{H}_A$ and $\mathcal{H}_B$ have the same (finite) dimension $n$, and $\psi= n^{-1/2}\sum_i e_{x,i}\otimes e_{y,i}$ for orthonormal bases $\{e_{x,i}\}$ and $\{e_{y,i}\}$ of $\mathcal{H}_A$ and $\mathcal{H}_B$ respectively, termed ``maximally entangled,'' yields 
\begin{equation}\label{eq2:born_rem}
    \Pro[(a,b)\ {\rm are\ the\  measurement\ outcomes}]=\tau(\sP^x_a \sP^y_b).
\end{equation} 
 \end{rem}

\begin{defn}\label{defn:quantum_value}
    The  \emph{value} of the strategy $\rho$ against the non-local game $\cG$ is
\[
\begin{split}
\val(\cG,\rho)&=\Ex_{xy\sim \mu}\Ex_{\gamma\sim \rho}[D_{xy}(\gamma)]\\
&=\sum_{xy\in E}\sum_{\gamma\colon S_{xy}\to \FF_2} \mu(xy)D_{xy}(\gamma)\tau\left(\prod_{\sX\in S_{x}}\sP^{\sX}_{\gamma(\sX)}\prod_{\sY\in S_y}\sP^{\sY}_{\gamma(\sY)}\right).     
\end{split}
\]
 The (synchronous) \emph{quantum value} of $\cG$, $\val^*(\cG)$, is the supremum over all possible strategies against $\cG$.
\end{defn}

The main result in \cite{MIPRE} is a reduction from the Halting Problem to approximating  the quantum value of a game:
\begin{thm}[$\MIP^*=\RE$]\label{thm:MIP*=RE}
There exists a polynomial time algorithm that takes as input (the encoding of) a Turing machine $M$ and outputs (the encoding of) a non-local game $\cG_M$ such that:
\begin{enumerate}
    \item Sampling $xy\in E$ according to  $\mu$ and evaluating  $D_{xy}(\cdot)$ from the encoding of the  game $\cG_M$  can be done in time ${\rm poly}(|M|)$, where $|M|$ is the bit-length of the encoding of $M$.
    \item If $M$ halts, then there exists a quantum strategy $\rho$ that commutes along edges, for which  $\val(\cG_M,\rho)=1$. In particular, $\val^*(\cG_M)=1$.
    \item If $M$ never halts, then $\val^*(\cG_M)\leq \nicefrac{1}{2}$.
\end{enumerate}
\end{thm}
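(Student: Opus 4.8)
The plan is to prove the stronger inclusion $\RE \subseteq \MIP^*$ and apply it to the Halting Problem, which is $\RE$-complete; the reverse direction $\MIP^* \subseteq \RE$ (not needed here) is routine, by enumerating finite-dimensional strategies and lower-bounding $\val$ so as to semidecide $\val^* = 1$. So fix a Turing machine $M$; the goal is an efficiently computable non-local game $\cG_M$ that is perfect --- via a finite-dimensional, tensor-product, hence commuting-along-edges, strategy --- when $M$ halts, and has $\val^* \le \tfrac12$ when $M$ does not.

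The engine is a \textbf{compression theorem}: an efficient transformation $\mathrm{Compress}$ sending the description of a verifier $V$ running in time $T$ with questions and answers of length $\mathrm{poly}(T)$ to a verifier $V' = \mathrm{Compress}(V)$ running in time $\mathrm{poly}(|V|, \log T)$, such that (\emph{completeness}) if $\val^*(V) = 1$ then $\val^*(V') = 1$, witnessed by a finite-dimensional tensor-product strategy, and (\emph{soundness}) if $\val^*(V) \le \tfrac12$ then $\val^*(V') \le \tfrac12$. Constructing $\mathrm{Compress}$ is the crux and decomposes into three stages. First, \emph{question reduction by introspection}: rather than the verifier sampling and transmitting the (long) pair $x,y$, the provers sample their own questions from a shared maximally entangled state and ``introspect'' --- measure --- the relevant registers; honesty of this self-sampling is forced by a robust self-testing/rigidity theorem for EPR pairs and Pauli-basis observables (a quantum low-degree / Pauli braiding test), which pins the provers' state and measurements down up to local isometry. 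This drops question length from $n$ to $\mathrm{polylog}(n)$. Second, \emph{answer reduction by PCPs}: replace the (time-$T$, long-answer) decision predicate by having the provers supply a classical probabilistically checkable proof of proximity for the claim ``$D_{xy}$ accepts'', of which the new verifier reads only $\mathrm{polylog}$ bits --- a composition of the game with a suitably algebraic PCP. Third, an \emph{anchored parallel-repetition} step to restore constant soundness $\tfrac12$. Throughout, one must verify that value-$1$ strategies stay \emph{exactly} value $1$, finite-dimensional, and of tensor-product form, so that the ``commutes along edges'' property survives.

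Given $\mathrm{Compress}$, I would apply Kleene's recursion theorem to produce, uniformly and efficiently from $M$, a verifier $V_M$ whose behaviour is self-referential: at the current ``depth'' it first simulates $M$ for the allotted number of steps and accepts outright if $M$ has halted, and otherwise plays $\mathrm{Compress}(V_M)$ --- i.e., delegates to a compressed copy of itself, which (because compression shrinks the time parameter) effectively increases the depth. Set $\cG_M := V_M$. If $M$ halts, then after finitely many levels the halting branch is reached, where the honest provers win perfectly with a finite-dimensional tensor-product strategy; propagating completeness of $\mathrm{Compress}$ back down the finitely many levels yields a strategy $\rho$ commuting along edges with $\val(\cG_M,\rho) = 1$. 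If $M$ never halts, no level ever accepts via the halting branch, so propagating soundness of $\mathrm{Compress}$ through the levels forces $\val^*(\cG_M) \le \tfrac12$. Efficiency of $\mathrm{Compress}$ and of the recursion-theorem construction gives clause (1).

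The dominant obstacle is building $\mathrm{Compress}$, and within it the introspection step: one needs a rigidity theorem for the maximally entangled state quantitatively strong enough to survive the exponential blow-up in question length, so that soundness is genuinely preserved rather than merely degraded, together with the combinatorial design of the question-reduced verifier. Making the PCP composition interact correctly with the quantum soundness analysis --- so that small $\val^*(V)$ provably propagates to small $\val^*(V')$ --- is the other delicate point; the recursion-theoretic wrapper, the final halting game, and the reverse inclusion are comparatively routine.
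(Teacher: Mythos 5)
The paper does not prove this theorem: it is quoted from~\cite{MIPRE} as background, and the result that this project actually establishes and uses is the stronger Theorem~\ref{thm:tailored_MIP*=RE}, whose proof is deferred to the companion paper~\cite{Tailored_MIPRE}. So there is no internal argument here to compare you against. On its own terms your sketch is a faithful bird's-eye view of the~\cite{MIPRE} proof---a recursion-theorem fixed point wrapped around a compression procedure built from introspection-based question reduction (resting on Pauli-braiding / quantum low-degree rigidity), PCP-based answer reduction, and anchored parallel repetition---and you correctly locate the compression theorem as the technical crux.

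There is, however, one concrete logical gap in how you treat clause~(2). You write that the honest strategy is ``a finite-dimensional tensor-product strategy, hence commuting-along-edges.'' These are not the same thing in the paper's formalism. A strategy here (Definition~\ref{defn:strategy_for_a_game}) is a \emph{single} map $\rho\colon S \to U(n)$ with $\rho(S_x)$ commuting for each fixed $x$, and ``commuting along edges'' demands that for every edge $xy$ the entire set $\rho(S_{xy}) = \rho(S_x) \cup \rho(S_y)$ commutes, i.e.\ the observables attached to two \emph{different} question vertices must commute with one another on the same Hilbert space. The tensor-product structure you invoke concerns commutation of Alice's operators with Bob's across distinct tensor factors; it is automatic in any bipartite strategy and says nothing about whether $\rho(S_x)$ commutes with $\rho(S_y)$. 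Establishing commutation along edges requires inspecting the concrete Pauli-observable design of the honest strategy at every stage of compression and checking that this structural property is preserved by question reduction, answer reduction, and anchoring. That bookkeeping is precisely the extra work that~\cite{Tailored_MIPRE} carries out in order to obtain the still stronger ``$Z$-aligned permutation strategy commuting along edges'' of Theorem~\ref{thm:tailored_MIP*=RE}; your argument as phrased does not yet deliver the claimed property.
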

The  goal of the next section is to show that a somewhat stronger variation of Theorem \ref{thm:MIP*=RE}, which is proved in \cite{Tailored_MIPRE},  provides a negative solution to the Aldous--Lyons Conjecture \ref{(Aldous-Lyons)}.

\subsection{Permutation strategies}\label{sec:permutation_strategies}
Our goal in  Section \ref{sec:test_associated_with_game} is to relate values of subgroup tests to values of non-local games and vice versa. The strategies for subgroup tests are finitely described IRSs, which are (induced by) maps of the form $\sigma\colon S\to \Sym(n)$, while quantum strategies for non-local games are maps of the form $\rho\colon S\to U(n)$ adhering to certain conditions. These already seem quite related, as embedding $\Sym(n)$ into $U(n)$ as permutation matrices is natural, and indeed transforms every (nice enough) finitely described IRS into a quantum strategy. But, there is a problem with doing this naively: 
Say that we want to construct a quantum strategy, such that  for a certain $\sX\in S$, regardless of anything else,  $\gamma(\sX)$ will always be $1$. Then, we can achieve it by choosing $\rho(\sX)=-\Id$. But, $-\Id$ is not a permutation matrix. Therefore, any $\sigma$ that will be naively transformed into a quantum strategy by embedding $\Sym(n)$ into $U(n)$ as permutation matrices cannot ensure a specific $\sX$ is always evaluated to $1$. 
To amend that, we use a tool developed in the study of solution groups of linear constraint system games (cf.\  \cite{slofstra2019set}): 
We add a special generator $\sJ$ that plays the role of $-\Id$. 
Since $-\Id$ is a central involution without fixed points, the image of $\sJ$ under $\sigma$ needs to behave the same. 
This forces $\sigma$ to act on an even sized set. 
Now, if we want $\sigma(\sX)=\sigma(\sJ)$ to translate into $\gamma(\sX)$ being always $1$, we will need to restrict ourselves to the $(-1)$-eigenspace of $\sigma(\sJ)$. This leads us to the following two definitions. 

\begin{defn}\label{defn:permutation_strategy}
    A \emph{permutation strategy} for a non-local game $\cG$ is a map 
$\sigma \colon S\cup \{\sJ\}\to \Sym(2n)$
with the following properties:
\begin{enumerate}
    \item \emph{The image of $\sJ$ is a central involution with no fixed points}. Namely, 
    \[
    \begin{split}
            &\sigma(\sJ)^2=\Id,\  d_H(\sigma(\sJ),\Id)=1,\\
            &\forall \sX\in S\ \colon \ \ \sigma(\sJ)\sigma(\sX)=\sigma(\sX)\sigma(\sJ).
    \end{split}
    \]
    \item \emph{The rest of the images are involutions as well}. Namely,
    \[
\forall \sX\in S\ \colon \ \ \sigma(\sX)^2=\Id.
    \]
    \item \emph{The image of $S_x$ commutes}. Namely, for every fixed vertex $x\in V$, 
    \[
\forall \sX,\sX'\in S_x\ \colon \ \ \sigma(\sX)\sigma(\sX')=\sigma(\sX')\sigma(\sX).
    \]
\end{enumerate}
\end{defn}

\begin{rem}\label{rem:flip_along_J-edges}
    Let $\sigma\colon S\cup \{\sJ\}\to \Sym(2n)$ be a permutation strategy for $\cG$. Embed $\Sym(2n)$ in $U(2n)$ as permutation matrices. Note that in this viewpoint, $U(2n)$ is acting naturally on functions $f\colon [2n]\to \mathbb{C}$, and the matrices are represented with respect to the standard basis, which consists of the indicators 
    \[
    \forall \star\in [2n]\ \colon \ \ {\bf 1}_{\star}(\diamond)=\begin{cases}
        1 & \diamond=\star,\\
        0 & \diamond\neq \star.
    \end{cases}
    \]
    Moreover, for a permutation $\zeta\in \Sym(2n)$ and function $f\colon [2n]\to \mathbb{C}$, we have
    \begin{equation}\label{eq:defn_action_of_perm_on_functions}
        \forall \star\in [2n]\ \colon \ \ (\zeta.f)(\star)=f(\zeta^{-1}.\star).
    \end{equation}
    By $(1)$ of Definition \ref{defn:permutation_strategy}, $\sigma(\sJ)$ is a fixed point free involution, and thus its $(+1)$-eigenspace $W^+$ and $(-1)$-eigenspace $W^{-}$ are of the same dimension $n$. Note that $W^+$ consists of all functions $f\colon [2n]\to \mathbb{C}$ that are constant along $\sJ$-labeled edges in the Schreier graph\footnote{Generalized Schreier graphs were defined in  Section \ref{sec:graph_theoretical_interpret}.} $\Sch(\sigma,S\cup \{\sJ\})$, while $W^{-}$ consists of all functions that \textbf{flip} their signs along $\sJ$-labeled edges. Let $\sQ=\sQ^\sJ_1\colon \mathbb{C}^{2n}\to W^{-}$ be the orthogonal  projection on $W^{-}$. Note that $\sQ^*\sQ=\sP^\sJ_1$ and $\sQ\sQ^*=\Id_{W^{-}}$.
    
    We can choose bases for $W^+$ and $W^{-}$ as follows. 
    If we denote 
    \begin{equation}\label{eq:defn_star'}
        \star'=\sigma(\sJ).\star
    \end{equation}
    for every $\star\in [2n]$, then $\star\xleftrightarrow{\sJ} \star'$ is the perfect matching induced by $\sJ$-labeled edges in  $\Sch(\sigma,S\cup \{\sJ\})$. 
    So, an orthogonal basis of $W^+$ may be taken to be  $B^+=\{{\bf 1}_\star+{\bf 1}_{\star'}\}_{\star\in [2n]}$, while for $W^{-}$ one takes  $B^{-}=\{{\bf 1}_\star-{\bf 1}_{\star'}\}_{\star\in [2n]}$ ---  $B^{-}$ contains for each vector also its opposite, so this is a basis when choosing a representative for each such pair. The union of these bases is an orthogonal basis for $\mathbb{C}^{2n}$. Furthermore, we later refer to $B^{-}$ as the \emph{standard basis} of $W^{-}\cong \mathbb{C}^n$.
\end{rem}

\begin{rem}
    Given that $\rho\colon S\cup\{\sJ\}\to \Sym(2n)$ is a permutation strategy, there are $n$ orbits of $\sigma(\sJ)$, and $\sigma$ acts on them in a well-defined manner. If we arbitrarily denote each orbit as $\{+\star,-\star\}$, instead of $\{\star,\star'\}$ a la \eqref{eq:defn_star'}, then $\sigma$ acts naturally on the set $\Omega_{\pm}=\{\pm\}\times \Omega$ in a way that commutes with the sign flip. Hence, a permutation strategy can be defined as a map into \textbf{signed} permutations, where $\sJ$ is mapped to the sign flip, and such that the variables at each vertex are mapped to commuting involutions. This point of view may be helpful for understanding some of the next claims and remarks. We decided not to use the language of signed permutations in this paper, though in the companion paper \cite{Tailored_MIPRE} it is more pronounced. 
\end{rem}

\begin{defn}\label{defn:quantum_strat_associated_with_perm_strat}
 Let $\sigma \colon S\cup \{\sJ\}\to \Sym(2n)$ be a permutation strategy for $\cG$. Recall from Remark \ref{rem:flip_along_J-edges} that $W^{-}$ is the $(-1)$-eigenspace of $\sigma(\sJ)$, and that $\sQ\colon \mathbb{C}^{2n}\to W^{-}$ is the orthogonal projection on $W^{-}$. Then,
the \emph{quantum strategy} $\rho\colon S\to \textrm{End}(W^{-})$ \emph{induced by} $\sigma$ is  the mapping to the $W^{-}$-corners
    \[
    \rho(\sX)=\sQ \sigma(\sX)\sQ^*.
    \]
\end{defn}

\begin{claim}\label{claim:quantum_strat_associated_with_perm_strat}
    Let  $\sigma$ be a permutation strategy. Then, the quantum strategy $\rho$ induced by $\sigma$, as in Definition \ref{defn:quantum_strat_associated_with_perm_strat}, is \textbf{indeed} a quantum strategy for $\cG$, as in Definition \ref{defn:strategy_for_a_game}.
\end{claim}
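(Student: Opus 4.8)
The plan is to verify directly the three defining properties of a synchronous quantum strategy from Definition \ref{defn:strategy_for_a_game}: that each $\rho(\sX)$ is a unitary involution on $W^{-}$ (so that, under the identification $W^{-}\cong\mathbb{C}^{n}$ with $n=\dim W^{-}$ supplied by $\sQ$, we obtain a map $S\to U(n)$), and that $\rho(S_x)$ is a commuting family for every vertex $x\in V$. None of the three requires more than bookkeeping, so there is no real obstacle; the one thing to isolate at the outset is the geometric fact that $\sigma(\sX)$ leaves $W^{-}$ invariant.

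That invariance is exactly Definition \ref{defn:permutation_strategy}(1): since $\sigma(\sX)$ commutes with $\sigma(\sJ)$, it commutes with the spectral projection $\sP^{\sJ}_{1}=\tfrac12(\Id-\sigma(\sJ))$ onto the $(-1)$-eigenspace $W^{-}$. Recalling from Remark \ref{rem:flip_along_J-edges} that $\sQ^{*}\sQ=\sP^{\sJ}_{1}$ and $\sQ\sQ^{*}=\Id_{W^{-}}$, this invariance is captured by the intertwining identity
\[
\sigma(\sX)\,\sQ^{*}=\sQ^{*}\,\rho(\sX)\,,
\]
which I would prove in one line: $\sQ^{*}\rho(\sX)=\sQ^{*}\sQ\,\sigma(\sX)\,\sQ^{*}=\sP^{\sJ}_{1}\,\sigma(\sX)\,\sQ^{*}=\sigma(\sX)\,\sP^{\sJ}_{1}\,\sQ^{*}=\sigma(\sX)\,\sQ^{*}\sQ\sQ^{*}=\sigma(\sX)\,\sQ^{*}$. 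In words, $\rho(\sX)$ is just the restriction $\sigma(\sX)|_{W^{-}}$ transported to $\mathbb{C}^{n}$ via $\sQ$.

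Granting this identity, the three properties drop out mechanically. First, $\rho(\sX)$ is Hermitian: a permutation matrix is real orthogonal, and $\sigma(\sX)^{2}=\Id$ (Definition \ref{defn:permutation_strategy}(2)) forces $\sigma(\sX)^{*}=\sigma(\sX)^{-1}=\sigma(\sX)$, so $\rho(\sX)^{*}=\sQ\,\sigma(\sX)^{*}\,\sQ^{*}=\rho(\sX)$. Second, using the intertwining identity and $\sigma(\sX)^{2}=\Id$,
\[
\rho(\sX)^{2}=\sQ\,\sigma(\sX)\,\sQ^{*}\rho(\sX)=\sQ\,\sigma(\sX)\sigma(\sX)\,\sQ^{*}=\sQ\sQ^{*}=\Id_{W^{-}}\,,
\]
so $\rho(\sX)$ is an involution, hence (being Hermitian) unitary: $\rho(\sX)^{*}\rho(\sX)=\rho(\sX)^{2}=\Id_{W^{-}}$. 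Third, for $\sX,\sX'\in S_{x}$ the identity gives $\rho(\sX)\rho(\sX')=\sQ\,\sigma(\sX)\,\sQ^{*}\rho(\sX')=\sQ\,\sigma(\sX)\sigma(\sX')\,\sQ^{*}$, and symmetrically $\rho(\sX')\rho(\sX)=\sQ\,\sigma(\sX')\sigma(\sX)\,\sQ^{*}$; these agree because $\sigma(\sX)$ and $\sigma(\sX')$ commute by Definition \ref{defn:permutation_strategy}(3). This exhibits $\rho\colon S\to U(W^{-})\cong U(n)$ with commuting images over each vertex, i.e.\ a synchronous quantum strategy for $\cG$.

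The only place to be careful is keeping straight the two adjoint relations $\sQ^{*}\sQ=\sP^{\sJ}_{1}$ (a projection on $\mathbb{C}^{2n}$) and $\sQ\sQ^{*}=\Id_{W^{-}}$, and noting that the hypothesis that $\sigma(\sX)$ commutes with $\sigma(\sJ)$ is precisely what lets one slide $\sigma(\sX)$ past $\sQ^{*}$ and $\sQ$. One could alternatively phrase the argument representation-theoretically — $W^{-}$ is the $(-1)$-isotypic component of the abelian group $\langle\sigma(\sJ)\rangle$, hence invariant under its commutant, and compression to it is a $*$-homomorphism on that subalgebra — but the explicit computation above is shorter and self-contained.
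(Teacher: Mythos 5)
Your proof is correct and follows essentially the same route as the paper's: both hinge on the fact that $\sigma(\sX)$ commutes with $\sP^{\sJ}_1 = \sQ^*\sQ$ (a consequence of $[\sigma(\sX),\sigma(\sJ)]=\Id$), and both then verify unitarity, involutivity, and vertexwise commutativity by algebraic manipulation of $\sQ$, $\sQ^*$, and $\sigma$. The only difference is organizational — you isolate the intertwining identity $\sigma(\sX)\sQ^* = \sQ^*\rho(\sX)$ as a reusable lemma, whereas the paper inlines that same sliding step inside each of the three computations.
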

\begin{proof}
    The criterion for being a quantum strategy as in Definition \ref{defn:strategy_for_a_game} is to be a map into \emph{unitaries} such that the image of the generators $S_x$ associated with any specific vertex \emph{commute}, and all the images are \emph{involutions}.
First,
 \begin{equation}\label{eq:quantum_associated_strat}
     \rho(\sX)^*=(\sQ \sigma(\sX)\sQ^*)^*=\sQ \sigma(\sX)^*\sQ ^*=\sQ  \sigma(\sX)^{-1}\sQ ^*.
 \end{equation}
    Now,  the fact that $\sigma(\sJ)$ commutes with  $\sigma(\sX)$ for any $\sX\in S$ implies that $\sQ^*\sQ\sigma(\sX)=\sigma(\sX)\sQ^*\sQ$. Therefore, 
    \[
    \begin{split}
    \rho(\sX)^*\rho(\sX)&=\sQ \sigma(\sX)^{-1}\sQ^*\sQ \sigma(\sX)\sQ^*\\
    &=\sQ \underbrace{\sigma(\sX)^{-1} \sigma(\sX)}_{\Id_{2n}}\sQ^*\underbrace{\sQ\sQ^*}_{\Id_{W^{-}}}\\
    &=\sQ\sQ^*=\Id_{W^{-}},
    \end{split}
    \]
   and the image of $\rho$ is contained in $U(W^{-})$. Item $(2)$ of Definition \ref{defn:permutation_strategy} states that  $\sigma(\sX)^{-1}=\sigma(\sX)$ for all $\sX\in S$,  and combined with \eqref{eq:quantum_associated_strat} gives
   \[
\rho(\sX)^*=\sQ \sigma(\sX)^{-1}\sQ^*=\sQ\sigma(\sX)\sQ^*=\rho(\sX),
   \]
   i.e., the images of $\rho$ are involutions. Finally, since $\sigma(S_x)$ commutes for every fixed $x\in V$, we have
   \[
   \begin{split}
       \forall \sX,\sY\in S_x\ \colon \ \ \rho(\sX)\rho(\sY)&=\sQ \sigma(\sX)\sQ^*\sQ \sigma(\sY)\sQ^*\\
       &=\sQ\sQ^*\sQ  \sigma(\sX)\sigma(\sY)\sQ^*\\
       &=\sQ\sQ^*\sQ  \sigma(\sY)\sigma(\sX)\sQ^*\\
       &=\sQ\sigma(\sY)\sQ^*\sQ \sigma(\sX)\sQ^*\\
       &=\rho(\sY)\rho(\sX),
   \end{split}
   \]
   which finishes the proof.
\end{proof}

\begin{example}[Classical permutation strategies]\label{example:classical_perm_strategies}
Let $\Sym(2)=\{\Id,\zeta\}$ be the group of permutations acting on two elements. For every fixed $f\colon S\to \{0,1\}$, we can define an action $\sigma\colon S\cup\{\sJ\}\to \Sym(2)$ as follows
\[
\sigma(\sJ)=\zeta\quad;\quad \sigma(\sX)=\begin{cases}
    \Id & f(\sX)=0,\\
    \zeta & f(\sX)=1.
\end{cases}
\]
It is straightforward to see that the quantum strategy associated with $\sigma$ satisfies $\rho(\sX)=(-1)^{f(\sX)}$, and thus $\gamma\colon S_{xy}\to \{0,1\}$ that is sampled according to $\rho$ is deterministically $f|_{S_{xy}}$.
Strategies  of this form are usually called \emph{deterministic}. By taking direct sums of such actions (for potentially different $f$'s), we can get any (rational) distribution over deterministic strategies. These strategies are usually called \emph{classical}. So, every (rational) classical strategy can be obtained as a permutation strategy. 
\end{example}

\begin{claim}\label{claim:rho_id_implies_sigma_id}
    Let $\sigma\colon S\cup \{\sJ\}\to \Sym(2n)$ be a permutation strategy, and $\rho$ the associated quantum strategy. Then, if for a word $w\in \cF(S\cup \{\sJ\})$ we have $\rho(w)=\Id$, then $\sigma(w)=\Id$.
\end{claim}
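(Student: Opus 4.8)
The plan is to exploit two structural features of a permutation strategy: that $\sigma(\sJ)$ commutes with every $\sigma(\sX)$ (hence commutes with $\sigma(v)$ for \emph{every} word $v$), and the explicit description of the $(-1)$-eigenspace $W^{-}$ of $\sigma(\sJ)$ and its standard basis $B^{-}=\{{\bf 1}_\star-{\bf 1}_{\star'}\}$ from Remark \ref{rem:flip_along_J-edges}, where $\star'=\sigma(\sJ).\star$. Throughout, $\rho$ is regarded as extended to a homomorphism on all of $\cF(S\cup\{\sJ\})$ by $\rho(\sJ)=\sQ\sigma(\sJ)\sQ^{*}=-\Id_{W^{-}}$, which is the natural extension of Definition \ref{defn:quantum_strat_associated_with_perm_strat} since $\sigma(\sJ)$ acts as $-\Id$ on $W^{-}$.

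The first step is a sub-claim: for every word $w\in\cF(S\cup\{\sJ\})$ one has $\rho(w)=\sQ\sigma(w)\sQ^{*}$; that is, $\rho(w)$ is exactly the compression of the permutation matrix $\sigma(w)$ to $W^{-}$. I would prove this by induction on word length. The length-one cases (a generator or the inverse of one) hold because $\sigma(\sX)$ and $\sigma(\sJ)$ are involutions and $\rho$ sends each generator to an involution, so $\rho(g^{-1})=\rho(g)=\sQ\sigma(g)\sQ^{*}=\sQ\sigma(g^{-1})\sQ^{*}$. For the inductive step, write $w=gv$ and compute $\rho(gv)=\rho(g)\rho(v)=\sQ\sigma(g)\sQ^{*}\sQ\sigma(v)\sQ^{*}=\sQ\sigma(g)\sP^{\sJ}_{1}\sigma(v)\sQ^{*}$ using $\sQ^{*}\sQ=\sP^{\sJ}_{1}$; since $\sigma(v)$ commutes with $\sigma(\sJ)$ it commutes with $\sP^{\sJ}_{1}$, so this equals $\sQ\sigma(g)\sigma(v)\sP^{\sJ}_{1}\sQ^{*}=\sQ\sigma(w)\sQ^{*}$, where at the end I use $\sP^{\sJ}_{1}\sQ^{*}=\sQ^{*}\sQ\sQ^{*}=\sQ^{*}$.

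With the sub-claim in hand, note that $\sigma(w)$ commutes with $\sigma(\sJ)$, hence preserves $W^{-}$, so the compression $\sQ\sigma(w)\sQ^{*}$ coincides with the honest restriction $\sigma(w)|_{W^{-}}$. Therefore the hypothesis $\rho(w)=\Id$ becomes $\sigma(w)v=v$ for all $v\in W^{-}$. Applying this to $v={\bf 1}_\star-{\bf 1}_{\star'}$ and using that a permutation acts on indicators by $\zeta.{\bf 1}_\star={\bf 1}_{\zeta.\star}$ (from \eqref{eq:defn_action_of_perm_on_functions}) gives ${\bf 1}_{\sigma(w).\star}-{\bf 1}_{\sigma(w).\star'}={\bf 1}_\star-{\bf 1}_{\star'}$. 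Since $\sigma(\sJ)$ is fixed-point free, $\star\neq\star'$, and injectivity of $\sigma(w)$ gives $\sigma(w).\star\neq\sigma(w).\star'$; comparing the coordinate carrying $+1$ and the coordinate carrying $-1$ on the two sides forces $\sigma(w).\star=\star$ and $\sigma(w).\star'=\star'$. As $\star\in[2n]$ was arbitrary, $\sigma(w)=\Id_{2n}$. The only place where care is genuinely needed — and the natural candidate for the main obstacle — is the compression identity of the first step: one must compose the projections in the right order and invoke centrality of $\sigma(\sJ)$ exactly where it is needed, since it is precisely centrality that makes $\rho$ multiplicative on the nose rather than merely ``up to corners.'' Everything after that is elementary linear algebra of $\pm1$-supported vectors, and I would present the compression identity as a standalone sub-claim before deducing the statement.
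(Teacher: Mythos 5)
Your proof is correct and takes essentially the same approach as the paper's; the only difference is that you make explicit, via an induction on word length, the compression identity $\rho(w)=\sQ\sigma(w)\sQ^*=\sigma(w)|_{W^-}$ for arbitrary words $w$, whereas the paper treats this as an immediate consequence of Remark~\ref{rem:flip_along_J-edges} (it follows at once from the fact that centrality of $\sigma(\sJ)$ makes every $\sigma(\sX)$ preserve $W^-$, so compression equals restriction and restriction is a homomorphism). After that, both arguments conclude identically by evaluating on the basis vectors ${\bf 1}_\star-{\bf 1}_{\star'}$.
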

\begin{proof}
    Recall from Remark \ref{rem:flip_along_J-edges} that the images of $\rho$ act naturally, and in \textbf{the same way} as $\sigma$,  on $W^{-}$, the $(-1)$-eigenspace of $\sigma(\sJ)$, which consist of functions $f\colon [2n]\to \mathbb{C}$ that flip along $\sJ$-edges. Recall also the notation $\star'=\sigma(\sJ).\star$ from \eqref{eq:defn_star'}.
    So, the fact that $\rho(w)=\Id$ means that for any  function $f\in W^{-}$, $\sigma(w).f=f$ as well.  
    This is true in particular for $B^{-}=\{{\bf 1}_\star-{\bf 1}_{\star'}\}_{\star\in [2n]}$, the basis of $W^{-}$ described in the same remark. Note that for every $\zeta\in \Sym(2n)$ we have $\zeta.{\bf 1}_\star={\bf 1}_{\zeta.\star}$.
    Hence, for every $\star\in[2n]$, we have
\[
{\bf 1}_\star-{\bf 1}_{\star'}=\sigma(w).({\bf 1}_\star-{\bf 1}_{\star'})={\bf 1}_{\sigma(w).\star}-{\bf 1}_{\sigma(w).\star'}.
\]
By evaluating the functions on both sides of the above equation on the point $\star\in [2n]$, we get
\[
1={\bf 1}_{\sigma(w).\star}(\star)-{\bf 1}_{\sigma(w).\star'}(\star),
\]
which means $\sigma(w).\star=\star$. Since $\star$ was any point in $[2n]$, $\sigma(w)=\Id$ and we are done.
\end{proof}

\begin{defn}\label{defn:game_having_perfect_permutation_strategy}
     A non-local game $\cG$ is said to have a \emph{perfect permutation strategy} if there exists a permutation strategy $\sigma$ such that the  quantum strategy $\rho$ induced by it, as in Definition \ref{defn:quantum_strat_associated_with_perm_strat}, satisfies $\val(\cG,\rho)=1$.
 \end{defn}

In the rest of this subsection we further analyze permutation strategies, and specifically discuss a procedural sampling --- similar to  Claim \ref{claim:Born's_rule}  --- of a $\gamma\colon S_{xy}\to \{0,1\}$ according to $\rho$ which is induced by a permutation strategy $\sigma$. 

\subsection{Interlude --- Fourier bases of $\FF_2^m$ actions}\label{sec:Fourier_bases}

\begin{defn}\label{defn:Fourier_basis_F_2^k}
    Let $\sigma\colon \FF_2^k\to \Sym(\FF_2^k)$ be the standard action of $\FF_2^k$ on itself, namely $\sigma(w).v=v+w$ for every $v,w\in \FF_2^k$. The \emph{Fourier basis} of $\sigma$ is the following collection of functions, denoted by $\widehat \FF_2^k$: For every $u\in \FF_2^k$, let $\widehat u\colon \FF_2^k\to \mathbb{C}$ be 
\begin{equation}\label{eq:Fourier_basis}
    \forall w\in \FF_2^k\ \colon \ \ \widehat u(w)=\frac{(-1)^{\langle u,w\rangle}}{\sqrt {2^k}},
\end{equation}
where $\langle u,w\rangle=\sum_{i=1}^k u_iw_i$ is the standard bilinear form on $\FF_2^k$.\footnote{We try to keep $\langle \cdot,\cdot\rangle$ for the bilinear form over a finite vector space, and $\langle \cdot |\cdot \rangle$ for the complex inner product.}
\end{defn}

\begin{claim}
    The collection $\widehat\FF_2^k$ is an orthonormal basis of eigenvectors for the action  $\sigma$. 
\end{claim}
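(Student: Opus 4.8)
The plan is to verify directly that the proposed functions $\widehat u$ form an orthonormal set of the right size, and that each is an eigenvector of $\sigma(w)$ for every $w \in \FF_2^k$. Since $|\widehat\FF_2^k| = 2^k = \dim \mathbb{C}^{\FF_2^k}$, orthonormality alone already forces it to be a basis, so the only two things to check are (i) orthonormality and (ii) the eigenvector property.

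First I would check orthonormality. For $u, u' \in \FF_2^k$ compute
\[
\langle \widehat u \mid \widehat{u'}\rangle = \sum_{w \in \FF_2^k} \overline{\widehat u(w)}\, \widehat{u'}(w) = \frac{1}{2^k}\sum_{w \in \FF_2^k} (-1)^{\langle u, w\rangle}(-1)^{\langle u', w\rangle} = \frac{1}{2^k}\sum_{w \in \FF_2^k} (-1)^{\langle u + u', w\rangle}.
\]
The standard character-sum identity $\sum_{w \in \FF_2^k} (-1)^{\langle v, w\rangle} = 2^k$ if $v = 0$ and $0$ otherwise (which follows by factoring the sum over coordinates, or by noting that for $v \neq 0$ the map $w \mapsto \langle v, w\rangle$ is a surjective homomorphism onto $\FF_2$, hence balanced) gives $\langle \widehat u \mid \widehat{u'}\rangle = \delta_{u, u'}$. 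This is the routine part.

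Next I would check the eigenvector property. Fix $w \in \FF_2^k$ and $u \in \FF_2^k$. Using the action formula $(\zeta.f)(\star) = f(\zeta^{-1}.\star)$ from \eqref{eq:defn_action_of_perm_on_functions}, and that $\sigma(w)$ has order $2$ so $\sigma(w)^{-1} = \sigma(w)$, we get for every $v \in \FF_2^k$:
\[
(\sigma(w).\widehat u)(v) = \widehat u(\sigma(w)^{-1}.v) = \widehat u(v + w) = \frac{(-1)^{\langle u, v + w\rangle}}{\sqrt{2^k}} = (-1)^{\langle u, w\rangle}\cdot\frac{(-1)^{\langle u, v\rangle}}{\sqrt{2^k}} = (-1)^{\langle u, w\rangle}\,\widehat u(v),
\]
using bilinearity of $\langle\cdot,\cdot\rangle$. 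Hence $\sigma(w).\widehat u = (-1)^{\langle u, w\rangle}\,\widehat u$, so $\widehat u$ is an eigenvector of $\sigma(w)$ with eigenvalue $(-1)^{\langle u, w\rangle} \in \{\pm 1\}$; since this holds simultaneously for all $w$, the $\widehat u$ are mutual eigenvectors for the whole action. Finally, as $\widehat\FF_2^k$ is an orthonormal set of $2^k$ vectors in the $2^k$-dimensional space $\mathbb{C}^{\FF_2^k}$, it is an orthonormal basis, completing the proof.

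There is no real obstacle here — this is a standard Fourier-analytic computation over $\FF_2^k$; the only thing to be slightly careful about is matching the convention in \eqref{eq:defn_action_of_perm_on_functions} (the inverse in $(\zeta.f)(\star) = f(\zeta^{-1}.\star)$), which is harmless since $\sigma(w)$ is an involution. I expect the author's proof to be essentially this character-sum argument, possibly phrased via the observation that $\{\widehat u\}$ simultaneously diagonalizes the commuting family of order-two operators $\{\sigma(e_i)\}_{i=1}^k$ (the images of the standard basis vectors), which generate the image of the whole action.
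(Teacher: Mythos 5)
Your proof is correct and essentially matches the paper's: both verify orthonormality by the standard character-sum computation and then check the eigenvector identity directly from the definition of the action. Your unification of the norm and orthogonality checks into a single Kronecker-delta computation, and your explicit remark that $\sigma(w)^{-1}=\sigma(w)$ reconciles the action convention of \eqref{eq:defn_action_of_perm_on_functions} with the formula $\widehat u(v+w)$, are minor presentational differences only.
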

\begin{proof}
    The action $\sigma$ acts as permutations on the standard basis of $\mathbb{C}^{\FF_2^k}$, and thus extends to act on the whole vector space of functions from $\FF_2^k$ to $\mathbb{C}$. Using the standard inner product $\langle \cdot|\cdot\rangle$ on  $\mathbb{C}^{\FF_2^k}$ as before, we can check that 
    \[
\forall u\in \FF_2^k\ \colon \ \ \langle \widehat u|\widehat u\rangle =\sum_{w\in \FF_2^k} \overline{\widehat u(w)}\cdot \widehat u(w)=\sum_{w\in \FF_2^k}\frac{1}{2^k}=1.
    \]
    Furthermore, 
    \[
    \begin{split}
       \forall u\neq v\in \FF_2^k\ \colon \ \ \langle \widehat v|\widehat u\rangle &=\sum_{w\in \FF_2^k} \overline{\widehat v(w)}\cdot \widehat u(w) \\
       &=\frac{1}{2^k}\sum_{w\in \FF_2^k}(-1)^{\langle v+u,w\rangle}\\
       &=0,
    \end{split}
    \]
    where the last equality is derived from the fact $v+u\neq \vec 0$. Lastly, 
    \[
    \begin{split}
\forall u,v,w\in \FF_2^k\ \colon \ \ (\sigma(w).\widehat v)(u)&=\widehat v(u+w)\\
&=\frac{(-1)^{\langle u+w,v\rangle }}{\sqrt{2^k}}\\
&=(-1)^{\langle w,v\rangle}\cdot \frac{(-1)^{\langle u,v\rangle}}{\sqrt{2^k}}\\
&=(-1)^{\langle w,v\rangle}\widehat v(u),
\end{split}
    \]
    and $\widehat\FF_2^k$ is indeed a collection of mutual eigenvectors for $\sigma$.
\end{proof}

Recall basic \emph{group actions} definitions. 
Let $G$ be a group, and $\sigma\colon G\to \Sym(X),\tau\colon G\to \Sym(Y)$ be two homomorphisms, which induce actions of $G$ on $X$ and $Y$ respectively. A map 
$f\colon X\to Y$  is a \emph{factor} (of $G$-actions)  --- also known as a \emph{morphism of $G$ actions} or a  $(\sigma,\tau)$-\emph{equivariance} --- if
for all $w\in G$ and $x\in X$, we have $f(\sigma(w).x)=\tau(w).f(x).$
Note that for every set $S\subseteq G$, such a map induces a morphism of oriented edge-labeled graphs between $\Sch(\sigma,S)$ and $\Sch(\tau,S)$.
The actions $\sigma$ and $\tau$ are said to be \emph{equivalent} if there exists a bijective factor between them, which in the graph case would be a (edge-labeled oriented) graph isomorphism between $\Sch(\sigma,S)$ and $\Sch(\tau,S)$.
Given two actions, we can define their \emph{sum}  $\sigma\oplus\tau\colon G\to \Sym(X\sqcup Y)$, where  $\sqcup$  is the  disjoint union, to be
\[
\forall w\in G, z\in X\sqcup Y\, \colon \quad \\ (\sigma\oplus\tau(w)).z=\begin{cases}
    \sigma(w). z & z\in X,\\
    \tau(w). z & z\in Y.
\end{cases}
\]
If $\sigma$ is a transitive action and  $x\in X$, then it is equivalent to the  action of $G$ on $\nicefrac{G}{\Stab(\sigma,x)}$, where $\Stab(\sigma,x)$ is defined (as in Section \ref{Intro_Sec_fin_desc_IRS}) to be
$\Stab(\sigma,x)=\{w\in G\mid \sigma(w).x=x\}.$
The bijective factor in this case is associating with $\sigma(w).x$ the coset $w\cdot\Stab(\sigma,x)$, and recalling that by transitivity every $y\in X$ is of the form $\sigma(w).x$ for some $w\in G$. So,  every action $\sigma\colon G\to \Sym(X)$ is equivalent to the sum of actions of $G$ on quotients of it $\nicefrac{G}{H}$, for some collection of subgroups $H\leq G$. 

Assume now that $G$ is $\FF_2^m$ for some positive integer $m$. Let $\sigma\colon G\to \Sym(X)$ be a transitive action. Since every subgroup $H$ of  $G$ is normal, this action factors through  some $\FF_2^k$ for $k\leq m$. Namely, we can find a bijective factor between $X$ and the action of $\FF_2^k$ on itself: 
 Fix some $\star\in X$; $\Stab(\sigma,\star)$ is a subspace of $\FF_2^m$, and we implicitly assumed it is isomorphic to $\FF_2^{m-k}$; for every $\diamond\in X$, there is an element $g$ of $\FF_2^m$ for which $\sigma(g).\star=\diamond$, and thus $\sigma(g').\star=\diamond$ for every $g'\in g+\Stab(\sigma,\star)$; so we can associate $g+\Stab(\sigma,\star)$ with $\diamond$ and $\nicefrac{\FF_2^{m}}{\Stab(\sigma,\star)}$ with $\FF_2^k$.  
For every $g\in \FF_2^m$, let $\tilde g\in \FF_2^k$ be its image in the bijection between the quotient space $\nicefrac{\FF_2^m}{\Stab(\sigma,\star)}$ and $\FF_2^k$. Then,  
\[
\forall v\in \FF_2^k\ \colon \ \ \sigma(g).\widehat v=(-1)^{\langle v,\tilde g\rangle}\widehat v.
\]
In particular, for $s\in S\subseteq \FF_2^m$, the action of $s$ on $\widehat v$ can be read from what happens along edges labeled by $s$ in $\Sch(\sigma,S)$ --- if $\widehat v(\star)=\widehat v(\sigma(s).\star)$, then $\widehat v$ is in the $(+1)$-eigenspace of $\sigma (s)$, and otherwise it is in its $(-1)$-eigenspace.

All in all, given a not necessarily transitive action $\sigma$ of $\FF_2^m$ on $X$, its orbits have Fourier bases as in Definition \ref{defn:Fourier_basis_F_2^k} (extended to being zero on all vertices outside of the orbit), and  the disjoint union of these will be called  \emph{the Fourier basis} of $\sigma$. 
Note that since the bijections between the orbits and $\FF_2^k$ can be chosen in many ways, the Fourier basis of $\sigma$ is determined only up to the sign of each basis vector.

\subsection{Procedural sampling according to permutation strategies}\label{sec:proc_samp_perm_strat}
Let $\sigma\colon S\cup \{\sJ\}\to \Sym(2n)$ be a permutation strategy for $\cG$, and recall Remark \ref{rem:flip_along_J-edges} and the notation therein. 
By Definition \ref{defn:permutation_strategy}, $\sigma(S_x\cup\{\sJ\})$ consists of commuting involutions. 
Therefore, the group generated by $\sigma(S_x\cup\{\sJ\})$ is a quotient of $\FF_2^{S_x\cup \{\sJ\}}$ --- this can be acquired by linearly extending the map ${\bf 1}_{\sX}\mapsto \sigma(\sX)$ for every $\sX\in S_x\cup \{\sJ\}$, where ${\bf 1}_\sX\colon {S_x\cup \{\sJ\}}\to \FF_2$ is the indicator of $\sX$. 
This means that 
\begin{equation}\label{eq:defn_sigma_x}
    \sigma_x=\sigma|_{S_x\cup\{\sJ\}}
\end{equation} 
induces an action of  $\FF_2^{S_x\cup \{\sJ\}}$ on $[2n]$. 
Choose $B_x \subset \mathbb{C}^{2n}$ to be the Fourier basis of $\sigma_x$, as was defined in the Interlude Section \ref{sec:Fourier_bases}.
Let $B_x^{-}= B_x\cap W^{-}\subset B_x$ be the collection of $n$ eigenvectors in $B_x$ on which $\sigma(\sJ)$ acts as $(-1)$, namely all $\vec v\in B_x$ such that  $\sigma(\sJ).\vec v=-\vec v$. These are the Fourier basis elements that flip along edges labeled by $\sJ$ in $\Sch(\sigma_x,S_x\cup \{\sJ\})$. Then, $B_x^{-}$ is an orthonormal basis of eigenvectors for $\rho(S_x)$.

\begin{claim}\label{claim:procedural_sampling_perm_strat}
    Let  $\sigma\colon S\cup \{\sJ\}\to \Sym(2n)$ be a permutation strategy for $\cG$, and $\rho\colon S\to U(W^{-})$ the quantum strategy associated to $\sigma$. Then, the sampling procedure of $\gamma\colon S_{xy}\to \{0,1\}$ according to $\rho$ can be described  as follows:
    \begin{itemize}
        \item Sample $\star\in [2n]$ uniformly at random. Then, it is part of an orbit $O_x\subseteq [2n]$ of $\sigma_x$ (defined in \eqref{eq:defn_sigma_x}) and $O_y\subseteq [2n]$ of $\sigma_y$.
        \item Choose a pair $\vec v\in B^{-}_x$ and $\vec u\in B^{-}_y$ such that $\vec v$ is supported on $O_x$ and $\vec u$ is supported on $O_y$, with probability 
        \[
        \frac{2\langle \vec v|\vec u\rangle ^2}{|O_x\cap O_y|}.
        \]
        \item For $\sX\in S_x$, let $\gamma(\sX)=0$ if $\rho(\sX).\vec v=\vec v$ and $1$ otherwise. Similarly, for $\sY\in S_y$, let $\gamma(\sY)=0$ if $\rho(\sY).\vec u=\vec u$ and $1$ otherwise.
    \end{itemize}
    We may use the notation $\gamma\sim \sigma$ instead of $\gamma\sim \rho$ to refer to sampling according to a permutation strategy.
\end{claim}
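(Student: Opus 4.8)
The plan is to reduce this to the general procedural sampling description already established in Claim \ref{claim:Born's_rule}. Recall that Claim \ref{claim:Born's_rule} says: to sample $\gamma \sim \rho$ one picks $\vec v$ uniformly from an orthonormal eigenbasis $B_x$ of $\rho(S_x)$, then picks $\vec u$ from an orthonormal eigenbasis $B_y$ of $\rho(S_y)$ with probability $|\langle \vec v|\vec u\rangle|^2$, and then reads off $\gamma$ from the signs of the eigenvalues. Here $\rho$ acts on $W^-$, which by Remark \ref{rem:flip_along_J-edges} is $n$-dimensional with standard basis $B^- = \{{\bf 1}_\star - {\bf 1}_{\star'}\}$. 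The content of the present claim is just a bookkeeping translation: instead of sampling $\vec v$ uniformly from an abstract orthonormal basis of $W^-$, we sample a point $\star \in [2n]$ uniformly and then a Fourier-type basis vector supported on its $\sigma_x$-orbit, and we verify the resulting distribution on pairs $(\vec v, \vec u)$ matches.

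First I would fix the eigenbases: take $B_x^-$ and $B_y^-$ as in the paragraph preceding the claim — these are the Fourier bases of $\sigma_x$ and $\sigma_y$ restricted to $W^-$, and by the Interlude (Section \ref{sec:Fourier_bases}) they are genuine orthonormal eigenbases of $\rho(S_x)$ and $\rho(S_y)$ acting on $W^-$, so Claim \ref{claim:Born's_rule} applies verbatim with $B_x = B_x^-$, $B_y = B_y^-$. The third bullet of the present claim is then literally the sign-reading step of Claim \ref{claim:Born's_rule}. So the only thing to check is that the two-bullet sampling of the pair $(\vec v, \vec u)$ produces the distribution $\Pro[(\vec v,\vec u)] = \nicefrac{|\langle \vec v|\vec u\rangle|^2}{n}$ from Claim \ref{claim:Born's_rule} (with $n = \dim W^-$). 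The key combinatorial fact is that a Fourier basis vector $\vec v \in B_x^-$ supported on a $\sigma_x$-orbit $O_x$ has the form $\vec v = \frac{1}{\sqrt{|O_x|}}\sum_{\diamond \in O_x}\pm{\bf 1}_\diamond$, and $|O_x| = 2|\tilde O_x|$ where $\tilde O_x$ is the corresponding orbit downstairs (the $\sJ$-matching pairs points of $O_x$, and $W^-$ eigenvectors flip along $\sJ$ — this is where the factor of $2$ and the constraint "$\sJ$ acts as $-1$" enter). Given a uniformly random $\star$, the probability $\star$ lands in a fixed orbit $O_x$ is $|O_x|/(2n)$; conditioned on that, the number of $\vec v \in B_x^-$ supported on $O_x$ equals $|O_x|/2$ (half the orbit size, since $B^-$ identifies $\vec v$ with $-\vec v$). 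So choosing $\vec v$ supported on $O_x$ "with probability $\frac{2\langle \vec v|\vec u\rangle^2}{|O_x \cap O_y|}$" must be shown to be a valid conditional distribution and to telescope, after multiplying by the marginal over $\star$ and the choice of $\vec u$, to $\nicefrac{\langle \vec v|\vec u\rangle^2}{n}$.

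The computation I would carry out: fix a valid pair $(\vec v, \vec u)$ with $\vec v$ supported on $O_x$ and $\vec u$ on $O_y$; note $\langle \vec v|\vec u\rangle$ is real (both have real entries) so $|\langle\vec v|\vec u\rangle|^2 = \langle\vec v|\vec u\rangle^2$, and $\langle \vec v|\vec u\rangle = 0$ unless $O_x \cap O_y \neq \emptyset$ (disjoint supports). The joint probability of $\star$, then $\vec v$, then $\vec u$ in the proposed procedure is, summing over all $\star \in O_x \cap O_y$ that could have been chosen,
\[
\sum_{\star \in O_x \cap O_y} \frac{1}{2n}\cdot \frac{2\langle \vec v|\vec u\rangle^2}{|O_x \cap O_y|} = \frac{|O_x \cap O_y|}{2n}\cdot\frac{2\langle \vec v|\vec u\rangle^2}{|O_x\cap O_y|} = \frac{\langle \vec v|\vec u\rangle^2}{n},
\]
which is exactly the probability in Claim \ref{claim:Born's_rule}. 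I also need the sanity check that this defines a probability distribution — i.e. that the conditional probabilities over $\vec u$ (for fixed $\star$ and $\vec v$) sum to $1$: this follows from $\sum_{\vec u \in B_y^-}\langle \vec v|\vec u\rangle^2 = \|\vec v\|^2 = 1$ together with the orbit-size normalization, using that the only $\vec u$ with nonzero overlap are those supported on orbits meeting $O_x$; unwinding the counting of how many points $\star$ in the various $O_x \cap O_y$ there are recovers the factor $|O_x \cap O_y|$ correctly. I expect the main (though still routine) obstacle to be exactly this bookkeeping of orbit sizes and the factor-of-two between $O_x$ and $\tilde O_x$, i.e. making sure the $\sJ$-flip structure on $W^-$ is threaded through every normalization constant consistently; once that is pinned down the identity above closes the proof immediately by appeal to Claim \ref{claim:Born's_rule}.
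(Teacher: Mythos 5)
Your proof is correct and is essentially the same as the paper's: reduce to Claim \ref{claim:Born's_rule} with the eigenbases $B_x^-$, $B_y^-$, and observe that summing over the $|O_x\cap O_y|$ choices of $\star$ gives
$\frac{|O_x\cap O_y|}{2n}\cdot\frac{2\langle\vec v|\vec u\rangle^2}{|O_x\cap O_y|}=\frac{\langle\vec v|\vec u\rangle^2}{n}$. The paper even omits the normalization sanity check you flag (and the minor parenthetical about $B^-$ identifying $\pm\vec v$ is not quite the right reason for the $|O_x|/2$ count --- it is the $\sJ$-eigenvalue split of the Fourier basis --- but this plays no role in your computation).
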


\begin{proof}
The probability of sampling $\star\in [2n]$ which is in $O_x\cap O_y$ is $\frac{|O_x\cap O_y|}{2n}$. Thus, the probability of sampling a specific pair $\vec v\in B_x^-,\vec u\in B_y^-$ whose intersection of supports is $O_x\cap O_y$ is
    \[
\frac{|O_x\cap O_y|}{2n}\cdot \frac{2\langle \vec v|\vec u\rangle^2}{|O_x\cap O_y|}=\frac{\langle \vec v|\vec u\rangle ^2}{n}.
    \]
    This shows that the above sampling procedure provides the same distribution as the one in Claim \ref{claim:Born's_rule}.

\end{proof}

\subsection{Tailored games}
 As permutation strategies allow us to translate finitely described strategies of subgroup tests to quantum strategies of non-local games, the soon to be defined \emph{$Z$-aligned permutation strategies} of \emph{tailored non-local games} will allow us to move in the other direction. It is harder to motivate our choices beforehand in this case, but we hope that Section \ref{sec:test_associated_with_game}  clarifies what may seem cryptic in this section. We begin by restricting the family of non-local games that we focus on --- for the reader familiar with linear constraint system games, we note that this family is a generalization of them.
 
\begin{defn}\label{defn:tailored_games}
Colloquially,  a \emph{tailored} non-local game is one where $D_{xy}$ reads \textbf{part} of $\gamma$, and decides according to this partial view which \textbf{parity checks} to apply on \textbf{the whole} of $\gamma$.\footnote{We considered calling such games \emph{controlled linear}, since it is more informative. But, since conditionally linear is a term used in \cites{MIPRE,Tailored_MIPRE}, and terms containing linear are generally overused, we decided to use a less informative notion.}

Formally,
   a tailored non-local game $\cG$ is equipped with an extra structure, described shortly, and its decision functions $D_{xy}$ behave \emph{canonically} with respect to this extra data.
    Instead of a single length function $\ell$, $\cG$ has two length functions $\ell^\frR\colon V\to \mathbb{N}$ and $\ell^{\frL}\colon V\to \mathbb{N}$, and $\ell=\ell^\frR+\ell^\frL$. Before, the length function described the size of the formal generating set at each vertex. 
    Now, the formal set of generators $S_x$ at $x\in V$ will be a disjoint union of the sets $S_x^\frR$ and $S_x^\frL$, where $S_x^\frR=\{\sX^{x,\frR,i}\mid 1\leq i\leq \ell^{\frR}(x)\}$ and $S_x^\frL=\{\sX^{x,\frL,i}\mid 1\leq i\leq \ell^{\frL}(x)\}$. The elements of  $S_x^\frR$ are  called \emph{readable} variables at $x\in V$ and the elements of $S_x^{\frL}$ the \emph{linear} or \emph{unreadable} variables at $x$. 
    Let $\frR\colon S\to \{0,1\}$ be the \emph{readability function}, namely, the indicator of whether $\sX\in S$ is  readable or not.
      In addition, $\cG$ is equipped with  a collection of \emph{controlled linear constraints} functions $L_{xy}$ that take as input a function $\gamma^\frR \colon S_x^\frR\cup S_y^\frR\to \FF_2$, and outputs a collection of subsets of $S_{xy}\cup \{\sJ\}$. Namely,
    \[
L_{xy}\colon \FF_2^{S_{x}^\frR \cup S_y^\frR}\to \FF_2^{\FF_2^{S_{xy}\cup \{\sJ\}}}.
    \]
    The image of $L_{xy}$ is interpreted as a collection of linear constraints that will be verified by the decision function.   
    Finally, the decision function $D_{xy}(\gamma)$ behaves as follows: It restricts $\gamma$ to the readable variables, namely looks at $\gamma^\frR=\gamma|_{S_x^\frR\cup S_y^\frR}\colon S_x^\frR\cup S_y^\frR\to \FF_2$, and calculates $L_{xy}(\gamma)=L_{xy}(\gamma^\frR)$. Then, it extends $\gamma$ such that $\gamma(\sJ)=1$. Finally, for every  $\alpha\in L_{xy}(\gamma)$, we have $\alpha\colon S_{xy}\cup \{\sJ\}\to \FF_2$,  and $D_{xy}$ verifies that 
    $$\langle \alpha,\gamma\rangle=\sum_{\sX\in S_{xy}\cup \{\sJ\}}\alpha(\sX) \cdot \gamma(\sX)=0.$$ 
    Namely, $L_{xy}(\gamma)$ consists of linear constraints that $\gamma$ needs to satisfy. 
\end{defn}

\begin{rem}\label{rem:naive_tailoring}
    Though being tailored  seems to be quite a restrictive form for a non-local game, every game can be tailored in a trivial manner. First, all variables are declared readable, namely $\ell^\frR=\ell$ and $\ell^\frL=0$. Then, if the decision function $D_{xy}$ decided to accept $\gamma$ according to the original game, then it lets $L_{xy}(\gamma)$ be empty (and thus all linear conditions will be satisfied regardless of what $\gamma$ is). And, if $D_{xy}$ decided to reject $\gamma$ according to the original game, 
    then it chooses $L_{xy}(\gamma)$ to contain the singleton $\{\sJ\}$ as the single subset appearing in  $L_{xy}$. Note that $\{\sJ\}$ represents the linear equation $1\cdot \gamma(\sJ)=0$, which is $1=0$, and thus cannot be satisfied. 
\end{rem}

The last remark raises the question: What have we gained by defining tailored non-local games, if any game can be tailored in a straightforward manner? 

\begin{defn}\label{defn:Z-aligned_strategy}
   A permutation strategy $\sigma$ for a tailored non-local game $\cG$ is said to be $Z$-\emph{aligned} if the assignments  $\sigma(\sX)$ for readable variables act on each point in $[2n]$ either like the identity  or like $\sigma(\sJ)$. This is equivalent to $\rho(\sX)$ being a diagonal matrix when presented according to  the \emph{standard basis} $B^{-}$ of $W^{-}$.\footnote{ The standard basis is usually called the $Z$-basis in quantum information theory, and thus the name $Z$-aligned strategies.}  
\end{defn}

\begin{rem}
    The classical permutation strategies described in Example \ref{example:classical_perm_strategies} are $Z$-aligned. But, one can construct permutation strategies that induce a classical strategy in the usual sense (i.e., whose images are \textbf{all} commuting) without being $Z$-aligned. 
\end{rem}
It is clearer now why the way one tailors a non-local game matters: The existence of a perfect $Z$-aligned permutation strategy for the game  depends on it. Let us demonstrate this with binary linear constraint system (LCS) games, and specifically with the magic square game. For  a full description of  the magic square game we refer to~\cite{aravind2002simple} or~\cite[Example 2.19]{Tailored_MIPRE}, and for a general  introduction to LCS games see~\cite{cleve2017perfect}. 

\begin{example}[Linear constraint system games]\label{example:LCSs}
   Let $A$ be an $m\times n$ matrix with $\FF_2$ coefficients, and let $\vec b$ be a column vector in $\FF_2^m$. Classically, such a pair defines a system of linear equations $A\vec x=\vec b$ over $\FF_2$.  It also defines a certain non-local game $\cG(A,\vec b)$ which is the quantum counterpart of this classical system of equations. Let us provide a dramatization of this game, in the spirit of Remark \ref{rem:standard_defn_games_and_drama}. In  $\cG(A,\vec b)$, the referee chooses a random linear constraint in $A\vec x=\vec b$ (i.e., a row) and sends it to one of the players, and chooses  a random variable that appears  in the chosen constraint (i.e., a column whose intersection with the chosen row is non-zero) to the other player. It then expects the row player to respond with an assignment to the variables appearing in this constraint, and from the column player to respond with an assignment to its single variable. The referee declares this to be a winning condition if the row player's assignment satisfied the constraint, and both players agree on the value of the variable given to the column player. 
   The column player is thought to hold a global assignment and answer according to it, while the row player is assumed to have a satisfying assignment to each constraint --- intuitively, by  checking consistency between the players, the referee verifies that the global assignment of the column player indeed satisfies all the constraints. 
   
   Let us define this game rigorously. 
   The vertices in the underlying graph of $\cG(A,\vec b)$ will be indexed by the rows and columns of the matrix $A$, namely $\{r_i\mid i\in [m]\}$ and $\{c_j\mid j\in [n]\}$. There is an edge between a row $r_i$ and a column $c_j$ if and only if $A_{ij}=1$. The length of every column vertex is $1$, and we denote by $\sX_j$ the variable associated with the $j^{\rm th}$ column $c_j$. The length of each row vertex is the number of $1$'s in the row, and we associate variables $S_{r_i}=\{\sR_{ij'}\mid A_{ij'}=1\}$ to every row $r_i$. The decision function $D_{r_ic_j}$ gets as input an assignment $\gamma$ to $\sX_j$ and $\{\sR_{ij'}\mid A_{ij'}=1\}$, and accepts if and only if 
\begin{equation}\label{eq:LCS_check}
    \sum_{j'\colon A_{ij'}=1}\gamma(\sR_{ij'})=b_i \quad\textrm{and}\quad  \gamma(\sX_j)=\gamma(\sR_{ij}).
\end{equation}
Though for our discussion the distribution $\mu$ over edges in this game is not important, one can consider the following standard sampling scheme: 1) Choose a row uniformly at random. 2) Choose a uniform column out of the support of the chosen row.

Let us describe a non-trivial tailoring of  $\cG(A,\vec b)$. First, all variables are \emph{unreadable}, namely $\ell^\frR=0$ and $\ell^\frL=\ell$. Given that the edge $r_ic_j$ was sampled, the system $L_{r_ic_j}$ will consist of two checks, which are derived from 
\eqref{eq:LCS_check}:\footnote{note that since there are no readable variables, $L_{r_ic_J}$ is constant.} 
\[
\begin{split}
    \alpha_{ {\rm consistency}}(\sX)&=\begin{cases}
    0 & \sX\neq \sX_j,\sR_{ij}\\
    1 & \textrm{otherwise}
\end{cases}\\
\alpha_{ {\rm linear}}(\sX)&=\begin{cases}
    0 & \sX=\sX_j\\
    1 & \sX=\sR_{ij'}\in S_{r_i}\\
    b_i & \sX=\sJ
\end{cases}
\end{split}
\]
Then, $\alpha_{{\rm consistency}}$ will force $D_{r_ic_j}$ to check that $\gamma(\sX_j)=\gamma(\sR_{ij})$, and $\alpha_{{\rm linear}}$ will force it to check that $\displaystyle{\sum_{j'\colon A_{ij'}=1}}\gamma(\sR_{ij'})=b_i$, and thus it acts the same way as before.

   The difference between the above tailored form of $\cG(A,\vec b)$ and the one suggested in Remark \ref{rem:naive_tailoring} may seem technical. But, here all the variables are unreadable, and in the version of Remark~\ref{rem:naive_tailoring} version all variables are readable. 
   If all variables of a tailored game are readable, for it to have a perfect  $Z$-aligned permutation strategy is the same as having a perfect classical strategy --- which in turn is the same as for the linear system $A\vec x=\vec b$ to have a solution. 
   But, when all the variables are unreadable, there could be a perfect $Z$-aligned permutation strategy without $A\vec x=\vec b$ having a solution. 
   For example, the perfect strategy for the Mermin--Peres magic square game (see for example~\cite{aravind2002simple}) can be derived from a permutation strategy (see our companion paper \cite[Example 2.19]{Tailored_MIPRE}), and since there are no readable variables in this case,  it is automatically $Z$-aligned.
   Furthermore, the magic square game has \textbf{no} perfect classical strategy. This shows that if we care about perfect $Z$-aligned permutation strategies, the way we tailor a game \textbf{matters}.
\end{example}

\section{Main Theorem II: Tests associated with tailored games}
\label{sec:test_associated_with_game}
The goal of this section is to define a mapping from tailored non-local games to subgroup tests such that: 
\begin{enumerate}
    \item \textbf{Completeness}: Perfect $Z$-aligned permutation strategies that commute along edges for the tailored game are translated to perfect finitely described strategies of the subgroup test.
    \item \textbf{Soundness}: Almost perfect finitely described strategies for the subgroup test are close to almost perfect $Z$-aligned permutation strategies for the tailored non-local game. 
\end{enumerate} 

Throughout this section, $\cG$ is a tailored non-local game with underlying graph $G=(V,E)$,  length functions $\ell^\frR,\ell^\frL\colon V\to \NN$ with $\ell=\ell^\frR+\ell^\frL$ and $\Lambda =\displaystyle{\max_{x\in V}}(\ell(x))$, formal sets of generators $S_x=S_x^\frR\cup S_x^\frL$ for every vertex $x\in V$, where $S_x^\frR=\{\sX^{x,\frR,i}\mid 1\leq i\leq \ell^\frR(x)\}$ and $S_x^\frL=\{\sX^{x,\frL,i}\mid 1\leq i\leq \ell^\frL(x)\}$, a readability function $\frR\colon S\to \{0,1\}$ where $S=\bigcup_{x\in V} S_x$, a distribution $\mu$ over  edges $E$, and decision functions $D_{xy}\colon \FF_2^{S_{xy}}\to \FF_2$ for every edge $xy\in E$ with a controlled linear constraints map $L_{xy}\colon \FF_2^{S_x^\frR\cup S_y^\frR}\to \FF_2^{\FF_2^{S_{xy}\cup \{\sJ\}}}$.

\begin{defn}\label{defn:associated_test}
    The \emph{synchronous subgroup test} $\tilde \cT=\tilde \cT(\cG)$ \emph{associated} with the tailored non-local game $\cG$ is defined as follows: The set of generators $\tilde S$ in the test $\tilde \cT$ is $S\cup \{\sJ\}$. The set  $\tilde Q$ which parametrizes the challenges of $\tilde \cT$ is the edge set $E$ of $\cG$, and the distribution $\tilde \mu$ over challenges is $\mu$ of $\cG$. The decision function $\tilde D_{xy}$ works as follows (and the subsets $\tilde K_{xy}$ can be derived from this description): Given a subgroup $H$ of $\cF(\tilde S)$, 
    \begin{enumerate}[label=\textcolor{black}{Check \arabic*.}, ref=Check \arabic*]
        \item \label{clause:Check_1_in_associated_test} It verifies that
    \[
\sJ\notin H\quad;\quad \sJ^2\in H\quad;\quad \forall \sX\in S_{xy}\ \colon \ \ [\sJ,\sX]\in H.
    \]
    \item \label{clause:Check_2_in_associated_test} It verifies that
    \[
    \begin{split}
        \forall \sX\in S_{xy}\ &\colon \ \ \sX^2\in H,\\
        \forall \sX,\sX'\in S_x\ &\colon \ \ [\sX,\sX']\in H,\\
        \forall \sY,\sY'\in S_y\ &\colon \ \ [\sY,\sY']\in H.
    \end{split}
    \]
        \item \label{clause:Check_3_in_associated_test} 
    It verifies that 
    \[
\forall \sX\in S_{x}^\frR\cup S_y^\frR \ \colon \ \ \sX\in H\quad\textrm{or}\quad \sJ\sX\in H.
    \]
        \item \label{clause:Check_4_in_associated_test} If $H$ passed all the previous checks, then the following function $\tilde\gamma^\frR\colon S_{x}^\frR\cup S_y^\frR\to\FF_2$ can be calculated: $\tilde\gamma^\frR(\sX)=0$ if $\sX\in H$ and $\tilde\gamma^\frR(\sX)=1$ if $\sJ\sX\in H$. After recovering this $\tilde\gamma^\frR$, the subset 
        \begin{equation}\label{eq:defn_tilde_L_xy}
           \tilde L_{xy}(H)=L_{xy}(\tilde\gamma^\frR) 
        \end{equation}
        can  be calculated.
        So, the decision function verifies that 
        \[
\forall \alpha\in \tilde L_{xy}(H)\ \colon \ \ \sJ^{\alpha(\sJ)}\cdot\prod_{\sX\in S_x}\sX^{\alpha(\sX)}\cdot\prod_{\sY\in S_y}\sY^{\alpha(\sY)}\in H,
        \]
          where the product is according to some pre-fixed ordering of $S_x$ and $S_y$.
    \end{enumerate}
  
    If any of the above checks did not go through, $\tilde D_{xy}$ rejects $H$, and otherwise it accepts it.
\end{defn}

\begin{rem}
    Let us motivate the checks that the decision function $\tilde D_{xy}$ is applying on $H$. Let $\sigma\colon \tilde S\to \Sym(X)$ be a finitely described strategy. If $\sigma$ always passes \labelcref{clause:Check_1_in_associated_test}, it means that $\sJ$ acts as a fixed point free central involution on $X$, which is part of the requirement of a permutation strategy  for $\cG$ (see Definition \ref{defn:permutation_strategy}). If $\sigma$ always passes \labelcref{clause:Check_2_in_associated_test}, then the images of all other variables in $\tilde S$ are involutions, and for any fixed $x\in V$, $\sigma(S_x)$ commutes, which are again requirements for $\sigma$ to be a permutation strategy of $\cG$. If $\sigma$ always passes \labelcref{clause:Check_3_in_associated_test}, then the readable variables always act as either $\Id$ or $\sJ$, which implies $\sigma$ is a $Z$-aligned permutation strategy for $\cG$ (see Definition \ref{defn:Z-aligned_strategy}). So, the goal of the first three Checks is to force any finitely described strategy of $\tilde \cT$ to be a permutation $Z$-aligned strategy for $\cG$. Lastly, \labelcref{clause:Check_4_in_associated_test} verifies ``the same''  linear relations as $\cG$, which means that the winning probability of $\sigma$  against  $\tilde \cT$ and $\cG$ are similar.
\end{rem}

We are ready to formulate our main theorem:
\begin{thm}\label{thm:values_of_associated_tests}
    Let $\cG$ be a tailored non-local game with $\displaystyle{\max_{x\in V}} (\ell(x))=\Lambda>0$, and let $\tilde \cT=\tilde \cT(\cG)$ be its associated synchronous subgroup test, as in Definition \ref{defn:associated_test}. 
    \begin{enumerate}[label=\textcolor{black}{(\arabic*)}, ref= (\arabic*)]
        \item \label{clause:1_in_values_associated_test} \textbf{Completeness}: If $\cG$ has a perfect $Z$-aligned permutation strategy that commutes along edges, then $\tilde\cT$ has a perfect finitely described strategy.
        \item \label{clause:2_in_values_associated_test} \textbf{Soundness}: If $\tilde\cT$ has a finitely described strategy $\sigma$ with $\val(\tilde\cT,\sigma)\geq 1-\eps$, then there exists a quantum strategy $\rho$ such that $\val(\cG,\rho)\geq 1-C\Lambda ^4\cdot 2^{6\Lambda}\eps$, where $C>0$ is a universal constant.
    \end{enumerate}
\end{thm}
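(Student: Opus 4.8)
The plan is to treat the two clauses separately, with the completeness direction being mostly bookkeeping and the soundness direction being the substantive work.

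\textbf{Completeness.} Suppose $\cG$ has a perfect $Z$-aligned permutation strategy $\sigma\colon S\cup\{\sJ\}\to\Sym(2n)$ that commutes along edges, with induced quantum strategy $\rho$ satisfying $\val(\cG,\rho)=1$. I would directly exhibit the finitely described strategy for $\tilde\cT$ as (the IRS $\Phi(\sigma)$ attached to) the action $\sigma$ itself on $X=[2n]$. First I would check that $\sigma$ passes \labelcref{clause:Check_1_in_associated_test}, \labelcref{clause:Check_2_in_associated_test}, \labelcref{clause:Check_3_in_associated_test} with probability $1$ at every vertex: \labelcref{clause:Check_1_in_associated_test} follows from item (1) of Definition \ref{defn:permutation_strategy} (so $\sJ\notin\Stab(\sigma,x)$ for all $x$, since $\sigma(\sJ)$ is fixed-point-free, $\sJ^2\in\Stab$ since it is an involution, and the commutator conditions from centrality); \labelcref{clause:Check_2_in_associated_test} from items (2) and (3); \labelcref{clause:Check_3_in_associated_test} from the definition of $Z$-alignedness, which says exactly that $\sigma(\sX)$ acts on each point like $\Id$ or like $\sigma(\sJ)$, hence $\sX\in\Stab(\sigma,x)$ or $\sJ\sX\in\Stab(\sigma,x)$. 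Then for \labelcref{clause:Check_4_in_associated_test}: because all prior checks pass, the recovered $\tilde\gamma^\frR$ at a point $x$ in the orbit structure over an edge $xy$ matches the measurement outcome $\gamma^\frR$ produced by $\rho$ via the procedural sampling of Claim \ref{claim:procedural_sampling_perm_strat}, and one checks that the word $\sJ^{\alpha(\sJ)}\prod\sX^{\alpha(\sX)}\prod\sY^{\alpha(\sY)}$ lies in $\Stab(\sigma,x)$ precisely when the corresponding parity check $\langle\alpha,\gamma\rangle=0$ is satisfied by the outcome read at $x$ (using that the relevant $\sigma$-images commute, which is where ``commutes along edges'' enters, to make the word act as a product of commuting involutions whose product-action is governed by the $\FF_2$-linear form $\alpha$). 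Since $\val(\cG,\rho)=1$, every such check passes, so $\val(\tilde\cT,\sigma)=1$.

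\textbf{Soundness.} This is the main obstacle. Given a finitely described $\sigma\colon\tilde S\to\Sym(X)$ with $\val(\tilde\cT,\sigma)\ge 1-\eps$, the strategy does not literally pass the structural checks, only on average, so one cannot directly read off a permutation strategy. The plan is a ``rounding'' or ``cleaning'' argument in several stages. First, a Markov/union-bound argument shows that for most vertices $x$ (weighted by the marginal of $\mu$), the fraction of $x\in X$ on which \labelcref{clause:Check_1_in_associated_test}--\labelcref{clause:Check_3_in_associated_test} fail is small; in particular $\sigma(\sJ)$ is close (in Hamming distance) to a fixed-point-free involution that commutes with the other generators. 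The factors $\Lambda$ and $2^\Lambda$ in the final bound come from the union bounds over the (up to $\Lambda$) generators per vertex and over the (up to $2^\Lambda$) possible values of $\gamma^\frR$ feeding into $L_{xy}$, together with the length of the words appearing in \labelcref{clause:Check_4_in_associated_test} (each at most $2\Lambda+1$ letters, costing a further $\Lambda$ factor when one propagates a local discrepancy through a word via the telescoping argument of Claim \ref{claim:test_weighted_edit_distance_implies_val_close}). Second, one replaces $\sigma$ by a genuine permutation strategy $\sigma'$ on a possibly enlarged set (or on a $\sigma(\sJ)$-invariant subset of $X$, doubling up where needed) at the cost of $O(\Lambda^{?}2^{O(\Lambda)}\eps)$ in value: force $\sigma'(\sJ)$ to be an honest fixed-point-free involution, symmetrize the images of $S_x$ to commuting involutions (e.g. by restricting to the subset of $X$ where the commutation relations among $\sigma(S_x)$ already hold and discarding the rest), and round each readable $\sigma(\sX)$ to either $\Id$ or $\sigma'(\sJ)$ pointwise. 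Third, pass from $\sigma'$ to the induced quantum strategy $\rho=\rho_{\sigma'}$ on $W^-$ as in Definition \ref{defn:quantum_strat_associated_with_perm_strat}, and use Claim \ref{claim:procedural_sampling_perm_strat} to compare $\Ex_{\gamma\sim\rho}[D_{xy}(\gamma)]$ with the probability that $\sigma'$ (hence $\sigma$, up to the accumulated error) passes \labelcref{clause:Check_4_in_associated_test}: since the recovered $\tilde\gamma^\frR$ equals the $Z$-basis measurement outcome and the parity-check words lie in $\Stab$ iff the checks hold, these quantities agree. Collecting the error terms gives $\val(\cG,\rho)\ge 1-C\Lambda^4 2^{6\Lambda}\eps$.

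The delicate point I expect to fight with is the second stage: modifying $\sigma$ into a bona fide permutation strategy while controlling the value loss, because the structural checks are coupled (e.g. $Z$-alignedness of a readable variable is only meaningful relative to an honest $\sigma(\sJ)$, and commutation of $\sigma(S_x)$ interacts with both). The cleanest route is probably to define a single ``good'' subset $X_{\mathrm{good}}\subseteq X$ of points at which \emph{all} local relations used by any incident edge hold exactly, show $|X_{\mathrm{good}}|/|X|\ge 1-\mathrm{poly}(\Lambda)2^{O(\Lambda)}\eps$ by a union bound over edges/generators/$\gamma^\frR$-values, restrict the generators to $X_{\mathrm{good}}$ (extending arbitrarily off it, or folding $X\setminus X_{\mathrm{good}}$ into a trivial block), and verify that on $X_{\mathrm{good}}$ the restriction is automatically a $Z$-aligned permutation strategy whose quantum value differs from $\val(\tilde\cT,\sigma)$ by at most the measure of the discarded set. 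Tracking the exact exponents to land on $6\Lambda$ and the power $4$ is then a matter of being careful with how many union bounds are nested, which I would defer to the detailed proof.
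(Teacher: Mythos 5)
Your completeness sketch matches the paper's argument in structure: verify Checks 1--3 directly from the definitions of permutation strategy and $Z$-alignedness, then handle Check 4 via the commuting-along-edges hypothesis and the equivalence between perfect parity checks on quantum measurement outcomes and the corresponding product of observables being the identity. That part is essentially what the paper does (via Claims~\ref{claim:perfect_3Lin_commuting_observables}, \ref{claim:L_depends_on_vertex_only} and~\ref{claim:rho_id_implies_sigma_id}).

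For soundness, there are two substantive issues. First, your ``single good subset $X_{\mathrm{good}}$'' cleaning step is not obviously workable. The paper instead performs a \emph{sequence} of Hamming-controlled perturbations (Claims~\ref{claim:fixing_to_involution}--\ref{claim:Glebsky_Rivera}): round $\sigma(\sJ)$ to an involution, then to a fixed-point-free involution (which requires passing to an even set $[2m]$ and cannot be achieved by mere restriction), then round the $\sigma(\sX)$'s to involutions commuting with $\sJ$, then apply Glebsky--Rivera per vertex to make the $\FF_2^{S_x}$-almost-actions genuine, and only then round readable variables to $\{\Id,\sigma(\sJ)\}$ orbit-wise. ``Folding $X\setminus X_{\mathrm{good}}$ into a trivial block'' is incompatible with $\sJ$ being fixed-point-free, and the constraints are coupled so a single union bound over a fixed good set does not produce a bona fide permutation strategy. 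Your plan would need to be reworked along the paper's iterative lines (or a genuinely different rounding argument supplied).

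Second, and more importantly, your claim that for the cleaned $Z$-aligned permutation strategy the quantum acceptance probability and the Check-4 acceptance probability ``agree'' is false, and papering over it hides the hardest step. These two quantities are different averages: Check 4 is a deterministic condition at each vertex $\star$, while the quantum value sums $\langle\vec v|\vec u\rangle^2$ over Fourier-basis pairs supported on the orbits through $\star$, and there is no bijection between failing vertices and failing pairs. What the paper proves (Proposition~\ref{prop:associated_quantum_strat_has_value_at_least_as_original}, via Claim~\ref{claim:tg}) is only a one-sided inequality with a multiplicative loss of order $2^{2\Lambda}$: roughly, for each fixed constraint $\alpha$, the sum of $\langle\vec v|\vec u\rangle^2$ over failing pairs is bounded by half the number of failing vertices, and then one unions over the up to $2^{2\Lambda+1}$ constraints in $L^{O_x,O_y}_{xy}$. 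This is where one of your $2^{2\Lambda}$ factors must come from, and it requires a genuine argument (the indicator-restriction trick $\langle\vec v|\vec u\rangle = \langle\vec v\cdot{\bf 1}_{\tilde\Upsilon}|\vec u\rangle$), not an appeal to the readable part matching. Without this step, the soundness bound does not close.

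Separately, note that the cleaned $\sigma'$ need not commute along edges (the associated test has no check for that), so you cannot invoke the equivalence from completeness; this is another reason the two probabilities do not agree.
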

The next section is devoted to the proof of Theorem \ref{thm:values_of_associated_tests}.  

\begin{thm}[$\TMIP^*=\RE$, see \cite{Tailored_MIPRE}]\label{thm:tailored_MIP*=RE}
    There exists a polynomial time algorithm that takes as input (the encoding of) a Turing machine $M$ and outputs (the encoding of) a \textbf{tailored} non-local game $\cG_M$ such that:
\begin{enumerate}[label=\textcolor{black}{(\arabic*)}, ref= (\arabic*)]
    \item \label{clause:1_tailored_MIP*} Sampling $xy\in E$ according to  $\mu$ and evaluating  $D_{xy}(\cdot)$ from the encoding of the  game $\cG_M$  can be done in time ${\rm poly}(|M|)$, where $|M|$ is the bit-length of the encoding of $M$.
    \item \label{clause:2_tailored_MIP*} If $M$ halts, then $\cG_M$ has a perfect  \textbf{$Z$-aligned permutation} strategy that commutes along edges.
    \item  \label{clause:3_tailored_MIP*} If $M$ never halts, then $\val^*(\cG_M)\leq \nicefrac{1}{2}$.
\end{enumerate}
\end{thm}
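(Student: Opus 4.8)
The final statement is Theorem~\ref{thm:tailored_MIP*=RE} ($\TMIP^*=\RE$), whose proof lives in the companion paper \cite{Tailored_MIPRE}; here is how I would organize that proof. The overall strategy is to re-run the construction behind $\MIP^*=\RE$ from \cite{MIPRE} (recalled as Theorem~\ref{thm:MIP*=RE}) and to upgrade every stage so that (a) the game produced is \emph{tailored} in the sense of Definition~\ref{defn:tailored_games}, and (b) in the completeness case the honest strategy witnessing $\val=1$ is not merely a quantum strategy commuting along edges but a $Z$-aligned permutation strategy (Definitions~\ref{defn:permutation_strategy} and~\ref{defn:Z-aligned_strategy}) commuting along edges. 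The first concrete step is to fix a class of \emph{tailored normal-form verifiers}: succinct descriptions whose question distribution is the usual conditionally-linear sampling of \cite{MIPRE}, and whose decision procedure on a question pair first reads a designated readable block of the answer (the variables $S^\frR$) and then, \emph{as a function of that block alone}, outputs a list of $\FF_2$-linear constraints on the full answer string together with the symbol $\sJ$ — exactly the format of $L_{xy}$ in Definition~\ref{defn:tailored_games}. One then checks, subroutine by subroutine, that the elementary games used in \cite{MIPRE} — the Pauli basis / low-individual-degree test, the anticommutation and Magic Square tests, the (oracularized) introspection game, and the consistency/tensor-product tests — can each be cast in this class, and that the prescribed honest provers act by signed permutation matrices that are diagonal on the readable registers, i.e.\ by $Z$-aligned permutation strategies. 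Efficiency (clause~\ref{clause:1_tailored_MIP*}) is then automatic, since sampling and decision for a tailored normal-form verifier are poly-time from its description.

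The heart of the argument is the two transformations that drive the recursion: \emph{question reduction} (introspection), which lets the provers sample their own questions and prove honesty, and \emph{answer reduction}, which composes with a PCP so that provers send short certificates. For question reduction the introspection game is, once the Pauli/sampling structure is made explicit, essentially a linear-constraint-system game: its decision predicate is a list of parities controlled by a short readable header, and the honest strategy uses computational-basis and Hadamard-basis Pauli operators — signed permutations — with the readable part the classical register, hence diagonal and $Z$-aligned. For answer reduction I would choose the composed PCP to have a verifier of ``read a few symbols, check a few linear relations'' form (a Hadamard/quadratic-equation PCP, or a carefully arithmetized routing of the low-degree-test PCP) so that composition stays inside the tailored class, the honest PCP prover's answers being linear functions of the witness in a fixed basis and hence again $Z$-aligned signed permutations. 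The property ``commutes along edges,'' which is already delivered in clause (2) of Theorem~\ref{thm:MIP*=RE}, is preserved because all honest strategies here are of data-processing/shift type: at each vertex the answer operators lie in one fixed abelian Pauli subgroup (after a fixed Clifford change of basis), and the operators at the two endpoints of an edge act on the two halves of shared EPR pairs, hence commute. The crucial new lemma to prove is \emph{completeness preservation}: each of these transformations sends $Z$-aligned permutation strategies commuting along edges to $Z$-aligned permutation strategies commuting along edges.

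With a tailored compression theorem established, the recursion-theorem / fixed-point argument of \cite{MIPRE} applies with only cosmetic changes: compose the tailored compression map with itself via Kleene's recursion theorem to obtain, for each Turing machine $M$, a single tailored normal-form verifier specifying a game $\cG_M$ of size $\mathrm{poly}(|M|)$, with $\val^*(\cG_M)=1$ when $M$ halts and $\val^*(\cG_M)\le\nicefrac12$ otherwise (clause~\ref{clause:3_tailored_MIP*}); in the halting case, unwinding the honest strategies through all compression layers and invoking completeness preservation at each layer yields the desired perfect $Z$-aligned permutation strategy commuting along edges (clause~\ref{clause:2_tailored_MIP*}). This is precisely the input required by Theorem~\ref{thm:values_of_associated_tests}\ref{clause:1_in_values_associated_test}, so composing with the transformation of Section~\ref{sec:test_associated_with_game} gives a tailored subgroup test whose sofic value is $1$ or bounded away from $1$ according to whether $M$ halts, which by Corollary~\ref{cor:Aldous_Lyons_implies_SOFVAL_decidable} refutes the Aldous--Lyons Conjecture.

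I expect the main obstacle to be the \emph{answer-reduction} step. The PCP machinery of \cite{MIPRE} naturally produces non-linear checks (degree-$2$ codeword tests, multiplication gates in sum-check), and coercing the whole composed decision procedure into the rigid ``readable header $\Rightarrow$ list of parities'' format of Definition~\ref{defn:tailored_games}, while simultaneously keeping the honest PCP prover a $Z$-aligned signed permutation and controlling the blow-up so that soundness survives, is where the genuine engineering lies — note that Theorem~\ref{thm:values_of_associated_tests}\ref{clause:2_in_values_associated_test} only tolerates an $O(\Lambda^4 2^{6\Lambda})$ loss, so the answer length $\Lambda$ of the tailored game must be kept bounded (a constant) throughout the reduction, not merely polylogarithmic. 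A secondary difficulty is that $Z$-alignment is basis-dependent: the Clifford changes of variable relating the Pauli-$X$ and Pauli-$Z$ descriptions inside the introspection and low-degree tests must all be tracked against one fixed global convention for the standard basis $B^-$, and one must verify that alignment is not destroyed by them.
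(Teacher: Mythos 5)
This paper deliberately does not prove Theorem~\ref{thm:tailored_MIP*=RE}: the statement records, as a black box, the main result of the companion paper \cite{Tailored_MIPRE}, which is where the real work is done. Your high-level program --- recast the MIP$^*=$RE pipeline (Pauli basis test, introspection, answer reduction via PCP composition, the recursion-theorem fixed point) so that every stage produces conditionally-linear / ``readable header controls parity checks'' decision procedures, and verify that the honest strategies are signed permutations diagonal on the readable block --- is indeed the intended architecture, as the paper hints by noting that ``conditionally linear'' is terminology shared with both \cite{MIPRE} and \cite{Tailored_MIPRE}, and by flagging answer reduction as the main obstacle (Remark~\ref{rem:slofstra}). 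On the broad outline I have no objection.

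However, there is one concrete error in your reasoning about what the tailored construction must deliver. You write that, because the soundness loss in Theorem~\ref{thm:values_of_associated_tests}\ref{clause:2_in_values_associated_test} is of order $\Lambda^4 2^{6\Lambda}$, the answer length $\Lambda$ of $\cG_M$ ``must be kept bounded (a constant) throughout the reduction, not merely polylogarithmic.'' This is not required, and indeed the paper makes no such demand. Look at the proof of Corollary~\ref{cor:AL_is_false}: there it is stated explicitly that $\Lambda = \mathrm{poly}(|M|)$ (obtained from clause~\ref{clause:1_tailored_MIP*}, since the verifier's running time bounds the answer lengths), and that the resulting completeness--soundness gap for the subgroup test is $\lambda(M) = 2^{-c|M|^c}$ --- exponentially small in $|M|$, not a universal constant. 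This is harmless precisely because $\lambda(M)$ is \emph{computable from $M$}: the hypothetical decision procedure furnished by Corollary~\ref{cor:Aldous_Lyons_implies_SOFVAL_decidable} is run with accuracy parameter $\theta = \lambda(M)/2$, which depends on the input $M$. Undecidability of the Halting Problem is contradicted even by a decider whose precision shrinks (computably) with its input. So the companion paper is under no obligation to compress answers to constant length; a polynomial-length answer alphabet is fully acceptable, which considerably relaxes the PCP-composition engineering you were worried about. (It remains genuinely delicate to force the composed PCP verifier into the tailored format while preserving $Z$-alignment --- you are right to flag that --- but the quantitative bar you imposed on yourself is higher than what the argument actually needs.)
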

Theorem \ref{thm:tailored_MIP*=RE} is proved in \cite{Tailored_MIPRE}.
\begin{cor}\label{cor:AL_is_false}
   The Aldous--Lyons Conjecture \ref{(Aldous-Lyons)} has a negative solution.
\end{cor}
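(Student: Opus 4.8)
\textbf{Proof plan for Corollary \ref{cor:AL_is_false}.}

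The plan is to chain together the three main ingredients that have been assembled: Corollary \ref{cor:Aldous_Lyons_implies_SOFVAL_decidable} (a positive answer to the conjecture makes approximating the sofic value of a subgroup test computable), Theorem \ref{thm:values_of_associated_tests} (the value-preserving correspondence between tailored games and their associated synchronous subgroup tests), and Theorem \ref{thm:tailored_MIP*=RE} (the tailored version of $\MIP^*=\RE$, proved in the companion paper). The strategy is a proof by contradiction: assume the Aldous--Lyons Conjecture \ref{(Aldous-Lyons)} holds, and derive a decision procedure for the Halting Problem, which is impossible.

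First I would set $\theta = 1/4$ and invoke Corollary \ref{cor:Aldous_Lyons_implies_SOFVAL_decidable}, which under the assumed positive solution yields a Turing machine $N$ that on input (the encoding of) a subgroup test $\cT$ halts and outputs a rational number within $\theta$ of $\val_{\sof}(\cT)$. Next I would take an arbitrary Turing machine $M$ and run the polynomial-time algorithm of Theorem \ref{thm:tailored_MIP*=RE} to produce the tailored non-local game $\cG_M$, then apply the transformation of Definition \ref{defn:associated_test} to obtain its associated synchronous subgroup test $\tilde\cT(\cG_M)$; by clause \ref{clause:1_tailored_MIP*} of Theorem \ref{thm:tailored_MIP*=RE} and the fact that the transformation of Definition \ref{defn:associated_test} is itself computable, the encoding of $\tilde\cT(\cG_M)$ can be produced in finite time from the encoding of $M$.

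The core step is to show that $\val_{\sof}(\tilde\cT(\cG_M))$ is bounded away from $1$ precisely according to whether $M$ halts. If $M$ halts, then by clause \ref{clause:2_tailored_MIP*} of Theorem \ref{thm:tailored_MIP*=RE} the game $\cG_M$ has a perfect $Z$-aligned permutation strategy that commutes along edges, so by clause \ref{clause:1_in_values_associated_test} (Completeness) of Theorem \ref{thm:values_of_associated_tests} the test $\tilde\cT(\cG_M)$ has a perfect finitely described strategy, i.e.\ $\val_{\sof}(\tilde\cT(\cG_M)) = 1$. If $M$ never halts, then by clause \ref{clause:3_tailored_MIP*} we have $\val^*(\cG_M) \le 1/2$; by the contrapositive of clause \ref{clause:2_in_values_associated_test} (Soundness) of Theorem \ref{thm:values_of_associated_tests}, if $\val_{\sof}(\tilde\cT(\cG_M)) \ge 1 - \eps$ then $\val^*(\cG_M) \ge 1 - C\Lambda^4 2^{6\Lambda}\eps$, so
\[
\val_{\sof}(\tilde\cT(\cG_M)) \le 1 - \frac{1}{2 C\Lambda^4 2^{6\Lambda}} =: 1 - \delta_M .
\]
Running $N$ on $\tilde\cT(\cG_M)$ with the error parameter set small enough relative to $\delta_M$ (one may take $\theta_M = \delta_M/3$, which is computable from $M$ since $\Lambda$ can be read off from $\cG_M$) then separates the two cases and decides whether $M$ halts, contradicting the undecidability of the Halting Problem. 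Hence the Aldous--Lyons Conjecture \ref{(Aldous-Lyons)} is false.

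The step I expect to require the most care is the bookkeeping around the universal constant $C$ and the parameter $\Lambda$: one must make sure that the soundness gap $\delta_M$ is genuinely positive and computable from the encoding of $M$ (so that $\theta_M$ is a valid, machine-computable input to $N$), and that the completeness and soundness thresholds do not overlap. Since $\Lambda = \max_{x \in V}\ell(x)$ is a finite quantity that appears explicitly in the output of the algorithm of Theorem \ref{thm:tailored_MIP*=RE}, and $C$ is a fixed universal constant, this is routine, but it is the one place where the quantitative content of Theorem \ref{thm:values_of_associated_tests} genuinely enters. Everything else is a formal composition of black boxes.
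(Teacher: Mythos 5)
Your proposal follows essentially the same route as the paper: assume a positive solution, combine Corollary \ref{cor:Aldous_Lyons_implies_SOFVAL_decidable} with the chain Theorem \ref{thm:tailored_MIP*=RE} $\Rightarrow$ Theorem \ref{thm:values_of_associated_tests} to separate $\val_{\sof}(\tilde\cT(\cG_M))=1$ from $\val_{\sof}(\tilde\cT(\cG_M))\le 1-\delta_M$ with $\delta_M = 1/(2C\Lambda^4 2^{6\Lambda})$, and decide the Halting Problem. The paper computes the same bound and sets its threshold $\lambda(M)=2^{-c|M|^c}\le\delta_M$ using $\Lambda=\mathrm{poly}(|M|)$ from clause \ref{clause:1_tailored_MIP*}. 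One small flag: your opening "set $\theta=1/4$" cannot be a fixed constant, since $\delta_M$ shrinks exponentially in $|M|$; you implicitly correct this later by taking $\theta_M=\delta_M/3$, which is the right move and is exactly what makes the paper's argument go through (the approximation sequences of Theorem \ref{Main_Thm} give a procedure that is uniform in $\theta$, so $\theta$ may be chosen as a computable function of $M$). With that understood, your proof is correct and matches the paper's.
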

\begin{proof}
The idea is as follows. Given a Turing machine $M$, we can define  the tailored non-local game  $\cG_M$ calculated in Theorem \ref{thm:tailored_MIP*=RE}, and then calculate the associated subgroup test $\tilde\cT(\cG_M)$  from Definition  \ref{defn:associated_test}. In the next paragraph, the following two facts are proved:
\begin{itemize}
    \item If $M$ halts, then $\tilde\cT(\cG_M)$ has sofic value $1$.
    \item On the other hand, if $M$ does not halt, then the sofic value of $\tilde\cT(\cG_M)$ is smaller than $1-\lambda(M)$, where $\lambda(M)=2^{-c|M|^c}$ for some universal constant $c>0$ independent of $M$. 
\end{itemize} 
 Recall that by Theorem \ref{Main_Thm}, there is a computable sequence approaching $\val_{\erg}(\tilde \cT(\cG_M))$ from above and another computable sequence approaching $\val_{\sof}(\tilde \cT(\cG_M))$ from below. If the Aldous--Lyons Conjecture \ref{(Aldous-Lyons)} has a positive solution, then these values are the same, and as Corollary \ref{cor:Aldous_Lyons_implies_SOFVAL_decidable} states, one can approximate $\val_{\sof}(\tilde \cT(\cG_M))$ up to any predetermined additive error $\theta$. As $\lambda(M)$ can be computed directly from $|M|$, one can choose $\theta=\frac{\lambda(M)}{2}$, and thus deduce whether $\val_{\sof}(\tilde \cT(\cG_M))<1$ or $\val_{\sof}(\tilde \cT(\cG_M))>1-\lambda(M)$. This in turn allows one to decide whether $M$ halts or not. Since the Halting Problem is undecidable, this is a contradiction, which implies that Conjecture  \ref{(Aldous-Lyons)} must have a negative solution.
\\

Let us prove the two bullets above. Since the running time of the verification procedure of $\cG_M$ bounds $\Lambda=\displaystyle{\max_{x\in V}}(\ell(x))$, we can use \labelcref{clause:1_tailored_MIP*} of Theorem \ref{thm:tailored_MIP*=RE} to deduce that $\Lambda=\textrm{poly}(|M|)$. 
    If $M$ halts, then by \labelcref{clause:2_tailored_MIP*} of Theorem \ref{thm:tailored_MIP*=RE} and \labelcref{clause:1_in_values_associated_test} of Theorem \ref{thm:values_of_associated_tests}, we have $\val_{\sof}(\tilde \cT(\cG_M))=1$.
    If $M$ does not halt, then by \labelcref{clause:3_tailored_MIP*} of Theorem \ref{thm:tailored_MIP*=RE}, we have $\val^*(\cG_M)\leq  \frac{1}{2}$. Hence, by \labelcref{clause:2_in_values_associated_test} of Theorem \ref{thm:values_of_associated_tests},
    we have
    \[
\frac{1}{2}\geq 1-C\Lambda^4\cdot 2^{6\Lambda}(1-\val_{\sof}(\tilde \cT(\cG_M))).
    \]
    Rearranging this inequality, we get 
    \[
\val_{\sof}(\tilde \cT(\cG_M))\leq 1-\frac{1}{2C\Lambda^4\cdot 2^{6\Lambda}}.
    \]
    The quantity $\frac{1}{2C\Lambda^4\cdot 2^{6\Lambda}}$ can be bounded from below by $\lambda(M)=2^{-c|M|^c}$, where $c>0$ is a universal constant that depends on the constants implicit in the notation  $\Lambda=\textrm{poly}(|M|)$ (guaranteed by Theorem \ref{thm:tailored_MIP*=RE}), as well as the constant $C>0$ appearing in Theorem \ref{thm:values_of_associated_tests}. This finishes the proof.
\end{proof}

\section{Proving Theorem \ref{thm:values_of_associated_tests}}\label{sec:proof_of_value_preserving_transformation}

\subsection{Proving the completeness in Theorem \ref{thm:values_of_associated_tests}}\label{sec:proof_of_completeness}

This is the easy direction, we have done most of the work towards it, and here we use the notation that we have established before. Recall that $\tilde S=S\cup \{\sJ\}$. Let $\sigma\colon S\cup \{\sJ\}\to \Sym(2n)$ be a perfect $Z$-aligned permutation strategy that commutes along edges for the tailored non-local game $\cG$. Further recall that the finitely described IRS $\Phi(\sigma)$ samples a subgroup $H$ of $\cF$ as follows --- it takes a uniform $\star\in [2n]$ and outputs $H=\Stab(\sigma,\star)$. By Definition \ref{defn:permutation_strategy},
\[
d_H(\sigma(\sJ),\Id)=1\quad ; \quad \sigma(\sJ)^2=\Id\quad;\quad \forall \sX\in S\ \colon \ \ \sigma(\sJ)\sigma(\sX)=\sigma(\sX)\sigma(\sJ).
\]
Therefore, for every $\star\in [2n]$, 
\[
\sJ\notin \Stab(\sigma,\star)\quad ;\quad \sJ^2\in \Stab(\sigma,\star)\quad;\quad \forall \sX\in S\ \colon \ \ [\sX,\sJ]\in \Stab(\sigma,\star),
\]
and when $\Phi(\sigma)$ runs against $\tilde \cT$ it  always passes \labelcref{clause:Check_1_in_associated_test}. Also by  Definition \ref{defn:permutation_strategy},
\[
\forall \sX\in S\ \colon \sigma(\sX)^2=\Id\quad;\quad \forall x\in V,\ \forall\sX,\sX'\in S_x\ \colon \ \ \sigma(\sX)\sigma(\sX')=\sigma(\sX')\sigma(\sX).
\]
Therefore, for every $\star\in [2n]$,
\[
\forall \sX\in S\ \colon \sX^2\in \Stab(\sigma,\star)\quad;\quad \forall x\in V,\ \forall\sX,\sX'\in S_x\ \colon \ \ [\sX,\sX']\in \Stab(\sigma,\star),
\]
and when $\Phi(\sigma)$ runs against $\tilde \cT$ it always passes \labelcref{clause:Check_2_in_associated_test}.
By Definition \ref{defn:Z-aligned_strategy}, for every $\star\in [2n]$ we have 
\[
\forall \sX\in \frR^{-1}(1)\ \colon \ \ \sigma(\sX).\star=\star\quad\textrm{or}\quad \sigma(\sX).\star=\sigma(\sJ).\star.
\]
Namely, for every $\star\in [2n]$,
\[
\forall \sX\in \frR^{-1}(1)\ \colon \ \ \sX\in \Stab(\sigma,\star)\quad\textrm{or}\quad \sJ\sX\in \Stab(\sigma,\star),
\]
and when $\Phi(\sigma)$ runs against $\tilde \cT$ it always passes \labelcref{clause:Check_3_in_associated_test}.

We are left to show that $\Phi(\sigma)$ always passes \labelcref{clause:Check_4_in_associated_test}.
To that end, we need a couple of claims.
    The first is a well known and commonly used observation in quantum information theory. For the convenience of the reader, we provide a proof.
\begin{claim}\label{claim:perfect_3Lin_commuting_observables}
    Let $S$ be a finite set, $\rho\colon S\to U(n)$ such that $\rho(S)$ commutes\footnote{This claim can be generalized so that a weaker assumption is used. See Section 3 of \cite{Tailored_MIPRE}.} and all the images are involutions.  Then, given a fixed $\alpha\colon S\to \FF_2$, we have $\sum_{\sX\in S}\alpha(\sX)\gamma(\sX)=0$  with probability $1$ when  $\gamma\sim \rho$ if and only if $\prod_{\sX\in S}\rho(\sX)^{\alpha(\sX)}=\Id$. Namely,
\[
\Pro_{\gamma\sim \rho}[\langle \alpha,\gamma\rangle=0]=1 \iff \prod_{\sX\in S}\rho(\sX)^{\alpha(\sX)}=\Id.
\]
\end{claim}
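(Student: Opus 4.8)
The plan is to reduce both directions of the equivalence to a single joint spectral decomposition of the commuting family $\rho(S)$. First I would record that, since the matrices $\{\rho(\sX)\}_{\sX\in S}$ are commuting unitary involutions, they are commuting Hermitian matrices, so $\mathbb{C}^n$ decomposes orthogonally as $\bigoplus_{\beta}V_\beta$, where $\beta$ ranges over maps $S\to\FF_2$ and $V_\beta=\{v\mid \rho(\sX)v=(-1)^{\beta(\sX)}v\ \text{for all}\ \sX\in S\}$; write $\Pi_\beta$ for the orthogonal projection onto $V_\beta$ (with $\Pi_\beta=0$ when $V_\beta=0$), so that $\sum_\beta\Pi_\beta=\Id$ and $\prod_{\sX\in S}\sP^{\sX}_{\beta(\sX)}=\Pi_\beta$. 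From the definition \eqref{eq:def-gamma-samp} of sampling according to $\rho$ --- which in this commuting situation collapses, via \eqref{eq:sampling_according_POVM}, to $\Pro_{\gamma\sim\rho}[\gamma=\beta]=\tau\big(\prod_{\sX\in S}\sP^{\sX}_{\beta(\sX)}\big)=\tau(\Pi_\beta)$ --- it then follows that drawing $\gamma\sim\rho$ amounts to drawing a uniformly random vector from a joint orthonormal eigenbasis and recording the sign pattern of its eigenvalues.

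Next I would introduce the single operator $N:=\prod_{\sX\in S}\rho(\sX)^{\alpha(\sX)}$ and evaluate it against this decomposition: it is again a Hermitian involution (a product of commuting involutions), and on $V_\beta$ each factor $\rho(\sX)^{\alpha(\sX)}$ acts as the scalar $(-1)^{\alpha(\sX)\beta(\sX)}$, so $N$ acts on $V_\beta$ as the scalar $(-1)^{\langle\alpha,\beta\rangle}$. Hence $N=\Pi^+-\Pi^-$, where $\Pi^+:=\sum_{\beta:\,\langle\alpha,\beta\rangle=0}\Pi_\beta$ and $\Pi^-:=\sum_{\beta:\,\langle\alpha,\beta\rangle=1}\Pi_\beta$ are complementary orthogonal projections with $\Pi^++\Pi^-=\Id$. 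On one hand $\Pro_{\gamma\sim\rho}[\langle\alpha,\gamma\rangle=0]=\sum_{\beta:\,\langle\alpha,\beta\rangle=0}\tau(\Pi_\beta)=\tau(\Pi^+)$; on the other hand $N=\Id$ if and only if $\Pi^-=0$, i.e. $\tau(\Pi^-)=0$ (since $\Pi^-$ is a projection and $\tau$ is the faithful normalized trace), i.e. $\tau(\Pi^+)=1$. Chaining these identities yields $\Pro_{\gamma\sim\rho}[\langle\alpha,\gamma\rangle=0]=1\iff N=\Id$, which is exactly the claim.

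I do not anticipate a genuine obstacle; once the spectral picture is in place the computation is essentially one line. The only points that deserve a sentence of care are (i) verifying that the edge-based sampling formula \eqref{eq:def-gamma-samp} really does reduce, when $\rho(S)$ commutes, to the uniform-joint-eigenvector description used above, so that the argument applies verbatim in the way the claim is invoked in Section \ref{sec:proof_of_completeness} (there $S$ plays the role of $S_{xy}\cup\{\sJ\}$, or the relevant subset thereof, for an edge along which the strategy commutes), and (ii) recalling, as in Claim \ref{claim:Born's_rule}, that the distribution of $\gamma$ is independent of the choice of joint eigenbasis, so that the $\Pi_\beta$ and the probabilities $\tau(\Pi_\beta)$ are well defined.
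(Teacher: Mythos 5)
Your proof is correct and follows essentially the same route as the paper's: both arguments boil down to observing that $N=\prod_{\sX}\rho(\sX)^{\alpha(\sX)}$ is a self-adjoint unitary whose normalized trace equals $2\,\Pro[\langle\alpha,\gamma\rangle=0]-1$, and that $\tau(N)=1$ for such an operator forces $N=\Id$. The paper reaches this by an algebraic expansion of $\prod(\sP^\sX_0-\sP^\sX_1)$ after reducing to $\alpha$ fully supported, while you phrase it via the joint spectral decomposition and the projections $\Pi^\pm$; the content is identical.
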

\begin{proof}
Assume without loss of generality that $\alpha$ is fully supported. Otherwise, we can focus on $\rho|_{\textrm{Supp}(\alpha)}$ and proceed in the same manner. 
    As before, we can write $\rho(\sX)=\sP_0^\sX-\sP_1^\sX$, where $\sP^\sX_0$ is the projection on the $(+1)$-eigenspace of $\rho(\sX)$, and $\sP_1^\sX$ is the projection on its $(-1)$-eigenspace.  By Definition \ref{defn:strategy_for_a_game}, 
\[
\begin{split}
    \Pro_{\gamma\sim\rho}\left[\sum_{\sX\in S}\gamma(\sX)=0\right]&=\sum_{\substack{\gamma\colon S\to \FF_2\\ \sum \gamma(\sX)=0}} \tau\left(\prod_{\sX\in S} \sP_{\gamma(\sX)}^\sX\right).
\end{split}
\]
Note that $\sP^\sX_0+\sP^\sX_1=\Id$, and thus 
$\displaystyle{\sum_{\gamma\colon S\to \FF_2}} \tau\left(\prod_{\sX\in S} \sP_{\gamma(\sX)}^\sX\right)$ is always $1$. 
Therefore, 
\[
\begin{split}
\tau\left(\prod_{\sX\in S}\rho(\sX)\right)&=\tau\left(\prod_{\sX\in S}(\sP^\sX_0-\sP^\sX_1)\right)\\    
&=\sum_{\substack{\gamma\colon S\to \FF_2\\  \sum \gamma(\sX)=0} }\tau\left(\prod\sP_{\gamma(\sX)}^\sX\right)\ -\sum_{\substack{\gamma\colon S\to \FF_2\\  \sum \gamma(\sX)=1} }\tau\left(\prod\sP_{\gamma(\sX)}^\sX\right)\\
&=2\cdot\sum_{\substack{\gamma\colon S\to \FF_2\\  \sum \gamma(\sX)=0} }\tau\left(\prod\sP_{\gamma(\sX)}^\sX\right)-1\\
&=2\cdot \Pro_{\gamma\sim\rho}\left[\sum_{\sX\in S}\gamma(\sX)=0\right]-1.
\end{split}
\]
Thus,  $\prod \rho(\sX)=\Id$ if and only if $\tau\left(\prod\rho(\sX)\right)=1$ if and only if  $\Pro_{\gamma\sim\rho}\left[\sum_{\sX\in S}\gamma(\sX)=0\right]=1$, which finishes the proof.
\end{proof}

\begin{claim}\label{claim:L_depends_on_vertex_only}
    Let $\sigma\colon S\cup \{\sJ\}\to \Sym(2n)$ be a $Z$-aligned permutation strategy for a tailored game $\cG$, and let $\rho$ be the quantum strategy associated with $\sigma$ (as in Definition \ref{defn:quantum_strat_associated_with_perm_strat}). 
    Then, the restriction of  $\gamma\sim \rho$ to the readable variables in $S_{xy}$ (which we denoted by $\gamma^\frR$) depends only on the vertex $\star\in [2n]$ sampled in the beginning of the procedure outlined in Claim \ref{claim:procedural_sampling_perm_strat}.
    Furthermore, if we denote the sampled $L_{xy}(\gamma)$ by $L_{xy}(\star)$, emphasizing its dependence on the sampled vertex, then $\tilde L_{xy}(\Stab(\sigma,\star))=L_{xy}(\star)$, where $\tilde L_{xy}$ was defined in \eqref{eq:defn_tilde_L_xy}.
\end{claim}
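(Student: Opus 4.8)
The statement to prove is Claim~\ref{claim:L_depends_on_vertex_only}. Let me sketch a proof plan.

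\textbf{Setup and strategy.} Recall from Claim~\ref{claim:procedural_sampling_perm_strat} that to sample $\gamma\sim\rho$ for an edge $xy$, one first picks a uniformly random point $\star\in[2n]$, which determines an orbit $O_x$ of $\sigma_x$ and an orbit $O_y$ of $\sigma_y$; then one picks a pair of Fourier basis vectors $\vec v\in B_x^-$, $\vec u\in B_y^-$ supported on $O_x$, $O_y$ respectively with the prescribed probability; finally $\gamma(\sX)$ for $\sX\in S_x$ is read off from whether $\rho(\sX)\vec v = \pm\vec v$, and similarly for $\sY\in S_y$ from $\vec u$. The key point I want to establish is that for a \emph{readable} variable $\sX\in S_x^\frR$, the sign $\rho(\sX)\vec v = \pm\vec v$ does \emph{not} depend on which $\vec v\in B_x^-$ supported on $O_x$ was chosen — it depends only on the orbit $O_x$, hence only on $\star$. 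This is exactly where $Z$-alignment is used.

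\textbf{Main argument.} By Definition~\ref{defn:Z-aligned_strategy}, for a readable variable $\sX\in S_x^\frR$ and every point $z\in[2n]$, $\sigma(\sX)$ acts on $z$ either as the identity or as $\sigma(\sJ)$; that is, $\sigma(\sX).z \in \{z, z'\}$ where $z'=\sigma(\sJ).z$. I claim this forces $\sigma(\sX)$ to act the \emph{same} way (either identically, or as $\sigma(\sJ)$) on \emph{all} of $O_x$. Indeed, $O_x$ is an orbit of $\sigma_x = \sigma|_{S_x\cup\{\sJ\}}$, which is an $\FF_2^{S_x\cup\{\sJ\}}$-action; on this orbit $\sigma(\sX)$ and $\sigma(\sJ)$ are commuting involutions, and the set $\{z\in O_x : \sigma(\sX).z = z\}$ is invariant under the whole group $\langle\sigma(S_x\cup\{\sJ\})\rangle$ restricted to $O_x$ (it is the fixed-point set of a central involution in a transitive action — either empty or everything). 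Since $Z$-alignment says $\sigma(\sX).z \in \{z,\sigma(\sJ).z\}$ for each $z$, and $\sigma(\sJ)$ is fixed-point-free, the orbit $O_x$ either lies entirely in the fixed-point set of $\sigma(\sX)$, or $\sigma(\sX) = \sigma(\sJ)$ on all of $O_x$. In Fourier-basis terms: every $\vec v\in B_x^-$ supported on $O_x$ is a simultaneous eigenvector of $\rho(\sX)$, and because $\rho(\sX)$ restricted to the span of such $\vec v$'s is $\pm\Id$ (as $\sigma(\sX) = \Id$ or $\sigma(\sJ)$ on $O_x$, and $\sigma(\sJ)$ acts as $-1$ on $W^-$), the eigenvalue is the same for all of them. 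Hence $\gamma(\sX) = \gamma^\frR(\sX)$ depends only on $\star$ (in fact only on $O_x$), and the same for $\sY\in S_y^\frR$ via $O_y$. This proves the first assertion.

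\textbf{Identifying $\tilde L_{xy}$ with $L_{xy}$.} For the second assertion, I need to check that the function $\tilde\gamma^\frR$ recovered by the test $\tilde\cT$ from $H = \Stab(\sigma,\star)$ in \labelcref{clause:Check_4_in_associated_test} equals the $\gamma^\frR$ sampled by the above procedure at the point $\star$. By Definition~\ref{defn:associated_test}, $\tilde\gamma^\frR(\sX) = 0$ iff $\sX\in H$, i.e.\ iff $\sigma(\sX).\star = \star$; and $\tilde\gamma^\frR(\sX)=1$ iff $\sJ\sX\in H$, i.e.\ iff $\sigma(\sX).\star = \sigma(\sJ).\star$ (using $\sigma(\sJ)^2 = \Id$). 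On the other hand, by the reading step in Claim~\ref{claim:procedural_sampling_perm_strat} together with the orbit-uniformity just established, $\gamma^\frR(\sX) = 0$ iff $\rho(\sX)\vec v = \vec v$ for the (any) chosen $\vec v$ supported on $O_x\ni\star$, which by the argument above holds iff $\sigma(\sX) = \Id$ on $O_x$, iff $\sigma(\sX).\star=\star$; and $\gamma^\frR(\sX)=1$ iff $\sigma(\sX)=\sigma(\sJ)$ on $O_x$, iff $\sigma(\sX).\star = \sigma(\sJ).\star$. So $\tilde\gamma^\frR = \gamma^\frR$ as functions on $S_x^\frR\cup S_y^\frR$, hence $\tilde L_{xy}(\Stab(\sigma,\star)) = L_{xy}(\tilde\gamma^\frR) = L_{xy}(\gamma^\frR) = L_{xy}(\star)$, as claimed.

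\textbf{Expected main obstacle.} The routine-looking but genuinely load-bearing step is the orbit-invariance claim: that $Z$-alignment (a pointwise condition: $\sigma(\sX)$ acts on each point as $\Id$ or as $\sigma(\sJ)$) upgrades to an orbit-wise condition ($\sigma(\sX)$ acts on a whole $\sigma_x$-orbit uniformly as $\Id$ or as $\sigma(\sJ)$). One must be careful that the relevant ``orbit'' here is an orbit of the \emph{vertex group} $\langle\sigma(S_x\cup\{\sJ\})\rangle$ (equivalently of $\sigma_x$), so that $\sigma(\sX)$ and $\sigma(\sJ)$ both preserve it and commute on it, and invoke that a central involution in a transitive permutation action is either fixed-point-free or the identity. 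I should also double-check the bookkeeping between ``$\sigma(\sX)$ acts as $\sigma(\sJ)$ on $O_x$'' and ``$\rho(\sX)$ acts as $-\Id$ on the span of the $\vec v\in B_x^-$ supported on $O_x$'', which uses Remark~\ref{rem:flip_along_J-edges} (that $W^-$ consists of functions flipping along $\sJ$-edges). Everything else is essentially unwinding Definitions~\ref{defn:Z-aligned_strategy}, \ref{defn:quantum_strat_associated_with_perm_strat}, and~\ref{defn:associated_test}.
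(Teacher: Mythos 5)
Your proof is correct and follows essentially the same route as the paper's. The one place you and the paper phrase things slightly differently is the orbit-uniformity step: you argue directly that $Z$-alignment, commutativity, and transitivity force $\sigma(\sX)$ to equal $\Id$ or $\sigma(\sJ)$ uniformly on the whole $\sigma_x$-orbit (via the fixed-point set of a central involution in a transitive action being empty or everything), and then read off the eigenvalue; the paper instead invokes the Fourier-basis fact that a character's eigenvalue under $\sigma(\sX)$ can be read off from whether $\vec v$ flips along an $\sX$-edge at any single vertex, then checks at $\star$ using $Z$-alignment pointwise. These are equivalent and neither is more general; your group-theoretic phrasing makes the load-bearing step a bit more explicit, while the paper leans on the Fourier structure already set up in Section~\ref{sec:Fourier_bases}. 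The second half of your argument, identifying $\tilde\gamma^\frR$ with $\gamma^\frR$, is a straight unwinding of Definition~\ref{defn:associated_test} and matches the paper's.
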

\begin{proof}
    Let $\star\in [2n]$ be some fixed vertex, $O_x$ be the $\sigma_x$ orbit of $\star$, and $\sX\in S_x$ be a readable variable. Since $\sigma$ is $Z$-aligned, either $\sigma(\sX).\star=\star$ or $\sigma(\sX).\star=\sigma(\sJ).\star$. Now, as discussed in Section \ref{sec:proc_samp_perm_strat}, $\sigma$ acting on  $O_x$ is equivalent to an action of $\FF_2^k$ on itself. Furthermore, checking whether  $\vec v\in B^-_x$ that is supported on $O_x$ is in the $(+1)$ or $(-1)$ eigenspace of $\sigma(\sX)$ depends only on whether $\vec v$ flips along an $\sX$-labeled edge in $\Sch(\sigma,S)$ for \textbf{some} vertex in $O_x$. Since $\vec v\in B^{-}_x$, it must flip along $\sJ$-labeled edges. So, if $\sigma(\sX).\star=\star$ then $\sigma(\sX)\vec v=\vec v$ and $\gamma(\sX)=0$, and if $\sigma(\sX).\star=\sigma(\sJ).\star$ then $\sigma(\sX)\vec v=-\vec v$ and $\gamma(\sX)=1$. 
    Since this was \textbf{independent} of the specific $\vec v$ that we considered, the first part of the claim is deduced.  
   This proves that $\tilde \gamma^\frR(\sX)=\gamma^\frR(\sX)$, where $\tilde \gamma^\frR(\sX)$ was defined in \labelcref{clause:Check_4_in_associated_test} of Definition \ref{defn:associated_test}, and therefore $\tilde L_{xy}(\Stab(\sigma,\star))=L_{xy}(\tilde \gamma^\frR)=L_{xy}(\gamma^\frR)=L_{xy}(\star)$ which is the second part of the claim.
\end{proof}

We can now come back to our proof. Let us, in the spirit of previous notations, denote  $\sigma_{xy}=\sigma|_{S_{xy}\cup \{\sJ\}}$. Let $\star\in [2n]$, and let $O_{xy}$ be the orbit of $\star$ under $\sigma_{xy}$. Since $\sigma$ commutes along edges (see Definition \ref{defn:strategy_for_a_game}), $\sigma_{xy}$ induces an action of $\FF_2^{S_{xy}\cup \{\sJ\}}$. Since $\Img(\sigma_{xy})$ is commuting, the action of readable variables in $O_{xy}$ is constant, namely if a readable variable acts as $\Id$ on some $\star\in O_{xy}$, it will act that way on all $\star\in O_{xy}$, and similarly for acting as $\sJ$. Furthermore, by Claim \ref{claim:L_depends_on_vertex_only}, the linear constraints sampled in the associated game $\tilde L_{xy}$ at a specific vertex $\star\in O_{xy}$ are the same as the ones sampled by the tailored non-local game.
Joining these observations, there is a set of constraints $L$ such that for every $\star\in O_{xy}$ we have  $\tilde L_{xy}(\Stab(\sigma,\star))=L_{xy}(\star)=L$.  So, we can focus on the orbit $O_{xy}$, and study the corner of $\rho$, which was the quantum strategy induced by $\sigma$, with respect to the projection $\sQ^{O_{xy}}\colon \mathbb{C}^{2n}\to \mathbb{C}^{O_{xy}}$. 
Since $\rho$ is a perfect strategy, for every $\gamma$ that was sampled (according to Claim \ref{claim:procedural_sampling_perm_strat}) by first choosing $\star\in O_{xy}$, and for every $\alpha\in L$, we have $\sum\alpha(\sX)\gamma(\sX)=0$. 
By Claim \ref{claim:perfect_3Lin_commuting_observables},  this means that $\prod\rho(\sX)^{\alpha(\sX)}$ is the identity when restricted to functions supported on $O_{xy}$ that flip along $\sJ$-labeled edges. By Claim \ref{claim:rho_id_implies_sigma_id}, we can deduce that $\prod\sigma(\sX)^{\alpha(\sX)}$ acts as the identity on $O_{xy}$. Since this was true for every $\alpha$ in $L$, and $L=\tilde L_{xy}(\Stab(\sigma,\star))$ for every $\star\in O_{xy}$, we can conclude that $\sigma$ always passes \labelcref{clause:Check_4_in_associated_test} when run against $\tilde \cT$. All in all, $\sigma$ is a perfect strategy for $\tilde \cT$, proving the completeness clause of Theorem \ref{thm:values_of_associated_tests}.

\subsection{Proving the soundness in Theorem \ref{thm:values_of_associated_tests}}

The plan is as follows:
\begin{itemize}
    \item Recall that $\tilde S= S\cup \{\sJ\}$.  Given a finitely described strategy $\sigma\colon \tilde S\to \Sym(n)$ for $\tilde \cT$, with $\val(\tilde \cT,\sigma)\geq 1-\eps$, we  perturb it bit by bit until it passes with probability $1$ \labelcref{clause:Check_1_in_associated_test}, \labelcref{clause:Check_2_in_associated_test} and \labelcref{clause:Check_3_in_associated_test} from the definition of $\tilde \cT$ (while potentially worsening the probability of passing  \labelcref{clause:Check_4_in_associated_test}). 
    \item  Then, we can bound from below the value of this perturbed strategy using the mechanisms of Section \ref{sec:Robustness}. 
    \item  Since the resulting perturbed strategy satisfies the first three checks of $\tilde \cT$, it is a $Z$-aligned permutation strategy for $\cG$. For such strategies, we prove that their value against $\tilde \cT$ induces a lower-bound on their value against $\cG$. This will conclude the proof.
\end{itemize}   

The following three propositions are the manifestation of the above plan. We abuse notations and denote by $\mu$ the marginal distribution on vertices of $\mathcal{G}$, namely
\[
\forall x\in V\ \colon \ \ \mu(x)=\frac{1}{2}\sum_{xy\in E}\mu(xy).
\]
Also, recall the notion of the significance function $\frS_{\cT}$  associated with a test $\cT$, defined in \eqref{equation:weight_function_of_a_test}, and the notion of the edit distance $d^\frS_{\rm edit}$ \eqref{eq:edit_dist_of_IRSs} between finitely described IRSs. Finally, recall that  $\Lambda=\displaystyle{\max_{x\in V}}(\ell(x))$, and assume $\Lambda>0$.
\begin{prop}\label{prop:perturbing_to_satisfy_1to3}
   Let $\sigma\colon \tilde S\to \Sym(n)$ be a finitely described strategy  for $\tilde \cT$, with $\val(\tilde \cT,\sigma)\geq 1-\eps$. Denote by $\eps_x$ the losing probability in $\tilde \cT$ given that an edge containing $x\in V$ was sampled as the challenge --- note that $\Ex_{x\sim \mu}[\eps_x]= \eps$. 
   Then, there is a strategy $\sigma'\colon \tilde S\to \Sym(2m)$, where $m= \lceil\frac{n}{2}\rceil$, that passes with probability $1$ \labelcref{clause:Check_1_in_associated_test}, \labelcref{clause:Check_2_in_associated_test} and \labelcref{clause:Check_3_in_associated_test} in the definition of the associated test $\tilde \cT$, and which is \emph{close} to $\sigma$ in the following sense:
   \[
   \begin{split}
       &d_H(\sigma(\sJ),\sigma'(\sJ))\leq  C_0\eps\\  
       \forall x\in V,\   \sX\in S_x\ \colon \ \ &d_H(\sigma(\sX),\sigma'(\sX))\leq  C_0\cdot 2^{2\Lambda}\cdot \Lambda^3(\eps+\eps_x).
   \end{split}
\]
    for some universal constant $C_0>0$.
\end{prop}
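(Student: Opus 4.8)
The plan is to fix the violations of Checks 1--3 one at a time, in the right order, so that each repair does not undo the previous ones and the total cost is controlled by $\eps$ and $\eps_x$.

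\medskip

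\textbf{Step 1: Repairing $\sJ$ (Check 1, first part).} First I would look at $\sigma(\sJ) \in \Sym(n)$. The probability, over a uniformly random vertex $\star$ and a random challenge, that $\sJ \notin \Stab(\sigma,\star)$ and $\sJ^2 \in \Stab(\sigma,\star)$ is at least $1 - \eps$ (since failing either of these makes Check 1 fail). Unpacking: the fraction of $\star \in [n]$ with $\sigma(\sJ).\star = \star$ is at most $\eps$, and the fraction with $\sigma(\sJ)^2.\star \ne \star$ is at most $\eps$. A permutation that is $\eps$-close (in Hamming distance) to being a fixed-point-free involution can be corrected to an actual fixed-point-free involution $\theta$ on a set of size $2m = 2\lceil n/2 \rceil$ by: taking each $2$-cycle of $\sigma(\sJ)$ as is, pairing up the remaining points (fixed points and points on longer/odd cycles) arbitrarily, and adding one dummy point if $n$ is odd. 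This changes $\sigma(\sJ)$ on an $O(\eps)$-fraction of points. Extend every other $\sigma(\sX)$ to $[2m]$ by fixing the dummy point (if any). So far the cost to $\sigma(\sJ)$ is $O(\eps)$ and nothing else moved.

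\medskip

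\textbf{Step 2: Making all images involutions (Check 2, first part) and $\sJ$-central (Check 1, second part).} Next, for each $\sX \in S_x$ I would replace $\sigma(\sX)$ by a permutation that (a) is an involution, (b) commutes with the now-exact $\theta = \sigma'(\sJ)$. The natural device, following the signed-permutation picture in Remark after Definition~\ref{defn:permutation_strategy}: since $\theta$ is a fixed-point-free involution, $[2m]$ is identified with $\{\pm\} \times \Omega$ with $|\Omega| = m$, and permutations commuting with $\theta$ are exactly signed permutations of $\Omega$. Project $\sigma(\sX)$ down to a permutation of $\Omega$ (say, look at where $\sigma(\sX)$ sends $\star$ and read off the $\Omega$-coordinate), and record a sign where consistent; where $\sigma(\sX)$ "disagrees with itself" (i.e. fails to descend to a well-defined signed permutation, or the descended map is not an involution) we correct. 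The set of $\star$ where a correction is forced is controlled by the probability that $[\sJ,\sX] \notin \Stab(\sigma,\star)$ or $\sX^2 \notin \Stab(\sigma,\star)$ — but these events are weighted in Check 2 only through challenges that contain $\sX$, so the relevant failure probability is roughly $\eps_x$ times the number of edges at $x$, i.e. $O(\eps_x)$. This is where a factor like $2^{\Lambda}$ (number of variables at a vertex, or number of subsets thereof) and powers of $\Lambda$ enter: a single $\star$ that is "bad" for the pair $(\sJ, \sX)$ may need to be repaired across all $\le 2^{\Lambda}$ readable-pattern classes, and the propagation of a single local edit to the orbit $O_x$ (size up to $2^{\Lambda}$) multiplies the count, as in the $\Bad(k)$ argument of Proposition~\ref{prop:top_edit_dist}. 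I would run this correction so that the new $\sigma'(\sX)$ commutes with $\theta$ and squares to the identity, at cost $O(2^{2\Lambda}\Lambda^3(\eps + \eps_x))$ in Hamming distance.

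\medskip

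\textbf{Step 3: Commuting within each vertex (Check 2, rest) and $Z$-alignment (Check 3).} For a fixed $x \in V$, the set $\{\sigma'(\sX) : \sX \in S_x\}$ now consists of involutions commuting with $\theta$, and Check 2 says that on a $(1-O(\eps_x))$-fraction of points all pairs from $S_x$ commute. I would invoke a "near-abelianization" argument: a tuple of involutions on $[2m]$ that pairwise commute on all but an $\eta$-fraction of points can be perturbed, at cost $O(|S_x|^2 \eta) = O(\Lambda^2 \eta)$, to an exactly pairwise-commuting tuple — concretely, restrict attention to the union of orbits where everything commutes and is consistent, which is the bulk, and redefine the maps arbitrarily (but commuting, e.g. all identity) on the small remainder. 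Finally, for readable $\sX \in S_x^{\frR}$, Check 3 asks $\sigma'(\sX).\star \in \{\star, \theta.\star\}$ on a $(1-O(\eps_x))$-fraction; since $\sigma'(\sX)$ already commutes with $\theta$ and is an involution, the orbits where it acts neither as $\Id$ nor as $\theta$ form a small set, and I redefine it to act as $\Id$ there. After this, $\sigma'$ exactly satisfies Checks 1--3, hence (by the motivating remark after Definition~\ref{defn:associated_test}) $\sigma'$ is a $Z$-aligned permutation strategy for $\cG$. Summing the three stages' costs and collecting the worst exponents gives the stated bounds with a universal $C_0$.

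\medskip

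\textbf{Main obstacle.} The delicate point is keeping the repairs \emph{non-interfering} and bookkeeping the constants honestly: each correction in Steps 2--3 is a local edit, but a local edit at a point $\star$ can contaminate the entire $\sigma_x$-orbit of $\star$ (size $\le 2^{\Lambda}$) and all $\le 2^{\Lambda}$ readable-pattern fibers, which is precisely the source of the $2^{2\Lambda}$ factor; and the weighting has to be done through the vertex-marginal $\mu(x) = \frac12\sum_{xy \in E}\mu(xy)$ so that the per-vertex error is $\eps_x$ (with $\Ex_{x\sim\mu}[\eps_x] = \eps$) rather than a crude global $\eps$. I would organize this by proving a single clean lemma — "a tuple of permutations that $\eta$-approximately satisfies a fixed finite list of relations $R$ over $\tilde S$ can be $O_R(\eta)$-edited to satisfy $R$ exactly" — applied separately to the $\sJ$-relations (globally) and to the $S_x$-relations (at each $x$, with $\eta = O(\eps_x)$), and then carefully tracking how the dependence on $|\tilde S|$, which here is $\Theta(\Lambda)$ per vertex and $\Theta(2^\Lambda)$ in orbit size, produces the claimed $2^{2\Lambda}\Lambda^3$ factor.
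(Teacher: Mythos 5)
Your repair order — fix $\sJ$ first, then make each $\sigma(\sX)$ an involution that commutes with $\sJ$, then fix intra-vertex commutation and $Z$-alignment — matches the paper's. But the key quantitative claim in your Step 3 has a genuine gap. You assert that a tuple of involutions that pairwise commute on all but an $\eta$-fraction of points can be made exactly pairwise-commuting at cost $O(\Lambda^2\eta)$ by ``restricting to the union of orbits where everything commutes and redefining arbitrarily on the remainder.'' The difficulty is that the set of points where the generators $\{\sigma'(\sX):\sX\in S_x\}$ pairwise commute is \emph{not} a union of orbits of the (partial) action they generate: for a single pair $\tau,\zeta$ the commuting set $W_{\tau\zeta}$ is invariant under $\tau$ and $\zeta$ (this is exactly Claim~\ref{claim:fixing_involutions_to_commute}), but not under a third generator, so $\bigcap_{i<j}W_{ij}$ is not orbit-closed and the ``restrict and extend by the identity'' step is not well-defined. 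The paper closes this hole by first upgrading the pairwise-commutation data to an \emph{approximate homomorphism} from the full group $\FF_2^{S_x}$ (costing a factor $\Lambda^2$ from reordering products of up to $\ell(x)\le\Lambda$ generators) and only then applying the stability of finite group actions, Claim~\ref{claim:Glebsky_Rivera}, whose good set — points at which \emph{all} $|G|^2$ relations of $G=\FF_2^{S_x}$ hold — is genuinely orbit-closed. That union bound over $|G|^2$ pairs is the actual source of the $2^{2\ell(x)}\le 2^{2\Lambda}$ factor; your ``Main obstacle'' paragraph attributes it instead to orbit-size-times-readable-fiber contamination, which is not the mechanism. Relatedly, your Step~2 cost of $O(2^{2\Lambda}\Lambda^3(\eps+\eps_x))$ is far too pessimistic: passing to involutions that commute with a fixed-point-free central $\sJ$ costs only $O(\eps+\eps_x)$ per generator, via Claims~\ref{claim:fixing_to_involution},~\ref{claim:fixing_fixed_point_free},~\ref{claim:fixing_involutions_to_commute}. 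The two mistakes happen to pull in opposite directions, so the final numbers could come out similar, but as written the argument does not establish the orbit-closure on which the restriction trick depends, and without that the crucial Step~3 repair does not produce a genuine permutation strategy.
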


\begin{prop}\label{prop:associated_quantum_strat_has_value_at_least_as_original}
     Every  finitely described strategy 
 $\sigma\colon \tilde S\to \Sym(2n)$   for $\tilde \cT$ that passes with probability $1$ \labelcref{clause:Check_1_in_associated_test}, \labelcref{clause:Check_2_in_associated_test} and \labelcref{clause:Check_3_in_associated_test} in the definition of the associated test $\tilde \cT$ is a $Z$-aligned permutation strategy for $\cG$ and vice versa. Moreover, there is a universal constant $C_1>0$ such that  if  $\val(\tilde \cT,\sigma)\geq 1-\eps$, then $\val(\cG,\rho)\geq 1-C_1\cdot 2^{2\Lambda}\eps$ where $\rho$ is the quantum strategy associated with $\sigma$ (Definition \ref{defn:quantum_strat_associated_with_perm_strat}). 
\end{prop}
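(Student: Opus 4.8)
The first claim — that a finitely described strategy $\sigma\colon \tilde S\to\Sym(2n)$ passing Checks 1--3 with probability $1$ is precisely the same data as a $Z$-aligned permutation strategy for $\cG$ — is essentially a restatement of the remark following Definition \ref{defn:associated_test}, so I would dispatch it quickly: passing \labelcref{clause:Check_1_in_associated_test} with probability $1$ for every $\star\in[2n]$ says $\sigma(\sJ)^2=\Id$, $\sigma(\sJ)$ is fixed-point free (hence $2n$ is even, justifying the notation $2n$), and $\sigma(\sJ)$ commutes with every $\sigma(\sX)$; passing \labelcref{clause:Check_2_in_associated_test} says each $\sigma(\sX)^2=\Id$ and $\sigma(S_x)$ commutes for each $x$; passing \labelcref{clause:Check_3_in_associated_test} says each readable $\sigma(\sX)$ acts pointwise as $\Id$ or as $\sigma(\sJ)$. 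The first two conditions are exactly Definition \ref{defn:permutation_strategy}, and the third is exactly Definition \ref{defn:Z-aligned_strategy}. Conversely a $Z$-aligned permutation strategy passes all three checks with probability $1$ by the same unwinding. So I take this part as bookkeeping.

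The substantive part is the value comparison. Let $\sigma$ be such a strategy, $\rho$ the associated quantum strategy on $W^{-}$ (Definition \ref{defn:quantum_strat_associated_with_perm_strat}), with $\val(\tilde\cT,\sigma)\ge 1-\eps$. Fix an edge $xy\in E$. The plan is to lower-bound, for a uniformly random $\star\in[2n]$, the probability that the $\gamma\sim\rho$ produced by the procedural sampling of Claim \ref{claim:procedural_sampling_perm_strat} (starting from $\star$) satisfies $D_{xy}(\gamma)=1$, in terms of the probability that $H=\Stab(\sigma,\star)$ passes \labelcref{clause:Check_4_in_associated_test}. The key linking fact is Claim \ref{claim:L_depends_on_vertex_only}: the readable part $\gamma^\frR$ depends only on $\star$ (not on the Fourier vectors chosen later), and $\tilde L_{xy}(\Stab(\sigma,\star))=L_{xy}(\star)$, i.e. the linear system the test checks at $H=\Stab(\sigma,\star)$ is the \emph{same} system $L$ the game would apply. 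So it suffices to compare: "$H$ passes \labelcref{clause:Check_4_in_associated_test}", i.e. $\prod_{\sX}\sigma(\sX)^{\alpha(\sX)}\cdot\sigma(\sJ)^{\alpha(\sJ)}$ fixes $\star$ for every $\alpha\in L$; versus "$\langle\alpha,\gamma\rangle=0$ for every $\alpha\in L$" when $\gamma$ is sampled from $\star$.

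The direction I need is: if $H=\Stab(\sigma,\star)$ passes Check 4, then with good probability $\gamma\sim\rho$ (started from $\star$) wins in $\cG$. When $\prod_\sX\sigma(\sX)^{\alpha(\sX)}$ acts as the identity on the \emph{orbit} $O_{xy}$ of $\star$ under $\sigma_{xy}$, the completeness argument already shows (via Claim \ref{claim:perfect_3Lin_commuting_observables} applied on the corner $W^{-}\cap\mathbb{C}^{O_{xy}}$, using commuting-along-edges) that $\langle\alpha,\gamma\rangle=0$ with probability $1$; but here I only know it fixes the single point $\star$, not the whole orbit. The fix is a standard averaging / orbit-counting argument: the probability over uniform $\star\in[2n]$ that $\sigma$ fails Check 4 on $\star$ is at most $\eps$ (more precisely, the per-edge failure probability, summed against $\mu$, is $\le\eps$); and passing Check 4 at $\star$ versus "$\prod_\sX\sigma(\sX)^{\alpha(\sX)}$ fixes $\sigma(w).\star$ for a whole ball of $w$'s" differ, over a random $\star$, by at most a factor polynomial in the orbit size, which is bounded by $2^{\Lambda+1}$ (an $\FF_2^{S_{xy}\cup\{\sJ\}}$-orbit has size at most $2^{|S_{xy}|+1}\le 2^{2\Lambda+1}$). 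This is exactly the source of the $2^{2\Lambda}$ loss in the statement. Concretely: call $\star$ "good for the edge $xy$" if $\sigma(w).\star$ passes Check 4 at edge $xy$ for \emph{every} $w$ in the generated group — equivalently if all of $O_{xy}(\star)$ passes. The fraction of $\star$ that are good is at least $1-2^{2\Lambda+1}\cdot(\text{failure fraction})$, since each bad point "spoils" its whole orbit of size $\le 2^{2\Lambda+1}$. For good $\star$, the completeness-style computation applies verbatim on $O_{xy}(\star)$ and gives $D_{xy}(\gamma)=1$ with probability $1$. Averaging over $xy\sim\mu$ then yields $\val(\cG,\rho)\ge 1-C_1\cdot 2^{2\Lambda}\eps$.

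I would carry this out as: (i) the bookkeeping identification of the two notions of strategy; (ii) fix an edge and invoke Claim \ref{claim:L_depends_on_vertex_only} to reduce to comparing Check 4 and the linear constraints of $\cG$ at the level of a single $\star$; (iii) introduce "good $\star$" = points whose entire $\sigma_{xy}$-orbit passes Check 4 at $xy$, and bound the bad fraction by $2^{2\Lambda+1}$ times the Check-4 failure fraction; (iv) for good $\star$ replay the completeness argument (Claims \ref{claim:perfect_3Lin_commuting_observables} and \ref{claim:rho_id_implies_sigma_id}, using commuting-along-edges) to conclude $D_{xy}(\gamma)=1$; (v) average over $xy\sim\mu$ and over $\star$, absorbing constants into $C_1$. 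The main obstacle, and the only place real care is needed, is step (iii): making precise that a single failing point only costs a bounded-size orbit, and tracking that Check 4's per-point check at $\star$ can always be promoted to a per-orbit statement at the cost of the $2^{O(\Lambda)}$ factor — in particular handling the fact that $D_{xy}$ checks the constraint set $\tilde L_{xy}(H)$ which a priori could vary across the orbit, but which Claim \ref{claim:L_depends_on_vertex_only} (plus the commuting-along-edges hypothesis, ensuring readable variables act constantly on $O_{xy}$) pins down to a single system $L$ on all of $O_{xy}$.
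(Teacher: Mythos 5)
Your identification of the first claim (passing Checks 1--3 $\Leftrightarrow$ $Z$-aligned permutation strategy) is fine and matches the paper. The value-comparison part, however, has a genuine gap, concentrated in your steps (iii) and (iv).

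In step (iv) you write that for good $\star$ you will ``replay the completeness argument (Claims \ref{claim:perfect_3Lin_commuting_observables} and \ref{claim:rho_id_implies_sigma_id}, \emph{using commuting-along-edges})''. But commuting along edges is \emph{not} a hypothesis of Proposition \ref{prop:associated_quantum_strat_has_value_at_least_as_original}, and a strategy that passes Checks 1--3 with probability $1$ need \emph{not} commute along edges: \labelcref{clause:Check_2_in_associated_test} only forces $[\sigma(\sX),\sigma(\sX')]=\Id$ for $\sX,\sX'$ in the \emph{same} $S_x$ (and similarly within $S_y$), never $[\sigma(\sX),\sigma(\sY)]=\Id$ for $\sX\in S_x$, $\sY\in S_y$. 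This is precisely the asymmetry between the completeness and soundness directions. Consequently Claim \ref{claim:perfect_3Lin_commuting_observables} is not applicable to $\rho|_{S_{xy}}$, since its hypothesis that the whole image commutes may fail across the edge.

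The same missing hypothesis also breaks step (iii). Your notion of ``good $\star$'' is built on the orbit $O_{xy}(\star)$ under $\langle\sigma(S_{xy}\cup\{\sJ\})\rangle$, and you bound its size by $2^{2\Lambda+1}$ on the grounds that this group is a quotient of $\FF_2^{S_{xy}\cup\{\sJ\}}$. Without commutation across the edge that group can be nonabelian of unbounded order, so $|O_{xy}(\star)|$ need not be bounded in terms of $\Lambda$, and the ``each bad point spoils at most $2^{2\Lambda+1}$ points'' counting collapses. The paper's actual proof sidesteps both problems by never forming the joint orbit $O_{xy}$: it works with intersections $O_x\cap O_y$ of the separate $\sigma_x$- and $\sigma_y$-orbits (each of size $\le 2^{\Lambda+1}$, since $\sigma(S_x)$ and $\sigma(S_y)$ \emph{do} commute separately), and proves a direct Fourier cancellation bound, Claim \ref{claim:tg}, which compares the mass $\sum_{(\vec v,\vec u)\in\Upsilon^{O_x,O_y}(\alpha)}\langle\vec v|\vec u\rangle^2$ of failing Fourier pairs against the vertex count $|\tilde\Upsilon^{O_x,O_y}(\alpha)|$ of test-failing points with no commutation assumption between $\rho(S_x)$ and $\rho(S_y)$. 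To repair your argument you would need to replace $O_{xy}$ by $O_x\cap O_y$ and then supply exactly the content of Claim \ref{claim:tg} for the case where some points of $O_x\cap O_y$ are bad, which is the real work of the proposition.
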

\begin{prop}\label{prop:significance_of_associated_test}
    Let $\tilde \cT$ be the test associated with the tailored non-local game $\cG$.  Then, there is a universal constant $C_2>0$, such that the significance $\frS_{\tilde \tau}$ associated with the test $\tilde \cT$  satisfies
    \begin{align}
        \frS_{\tilde \cT}(\sJ)&\leq C_2\cdot 2^{2\Lambda},\\
        \forall x\in V,\ \sX\in S_x\ \colon \ \ \frS_{\tilde \cT}(\sX)&\leq \mu(x)\cdot C_2\cdot 2^{2\Lambda}.
    \end{align}
\end{prop}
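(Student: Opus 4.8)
The plan is a direct counting argument over the words that appear in the subsets $\tilde K_{xy}$ of Definition \ref{defn:associated_test}. By the definition \eqref{equation:weight_function_of_a_test} of the significance function,
\[
\frS_{\tilde\cT}(\sZ)=\sum_{xy\in E}\mu(xy)\sum_{w\in\tilde K_{xy}}v_{\sZ}(w)
\]
for every $\sZ\in\tilde S=S\cup\{\sJ\}$, so the whole task reduces to (a) bounding the inner sum $\sum_{w\in\tilde K_{xy}}v_{\sZ}(w)$ for a single edge $xy$, uniformly in $xy$, and then (b) averaging over the edge distribution. For step (a) I would go through the four Checks one at a time and list the words they contribute to $\tilde K_{xy}$, together with $v_{\sZ}$ of each. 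Checks 1--3 contribute only $O(\Lambda)$ words, each of length $O(\Lambda)$: \labelcref{clause:Check_1_in_associated_test} adds $\sJ$, $\sJ^2$ and the $|S_{xy}|\le 2\Lambda$ commutators $[\sJ,\sX]$ (giving $v_{\sJ}$-mass $\le 3+4\Lambda$ and $v_{\sX}$-mass $2$ for each $\sX\in S_{xy}$); \labelcref{clause:Check_2_in_associated_test} adds $\sX^2$ for $\sX\in S_{xy}$ and the commutators within $S_x$ and within $S_y$ (no $\sJ$ at all, and $v_{\sX}$-mass $\le 2+4\Lambda$ for a fixed $\sX$); \labelcref{clause:Check_3_in_associated_test} adds $\sX$ and $\sJ\sX$ for readable $\sX\in S_x^\frR\cup S_y^\frR$ (giving $v_{\sJ}$-mass $\le 2\Lambda$ and $v_{\sX}$-mass $\le 2$). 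All of these words are reduced and involve each generator at most twice, so these bounds are immediate.

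The one step that needs a genuine idea is bounding the contribution of \labelcref{clause:Check_4_in_associated_test}. Here $\tilde K_{xy}$ must contain, for \emph{every} possible recovered readable pattern $\tilde\gamma^\frR\in\FF_2^{S_x^\frR\cup S_y^\frR}$ and every $\alpha\in L_{xy}(\tilde\gamma^\frR)$, the word $w_\alpha=\sJ^{\alpha(\sJ)}\prod_{\sX\in S_x}\sX^{\alpha(\sX)}\prod_{\sY\in S_y}\sY^{\alpha(\sY)}$. The key observation is that each $\alpha$ is an element of $\FF_2^{S_{xy}\cup\{\sJ\}}$ and that $w_\alpha$ is a \emph{reduced} word (the generator $\sJ$ and those of the disjoint union $S_x\sqcup S_y$ are all distinct) in which every generator occurs at most once; hence there are at most $2^{|S_{xy}|+1}\le 2^{2\Lambda+1}$ distinct Check 4 words in total, no matter how complicated the maps $L_{xy}$ are, and each contributes at most $1$ to $v_{\sZ}$ for any fixed $\sZ\in\tilde S$. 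Consequently the Check 4 contribution to $\sum_{w\in\tilde K_{xy}}v_{\sZ}(w)$ is $\le 2^{2\Lambda+1}$ for all $\sZ$.

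Summing the four contributions and using $\Lambda\ge 1$ to absorb the $O(\Lambda)$ terms into a multiple of $2^{2\Lambda}$, I obtain a universal constant $C_2'>0$ with $\sum_{w\in\tilde K_{xy}}v_{\sJ}(w)\le C_2'\,2^{2\Lambda}$ for every edge $xy$, and $\sum_{w\in\tilde K_{xy}}v_{\sX}(w)\le C_2'\,2^{2\Lambda}$ for every $\sX\in S_{xy}$. The final point is that if $\sX\in S_x$ then, because the sets $S_z$ are pairwise disjoint, $\sX$ does not occur in any word of $\tilde K_{x'y'}$ unless $x\in\{x',y'\}$, so $v_{\sX}$ vanishes on $\tilde K_{x'y'}$ for all edges not incident to $x$. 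Plugging these per-edge bounds into the displayed formula for $\frS_{\tilde\cT}$ and using $\sum_{xy\in E}\mu(xy)=1$ gives $\frS_{\tilde\cT}(\sJ)\le C_2'\,2^{2\Lambda}$, while for $\sX\in S_x$ only edges incident to $x$ survive and $\sum_{y:xy\in E}\mu(xy)=2\mu(x)$ by the definition of the vertex marginal $\mu(x)=\tfrac12\sum_{xy\in E}\mu(xy)$, yielding $\frS_{\tilde\cT}(\sX)\le 2C_2'\,2^{2\Lambda}\mu(x)$; taking $C_2=2C_2'$ finishes the proof. The main (and essentially only) obstacle is the Check 4 count: one has to see that the right bound comes from counting the possible linear constraints $\alpha\in\FF_2^{S_{xy}\cup\{\sJ\}}$ rather than from any property of the functions $L_{xy}$ themselves; everything else is bookkeeping.
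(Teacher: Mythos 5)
Your proof is correct and follows essentially the same route as the paper: count the contribution to $\sum_{w\in\tilde K_{xy}} v_{\sZ}(w)$ from each of the four Checks, noting that Checks 1--3 add $O(\Lambda)$ while Check 4 adds at most $2^{O(\Lambda)}$ because every $\alpha$ ranges over $\FF_2^{S_{xy}\cup\{\sJ\}}$ and each generator occurs at most once in $w_\alpha$, then pass to the vertex marginal $\mu(x)$ for the $\sX\in S_x$ case. In fact you are slightly more careful than the paper about the factor of $2$ coming from $\sum_{y:xy\in E}\mu(xy)=2\mu(x)$ --- the paper's displayed intermediate bound $\mu(x)\cdot 3\cdot 2^{2\Lambda}$ should really carry that extra factor --- but since $C_2$ is a universal constant this is harmless either way.
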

\begin{rem}
    We did not try to optimise the parameters in the three preceding propositions. Actually, we decided to choose a simpler version of the associated synchronous test (Definition \ref{defn:associated_test}) that provides an $\textrm{exp}(\Lambda)$ deterioration in the value, as seen in Proposition \ref{prop:perturbing_to_satisfy_1to3}. We have variations of tailored games and the associated synchronous test which allow for only a $\textrm{poly}(\Lambda)$ deterioration in the value, but we could not find any reduction that is independent of $\Lambda$.
\end{rem}

Now, we can deduce the soundness clause of Theorem \ref{thm:values_of_associated_tests} as a corollary of the preceding propositions.

\begin{proof}[Proof of \labelcref{clause:2_in_values_associated_test} in Theorem \ref{thm:values_of_associated_tests}]
    Let $\sigma\colon \tilde S\to \Sym(n)$ be the given finitely described strategy with value $\ge 1-\eps$. By Proposition \ref{prop:perturbing_to_satisfy_1to3} and Proposition  \ref{prop:significance_of_associated_test}, there is a strategy $\sigma'\colon \tilde S\to \Sym(2\lceil\frac{n}{2}\rceil)$ passing \labelcref{clause:Check_1_in_associated_test}, \labelcref{clause:Check_2_in_associated_test} and \labelcref{clause:Check_3_in_associated_test} with probability $1$, and such that the significance weighted distance between $\sigma$ and $\sigma'$ is
    \[
    \begin{split}
         d^{\frS_{\tilde \cT}}(\sigma,\sigma')&=\sum_{s\in \tilde S}\frS_{\tilde \cT}(s)\cdot d_H(\sigma(s),\sigma'(s))\\
         &=\frS_{\tilde \cT}(\sJ)d_H(\sigma(\sJ),\sigma'(\sJ))+\sum_{x\in V}\sum_{\sX\in S_x}\frS_{\tilde \cT}(\sX)d_H(\sigma(\sX),\sigma'(\sX))\\
         &\leq C_2\cdot 2^{2\Lambda}d_H(\sigma(\sJ),\sigma'(\sJ))+\sum_{x\in V}\sum_{\sX\in S_x}\mu(x)\cdot C_2\cdot 2^{2\Lambda} d_H(\sigma(\sX),\sigma'(\sX))\\
         &\leq C_0C_2\cdot2^{2\Lambda} \eps+\sum_{x\in V}|S_x|\cdot\mu(x)\cdot C_2\cdot 2^{2\Lambda} \cdot C_0\cdot 2^{2\Lambda}\cdot \Lambda^3(\eps+\eps_x)\\
         &\leq C_0C_2\cdot2^{2\Lambda} \eps+ C_0C_2\Lambda ^4 \cdot 2^{4\Lambda}\underbrace{\left(\sum_{x\in V}\mu(x)(\eps+\eps_x)\right)}_{\Ex_{x\sim \mu}[\eps+\eps_x]=2\eps}\\
         &\leq 3C_0C_2\Lambda ^4\cdot 2^{4\Lambda}\eps.
    \end{split}
    \]
    Therefore, by Claim \ref{claim:test_weighted_edit_distance_implies_val_close}, $$\val(\tilde \cT,\sigma')\geq 1-\eps-3C_0C_2\Lambda ^4\cdot 2^{4\Lambda}\eps\geq 1-4C_0C_2\Lambda ^4\cdot 2^{4\Lambda}\eps.$$ By applying Proposition \ref{prop:associated_quantum_strat_has_value_at_least_as_original} on $\sigma'$,  its associated quantum strategy $\rho'$ satisfies  
    \[
    \val(\cG,\rho')\geq 1-4C_0C_1C_2\Lambda ^4\cdot 2^{6\Lambda}\eps.
    \]
By    choosing $C=4C_0C_1C_2$, we deduce the claim.
\end{proof}

We are left to prove Propositions \ref{prop:perturbing_to_satisfy_1to3}, \ref{prop:associated_quantum_strat_has_value_at_least_as_original} and 
 \ref{prop:significance_of_associated_test}. This is done in a reverse manner.

\begin{proof}[Proof of Proposition \ref{prop:significance_of_associated_test}]
   Note that $\sJ$ participates in most of  the checks in $\tilde \cT$ regardless of the specific challenge. 
   In \labelcref{clause:Check_1_in_associated_test}, it appears $3+2|S_{xy}|$ times, and since $|S_{xy}|\leq 2\Lambda$, this is bounded by $3+4\Lambda$. 
   In \labelcref{clause:Check_2_in_associated_test} it is not used. 
   In \labelcref{clause:Check_3_in_associated_test} it is used at most $|S_{xy}|$ times, which is bounded from above  by $2\Lambda$. 
   Lastly, the number of times $\sJ$  appears in $L_{xy}(\gamma)$ is bounded by the number of subsets of $S_{xy}\cup \{\sJ\}$ that contain it, which is $2^{|S_{xy}|}\leq 2^{2\Lambda}$.   All in all, the significance of $\sJ$ is at most $(3+4\Lambda)+2\Lambda+2^{2\Lambda}\leq 4\cdot 2^{2\Lambda}$. 
    
    Now, a variable $\sX$ in $ S_x$ appears only if we sampled an edge in $E$ with $x$ as one of its endpoints. If this happened, it will appear twice in \labelcref{clause:Check_1_in_associated_test}, $2+2|S_x|$ times in \labelcref{clause:Check_2_in_associated_test}, at most twice in \labelcref{clause:Check_3_in_associated_test}, and at most $2^{|S_{xy}\cup \{\sJ\}|-1}$ times in \labelcref{clause:Check_4_in_associated_test}. Again, $|S_x|\leq \Lambda$ and $|S_{xy}|\leq 2\Lambda$, and thus $\sX$ appears at most $2+2+2\Lambda+2+2^{2\Lambda}\leq 3\cdot 2^{2\Lambda}$ times when an edge containing $x$ is sampled. This means that the significance of $\sX$ is at most $\mu(x)\cdot 3\cdot 2^{2\Lambda}$. Choosing $C_2=4$ completes the proof.
\end{proof}

\begin{proof}[{Proof of Proposition \ref{prop:associated_quantum_strat_has_value_at_least_as_original}}]
Let $\sigma\colon \tilde S\to \Sym(2n)$ be a finitely described strategy that passes \labelcref{clause:Check_1_in_associated_test}, \labelcref{clause:Check_2_in_associated_test} and \labelcref{clause:Check_3_in_associated_test} of $\tilde \cT$ with probability $1$. Then, 
from perfectly passing \labelcref{clause:Check_1_in_associated_test} and  \labelcref{clause:Check_2_in_associated_test}, $\sigma$ is a permutation strategy, and by perfectly passing \labelcref{clause:Check_3_in_associated_test}  it is $Z$-aligned. The other direction was proved in the completeness analysis in Section \ref{sec:proof_of_completeness}. We are going to prove that 
\begin{equation}\label{eq:sufficient_condition_Prop_8.4}
    \val(\cG,\rho)\geq 1- 2\cdot 2^{2\Lambda}(1-\val(\tilde \cT,\sigma)),
\end{equation} and thus $C_1=2$ satisfies the claim.

We start by giving expressions for the success probability of $\sigma$ in $\tilde\cT$, and of the associated $\rho$ in $\mathcal{G}$ respectively. 
Fix an edge $xy\in E$, and denote by $\sigma_x=\sigma|_{S_x\cup \{\sJ\}}$ and $\sigma_y=\sigma|_{S_y\cup \{\sJ\}}$ the appropriate restrictions of the permutation strategy $\sigma$.  The intersection of orbits $O_x\cap O_y$, where $O_x$ is an orbit of $\sigma_x$ and $O_y$ is an orbit of $\sigma_y$, partitions $[2n]$ into disjoint sets. Fixing a pair of orbits $O_x,O_y$ such that $O_x\cap O_y\neq \emptyset$,  by Claim \ref{claim:L_depends_on_vertex_only}, there is a set $L_{xy}^{O_x,O_y}$  of subsets of  $S_{xy}\cup \{\sJ\}$ such that 
\[
\forall \star\in O_x\cap O_y\ \colon \ \ L^{O_x,O_y}_{xy}=L_{xy}(\star)=\tilde L_{xy}(\Stab(\sigma,\star)).
\] 
For $\alpha \in \mathbb{F}_2^{S_{xy}\cup\{\sJ\}}$, define
\begin{equation}\label{eq:def_sX^alpha,sY^alpha}
\sX^\alpha=\prod_{\sX\in S_x\cup \{\sJ\}}\sX^{\alpha(\sX)}\quad \textrm{ and  }\quad \sY^\alpha=\prod_{\sY\in S_y}\sY^{\alpha(\sY)}.
\end{equation}
Let $\tilde \Upsilon^{O_x,O_y}(\alpha)$ be the set of all vertices $\star\in O_x\cap O_y$ such that $\sigma(\sX^{\alpha}).\star\neq \sigma(\sY^\alpha).\star$, namely all vertices in the intersection of orbits that do not satisfy the subgroup test constraint induced by $\alpha$.  Let 
\begin{equation}\label{eq:def_upsilon_xy_local}
    \tilde\Upsilon^{O_x,O_y}_{xy} = \bigcup_{\alpha \in L^{O_x,O_y}_{xy}} \tilde\Upsilon^{O_x,O_y}(\alpha),
\end{equation}
and let 
\[
\tilde\Upsilon_{xy}=\bigcup_{O_x,O_y}\tilde\Upsilon^{O_x,O_y}_{xy},
\]
where the union is over all orbits of $\sigma_x$ and $\sigma_y$ that have a non-empty intersection.
Then $\tilde\Upsilon_{xy}$ is the set of all  $\star\in [2n]$ such that $\tilde D_{xy}( \Stab(\sigma,\star))=0$, hence
\begin{equation}\label{eq:tg-2}
 \val(\tilde\cT,\sigma)=1- \displaystyle{\Ex_{xy\sim \mu}}  \frac{|\tilde\Upsilon_{xy}|}{2n} =1-\displaystyle{\Ex_{xy\sim \mu}\Ex_{\star\in [2n]}} \eps_{xy}(\star)  \;,
 \end{equation}
 where
 \begin{equation}\label{eq:eps_xy(star)}
 \eps_{xy}(\star)=\frac{|\tilde\Upsilon_{xy}^{O_x,O_y}|}{|O_x\cap O_y|}
 \end{equation}
 and $O_x$ and $O_y$ are the unique $\sigma_x$ and $\sigma_y$ orbits for which $\star\in O_x\cap O_y$.
 
Recall that according to the procedural sampling described in  Claim \ref{claim:procedural_sampling_perm_strat}, any pair $\vec v\in B_x^{-}, \vec u\in B^{-}_y$ of Fourier basis elements supported on $\sigma_x$-orbit $O_x$ and $\sigma_y$-orbit $O_y$ respectively, is sampled with probability $\frac{2\langle \vec v|\vec u\rangle^2}{|O_x\cap O_y|}$ (conditional on the sampled $\star$ being in $O_x\cap O_y$), and it induces an ``assignment'' $\gamma \colon S_{xy}\cup \{\sJ\}\to \FF_2$ for which $\gamma(\sJ)=1$.
 Let $\Upsilon^{O_x,O_y}(\alpha)$ be the set of all such $(\vec v,\vec u)$ for which 
  $\gamma$ does not satisfy a specific linear constraint $\alpha\in \FF_2^{S_{xy}\cup \{\sJ\}}$, namely $\sum_{\sX\in S_{xy}\cup\{\sJ\}} \gamma(\sX)\alpha(\sX)\neq 0$. 
  Let  
 \[
 \Upsilon^{O_x,O_y}_{xy} = \bigcup_{\alpha\in L^{O_x,O_y}_{xy}} \Upsilon^{O_x,O_y}(\alpha)
 \]
 be the collection of Fourier basis pairs $(\vec v,\vec u)\in B_x^-\times B_y^-$ supported on $O_x,O_y$, such that the assignment $\gamma$ induced by the pair is rejected by $D_{xy}$. 
 Finally let 
 \[
\Upsilon_{xy}=\bigcup_{O_x,O_y}\Upsilon^{O_x,O_y}_{xy}\subseteq B^-_x\times B^-_y,
 \]
 which is exactly the collection of possible sampled pairs by the quantum strategy $\rho$ induced by $\sigma$, given that the edge $xy$ was asked,  that produce an assignment $\gamma$ satisfying $D_{xy}(\gamma)=0$. 
 Hence,
\begin{equation}\label{eq:tg-1}
\val(\cG,\rho)=1-\displaystyle{\Ex_{xy\sim \mu}\sum_{(\vec v,\vec u)\in \Upsilon_{xy}}\frac{\langle \vec v|\vec u\rangle^2}{n}}= 1-\displaystyle{\Ex_{xy\sim \mu}\Ex_{\star\in [2n]}} \delta_{xy}(\star)\;,
\end{equation}
where 
\begin{equation}\label{eq:delta_xy(star)}
\delta_{xy}(\star) =\sum_{(\vec v,\vec u)\in \Upsilon^{O_x,O_y}_{xy}}\frac{2\langle \vec v|\vec u\rangle ^2}{|O_x\cap O_y|}
\end{equation}
and $O_x,O_y$ are the unique orbits for which $\star\in O_x\cap O_y$.
Observe the following claim.
\begin{claim}\label{claim:tg}
    For every $\alpha \in \mathbb{F}_2^{S_{xy}\cup\{\sJ\}}$,  $\sigma_x$-orbit $O_x$ and $\sigma_y$-orbit $O_y$ such that $O_x\cap O_y\neq \emptyset$, we have
\begin{equation}\label{eq:main_bound_prop_z-alinged_test_and_game}
    \sum_{(\vec v,\vec u)\in \Upsilon^{O_x,O_y}(\alpha)}\langle \vec v|\vec u\rangle ^2\,\leq\, \frac{1}{2} \big|\tilde{\Upsilon}^{O_x,O_y}(\alpha)\big| \;.
\end{equation}
\end{claim}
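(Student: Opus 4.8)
Looking at Claim \ref{claim:tg}, I need to prove a bound relating Fourier-basis pairs violating a linear constraint $\alpha$ to vertices in the orbit intersection violating the same constraint under the permutation action.

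\subsection*{Proof proposal for Claim \ref{claim:tg}}

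The plan is to fix the pair of orbits $O_x, O_y$ and the constraint $\alpha\in\FF_2^{S_{xy}\cup\{\sJ\}}$, and work entirely inside the finite-dimensional space of functions supported on $O_x$ (and on $O_y$). First I would unpack what membership in $\Upsilon^{O_x,O_y}(\alpha)$ and $\tilde\Upsilon^{O_x,O_y}(\alpha)$ really means in terms of the observables. Recall $\sX^\alpha = \prod_{\sX\in S_x\cup\{\sJ\}}\sX^{\alpha(\sX)}$ and $\sY^\alpha=\prod_{\sY\in S_y}\sY^{\alpha(\sY)}$ from \eqref{eq:def_sX^alpha,sY^alpha}. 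A vertex $\star\in O_x\cap O_y$ lies in $\tilde\Upsilon^{O_x,O_y}(\alpha)$ iff $\sigma(\sX^\alpha).\star\neq\sigma(\sY^\alpha).\star$, equivalently $\sigma((\sY^\alpha)^{-1}\sX^\alpha).\star\neq \star$, i.e.\ $\star$ is \emph{not} fixed by the permutation $\pi_\alpha := \sigma((\sY^\alpha)^{-1}\sX^\alpha)$ restricted to $O_x\cap O_y$. On the Fourier side, a pair $(\vec v,\vec u)\in B_x^-\times B_y^-$ supported on $O_x,O_y$ lies in $\Upsilon^{O_x,O_y}(\alpha)$ iff the induced assignment $\gamma$ (with $\gamma(\sJ)=1$) satisfies $\langle\alpha,\gamma\rangle\neq 0$; since $\rho(\sX)\vec v = (-1)^{\gamma(\sX)}\vec v$ for $\sX\in S_x$ and $\rho(\sY)\vec u=(-1)^{\gamma(\sY)}\vec u$ for $\sY\in S_y$, and $\gamma(\sJ)=1$, this means precisely that $\rho(\sX^\alpha)\vec v = -(-1)^{?}\vec v$ — more carefully, $\prod_{\sX\in S_x}\rho(\sX)^{\alpha(\sX)}\vec v = (-1)^{\sum_{\sX\in S_x}\alpha(\sX)\gamma(\sX)}\vec v$ and similarly for $\vec u$, so $(\vec v,\vec u)\in\Upsilon(\alpha)$ iff the scalars $\epsilon_v := (-1)^{\alpha(\sJ)+\sum_{S_x}\alpha(\sX)\gamma(\sX)}$ and $\epsilon_u := (-1)^{\sum_{S_y}\alpha(\sY)\gamma(\sY)}$ differ.

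The key computation I would carry out is to expand $\sum_{(\vec v,\vec u)\in\Upsilon^{O_x,O_y}(\alpha)}\langle\vec v|\vec u\rangle^2$ using the spectral decomposition. Restrict attention to functions supported on $O_x\cap O_y$ (pairs with disjoint-ish support contribute zero inner product, and one must check the support-restriction bookkeeping carefully — this is where I'd be careful). Write $A = \prod_{\sX\in S_x}\rho(\sX)^{\alpha(\sX)}$ acting on $\mathrm{span}(B_x^-|_{O_x\cap O_y})$ and $B = \prod_{\sY\in S_y}\rho(\sY)^{\alpha(\sY)}$ on the $y$-side; both are involutions. Decompose each space into $\pm 1$ eigenspaces of $A$ (resp.\ $B$) times the sign $(-1)^{\alpha(\sJ)}$. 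Then $\sum_{(\vec v,\vec u)\in\Upsilon(\alpha)}\langle\vec v|\vec u\rangle^2$ is a sum of $|\langle\vec v|\vec u\rangle|^2$ over eigenvector pairs with opposite eigenvalue-signs, which equals $\mathrm{Tr}(P_+^A P_-^B) + \mathrm{Tr}(P_-^A P_+^B)$ (with the $\sJ$-sign folded in), where $P^A_\pm, P^B_\pm$ are the relevant spectral projections on the $O_x\cap O_y$ block. A short trace manipulation gives $\mathrm{Tr}(P_+^A P_-^B)+\mathrm{Tr}(P_-^A P_+^B) = \tfrac12(|O_x\cap O_y|_{\mathrm{red}} - \mathrm{Tr}((-1)^{\alpha(\sJ)}AB))$ — essentially the same identity as in Claim \ref{claim:perfect_3Lin_commuting_observables}, applied on the orbit block. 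Meanwhile $\mathrm{Tr}((-1)^{\alpha(\sJ)}AB)$ relates, via Claim \ref{claim:rho_id_implies_sigma_id} and the Fourier structure from Section \ref{sec:Fourier_bases}, to the \emph{permutation} trace: the corner of $\rho$ on functions flipping along $\sJ$-edges supported on $O_x\cap O_y$ records exactly how $\sigma((\sY^\alpha)^{-1}\sX^\alpha)$ permutes $O_x\cap O_y$, up to the sign $(-1)^{\alpha(\sJ)}$. Concretely I expect $\mathrm{Tr}((-1)^{\alpha(\sJ)}AB) = \#\{\star\in O_x\cap O_y : \sigma(\sX^\alpha).\star = \sigma(\sY^\alpha).\star\} - (\text{contribution of non-fixed points counted with sign})$, and after accounting for the $\sJ$-orbit halving ($W^-$ has dimension $|O_x\cap O_y|/2$) this yields $\sum_{\Upsilon(\alpha)}\langle\vec v|\vec u\rangle^2 = \tfrac12|\tilde\Upsilon^{O_x,O_y}(\alpha)|$ on the nose when $\sigma$ commutes along the edge and things are clean, and $\leq\tfrac12|\tilde\Upsilon^{O_x,O_y}(\alpha)|$ in general because $\mathrm{Tr}((-1)^{\alpha(\sJ)}AB) \geq |O_x\cap O_y|/2 - |\tilde\Upsilon^{O_x,O_y}(\alpha)|$ — the non-fixed points can only \emph{decrease} the trace, never by more than their count.

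The main obstacle I anticipate is the careful bookkeeping between three different "sizes": $|O_x\cap O_y|$ as a subset of $[2n]$, the dimension $|O_x\cap O_y|/2$ of the relevant $W^-$ block, and making sure the restriction to Fourier vectors \emph{supported on} $O_x$ resp.\ $O_y$ (rather than on the common refinement $O_x\cap O_y$) is handled correctly — a Fourier basis element of $\sigma_x$ supported on $O_x$ need not be supported on $O_x\cap O_y$, so $\langle\vec v|\vec u\rangle$ is an inner product of a vector living on $O_x$ with one living on $O_y$, and only the overlap $O_x\cap O_y$ matters. The cleanest route is probably to restrict both vectors to $O_x\cap O_y$ first (which preserves all inner products between a $\vec v$ supported on $O_x$ and a $\vec u$ supported on $O_y$), observe that $\vec v|_{O_x\cap O_y}$ and $\vec u|_{O_x\cap O_y}$ are, after renormalization, still eigenvectors of the relevant observables \emph{on that block} because $\sigma_x$ and $\sigma_y$ both preserve $O_x\cap O_y$, and then run the trace identity purely on $\mathbb{C}^{O_x\cap O_y}$. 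Once that reduction is in place, the inequality \eqref{eq:main_bound_prop_z-alinged_test_and_game} should follow from the observable-trace identity together with the elementary fact that a $\pm1$ matrix obtained by signing a permutation matrix has trace at least $(\text{fixed points}) - (\text{non-fixed points})$, i.e.\ at least $|O_x\cap O_y| - 2|\tilde\Upsilon^{O_x,O_y}(\alpha)|$ on the block, which is exactly what is needed after dividing by $2$.
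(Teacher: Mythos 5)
Your spectral/trace approach is a genuinely different route from the paper's, but as written it has a concrete gap. You assert that ``$\sigma_x$ and $\sigma_y$ both preserve $O_x\cap O_y$.'' This is false: $\sigma_x$ preserves the $\sigma_x$-orbit $O_x$, but generically \emph{not} the smaller set $O_x\cap O_y$, and symmetrically for $\sigma_y$. Consequently the observables $A=\prod_{\sX\in S_x}\rho(\sX)^{\alpha(\sX)}$ and $B=\prod_{\sY\in S_y}\rho(\sY)^{\alpha(\sY)}$ do not restrict to operators on $\mathbb{C}^{O_x\cap O_y}$, the restrictions $\vec v|_{O_x\cap O_y}$ and $\vec u|_{O_x\cap O_y}$ are not eigenvectors of anything living on that block, and ``run the trace identity purely on $\mathbb{C}^{O_x\cap O_y}$'' is not available as you state it. The route can still be pushed through, but only by staying on all of $W^{-}$ and carrying two distinct projectors $\Pi_{O_x}$, $\Pi_{O_y}$: with $P^x_\pm=\tfrac12\Pi_{O_x}(\Id\pm A)$ and $P^y_\pm=\tfrac12\Pi_{O_y}(\Id\pm B)$, one gets $\sum_{\Upsilon^{O_x,O_y}(\alpha)}\langle\vec v|\vec u\rangle^2=\tfrac12\big(\mathrm{Tr}(\Pi_{O_x}\Pi_{O_y})-(-1)^{\alpha(\sJ)}\mathrm{Tr}(\Pi_{O_x}A\Pi_{O_y}B)\big)$, and the second trace becomes a fixed-point count only after commuting $A$ past $\Pi_{O_x}$ and $B$ past $\Pi_{O_y}$ and using $\Pi_{O_x}\Pi_{O_y}=\Pi_{O_x\cap O_y}$: it equals $\tfrac12(c_1-c_2)$ where $c_1=|\{\star\in O_x\cap O_y:\sigma\big(\prod_{\sX\in S_x}\sX^{\alpha(\sX)}\big).\star=\sigma(\sY^\alpha).\star\}|$ and $c_2$ is the analogous count with an extra $\sJ$. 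The bound then reduces to $c_1+c_2\le|O_x\cap O_y|$, which holds because the two conditions are mutually exclusive (since $\sigma(\sJ)$ is fixed-point free). Your slogan ``non-fixed points can only decrease the trace by at most their count'' is not the operative fact here; what actually closes the argument is this mutual exclusivity.

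The paper's proof avoids all of the non-commuting-projector bookkeeping. It observes that $\tilde\Gamma^{O_x,O_y}(\alpha)=(O_x\cap O_y)\setminus\tilde\Upsilon^{O_x,O_y}(\alpha)$ is invariant under both $\sX^\alpha$ and $\sY^\alpha$, and that for $(\vec v,\vec u)\in\Upsilon^{O_x,O_y}(\alpha)$ the $\sX^\alpha$-eigenvalue of $\vec v$ and the $\sY^\alpha$-eigenvalue of $\vec u$ are opposite; hence the contributions to $\langle\vec v|\vec u\rangle$ over $\tilde\Gamma^{O_x,O_y}(\alpha)$ cancel in pairs along $\sY^\alpha$-labeled edges, giving $\langle\vec v|\vec u\rangle=\langle\vec v\cdot{\bf 1}_{\tilde\Upsilon^{O_x,O_y}(\alpha)}|\vec u\rangle$. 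A single Bessel estimate over $\vec u$ followed by the count of $|O_x|/2$ vectors $\vec v$ finishes. This is shorter, stays entirely combinatorial, and never needs to identify $\mathrm{Tr}(\Pi_{O_x}A\Pi_{O_y}B)$; your approach trades that cancellation lemma for an explicit trace identity plus the $c_1+c_2\le|O_x\cap O_y|$ inequality.
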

Let us show how to deduce our conclusion assuming the  validity of this claim, and then we will address the claim itself.
Given $\star\in O_x\cap O_y$, we have
\[ 
\begin{split}
    \delta_{xy}(\star)&\quad\overset{\eqref{eq:delta_xy(star)}}{=}\sum_{(\vec v,\vec u)\in \Upsilon^{O_x,O_y}_{xy}}\frac{2\langle \vec v|\vec u\rangle ^2}{|O_x\cap O_y|}\\
    &\quad\  \leq\, \sum_{\alpha \in L_{xy}^{O_x,O_y}}\ \sum_{(\vec v,\vec u)\in \Upsilon^{O_x,O_y}(\alpha)}\frac{2\langle \vec v|\vec u\rangle ^2}{|O_x\cap O_y|}\\
    &\overset{{\rm Claim}\ \ref{claim:tg}}{\leq} \sum_{\alpha \in L_{xy}^{O_x,O_y}}\ \frac{|\tilde \Upsilon^{O_x,O_y}(\alpha)|}{|O_x\cap O_y|}\ .
\end{split}\]
Now, for every $\alpha\in L_{xy}^{O_x,O_y}$,  we have by \eqref{eq:def_upsilon_xy_local} that $\tilde\Upsilon^{O_x,O_y}(\alpha)\subseteq \tilde\Upsilon^{O_x,O_y}_{xy}$, which using the above analysis means 
\[
\delta_{xy}(\star)\leq \sum_{\alpha \in L_{xy}^{O_x,O_y}}\ \frac{|\tilde \Upsilon^{O_x,O_y}_{xy}|}{|O_x\cap O_y|}\leq 2^{2\Lambda+1}\cdot \eps_{xy}(\star),
\]
where the last inequality uses \eqref{eq:eps_xy(star)} and  the fact $|L^{O_x,O_y}_{xy}|\leq|\FF_2^{S_{xy}\cup \{\sJ\}}|\leq  2^{2\Lambda+1}$. 
Therefore,
\[
\begin{split}
1-\val(\mathcal{G},\rho)\overset{\eqref{eq:tg-1}}{=}    \Ex_{xy\sim \mu}\Ex_{\star\in [2n]}[\delta_{xy}(\star)]\leq 2\cdot 2^{2\Lambda}\Ex_{xy\sim \mu}\Ex_{\star\in [2n]}[\eps_{xy}(\star)]\overset{\eqref{eq:tg-2}}{=}2\cdot 2^{2\Lambda}(1-\val( \tilde{\mathcal{T}},\sigma)),
\end{split}
\]
proving \eqref{eq:sufficient_condition_Prop_8.4} and thus the proposition. 
\end{proof}

\begin{proof}[Proof of Claim~\ref{claim:tg}]
Let $\tilde{\Gamma}^{O_x,O_y}(\alpha)$ be the complement of $\Upsilon^{O_x,O_y}(\alpha)$ in $O_x\cap O_y$, namely 
\[
\tilde{\Gamma}^{O_x,O_y}(\alpha)= (O_x\cap O_y) \setminus \tilde\Upsilon^{O_x,O_y}(\alpha).
\]
Recall the notation $\sX^\alpha$ and $\sY^\alpha$ from \eqref{eq:def_sX^alpha,sY^alpha}.
For $\star\in O_x\cap O_y$, we have $\sigma(\sX^{\alpha}).\star\in O_x$ and $\sigma(\sY^\alpha).\star\in O_y$ by the fact $O_x$ is a $\sigma_x$-orbit and $O_y$ is a $\sigma_y$-orbit. If $\star\in \tilde\Gamma^{O_x,O_y}(\alpha)$, then by its definition we have $\diamond:=\sigma(\sX^\alpha).\star=\sigma(\sY^\alpha).\star$, and $\diamond\in O_x\cap O_y$.  As $\sigma(\sX^\alpha)$ and $\sigma(\sY^\alpha)$ are involutions, we have $\sigma(\sX^\alpha).\diamond=\star=\sigma(\sY^\alpha).\diamond$, which implies $\diamond\in \tilde\Gamma^{O_x,O_y}(\alpha)$.  All in all, $\tilde{\Gamma}^{O_x,O_y}(\alpha)$ is invariant under the actions of $\sX^\alpha$ and $\sY^\alpha$ --- graphically, if we add $\sX^\alpha$ and $\sY^\alpha$ labels to the Schrier graph of the action $\sigma$ on $[2n]$, then they agree on every $\star\in \tilde \Gamma^{O_x,O_y}(\alpha)$ and are contained in it.

For $(\vec v,\vec u) \in \Upsilon^{O_x,O_y}(\alpha)$  we know that  $\sigma(\sX^\alpha)\vec v =\pm\vec v$ while $\sigma(\sY^\alpha)\vec u=\mp\vec u$. 
This means that for every $\sY^\alpha$-labeled edge between vertices in $\tilde \Gamma^{O_x,O_y}(\alpha)$, $\vec v$ and $\vec u$ agree on one of its endpoints and evaluate to opposites on the other endpoint. 
This implies
\[
\begin{split}
    \langle \vec v|\vec u\rangle &=\sum_{\star\in O_x\cap O_y}\vec v(\star)\vec u(\star)\\
    &=\sum_{\star\in \tilde \Upsilon^{O_x,O_y} (\alpha)}\vec v(\star)\vec u(\star)+\sum_{\star\in \tilde\Gamma^{O_x,O_y}(\alpha)}\vec v(\star)\vec u(\star)\\
    &=\sum_{\star\in \tilde \Upsilon^{O_x,O_y} (\alpha)}\vec v(\star)\vec u(\star)+\frac{1}{2}\sum_{\star\in \tilde\Gamma^{O_x,O_y}(\alpha)}\underbrace{\vec v(\star)\vec u(\star)+\vec v(\sigma(\sY^{\alpha}).\star)\vec u(\sigma(\sY^\alpha).\star)}_{=0}\\
    &=\langle \vec v\cdot {\bf 1}_{\tilde \Upsilon^{O_x,O_y} (\alpha)}|\vec u \rangle,
\end{split}
\]
where  ${\bf 1}_{\tilde \Upsilon^{O_x,O_y}}$ is the indicator function of the set $\tilde \Upsilon^{O_x,O_y}$, and $\cdot$ is the pointwise product of functions.
Thus,
\[
\begin{split}
    \sum_{(\vec v,\vec u)\in \Upsilon^{O_x,O_y}(\alpha)} \langle \vec v|\vec u\rangle^2&=\sum_{\vec v\colon \textrm{Supp}(\vec v)=O_x}\left(\sum_{\substack{\vec u \colon\\  (\vec v,\vec u)\in \Upsilon^{O_x,O_y}(\alpha)}} \langle \vec v\cdot {\bf 1}_{\tilde \Upsilon^{O_x,O_y} (\alpha)} |\vec u\rangle^2\right)\\
    &\leq \sum_{\vec v\colon \textrm{Supp}(\vec v)=O_x}\left(\sum_{\vec u \colon \textrm{Supp}(\vec u)=O_y} \langle \vec v\cdot {\bf 1}_{\tilde \Upsilon^{O_x,O_y} (\alpha)} |\vec u\rangle^2\right)\\
    &=(*),
\end{split}
\]
and since ${\bf 1}_{\tilde\Upsilon^{O_x,O_y}(\alpha)}$ is supported on $O_y$, we have 
\[
\sum_{\vec u \colon \textrm{Supp}(\vec u)=O_y} \langle \vec v\cdot {\bf 1}_{\tilde \Upsilon ^{O_x,O_y}(\alpha)} |\vec u\rangle^2=\|\vec v\cdot {\bf 1}_{\tilde \Upsilon^{O_x,O_y} (\alpha)} \|^2=\frac{|\tilde \Upsilon^{O_x,O_y} (\alpha)|}{|O_x|}\;.
\]
Since there are $\nicefrac{|O_x|}{2}$ many $\vec v\in B^{-}_x$ which are supported on $O_x$, we deduce that
$(*)\leq \nicefrac{|\tilde \Upsilon^{O_x,O_y}(\alpha)|}{2}$,
as desired.
\end{proof}

A short detour into (quite elementary) group stability results is needed for proving  Proposition \ref{prop:perturbing_to_satisfy_1to3}.
\subsection{Interlude --- Group stability}\label{sec:interlude_stability}

\begin{claim} [Almost involutions are close to involutions]\label{claim:fixing_to_involution}
\label{claim:2-torsion-Sym} Let $\zeta\in\Sym\left(X\right)$. Then,
there is $\tau\in\Sym\left(X\right)$ such that $\tau^{2}=\Id$ and
$d_{H}\left(\zeta,\tau\right)= d_{H}\left(\zeta^{2},\Id\right)$. 
\end{claim}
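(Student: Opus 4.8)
The plan is to analyze the cycle structure of $\zeta$ and repair it cycle by cycle. Recall that $d_H(\zeta^2,\Id) = \Pro_{x\in X}[\zeta^2(x)\neq x]$, so a point $x$ contributes to this quantity precisely when $x$ lies on a cycle of $\zeta$ of length at least $3$ (fixed points and $2$-cycles of $\zeta$ satisfy $\zeta^2(x)=x$). Thus $d_H(\zeta^2,\Id)$ equals the fraction of points lying on cycles of length $\geq 3$. The goal is to build an involution $\tau$ which agrees with $\zeta$ on every fixed point and every $2$-cycle, and disagrees with $\zeta$ on \emph{exactly} the points lying on longer cycles — giving $d_H(\zeta,\tau) = d_H(\zeta^2,\Id)$ exactly, not just up to a constant.

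First I would decompose $\zeta$ into disjoint cycles. On each fixed point and each transposition of $\zeta$, set $\tau$ to agree with $\zeta$; these points contribute nothing to either side. Now consider a single cycle $c = (x_1\, x_2\, \cdots\, x_k)$ of $\zeta$ with $k\geq 3$. I need to define $\tau$ restricted to the set $\{x_1,\dots,x_k\}$ so that $\tau$ is an involution of that set and $\tau(x_i)\neq \zeta(x_i)$ for \emph{all} $i$. When $k$ is even, pair them up as $\tau = (x_1\,x_2)(x_3\,x_4)\cdots(x_{k-1}\,x_k)$: one checks $\tau(x_i)\neq\zeta(x_i)$ for each $i$ since $\zeta(x_i)=x_{i+1}$ while $\tau$ sends $x_i$ to $x_{i-1}$ or $x_{i+1}$ depending on parity but in the pairing $(x_{2j-1}\,x_{2j})$ we have $\tau(x_{2j-1})=x_{2j}=\zeta(x_{2j-1})$ — so this naive pairing fails. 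Instead use the "shift" involution: the problem is to find a fixed-point-free involution (or near-fixed-point-free) $\tau$ on a $k$-set that is everywhere distinct from the $k$-cycle $\zeta$. For $k$ even, $\tau(x_i) = x_{i+k/2}$ (indices mod $k$) works when $k/2\neq \pm 1 \pmod k$, i.e. $k\geq 4$ and $k\neq 2$; this is an involution and $x_{i+k/2}\neq x_{i+1}$ precisely when $k/2\neq 1$, i.e. $k\geq 4$. For $k$ odd, say $k=2m+1\geq 3$, take $\tau$ to fix $x_1$ and act as the transpositions $(x_2\,x_{k})(x_3\,x_{k-1})\cdots$, i.e. $\tau(x_1)=x_1$ and $\tau(x_i)=x_{k+2-i}$ for $2\le i\le k$; one verifies this is an involution and that $\tau(x_i)\neq x_{i+1}=\zeta(x_i)$ for all $i$ (including $\tau(x_1)=x_1\neq x_2$), using $k\geq 3$. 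Care is needed at the small cases $k=3,4,5$, which I would check by hand.

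The main obstacle — and it is a mild one — is handling the odd cycles cleanly: an involution of an odd set must have at least one fixed point, and I must ensure that this forced fixed point $x_1$ still satisfies $\zeta(x_1)\neq x_1$ (true since $x_1$ is on a cycle of length $k\geq 3$), so it still counts toward $d_H(\zeta^2,\Id)$ and the exact-equality bookkeeping goes through. Concretely, every point on a $\zeta$-cycle of length $\geq 3$ will be a point where $\tau\neq\zeta$, and every point on a cycle of length $\leq 2$ is a point where $\tau=\zeta$; hence
\[
d_H(\zeta,\tau) = \Pro_{x\in X}[\zeta(x)\neq\tau(x)] = \Pro_{x\in X}[x \text{ lies on a } \zeta\text{-cycle of length} \geq 3] = \Pro_{x\in X}[\zeta^2(x)\neq x] = d_H(\zeta^2,\Id).
\]
Finally I would assemble $\tau$ as the union of these cycle-local involutions over all cycles of $\zeta$; it is globally an involution since its support decomposes into $\zeta$-invariant blocks, and the displayed equality holds. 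This completes the proof sketch.
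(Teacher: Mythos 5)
Your high-level plan — decompose into $\zeta$-cycles, keep $\tau=\zeta$ on fixed points and transpositions, and make $\tau$ disagree with $\zeta$ everywhere on longer cycles — is the right idea, but your local constructions on long cycles are both overcomplicated and, in the odd case, actually wrong. On an odd $k$-cycle ($k\geq 3$), the involution $\tau(x_1)=x_1$, $\tau(x_i)=x_{k+2-i}$ for $2\le i\le k$ has a ``middle'' transposition $(x_{(k+1)/2}\,x_{(k+3)/2})$ whose two elements are \emph{adjacent} on the cycle, so $\tau(x_{(k+1)/2})=x_{(k+3)/2}=\zeta(x_{(k+1)/2})$. (For $k=3$: $\tau$ fixes $x_1$ and swaps $x_2\leftrightarrow x_3$, while $\zeta(x_2)=x_3$.) Hence your $\tau$ agrees with $\zeta$ at one point per odd cycle of length $\geq 3$, and the asserted exact equality $d_H(\zeta,\tau)=d_H(\zeta^2,\Id)$ fails — you in fact get a strictly smaller distance whenever an odd long cycle is present, which still implies the inequality $d_H(\zeta,\tau)\le d_H(\zeta^2,\Id)$ but not what you claimed to prove.

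The paper's proof is much shorter and avoids all of this: set $W=\{x:\zeta^2(x)=x\}$ (the fixed points and $2$-cycles of $\zeta$), note $\zeta$ preserves $W$ and $\zeta|_W$ is an involution, and define $\tau=\zeta$ on $W$ and $\tau=\Id$ on $X\setminus W$. Then $\tau^2=\Id$, and since $\zeta(x)\neq x$ for every $x\notin W$, we get $d_H(\zeta,\tau)=|X\setminus W|/|X|=d_H(\zeta^2,\Id)$ exactly. In other words, the ``local involution disjoint from $\zeta$'' you want on a long cycle is simply the identity — any $x$ on a cycle of length $\geq 2$ has $\zeta(x)\neq x$, so $\Id$ already disagrees with $\zeta$ at every such point, no shift construction needed. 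Your even-$k$ shift $\tau(x_i)=x_{i+k/2}$ does work for $k\geq 4$, but it buys you nothing over $\Id$, and it was the attempt to match it with an equally clever odd-$k$ construction that introduced the bug.
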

\begin{proof}
Let $W=\left\{ x\in X \mid\zeta^{2} .x=x\right\} $.
Note that the restriction $\zeta|_{W}$ is an involution $W\rightarrow W$.
Define 
\[
\tau\left(x\right)=\begin{cases}
\zeta\left(x\right) & x\in W,\\
x & x\notin W.
\end{cases}
\]
Then $\tau^{2}=\Id$ and $d_{H}\left(\zeta,\tau\right)=1-\frac{|W|}{|X|}=d_{H}\left(\zeta^{2},\Id\right)$.
\end{proof}

\begin{claim}[Almost fixed point free are close to fixed point free]\label{claim:fixing_fixed_point_free}
        Let $\zeta\in \Sym(X)$ be an involution. Assume $d_H(\zeta,\Id)\geq 1-\eps$. Then, there is an involution with no fixed points $\tau\in \Sym(Y)$, where $|Y|=2\cdot\lceil\nicefrac{|X|}{2}\rceil$, such that $d_H(\zeta,\tau)\leq 2\eps$.
\end{claim}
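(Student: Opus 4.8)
The statement says: if $\zeta \in \Sym(X)$ is an involution with $d_H(\zeta, \Id) \ge 1 - \eps$, i.e.\ $\zeta$ has at most $\eps |X|$ fixed points, then there is a fixed-point-free involution $\tau$ on a slightly larger set $Y$ (of even size $2\lceil |X|/2\rceil$) with $d_H(\zeta,\tau) \le 2\eps$. The plan is completely constructive: repair $\zeta$ by re-pairing its fixed points among themselves, adding at most one extra point so that the total is even.

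First I would let $F = \{x \in X \mid \zeta(x) = x\}$ be the fixed point set, so $|F| \le \eps |X|$ by hypothesis. On $X \setminus F$ the permutation $\zeta$ is already a fixed-point-free involution, so I only need to deal with $F$. If $|F|$ is even, take $Y = X$; if $|F|$ is odd, adjoin one new point $\star \notin X$ and set $Y = X \cup \{\star\}$, putting $\star$ into $F$ as well (so now the modified fixed-point set $F'$ has even cardinality, and $|Y| = 2\lceil |X|/2\rceil$ since $|X| - |F|$ is... well, more carefully: $|Y|$ is even because $|X\setminus F|$ is even and $|F'|$ is even, and $|Y| \in \{|X|, |X|+1\}$, so $|Y| = 2\lceil |X|/2\rceil$). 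Then choose an arbitrary fixed-point-free involution (a perfect matching) $\pi$ on the even-sized set $F'$, and define
\[
\tau(y) = \begin{cases} \zeta(y) & y \in X \setminus F,\\ \pi(y) & y \in F'. \end{cases}
\]
This $\tau$ is a fixed-point-free involution on $Y$.

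Finally I would estimate $d_H(\zeta, \tau)$ using the definition of the generalized Hamming distance from \eqref{generalized_Hamming_dist}, namely $d_H(\zeta,\tau) = 1 - \frac{|\{y \in X \mid \zeta(y) = \tau(y)\}|}{|Y|}$ (recall $\zeta$ is undefined, i.e.\ equals $\frak{error}$, on $Y \setminus X = \{\star\}$ if that point was added, so $\star$ always counts as a disagreement). On $X \setminus F$ the two permutations agree, so the disagreement set is contained in $F \cup (Y \setminus X)$, which has size at most $|F| + 1$. Hence the number of disagreements is at most $|F| + 1 \le \eps|X| + 1$, but I should be slightly careful: I want the bound $2\eps$ relative to $|Y| \ge |X|$. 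If $|F| \le 1$ one checks directly that $d_H(\zeta,\tau) \le 1/|Y| \le 2\eps$ unless things are degenerate (and if $\eps$ is so small that $\eps|X| < 1$ then $F$ is empty and $\zeta$ is already a fixed-point-free involution, possibly after adding one point, giving $d_H \le 1/|Y|$; one handles this trivial case separately or absorbs it). In the main case $|F| \ge 1$, the disagreements number at most $|F| + 1 \le 2|F| \le 2\eps|X| \le 2\eps|Y|$, so $d_H(\zeta,\tau) \le 2\eps$.

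The only subtlety — and the single place requiring care rather than routine bookkeeping — is the boundary case where $|F|$ is odd and small (e.g.\ $|F| = 1$), where the "$+1$" from the added point and the "$+1$" from re-pairing an odd set are the same point, so the factor $2$ is exactly what makes the estimate go through; this is why the statement allows $|Y| = 2\lceil |X|/2\rceil$ rather than insisting $|Y| = |X|$. I expect no real obstacle here; the whole argument is a short explicit construction plus a counting estimate.
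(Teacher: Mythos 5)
Your proof is correct and follows essentially the same construction as the paper: isolate the fixed-point set, adjoin one extra point if needed to make it even, repair with a perfect matching on that set, and observe that the disagreement set has size at most $|F|+1\leq 2|F|\leq 2\eps|X|\leq 2\eps|Y|$. The paper's version is slightly terser (it writes $d_H(\zeta,\tau)\leq \frac{|W|+1}{|X|}\leq 2\eps$ without spelling out the small-$|F|$ edge case you flag), but the content is identical.
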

\begin{proof}
    Let $W=\{x\in X\mid \zeta.x=x\}$. Since $d_H(\zeta,\Id)=1-\nicefrac{|W|}{|X|}$, we can deduce that $|W|\leq \eps|X|$. If $W$ is odd, add a vertex to it and make it even. Now that $W$ is even sized, we can choose any perfect matching on it and define $\tau$ to be the involution induced by this perfect matching. Extend $\tau$ to the rest of $X$ by letting it act as $\zeta$ on the vertices out of $W$. Then the resulting $\tau$ is an involution with no fixed points, and 
    \[
d_H(\zeta,\tau)\leq \frac{|W|+1}{|X|}\leq 2\eps.
    \]
\end{proof}

\begin{claim}[Almost commuting involutions are close to commuting involutions]\label{claim:fixing_involutions_to_commute}
    Given two involutions $\zeta,\tau\in \Sym(X)$, there is an involution $\tau'\in \Sym(X)$ such that
    \[
[\tau',\zeta]=\Id\quad {\rm and}\quad d_H(\tau,\tau')\leq d_H([\tau,\zeta],\Id). 
    \]
    \end{claim}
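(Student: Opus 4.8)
The plan is to build $\tau'$ directly from the action of $\zeta$ on the ``bad set'' where $\tau$ and $\zeta$ fail to commute, exactly mirroring the strategy used in the two preceding claims: locate the vertices that cause a problem, and redefine $\tau$ there in a way that is forced by the constraint. Let $B = \{x \in X \mid [\tau,\zeta].x \neq x\}$, so that $d_H([\tau,\zeta],\Id) = |B|/|X|$. The first step is to understand $B$ structurally. Since $\zeta$ and $\tau$ are involutions, $[\tau,\zeta] = \tau\zeta\tau\zeta$, and $x \notin B$ exactly when $\tau\zeta.x = \zeta\tau.x$. I would like to argue that $B$ is invariant under both $\zeta$ and $\tau$: if $x \notin B$ then applying $\zeta$ (resp. $\tau$) and using that each is an involution should show $\zeta.x \notin B$ (resp. $\tau.x \notin B$); concretely, $\tau\zeta.x = \zeta\tau.x$ implies, after conjugating the relation $\tau\zeta\tau\zeta.x = x$ appropriately, that the commutator also fixes $\zeta.x$ and $\tau.x$. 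Thus $X \setminus B$ is $\langle \zeta, \tau\rangle$-invariant, and on it $\zeta$ and $\tau$ already commute.

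The second step is to define $\tau'$. On $X \setminus B$, set $\tau' = \tau$; this costs nothing and preserves commutativity there. On $B$, I want to replace $\tau$ by something that is an involution and commutes with $\zeta|_B$. The natural choice is to build $\tau'|_B$ out of $\zeta|_B$: for instance, declare $\tau'|_B = \zeta|_B$ (which is an involution and trivially commutes with $\zeta|_B$), or if one needs $\tau'$ to have some other property, pick any involution on $B$ commuting with $\zeta|_B$ — such exist, e.g. $\zeta|_B$ itself, or the identity on $B$. Either way, $\tau'$ is a well-defined involution on $X$ (we must check $\tau'$ is a genuine permutation: it is a bijection on $X \setminus B$ and a bijection on $B$, and $B$ is $\tau'$-invariant by construction, so $\tau'$ is a bijection of $X$), and $[\tau',\zeta]=\Id$ since the relation holds on both $\langle\zeta,\tau'\rangle$-invariant pieces $B$ and $X\setminus B$ (note $X\setminus B$ is $\zeta$-invariant, hence also $\tau'$-invariant as $\tau'=\tau$ there and $B$ is $\tau$-invariant). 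Finally $\tau$ and $\tau'$ agree on $X \setminus B$, so $d_H(\tau,\tau') \le |B|/|X| = d_H([\tau,\zeta],\Id)$, as required.

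The main obstacle I anticipate is the claimed invariance of $B$ under $\tau$: invariance under $\zeta$ is immediate from conjugating $[\tau,\zeta]$ by $\zeta$ (conjugate commutators have the same fixed-point structure up to the relabeling induced by $\zeta$), but invariance under $\tau$ requires noting that $\tau [\tau,\zeta] \tau^{-1} = \tau\tau\zeta\tau\zeta\tau = \zeta\tau\zeta\tau = [\zeta,\tau] = [\tau,\zeta]^{-1}$, which has exactly the same fixed set as $[\tau,\zeta]$. So $x \notin B \iff [\tau,\zeta].x = x \iff [\tau,\zeta]^{-1}.x = x \iff \tau[\tau,\zeta]\tau^{-1}.x = x \iff [\tau,\zeta].(\tau^{-1}.x) = \tau^{-1}.x \iff \tau^{-1}.x \notin B \iff \tau.x\notin B$ (using $\tau=\tau^{-1}$). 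This makes $B$ genuinely $\langle\zeta,\tau\rangle$-invariant and the construction goes through cleanly; everything else is bookkeeping.
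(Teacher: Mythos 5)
Your proposal is correct and follows essentially the same approach as the paper: you partition $X$ into the $\langle\zeta,\tau\rangle$-invariant set where $\zeta$ and $\tau$ already commute and its complement $B$, keep $\tau$ on the former, and replace it on $B$ by an involution commuting with $\zeta|_B$ (the paper uses the identity on $B$; your alternative $\zeta|_B$ works equally well). Your conjugation identity $\tau[\tau,\zeta]\tau^{-1}=[\tau,\zeta]^{-1}$, valid because both are involutions, is a slightly slicker packaging of the paper's direct element-wise invariance check, but it is the same observation.
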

    \begin{proof}
   Let $\delta=d_H([\tau,\zeta],\Id)$, and let $W=\{x\in X \mid \zeta \tau .x=\tau \zeta .x\}$. Then $|W|= (1-\delta)|X|$. Moreover, for every $x
    \in W$, 
    \[
    \begin{split}
\zeta \tau.(\zeta. x)&=\zeta .(\tau \zeta .x)=\zeta^2 \tau .x=\tau .x=\tau \zeta .(\zeta .x)\implies \zeta .x\in W;\\
\tau \zeta. (\tau .x)&=\tau .(\zeta \tau .x)= \tau^2 \zeta .x=\zeta.x=\zeta \tau .(\tau .x)\implies \tau .x\in W.
\end{split}
    \]
    Hence, $W$ is invariant under the actions of $\zeta$ and $\tau$. Thus, the following is well defined
    \[
        \tau'(x)=\begin{cases}
            \tau(x) & x\in W,\\
            x & x\notin W.
        \end{cases}
    \]
    We therefore can conclude that $d_H(\tau,\tau')\leq 1-\frac{|W|}{|X|}=\delta$, and 
    \[\begin{split}
        \forall x\in W\ \colon\ \  \tau'\underbrace{\zeta.x}_{\in W}&=\tau \zeta. x=\zeta \tau .x=\zeta \tau'. x;\\
        \forall x\notin W\ \colon\ \ \tau'\underbrace{\zeta. x}_{\notin W}&= \zeta .x=\zeta \tau'.x,
    \end{split}
    \]
    which implies $[\tau',\zeta]=\Id$ as required.
 \end{proof}
 \begin{rem}
     Claim \ref{claim:fixing_involutions_to_commute} can be proved even without assuming that the permutations are involutions (while allowing both permutations to change and not only $\tau$). This was originally proved in \cite{ArzhantsevaPaunescu} and was later reproved with effective bounds in \cite{BeckerMosheiff}.
 \end{rem}

The following is a special case of item $1$ of Theorem 2 in \cite{GlebskyRivera}.
\begin{claim}[Almost actions of finite groups are close to actual actions]\label{claim:Glebsky_Rivera}
    Let $G$ be a finite group, and let $f\colon G\to \Sym(X)$ be a function. Assume that for every $g,h\in G$, $d_H(f(gh),f(g)f(h))\leq \eps$. Then, there is a homomorphism $\varphi\colon G\to \Sym(X)$ such that 
    \[
\forall g\in G\ \colon \ \ d_H(f(g),\varphi(g))\leq |G|^2\eps.
    \]
\end{claim}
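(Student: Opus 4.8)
The plan is to establish the statement by averaging over a random choice of ``intended'' destination for each element, following the classical stability argument for finite groups. Since $G$ is finite, we can think of $f$ as an almost-action, and the goal is to produce from it a genuine action $\varphi$ that is close to $f$ in normalized Hamming distance. The key observation is that for each $g \in G$ and each point $x \in X$, the value $f(g).x$ should morally equal $f(g).f(h^{-1}).f(h).x$ for \emph{every} $h$; so if most of these agree, a plurality (or majority) vote over $h$ pins down the ``right'' value.

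First I would, for a fixed $x \in X$, consider the map $h \mapsto f(h)^{-1}.f(h).x$; more usefully, I would fix a base point and look at the orbit-like structure. Concretely, pick a uniformly random $h_0 \in G$ and define, for each $g\in G$, a candidate permutation $\varphi(g)$ by setting $\varphi(g).x$ to be the point obtained by a consistent rule such as $f(gh_0).\big(f(h_0)^{-1}.x\big)$ — i.e.\ transport $x$ back by $f(h_0)^{-1}$, then forward by $f(gh_0)$. The point of the random $h_0$ is that for a typical $h_0$ these definitions are mutually consistent: using the hypothesis $d_H(f(ab),f(a)f(b))\le \eps$ with a union bound over the (finitely many, namely $|G|^2$) relevant pairs, the set of $(x,h_0)$ for which some defining relation fails has density at most $|G|^2\eps$. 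Hence there exists a choice of $h_0$ for which the resulting $\varphi$ is simultaneously (i) a well-defined family of bijections, (ii) a genuine homomorphism $G\to\Sym(X)$, and (iii) satisfies $d_H(f(g),\varphi(g))\le |G|^2\eps$ for all $g$. The averaging/probabilistic-method step is what converts ``most relations hold'' into ``a single $\varphi$ works uniformly.''

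The step I expect to be the main obstacle is verifying that $\varphi$ as defined is genuinely a homomorphism and that each $\varphi(g)$ is a genuine permutation (bijection) of $X$, not merely a partial map, \emph{for the good choice of $h_0$} — this requires carefully choosing which finite list of ``bad events'' to union-bound over so that ruling them all out forces exact multiplicativity $\varphi(g)\varphi(g')=\varphi(gg')$ on the nose, rather than just on a large subset. One must be slightly careful that $\varphi$ is defined by a rule that is automatically multiplicative once the relevant pointwise identities hold on the chosen fiber; the cleanest route is to define $\varphi(g).x := f(gh_0(x)).y_x$ where $h_0(x)$ and $y_x$ come from a maximal ``coherent'' structure, or alternatively to invoke the standard fact that an almost-action of a finite group on a set decomposes, after small modification, into near-copies of the regular representation, and repair each copy. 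Once the combinatorial bookkeeping is set up, the bound $|G|^2\eps$ falls out of the union bound over all pairs $(g,g')\in G\times G$. Since the statement is quoted from \cite{GlebskyRivera} as a special case of their Theorem 2, I would in fact simply cite that result; if a self-contained argument is wanted, the probabilistic repair sketched above is the way I would carry it out.
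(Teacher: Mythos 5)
Your core idea — a union bound over the $|G|^2$ pairs to control a bad set of density at most $|G|^2\eps$, then repair the map there — is the right shape, but the mechanism you propose for producing an \emph{exact} homomorphism does not work. If you fix a single global $h_0$ and set $\varphi(g).x := f(gh_0).\bigl(f(h_0)^{-1}.x\bigr)$, then $\varphi(g)$ is certainly a permutation, but $\varphi(g_1g_2)=\varphi(g_1)\varphi(g_2)$ reduces to the identity
\[
f(g_1g_2h_0)\;=\;f(g_1h_0)\,f(h_0)^{-1}\,f(g_2h_0),
\]
which is only an \emph{approximate} identity under your hypothesis and fails pointwise for a positive fraction of $x$ no matter which $h_0$ you choose. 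The union bound over $(x,h_0)$ can tell you that \emph{most} points behave well, but it cannot make this particular rule multiplicative on all of $X$: the error is structural, not concentrated on a small set you can discard. You flag this yourself as ``the main obstacle,'' but your two suggested fixes (a point-dependent $h_0(x)$ from a ``maximal coherent structure,'' or decomposition into near-copies of the regular representation) are not carried out and, as sketched, do not obviously yield exact multiplicativity either.

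The paper's proof avoids the shift entirely. It declares a point $x\in X$ \emph{good} if $f(gh).x = f(g)f(h).x$ holds exactly for all $g,h\in G$, and observes the crucial propagation: if $x$ is good and $y=f(k).x$, then $y$ is also good (one checks $f(gh).y=f(g)f(h).y$ by a short chain of substitutions each using only goodness of $x$). Since $X$ is finite, the good set is therefore invariant under the group generated by $\Img(f)$, so it is a union of orbits on which $f$ already restricts to a genuine homomorphism. Define $\varphi$ to agree with $f$ on the good set and to act trivially on the complement. The complement is precisely the set of $x$ violating some relation, hence has density at most $|G|^2\eps$ by the union bound over $(g,h)$, which gives $d_H(f(g),\varphi(g))\le|G|^2\eps$ directly. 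If you want to salvage your write-up, replace the random-$h_0$ construction with this ``good orbit'' argument: the probabilistic language is unnecessary once you notice that exact coherence at a single point already propagates along its orbit.
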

\begin{proof}
    Let $x\in X$ be a vertex such that 
    \begin{equation}\label{eq:assumption_on_x}
        \forall g,h\in G\ \colon \ \ f(gh).x=f(g)f(h).x.
    \end{equation}
    Then, if $O\subseteq X$ is the orbit of $x$ with respect to the action of (the group generated by) $\Img(f)$, then for every $y\in O$ we have $f(gh).y=f(g)f(h).y$. To prove that, it is enough to prove it for a single step, namely for  $y=f(k).x$ for some $k\in G$ (the rest is by induction). So, for every $g,h\in G$, we have
    \[
f(gh).y=f(gh)f(k).x=f(ghk).x=f(g)f(hk).x=f(g)f(h)f(k).x=f(g)f(h).y,
    \]
    where all equalities are either $f(k).x=y$ or \eqref{eq:assumption_on_x}. This means that the restriction of $f$ to its action on $O$ induces a homomorphism from $G$ to $\Sym(O)$.
    
    We assumed for every $g,h\in G$, that $d_H(f(gh),f(g)f(h))\leq \eps$. Therefore, there are at most $|G|^2\eps$ many vertices $z$ in $X$ for which there is some $g,h\in G$ such that $f(gh).z\neq f(g)f(h).z$. So, the union of orbits as $O$ consists of at least $(1-|G|^2\eps)|X|$ of the vertices in $X$. So, if we define $\varphi(g).x=f(g).x$ whenever $x$ satisfies \eqref{eq:assumption_on_x}, and $\varphi(g).x=x$ otherwise, then $\varphi$ is an action of $G$ it satisfies for every $g\in G$, $d_H(f(g),\varphi(g))\leq |G|^2\eps$.
\end{proof}

\subsection{Proof of Proposition \ref{prop:perturbing_to_satisfy_1to3}}
Throughout this proof, we use the term \emph{triangle inequality} for \textbf{iterative applications} of the Hamming distance's triangle inequality,  as was used  in the proof of Claim \ref{claim:test_weighted_edit_distance_implies_val_close}.

   Let $\sigma\colon \tilde S\to \Sym(n)$ be a finitely described strategy  for $\tilde \cT$, with $\val(\tilde \cT,\sigma)\geq 1-\eps$. By \labelcref{clause:Check_1_in_associated_test}, $d_H(\sigma(\sJ)^2,\Id)\leq \eps$. By Claim \ref{claim:fixing_to_involution}, there is a function $\sigma_1\colon \tilde S\to \Sym(n)$ that agrees with $\sigma$ on all $\sX\in S$, but $\sigma_1(\sJ)$ is an involution and $d_H(\sigma(\sJ),\sigma_1(\sJ))\leq \eps$. Again by \labelcref{clause:Check_1_in_associated_test}, $d_H(\sigma(\sJ),\Id)\geq 1-\eps$, and by the triangle inequality $d_H(\sigma_1(\sJ),\Id)\geq 1-2\eps$. Let $m=\lceil \frac{n}{2}\rceil$. Then, by Claim \ref{claim:fixing_fixed_point_free}, there is a function $\sigma_2\colon \tilde S\to \Sym(2m)$ such that for every $\star\in [n]\subseteq [2m]$, we have $\sigma_2(\sX).\star=\sigma_1(\sX).\star=\sigma(\sX).\star$, and $\sigma_2(\sJ)$ is a fixed point free involution with $d_H(\sigma_1(\sJ),\sigma_2(\sJ))\leq 4\eps$. All in all, for every $\sX\in S\cup \{\sJ\}$ we have $d_H(\sigma_2(\sX),\sigma(\sX))\leq 5\eps$.

   Now, let $\eps_x$ be the losing probability when an edge containing $x$ is sampled in $\tilde \cT$. Then, $\Ex_{x\sim \mu}[\eps_x]=\eps$. Furthermore, by \labelcref{clause:Check_2_in_associated_test}, for every $\sX\in S_x$ we have $d_H(\sigma(\sX)^2,\Id)\leq \eps_x$. Therefore, by the triangle inequality, $d_H(\sigma_2(\sX)^2,\Id)\leq d_H(\sigma_2(\sX)^2,\sigma(\sX)^2)+d_H(\sigma(\sX)^2,\Id)\leq 10\eps+\eps_x$. By Claim \ref{claim:fixing_to_involution}, there is a function $\sigma_3\colon \tilde S\to \Sym(2m)$  such that  $\sigma_3(\sJ)=\sigma_2(\sJ)$, and for every $x\in V$ and $\sX\in S_x$ both $\sigma_3(\sX)^2=\Id$ and $d_H(\sigma_3(\sX),\sigma_2(\sX))\leq 10\eps+\eps_x$. Now, by \labelcref{clause:Check_1_in_associated_test}, for every $\sX\in S_x$ we have $d_H(\sigma(\sX)\sigma(\sJ),\sigma(\sJ)\sigma(\sX))\leq \eps_x$. Thus, by the triangle inequality, $d_H(\sigma_3(\sX)\sigma_3(\sJ),\sigma_3(\sJ)\sigma_3(\sX))\leq 30\eps+2\eps_x$. Since $\sigma_3(\sX)$ and $\sigma_3(\sJ)$ are involutions, we may apply Claim \ref{claim:fixing_involutions_to_commute} and find a function $\sigma_4\colon \tilde S\to \Sym(2m)$ such that $\sigma_4(\sJ)=\sigma_3(\sJ)$, $\sigma_4(\sX)\sigma_4(\sJ)=\sigma_4(\sJ)\sigma_4(\sX)$ and $d_H(\sigma_4(\sX),\sigma_3(\sX))\leq 30\eps+2\eps_x$. Again by the triangle inequality, we deduce that $d_H(\sigma(\sJ),\sigma_4(\sJ))\leq 5\eps$, and that for $\sX\in S_x$ we have $d_H(\sigma(\sX),\sigma_4(\sX))\leq 40\eps +3\eps_x.$

   Recall that $S_x=\{\sX^{x,i}\mid i\leq \ell(x)\}$ (ignoring readability). We extend $\sigma_4$ to $\FF_2^{S_x}$ in the following way: For $\xi\colon S_x\to \FF_2$, let
   \[
\sigma_4(\xi)=\prod_{i=1}^{\ell(x)}\sigma_4(\sX)^{\xi(\sX)},
   \]
   where the product is ordered according to the index $i$. By \labelcref{clause:Check_2_in_associated_test}, for every $\sX,\sX'\in S_x$ we have $$d_H(\sigma(\sX)\sigma(\sX'),\sigma(\sX')\sigma(\sX))\leq \eps_x.$$
   Thus, by the triangle inequality, $d_H(\sigma_4(\sX)\sigma_4(\sX'),\sigma_4(\sX')\sigma_4(\sX))\leq 160\eps+ 13\eps_x$. Note also that the images of $\sigma_4$ are involutions. Therefore, for every $\xi_1,\xi_2\colon S_x\to \FF_2$ we have 
   \[
d_H(\sigma_4(\xi_1+\xi_2),\sigma_4(\xi_1)\sigma_4(\xi_2))\leq \ell(x)^2(160\eps+ 13\eps_x)\leq \Lambda^2(160\eps+ 13\eps_x).
   \]
   By Claim \ref{claim:Glebsky_Rivera} applied to each $\FF_2^{S_x}$ individually, we have a function $\sigma_5\colon \tilde S\to \Sym(2m)$ such that $\sigma_5(S_x)$ commutes and consists only of involutions, and $$d_H(\sigma_4(\sX),\sigma_5(\sX))\leq |\FF_2^{S_x}|^2 \cdot\Lambda^2(160\eps+13\eps_x)\leq 2^{2\Lambda}\cdot\Lambda^2(160\eps+13\eps_x).$$ Furthermore, because of the way $\sigma_5$ is constructed in the proof of Claim \ref{claim:Glebsky_Rivera}, by letting $\sigma_5(\sJ)=\sigma_4(\sJ)$, the image of $\sJ$ is still a central involution with no fixed points. So, $\sigma_5$ always passes \labelcref{clause:Check_1_in_associated_test} and \labelcref{clause:Check_2_in_associated_test}. 

   Since $\sigma$ passes \labelcref{clause:Check_3_in_associated_test} with probability $\eps_x$ when an edge with $x\in V$  is sampled, we deduce that every readable variable $\sX\in S_x^\frR$ satisfies
\[
\Pro_{\star\in [n]}[\sigma(\sX).\star=\star\ \textrm{or}\ \sigma(\sX).\star=\sigma(\sJ).\star]\geq 1-\eps_x.
\]
Hence, by the triangle inequality, 
   \[
   \begin{split}
       \Pro_{\star\in [2m]}[\sigma_5(\sX).\star=\star\ \textrm{or}\ \sigma_5(\sX).\star=\sigma_5(\sJ).\star]&\geq 1-\eps_x-d_H(\sigma_5(\sJ),\sigma(\sJ))-d_H(\sigma_5(\sX),\sigma(\sX))\\
       &\geq 1-\eps_x-5\eps-2^{2\Lambda} \cdot \Lambda^2(160\eps+13\eps_x)\\
       &\geq 1-2^{2\Lambda} \cdot \Lambda^2(165\eps+14\eps_x).
   \end{split}
   \]
   By the fact that $\sigma_5(S_x)$ commutes for every $x\in V$,  the property $\sigma_5(\sX).\star=\star$ implies $\sigma_5(\sX).\diamond=\diamond$ for every $\diamond\in O_x$, where $O_x$ is the orbit of $\star$ induced by $\sigma_5(S_x)$ (which we usually denoted as $(\sigma_5)_x$ beforehand). Since $\sigma_5(\sJ)$ commutes with all images, the same is true for the property $\sigma_5(\sX).\star=\sigma_5(\sJ).\star$.  
   Hence, in each orbit of $\sigma_5(S_x)$, either all vertices $\star$ satisfy $\sigma_5(\sX).\star=\star\ \textrm{or}\ \sigma_5(\sX).\star=\sigma_5(\sJ).\star$ or none of them. 
   We can thus define a function $\sigma_6$ that agrees with $\sigma_5$ on the orbits in which all readable variables $\sX\in S_x^\frR$ act appropriately, and on the other orbits we just define for every $\sX\in S_x$ that $\sigma_6(\sX).\star=\star$ (note that we keep $\sJ$ acting the same no matter what). So, we have 
   \[
   \begin{split}
       \forall x\in V,\ \sX\in S_x \ \colon \ \ d_H(\sigma_6(\sX),\sigma_5(\sX))&\leq |S_x^\frR|\cdot 2^{2\Lambda} \cdot \Lambda^2(165\eps+14\eps_x)\\
       &\leq 2^{2\Lambda} \cdot \Lambda^3(165\eps+14\eps_x),
   \end{split}
\]
and now $\sigma_6$ always passes \labelcref{clause:Check_1_in_associated_test}, \labelcref{clause:Check_2_in_associated_test} and \labelcref{clause:Check_3_in_associated_test}. By the triangle inequality, $d_H(\sigma_6(\sJ),\sigma(\sJ))\leq 5\eps$ and for every $\sX\in S_x$ we have
\[
\begin{split}
    d_H(\sigma_6(\sX),\sigma(\sX))&\leq 2^{2\Lambda} \cdot \Lambda^3(165\eps+14\eps_x)+2^{2\Lambda}\cdot\Lambda^2(160\eps+13\eps_x)\\
    &+30\eps+2\eps_x+10\eps+\eps_x+5\eps\\
    &\leq 2^{2\Lambda}\Lambda ^3(370\eps+30\eps_x).
\end{split}
\]
Choosing $\sigma'=\sigma_6$ and $C_0=370$, we deduce the claim.

\begin{rem}\label{rem:better_params}
    How better can the parameters in Propositions \ref{prop:perturbing_to_satisfy_1to3}, \ref{prop:associated_quantum_strat_has_value_at_least_as_original} and \ref{prop:significance_of_associated_test} be? We know how to  make the parameters in Proposition \ref{prop:perturbing_to_satisfy_1to3}  polynomial in $\Lambda$ by implementing a Hadmamard code in every vertex $x\in V$ and using \cite{BC22} or \cite{GowersHatami} in the analysis instead of \cite{GlebskyRivera} --- since we did not see any theoretical gain by doing that, and it was a lengthier proof, we decided not to implement this parameter improvement. We are also not sure whether the dependence in Proposition \ref{prop:associated_quantum_strat_has_value_at_least_as_original} is needed or not. Finally, the significance analysis performed in Section \ref{sec:Robustness} is very crude. Can a better analysis allow to remove the dependence on $\Lambda$ altogether, at least for some rich enough family of games?
\end{rem}

\bibliographystyle{plain}
\bibliography{Bibliography}

\end{document}